\documentclass{article}






\usepackage[a4paper,outer=3cm]{geometry}

\DeclareOldFontCommand{\bf}{\normalfont\bfseries}{\mathbf}

\let\emph\relax 
\DeclareTextFontCommand{\emph}{\bfseries\em}

\marginparwidth  2cm
\headsep 2cm

\usepackage[utf8]{inputenc}
\usepackage[T1]{fontenc}
\usepackage{times}

\usepackage{xspace}

\usepackage{longtable}
\usepackage{amsmath,amsthm,amssymb,amsthm, stmaryrd}
\usepackage{enumerate}
\usepackage{multirow}
\usepackage{algorithm}
\usepackage{algpseudocode}
\algnewcommand\algorithmicinput{\textbf{Input:}}
\algnewcommand\algorithmicoutput{\textbf{Output:}}
\algnewcommand\Input{\item[\algorithmicinput]}
\algnewcommand\Output{\item[\algorithmicoutput]}


\usepackage{pgf,tikz}
\usepackage{svg}
\usetikzlibrary{arrows}
\usetikzlibrary{decorations.markings}

\usepackage{pdftricks}
\begin{psinputs}
	\usepackage{pstricks,pst-plot,pst-node,pst-func}
\end{psinputs}
\usepackage{graphics,graphicx}

\tikzstyle{vertex}=[circle, draw, inner sep=0pt, minimum size=6pt]
\newcommand{\vertex}{\node[vertex]}

\usetikzlibrary{decorations.pathreplacing}

\tikzset{->-/.style={decoration={
  markings,
  mark=at position .5 with {\arrow{>}}},postaction={decorate}}}



 \usepackage{marginnote}

\newcommand{\m}[1]{}


%
%

\usepackage[nothm]{thmbox}
\usepackage{amsthm}
\usepackage{thmtools}
\usetikzlibrary{patterns}
\usetikzlibrary{calc}

\declaretheorem[parent=section,thmbox=M]{theorem}

\declaretheorem[numberlike=theorem,thmbox=M]{corollary}
\declaretheorem[numberlike=theorem,thmbox=M]{conjecture}

\declaretheorem[numberlike=theorem,thmbox=M]{definition}

\newtheorem{claim}{Claim}[theorem]

\declaretheorem[numberlike=theorem]{lemma}

\declaretheorem[numberlike=theorem]{property}




\newcommand{\Haj}{Hajnal\xspace}
\newcommand{\Lov}{Lov\'asz\xspace}




\newcommand{\mc}{\mathcal}

\newcommand{\ra}{\rightarrow}
\newcommand{\Ra}{\Rightarrow}

\newcommand{\olra}{\overleftrightarrow}
\newcommand{\ora}[1]{\overrightarrow{#1}}

\newcommand{\ovlra}{\overleftrightarrow}

\newcommand{\bid}{\protect\ovlra}


\DeclareMathOperator{\dic}{\ora \chi}

\tikzstyle{vertex}=[circle,draw, top color=gray!5, 
	    bottom color=gray!30, minimum size=12pt, scale=1, inner sep=0.5pt]
\tikzstyle{arc}=[->, > = latex',  thick]
\tikzstyle{edge}=[thick, blue]

\def\centerarc[#1](#2)(#3:#4:#5) {\draw[#1] ($(#2)+({#5*cos(#3)},{#5*sin(#3)})$) arc (#3:#4:#5); }

\newenvironment{proofclaim}
	{\noindent {\bf Proof of Claim
	:}}
	{\hfill $\square$ \par\vspace{11pt}}


\usepackage{comment}
\setlength{\emergencystretch}{2cm}

\newcommand{\htk}{\vec{E\mc{HT}}_{k}}
\newcommand{\hk}{\vec{\mc{H}}_{k}}
\newcommand{\htthree}{\vec{E\mc{HT}}_{3}}


\usepackage{authblk}

\title{Digraph Colouring and Arc-Connectivity}
\author[1]{Pierre Aboulker}
\author[1,2]{Guillaume Aubian}
\author[2]{Pierre Charbit}

\affil[1]{DIENS, \'Ecole normale sup\'erieure, CNRS, PSL University, Paris, France.}
\affil[2]{Université de Paris, CNRS, IRIF, F-75006, Paris, France.}

\begin{document}

\maketitle

\begin{abstract}
The dichromatic number $\dic(D)$ of a digraph $D$ is the minimum size of a partition of its vertices into acyclic induced subgraphs. We denote by $\lambda(D)$ the maximum local arc connectivity of a digraph $D$. Neumann-Lara proved that for every digraph $D$, $\dic(D) \leq \lambda(D) + 1$. In this paper, we characterize the digraphs $D$ for which $\dic(D) = \lambda(D) + 1$. This generalizes an analogue result for undirected graphs proved by Stiebitz and Toft as well as the directed version of Brooks' Theorem proved by Mohar. 
Along the way, we introduce a generalization of Haj\'os join that gives a new way to construct families of dicritical digraphs that is  of independent interest. 
\end{abstract}
\pagebreak


\tableofcontents


\section{Introduction}
The \emph{dichromatic number} $\dic(D)$ of a digraph $D$ is the least integer $k$ such that $D$ can be partitioned into $k$ acyclic digraphs. 

The dichromatic number was first introduced by Neumann-Lara~\cite{NL82} in 1982 and was  rediscovered by Mohar~\cite{M03} 20 years later.
It is easy to see that for any undirected graph $G$, the \textit{symmetric digraph} $\olra G$ obtained from $G$ be replacing each edge by a digon satisfies $\chi(G) = \dic(\olra G)$. This simple fact permits to generalize results on the chromatic number of undirected graphs to digraphs via the dichromatic number. Such results have (recently) been found in various areas of graph colouring such as extremal graph theory~\cite{BBSS20, HK15, KS20}, algebraic graph theory~\cite{M10}, substructure forced by large dichromatic number~\cite{AAC21, ACN21, ACL19, hero,GSS20, HLNT19, S21}, list dichromatic number~\cite{BHL18, HM11}, dicolouring digraphs on surfaces~\cite{AHKR21, LM17, S19}, flow theory~\cite{H17, KV12}, links between dichromatic number and girth~\cite{HM12, S20}, complexity~\cite{HLM21}. 
Thus, more and more efforts are made to extend colouring results from undirected graphs to directed graphs. This paper participates in this effort. 
We explain the undirected version of our result in the next section. The reader in a hurry who would like to read our result directly can jump to Section~\ref{sec:our_result}. 


\subsection{The undirected case}


Let $G$ be a graph. The \emph{chromatic number} $\chi(G)$   of $G$ is the least integer $k$ such that $G$ can be partitioned into $k$ stable sets. A graph is \emph{$k$-critical} if it has chromatic number $k$ and all its proper subgraphs have chromatic number at most $k-1$.  

It is an easy observation that, for every graph $G$, $\chi(G) \leq  \Delta(G) + 1$, where $\Delta(G)$ is the maximum degree of $G$. 
Moreover, equality holds for odd cycles and complete graphs, and the chromatic number of a graph equals the maximum chromatic number of its connected components. This leads to a full characterization of graphs $G$ for which $\chi(G) = \Delta(G) + 1$, famously known as Brooks' Theorem. 
Let  $\mathcal B_2$ be the set of odd cycles and for $k \neq 2$, let  $\mathcal B_k = \{ K_{k+1}\}$. 

\begin{theorem}[Brooks' Theorem~\cite{B41}]
    Let $G$ be a graph. Then $\chi(G) = \Delta(G) + 1=k+1$ if and only if one of the connected component of $G$ is in $\mathcal B_k$. 
\end{theorem}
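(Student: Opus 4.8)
The plan is to first reduce to connected graphs and then prove the two implications. Since both $\chi$ and $\Delta$ of a graph are the maxima of the corresponding parameters over connected components, and since $\chi(H)\le\Delta(H)+1\le\Delta(G)+1$ for every component $H$, the equality $\chi(G)=\Delta(G)+1=k+1$ holds exactly when some component $H$ satisfies $\Delta(H)=k$ and $\chi(H)=k+1$. Hence it suffices to prove that a \emph{connected} graph $G$ with $\Delta(G)=k$ has $\chi(G)=k+1$ if and only if $G\in\mathcal{B}_k$. The ``if'' direction is an immediate computation: an odd cycle has $\Delta=2$ and $\chi=3$, and $K_{k+1}$ has $\Delta=k$ and $\chi=k+1$, so $\chi=\Delta+1$ in both cases.

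For the ``only if'' direction I would argue by contraposition: assuming $G$ connected with $k=\Delta(G)$ and $G\notin\mathcal{B}_k$, I show $\chi(G)\le k$. The cases $k\le 2$ are handled directly, since a connected graph with $\Delta\le 2$ is a path or a cycle, and the only such graphs with $\chi=\Delta+1$ are $K_1$, $K_2$ and the odd cycles, all lying in $\mathcal{B}_k$. So assume $k\ge 3$ and $G$ not complete. The engine throughout is the greedy bound: if we colour $v_1,\dots,v_n$ in order with palette $\{1,\dots,k\}$, the colouring succeeds whenever each $v_i$ meets at most $k-1$ already-coloured neighbours when reached. From this I extract the basic lemma that \emph{any connected graph $H$ with $\Delta(H)\le k$ possessing a vertex $u$ of degree $<k$ satisfies $\chi(H)\le k$}: order the vertices of $H$ by non-increasing BFS-distance from $u$, so $u$ is last and every other vertex has its BFS-parent later in the order; then each $v_i\neq u$ has an uncoloured neighbour (hence at most $k-1$ coloured ones) and $u$ itself has fewer than $k$ neighbours.

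This lemma immediately disposes of the case where $G$ is not $k$-regular. If $G$ is $k$-regular but has a cutvertex, I decompose $G$ into its blocks: every block $B$ contains a cutvertex $x$ of $G$, and $x$ has neighbours in at least two blocks, so $\deg_B(x)<k$; the lemma gives $\chi(B)\le k$ for each block. Processing the block–cut tree and permuting the $k$ colours of each newly attached block so that it agrees with the already-fixed colour of its shared cutvertex assembles a $k$-colouring of $G$. The last and genuine case is $G$ being $k$-regular, $2$-connected, and not complete. Here I use the triple lemma: there exist vertices $a,b,c$ with $ca,cb\in E$, $ab\notin E$, and $G-\{a,b\}$ connected. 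Granting it, set $v_1=a$, $v_2=b$, $v_n=c$, and list $v_3,\dots,v_{n-1}$ as a reverse-BFS order of the connected graph $G-\{a,b\}$ rooted at $c$. Colour $a$ and $b$ both with colour $1$; each $v_i$ with $3\le i\le n-1$ has a later neighbour and so at most $k-1$ coloured neighbours; and $c$ has two neighbours $a,b$ sharing colour $1$, so its $k$ neighbours exhibit at most $k-1$ colours. Greedy then yields $\chi(G)\le k$.

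The main obstacle is proving the triple lemma when $G$ is $2$-connected but not $3$-connected. When $G$ is $3$-connected the argument is easy: since $G$ is not complete it has two vertices at distance $2$, giving a path $a,c,b$ with $ab\notin E$, and deleting any two vertices from a $3$-connected graph leaves it connected. In the genuinely hard subcase I would choose a vertex $v$ for which $G-v$ has a cutvertex (such $v$ exists precisely because $G$ is not $3$-connected), take two distinct endblocks of $G-v$, and let $a,b$ be neighbours of $v$ in these two endblocks other than their respective cutvertices; then $a,b$ lie in different blocks, so $ab\notin E$, and with $c=v$ we have $ca,cb\in E$. The delicate point is verifying that $G-\{a,b\}$ stays connected: $a$ and $b$ are non-cutvertices of $G-v$ sitting in distinct endblocks, so $(G-v)-\{a,b\}$ is connected, and re-inserting $v$ preserves connectivity because $\deg v=k\ge 3$ guarantees that $v$ retains a neighbour after deleting $a,b$. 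Making these endblock and connectivity claims fully rigorous is where the care is needed.
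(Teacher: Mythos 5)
The paper does not prove this statement: Brooks' Theorem is quoted as a classical result with a citation to \cite{B41}, so there is no in-paper argument to compare yours against. Judged on its own, your proposal is the standard greedy/Lov\'asz proof and it is correct: the reduction to connected components, the reverse-BFS ordering lemma for graphs with a vertex of degree less than $k$, the block decomposition for regular graphs with a cutvertex, and the final case of a $2$-connected $k$-regular non-complete graph handled via a triple $a,b,c$ with $ca,cb\in E$, $ab\notin E$ and $G-\{a,b\}$ connected, coloured greedily with $a$ and $b$ receiving the same colour. The only places requiring the care you already flag are routine: in the non-$3$-connected subcase you need that $2$-connectivity of $G$ forces $v$ to have a neighbour in each endblock of $G-v$ other than that endblock's cutvertex (otherwise that cutvertex would separate $G$), and that deleting a non-cutvertex from an endblock (which has at least three vertices since $k\ge 3$ forces minimum degree at least $2$ in $G-v$) preserves connectivity. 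Both go through, so the argument is complete in outline.
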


Given two vertices $u,v$ of a graph $G$, $\lambda(u,v)$ is the maximum number of edge-disjoint paths linking $u$ and $v$, and $\lambda(G)$ is the maximum local edge connectivity of $G$, that is  $\max_{u\neq v} \lambda(u,v)$.  Mader~\cite{M73} proved that for every graph $G$, $\chi(G) \leq \lambda(G) + 1$. 
Moreover, it is clear that  $\lambda(G) \leq \Delta(G)$.
Thus, for every graph $G$, 
$$ \chi(G) \leq \lambda(G) + 1 \leq \Delta(G) +1$$

Hence one can ask for graphs $G$ for which 


\begin{equation}\label{eq:nono_extremal}
  \chi(G) = \lambda(G) + 1  
\end{equation}

Exception of Brooks' Theorem of course satisfies~\eqref{eq:nono_extremal}, but it turns out that there are more. 

To describe them, we need a famous construction first introduced by Haj\'os~\cite{H61} to construct an infinite family of $k$-critical graphs. Let $G_1$ and $G_2$ be two graphs, with $uv_1 \in E(G_1)$ and  $v_2w \in E(G_2)$. 
The \emph{Haj\'os join} of $G_1$ and $G_2$ with respect to $(uv_1, v_2w)$ is the graph $G$ obtained from the disjoint union of $G_1 - uv_1$ and $G_2 - v_2w$, by identifying $v_1$ and $v_2$ to a new vertex $v$, and adding the edge $uw$. 

An  \emph{odd wheel}, is a graph obtained from an odd cycle by adding a vertex adjacent to every vertex of the odd cycle. Note that $K_4$ is an odd wheel and that odd wheels satisfy~\eqref{eq:nono_extremal}. 

One can prove that the Haj\'os join $G$ of two graphs $G_1$ and $G_2$ satisfies \eqref{eq:nono_extremal} if and only if both $G_1$ and $G_2$ satisfies it. 
Moreover,  the maximum local edge connectivity of a graph equal the maximum maximum local edge connectivity of its blocks. 

This leads to the characterization of graphs $G$ satisfying~\eqref{eq:nono_extremal}, proved by Aboulker et al.~\cite{ABHMT17} for graphs $G$ with $\chi(G) \leq 4$, and by Stiebitz and Toft~\cite{ST16} for $\chi(G) \geq 5$.  

Let $\mathcal H_k = \mathcal B_k$ when $k \leq 2$, let $\mathcal H_3$ be the smallest class containing all odd wheels and closed under taking Haj\'os join, and for $k \geq 4$, let $\mathcal H_k$ be the smallest class of graphs containing $K_k$ and closed under taking Haj\'os join. 
\begin{theorem}[\cite{ST16}]\label{thm:nono_version}
    Let $G$ be a graph. Then $\chi(G) = \lambda(G) + 1 = k+1$ if and only if a block of $G$ is in $\mathcal H_k$. 
\end{theorem}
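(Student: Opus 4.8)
The plan is to establish a block-free reformulation and then lift it: for every $2$-connected graph $B$, one has $\chi(B)=\lambda(B)+1=k+1$ if and only if $B\in\mathcal{H}_k$ (call this statement $(\dagger)$; the trivial blocks $K_1,K_2$ are covered by $\mathcal{H}_0=\{K_1\}$, $\mathcal{H}_1=\{K_2\}$). Granting $(\dagger)$, Theorem~\ref{thm:nono_version} follows because both parameters are governed by the blocks: $\chi(G)=\max_B\chi(B)$ and $\lambda(G)=\max_B\lambda(B)$, the maxima ranging over the blocks $B$ of $G$. Indeed, Mader's bound applied blockwise gives $\chi(B)\le\lambda(B)+1\le\lambda(G)+1$ for every block, so $\chi(G)=\lambda(G)+1$ holds exactly when some block $B$ attains $\chi(B)=\lambda(G)+1$; for such a block the chain $\lambda(G)+1=\chi(B)\le\lambda(B)+1\le\lambda(G)+1$ forces $\chi(B)=\lambda(B)+1=k+1$ with $k=\lambda(G)$, and by $(\dagger)$ this is precisely a block in $\mathcal{H}_k$.

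First I would treat the easy (``if'') direction of $(\dagger)$. By construction $\mathcal{H}_k$ is generated from its base graphs ($K_{k+1}$ for $k\ge 4$, the odd wheels for $k=3$, the odd cycles for $k=2$) under Haj\'os join. A direct computation shows each base graph satisfies $\chi=\lambda+1=k+1$: for $K_{k+1}$ one has $\chi=k+1$ and $\lambda=k$; for an odd wheel $\chi=4$ and $\lambda=3$; for an odd cycle $\chi=3$ and $\lambda=2$. Since the stated preservation property guarantees that a Haj\'os join satisfies \eqref{eq:nono_extremal} if and only if both of its factors do, and since the construction preserves the common value $k$, an induction on the Haj\'os construction shows every member of $\mathcal{H}_k$ satisfies \eqref{eq:nono_extremal} with value $k$.

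The substance is the ``only if'' direction of $(\dagger)$, which I would prove by induction on $|V(B)|$. Let $B$ be $2$-connected with $\chi(B)=\lambda(B)+1=k+1$; I may assume $k\ge 3$, the cases $k\le 2$ being immediate ($\lambda=2$ forces $B$ to be a cycle and $\chi=3$ an odd one, while $k\le 1$ is trivial). Since $\chi(B)\le\Delta(B)+1$ we have $\Delta(B)\ge k=\lambda(B)$, giving a dichotomy. If $\Delta(B)=\lambda(B)=k$, then $\chi(B)=\Delta(B)+1$ and Brooks' Theorem forces $B=K_{k+1}$ (as $k\ge 3$), a base graph of $\mathcal{H}_k$. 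If instead $\Delta(B)>\lambda(B)=k$, I would exhibit $B$ as a Haj\'os join $B=B_1\star B_2$ of two strictly smaller $2$-connected graphs; the preservation property then yields $\chi(B_i)=\lambda(B_i)+1=k+1$, the inductive hypothesis gives $B_1,B_2\in\mathcal{H}_k$, and closure under Haj\'os join gives $B\in\mathcal{H}_k$.

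The crux, and the step I expect to be the main obstacle, is producing this decomposition when $\Delta(B)>\lambda(B)$. The signature of a Haj\'os join is a mixed separator: a vertex $v$ together with a single edge $uw$ whose removal disconnects $B$, with $v$ having neighbours on both sides; un-identifying $v$ and reinstating the edges $uv_1,v_2w$ recovers the factors $B_1,B_2$. To locate such a separator I would start from a vertex $v$ of degree $\Delta(B)>\lambda(B)=k$: since $\lambda(v,u)\le k$ for every $u$, Menger's theorem yields, for each $u$, an edge cut of size at most $k$ separating $v$ from $u$, and because $\deg(v)>k$ this cut is nontrivial on $v$'s side. Analysing a minimal such cut, while using that the $(k+1)$-chromatic part of $B$ has minimum degree at least $k$ and so cannot be severed cheaply, should force the cut into the required vertex-plus-edge shape and make both sides $2$-connected and strictly smaller. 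Several points demand care: one must show the extremal condition forces $B$ to be $(k+1)$-critical (otherwise an inessential edge would either push $\lambda$ above $k$ or leave a non-base, non-decomposable graph); one must verify that each factor inherits $\lambda(B_i)=k$ and $\chi(B_i)=k+1$ with the \emph{same} $k$; and for $k=3$ one must additionally recognise the odd wheels, which are non-decomposable bases occurring precisely in this high-degree regime. Making the cut analysis simultaneously respect chromatic criticality and local edge-connectivity is where the real work lies.
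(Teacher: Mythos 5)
First, a caveat: the paper does not prove Theorem~\ref{thm:nono_version} at all --- it is quoted from Stiebitz--Toft and from Aboulker et al., so there is no in-paper proof to compare against. The closest internal analogue is the paper's proof of the directed decomposition theorem (Theorem~\ref{thm:decHJHBJ} together with Sections~\ref{sec:kextr}--\ref{sec:mainthm}), which is the right yardstick for how much work the undirected statement actually requires. Against that yardstick, your reduction to $2$-connected blocks, your treatment of $k\le 2$, and your ``if'' direction (base graphs plus the preservation property of the Haj\'os join) are all sound, with the caveat that the preservation property is itself only asserted, not proved, in the paper, and its proof is not a one-liner (compare Lemma~\ref{lem:extremal_directed_Haj} for the directed version).

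The genuine gap is the decomposition step of the ``only if'' direction, which you explicitly leave as a plan. The dichotomy $\Delta(B)=k$ versus $\Delta(B)>k$ is a reasonable starting point, but the passage from ``a vertex $v$ of degree greater than $k$ and, for each $u$, a $k$-edge-cut separating $v$ from $u$'' to ``a vertex plus a single edge whose deletion disconnects $B$'' is precisely the content of the theorem, and a $k$-edge-cut with $k\ge 3$ is nowhere near a one-vertex-plus-one-edge separator. What is actually needed (as the directed analogue shows) is to first establish that an extremal $B$ is $(k+1)$-critical with $\lambda(x,y)=k$ for \emph{every} pair of vertices (the analogue of Lemma~\ref{lem:prop_k-extremal}); then to treat separately the case where every minimum cut isolates a vertex, which is where $K_{k+1}$ and, for $k=3$, the odd wheels emerge (the analogues of Lemmas~\ref{lem:all_cuts_isolate_vertices} and~\ref{lem:all_cuts_isolate_vertices_three}, the latter requiring a delicate switching argument on maximal independent-like sets); and otherwise to contract one side of a non-trivial minimum cut, verify that the contraction is again extremal (the analogue of Lemma~\ref{lem:contraction_extremal}), apply induction, and un-contract while preserving the join structure. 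None of this appears in your proposal beyond the acknowledgement that ``the real work lies'' there; as written, you have proved the easy direction and the block reduction, but not the structural half of the theorem.
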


The goal of this paper is to generalize this result to digraphs.

This result has already been generalized for hypergraphs and our result can be compared to the hypergraph case as we explain  in Section~\ref{sec:hypergraph}.


\subsection{Our result: the directed case}\label{sec:our_result}


Brooks' Theorem has been generalized to digraphs by Mohar~\cite{M10}. 
There are several notions of maximum degree for a digraph. The most suitable one to generalize Brooks' theorem is the following: Given a digraph $D$, let $\Delta_{max}(D)$ be the maximum over the vertices of $G$ of the maximum of their in-degree and their out-degree. 

It is an easy observation that $\dic(D) \leq \Delta_{max}(D)+1$. 
Moreover, equality holds for directed cycles, symmetric odd cycles and symmetric complete graphs. Finally, the dichromatic number of a digraph $D$ is the maximum dichromatic number of its connected components, where the \emph{connected components} of a digraph are the connected components of its underlying graph. This leads to the directed Brooks' Theorem. 

Let  $\vec{\mathcal B}_1$ be the set of directed cycles,  let $\vec{\mathcal B}_2$ be the set of symmetric odd cycles and, for $k=0$ and $k \geq 3$, let  $\vec{\mathcal B}_k = \{\bid K_{k+1}\}$. 

\begin{theorem}[Directed Brooks' theorem~\cite{M10}, see also \cite{AA22}]\label{thm:dir_brooks}
Let $D$ be a digraph. Then $\dic(D) = \Delta_{max}(D) + 1=k+1$ if and only if a connected component of $D$ is in  $\vec{\mathcal B}_k$
\end{theorem}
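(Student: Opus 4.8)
The plan is to separate the routine ``if'' direction from the substantial ``only if'' direction, reducing the latter to a statement about a single strongly connected, diregular, dicritical digraph. Throughout I use the easy bound $\dic(D)\le\dmax(D)+1$ and the fact that both $\dic$ and $\dmax$ are maxima over (strongly) connected components. For ``if'', each member of $\vec{\mathcal{B}}_k$ satisfies $\dic=k+1$ and $\dmax=k$ (a directed cycle for $k=1$, a symmetric odd cycle for $k=2$, and $\bid K_{k+1}$ for $k=0$ or $k\ge3$); hence if a component of $D$ lies in $\vec{\mathcal{B}}_{\dmax(D)}$ it has dichromatic number $\dmax(D)+1$, and the easy bound forces $\dic(D)=\dmax(D)+1$. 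For ``only if'', suppose $\dic(D)=\dmax(D)+1=k+1$ and pass to a $(k+1)$-dicritical induced subdigraph $D'$; then $\dmax(D')\le k$, while $\dic(D')=k+1\le\dmax(D')+1$ forces $\dmax(D')=k$. Every $(k+1)$-dicritical digraph is strongly connected and has $d^+(v),d^-(v)\ge k$ at each vertex (if, say, $d^+(v)<k$, a $k$-dicolouring of $D'-v$ leaves a colour unused on $N^+(v)$, and giving $v$ that colour keeps its class acyclic); so $D'$ is $k$-diregular. Since each vertex of $D'$ then already has in- and out-degree $k$ inside $D'$ while $\dmax(D)=k$, no arc joins $V(D')$ to the rest, and connectivity of $D$ gives $D=D'$. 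It therefore suffices to show that a strongly connected, $k$-diregular, $(k+1)$-dicritical digraph lies in $\vec{\mathcal{B}}_k$.

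The cases $k\le1$ are immediate: $k=0$ gives a single vertex, and a strongly connected $1$-diregular digraph is a directed cycle. For $k\ge2$ I would argue by contradiction, producing a $k$-dicolouring of $D'$ whenever $D'\notin\vec{\mathcal{B}}_k$. The tool is the extension criterion: a partial $k$-dicolouring extends to an uncoloured vertex $v$ with colour $c$ provided no out-neighbour of $v$ already coloured $c$ can reach, inside colour class $c$, an in-neighbour of $v$ coloured $c$; in particular colour $c$ is \emph{always} safe for $v$ if no earlier out-neighbour of $v$ uses $c$. This makes a suitable ordering work: if $v_1,\dots,v_n$ is such that every $v_i$ with $i<n$ has an out-neighbour among $v_{i+1},\dots,v_n$, then each $v_i$ ($i<n$) has at most $k-1$ earlier out-neighbours, so some colour is free on them and may be used safely, and we greedily colour $v_1,\dots,v_{n-1}$.

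The only difficulty is the last vertex $v_n$, whose $k$ out-neighbours are all earlier and may exhaust all $k$ colours. As in Lov\'asz's proof of Brooks' theorem, I would defuse this by planting at the front two out-neighbours $a,b$ of $v_n$ that are \emph{not} joined by a digon and colouring both with colour $1$ (legal precisely because there is no digon $ab$, so the class $\{a,b\}$ is acyclic); then $N^+(v_n)$ sees at most $k-1$ colours and $v_n$ can be coloured. For this ordering to exist I need a vertex $z$ and $a,b\in N^+(z)$ with no digon between $a$ and $b$ such that $D'-a-b$ is strongly connected (so that an in-branching toward $z=v_n$ on $V\setminus\{a,b\}$ supplies the later-out-neighbour property). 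The heart of the argument, and the step I expect to be the main obstacle, is proving that such a configuration $\{z,a,b\}$ exists whenever $D'$ is neither $\bid K_{k+1}$ (for $k\ge3$) nor a symmetric odd cycle (for $k=2$). This is the directed analogue of Lov\'asz's configuration lemma and carries the genuinely new work: the digon constraint is exactly what protects the exceptions (in $\bid K_{k+1}$ every two out-neighbours of $z$ form a digon, and in a symmetric odd cycle deleting the two neighbours of $z$ disconnects it), and one must separately handle digraphs in which no deletion of two suitable vertices preserves strong connectivity, via an analysis of their strong-cut structure and of the small and near-complete cases.
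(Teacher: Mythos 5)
First, note that the paper does not prove this theorem: it is imported from Mohar~\cite{M10} (see also~\cite{AA22}), so there is no in-paper argument to compare against. Your route is the standard Lov\'asz-style one (and is essentially one of the proofs given in~\cite{AA22}): the reduction to a strong, $k$-diregular, vertex-dicritical component is correct (the degree bound $d^+(v),d^-(v)\ge k$, the forced diregularity from $\Delta_{max}(D)=k$, and the conclusion that $V(D')$ absorbs all incident arcs and is therefore a connected component are all sound), and the greedy scheme along an in-branching toward $z$, with the two non-digon out-neighbours $a,b$ of $z$ precoloured alike, is a valid colouring mechanism given the configuration it relies on.

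The genuine gap is that the configuration lemma --- the existence, whenever $D'$ is neither $\bid K_{k+1}$ nor a symmetric odd cycle, of a vertex $z$ and two out-neighbours $a,b$ with no digon between them such that every vertex of $D'-a-b$ still reaches $z$ --- is stated as a goal but not proved. You explicitly flag it as ``the main obstacle'' and gesture at ``an analysis of their strong-cut structure and of the small and near-complete cases'' without carrying it out. This is not a peripheral detail: it is where the entire content of any Brooks-type theorem lives, and in the directed setting it is strictly harder than Lov\'asz's undirected lemma. For instance, for $k=2$ the $2$-diregular strong digraphs that are not symmetric odd cycles include unions of two arc-disjoint Hamiltonian cycles with no digons at all, and one must show reachability of $z$ survives deleting two of its out-neighbours; for general $k$ one must handle digraphs whose underlying graph is only $2$-connected, where no choice of $\{a,b\}$ leaves a strong digraph, which forces a separate decomposition argument (this is precisely the work done in Sections 3 and 5 of~\cite{AA22}, via Rabern's and Lov\'asz's methods respectively). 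Until that lemma is proved, the argument establishes nothing beyond the easy bound $\dic(D)\le \Delta_{max}(D)+1$ and the reduction to the diregular dicritical case.
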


Let $D$ be a digraph. Given a pair of ordered vertices $(u,v)$, we denote by $\lambda(u,v)$ the maximum number of arc-disjoint directed paths from $u$ to $v$, and by $\lambda(D)$ the \emph{maximum local arc-connectivity} of $D$, that is $max_{u\neq v}\lambda(u,v)$. 
Neumann-Lara~\cite{NL82} proved that for every digraph $D$, $\dic(D) \leq \lambda(D) + 1$. 
Since we clearly have that $\lambda(D) + 1 \leq \Delta_{max}(D) + 1$, we get that for every digraph $D$: 
$$ \dic(D) \leq \lambda(D) + 1 \leq \Delta_{max}(D)  + 1 $$

The main result of this paper is  a full characterization of digraphs $D$ for which 
\begin{equation}\label{eq:extremal}
\dic(D) = \lambda(D) + 1    
\end{equation}
 when $\dic(D) \leq 2$  and $\dic(D) \geq 4$. This generalizes the directed Brooks' Theorem and Theorem~\ref{thm:nono_version} except for digraphs with dichromatic number $3$ that we have not been able to characterize. See Section~\ref{sec:2extremal}.  
\medskip 


There are two easy observations one can make about digraphs that satisfy~\eqref{eq:extremal}. First, a digraph satisfies $\dic(D) = \lambda(D) + 1 = k + 1$ if and only if one of its strong component $S$ satisfies $\dic(S) = \lambda(S) + 1 = k + 1$. Indeed, the dichromatic number of a digraph is equal to the maximum dichromatic number of its strong components, the maximum local arc-connectivity of a digraph is at least the maximum local arc-connectivity of its strong components, and for every digraph, $\dic$ is at most $\lambda + 1$. Second, and for the same exact reason, if a digraph $D$ is strong but has cutvertices, then $D$ satisfies $\dic(D) = \lambda(D) + 1 = k+1$ if and only if one of its block $B$ satisfies $\dic(B) = \lambda(B) + 1 = k+1$. Note that these blocks also induce strong digraphs.




\medskip

Thus, our main theorem will be a structural characterization of the class of \emph{$k$-extremal digraphs} (for $k=1$ and $k\geq 3$) where a digraph $D$ is $k$-extremal if $D$ is strong, its underlying graph is $2$-connected, and $\dic(D)  = \lambda(D) + 1 = k+1$.\\

Characterizing $1$-extremal digraphs is rather easy, we will prove in Section~\ref{sec:kextr} (Theorem~\ref{thm:1extremal}) that they are exactly the class of directed cycles. Studying $k$-extremal digraphs for larger values of $k$ requires more engineering. Mimicking the construction appearing in Theorem~\ref{thm:nono_version}, we need to develop an analogue of Haj\'os joins for digraphs. 
As a matter of fact, we will need a wild generalization of Haj\'os join, giving a new way to construct $k$-dicritical  digraphs that is interesting on its own.

The most natural way to generalize Haj\'os join is to take the same definition, and replace edges by digons. Given two vertices $u$ and $v$, we set $[u,v] = \{uv, vu\}$.

\begin{definition}[Bidirected Haj\'os join]\label{def:bidirected_HJ}
  Let $D_1$ and $D_2$ be two digraphs, with $[u,v_1] \subseteq A(D_1)$ and  $[v_2,w] \subseteq A(D_2)$. 
The \emph{bidirected Haj\'os join} of $D_1$ and $D_2$ with respect to $([u,v_1], [w, v_2])$ is the digraph $D$ obtained from the disjoint union of $D_1- [u,v_1]$ and $D_2 - [w, v_2]$, by identifying $v_1$ and $v_2$ to a new vertex $v$, and adding the digon $[u,w]$.   
\end{definition}

Bidirected Haj\'os joins were first introduced and studied in~\cite{BBSS20}. 

Still inspired by the Haj\'os joins, the so-called directed Haj\'os join, first introduced in~\cite{HK15} and studied in~\cite{BBSS20} is  defined as follows.  See Figure~\ref{fig:directed_join}.

\begin{definition}[Directed Haj\'os join]\label{def:directed_HJ}
Let $D_1$ and $D_2$ be two digraphs, with $uv_1 \in A(D_1)$ and  $v_2w \in A(D_2)$. 
The \emph{directed Haj\'os join} of $D_1$ and $D_2$ with respect to $(uv_1, v_2w)$ is the digraph $D$ obtained from the disjoint union of $D_1- uv_1$ and $D_2 - v_2w$, by identifying $v_1$ and $v_2$ to a new vertex $v$, and adding the arc $uw$.   
\end{definition}

    \begin{figure}[!hbtp]
    \begin{center}
        \begin{tikzpicture}[scale=0.5]

            \begin{scope}[yshift=2cm]
            \begin{scope}[rotate = -60]
                \vertex (x) at (0,0) {$v$};
                \vertex (v1) at (5,0) {$w$};
                \draw (2.5,0) ellipse (3cm and 2cm) {};
            \end{scope}

            \begin{scope}[rotate = -120]
                \vertex (v2) at (5,0) {$u$};
                \draw (2.5,0) ellipse (3cm and 2cm) {};
            \end{scope}
            \end{scope}

            \draw[->-] (v2) to (v1);

            \begin{scope}[shift=(v2)]
                \begin{scope}[xshift=-18cm,rotate = 60]
                    \vertex (l) at (0,0) {$u$};
                    \vertex (v) at (5,0) {$v_1$};
                    \draw[->-] (l) to (v);
                    \draw (2.5,0) ellipse (3cm and 2cm) {};
                    \node () at (-2,0) {$D_1$};
                \end{scope}
    
                \begin{scope}[xshift=-8cm, rotate = 120]
                    \vertex (l) at (0,0) {$w$};
                    \vertex (v) at (5,0) {$v_2$};
                    \draw[->-] (v) to (l);
                    \draw (2.5,0) ellipse (3cm and 2cm) {};
                    \node () at (-2,0) {$D_2$};
                \end{scope}
            \end{scope}

            \node () at (0,-4) {$D$};
           
        \end{tikzpicture}
        \end{center}
           \caption{$D$ is a directed Haj\'os join of $D_1$ and $D_2$.}
           \label{fig:directed_join}
        \end{figure}
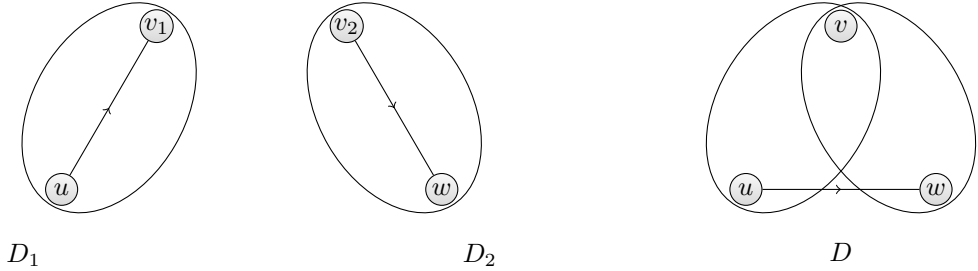

These two operations are particularly interesting because one can prove that the bidirected (directed) Haj\'os join 
$D$ of two digraphs $D_1$ and $D_2$ is $k$-dicritical if and only if both $D_1$ and $D_2$ are $k$-dicritical. They, therefore, provide a way to construct an infinite family of $k$-dicritical digraphs. They are also primordial for us because $D$ satisfies~\eqref{eq:extremal} if and only if both $D_1$ and $D_2$ do. Thus, they also provide a way to construct an infinite family of digraphs satisfying~\eqref{eq:extremal}.    

But these two joins are not enough to capture all digraphs satisfying~\eqref{eq:extremal}. In order to do so, we need to define Haj\'os tree joins, which can be seen as a generalization of  bidirected Haj\'os joins. See Figure~\ref{fig:HTcomplete}.

\begin{definition}[Haj\'os tree join and Haj\'os star join]\label{def:HTJ}
Given
\begin{itemize}
    \item a tree $T$ embedded in the plane with edges $\{u_1v_1, \dots, u_nv_n\}$, 
    \item a circular ordering $C=(x_1, \dots, x_{\ell})$ of the leaves of $T$, taken following the natural ordering given by the embedding of $T$,  and
    \item for $i=1, \dots, n$, a digraph $D_i$ such that
    \begin{itemize}
        \item $V(D_i) \cap V(T) = \{u_i,v_i\}$, 
        \item $[u_i,v_i] \subseteq A(D_i)$, and
        \item for $1 \leq i \neq j \leq n$, $V(D_i) \setminus \{u_i, v_i\} \cap V(D_j) \setminus \{u_j, v_j\} = \emptyset$,
    \end{itemize}
\end{itemize} 
we define the \emph{Haj\'os tree join} $T(D_1, \dots, D_n; C)$ to be the digraph obtained from  $D_i -[a_i,b_i]$ for $i=1, \dots, n$  by adding the directed cycle  $C = x_1 \ra x_2 \ra \dots \ra x_{\ell} \ra x_1$. 

$C$ is called the  \emph{peripheral cycle} of $D$ and  vertices $u_1, v_1, \dots, u_n,v_n$ are the \emph{junction vertices} of $D$ (note that there are $n+1$ of them). 

When $T$ is a star, we call it \emph{Haj\'os star join}. 
\end{definition}

  \begin{figure}[!hbtp]
    \begin{center}
        \begin{tikzpicture}[scale=0.4]

            \begin{scope}[xshift=0cm,scale = 0.7]              
                \begin{scope}
                    \vertex (c) at (0,0) {$c$};
                    \vertex (e) at (5,0) {$e$};
                    \draw[green] (c) -- (e);
                \end{scope}
    
                \begin{scope}[shift=(c), rotate = 120]
                    \vertex (a) at (5,0) {$a$};
                    \draw[green] (a) -- (c);
                \end{scope}
    
                \begin{scope}[shift=(c), rotate = -120]
                    \vertex (b) at (5,0) {$b$};
                    \draw[green] (b) -- (c);
                \end{scope}
    
                \begin{scope}[shift=(e), rotate = 60]
                    \vertex (e) at (0,0) {$e$};
                    \vertex (g) at (5,0) {$g$};
                    \draw[green] (e) -- (g);
                \end{scope}
    
                \begin{scope}[shift=(e), rotate = -60]
                    \vertex (e) at (0,0) {$e$};
                    \vertex (h) at (5,0) {$h$};
                    \draw[green] (e) -- (h);
                \end{scope}
    
                \begin{scope}[shift = (g)]
                    \vertex (g) at (0,0) {$g$};
                    \vertex (i) at (5,0) {$i$};
                    \draw[green] (i) -- (g);
                \end{scope}
                
                \begin{scope}[shift = (g), rotate=120]
                    \vertex (g) at (0,0) {$g$};
                    \vertex (d) at (5,0) {$d$};
                    \draw[green] (d) -- (g);
                \end{scope}

            \end{scope}

            \node () at (e |- 0, -8) {$T$};


        \begin{scope}[xshift=22cm]
            \begin{scope}
                \vertex (c) at (0,0) {$c$};
                \vertex (e) at (5,0) {$e$};
                \draw[->-, bend right = 15, green, dashed] (c) to (e);
                \draw[->-, bend right = 15, green, dashed] (e) to (c);
                \draw (2.5,0) ellipse (3cm and 2cm) {};
            \end{scope}

            \begin{scope}[shift=(c), rotate = 120]
                \vertex (a) at (5,0) {$a$};
                \draw[->-, bend right = 15, green, dashed] (c) to (a);
                \draw[->-, bend right = 15, green, dashed] (a) to (c);
                \draw (2.5,0) ellipse (3cm and 2cm) {};
            \end{scope}

            \begin{scope}[shift=(c), rotate = -120]
                \vertex (b) at (5,0) {$b$};
                \draw[->-, bend right = 15, green, dashed] (c) to (b);
                \draw[->-, bend right = 15, green, dashed] (b) to (c);
                \draw (2.5,0) ellipse (3cm and 2cm) {};
            \end{scope}

            \begin{scope}[shift=(e), rotate = 60]
                \vertex (e) at (0,0) {$e$};
                \vertex (g) at (5,0) {$g$};
                \draw[->-, bend right = 15, green, dashed] (e) to (g);
                \draw[->-, bend right = 15, green, dashed] (g) to (e);
                \draw (2.5,0) ellipse (3cm and 2cm) {};
            \end{scope}

            \begin{scope}[shift=(e), rotate = -60]
                \vertex (e) at (0,0) {$e$};
                \vertex (h) at (5,0) {$h$};
                \draw[->-, bend right = 15, green, dashed] (e) to (h);
                \draw[->-, bend right = 15, green, dashed] (h) to (e);
                \draw (2.5,0) ellipse (3cm and 2cm) {};
            \end{scope}

            \begin{scope}[shift = (g)]
                \vertex (g) at (0,0) {$g$};
                \vertex (i) at (5,0) {$i$};
                \draw[->-, bend right = 15, green, dashed] (g) to (i);
                \draw[->-, bend right = 15, green, dashed] (i) to (g);
                \draw (2.5,0) ellipse (3cm and 2cm) {};
            \end{scope}
            
            \begin{scope}[shift = (g), rotate=120]
                \vertex (g) at (0,0) {$g$};
                \vertex (d) at (5,0) {$d$};
                \draw[->-, bend right = 15, green, dashed] (g) to (d);
                \draw[->-, bend right = 15, green, dashed] (d) to (g);
                \draw (2.5,0) ellipse (3cm and 2cm) {};
            \end{scope}

            \node () at (e |- 0, -8) {$G_1$};

            \draw[->-, bend right = 60, red] (a) to (b);
            \draw[->-, bend right = 60, red] (b) to (h);
            \draw[->-, bend right = 60, red] (h) to (i);
            \draw[->-, bend right = 60, red] (i) to (d);
            \draw[->-, bend right = 20, red] (d) to (a);
\end{scope}


    \begin{scope}[yshift = -20cm]
        
            \begin{scope}
                \vertex (c) at (0,0) {$c$};
                \vertex (e) at (5,0) {$e$};
                \vertex (up) at (2.5, 1.5) {};
                \vertex (do) at (2.5, -1.5) {}; 
                \draw[->-, bend right = 15] (c) to (up);
                \draw[->-, bend right = 15] (c) to (do);
                \draw[->-, bend right = 15] (e) to (up);
                \draw[->-, bend right = 15] (e) to (do);
                \draw[->-, bend right = 15] (up) to (c);
                \draw[->-, bend right = 15] (do) to (c);
                \draw[->-, bend right = 15] (up) to (e);
                \draw[->-, bend right = 15] (do) to (e);
                \draw[->-, bend right = 15] (up) to (do);
                \draw[->-, bend right = 15] (do) to (up);
            \end{scope}

            \begin{scope}[shift=(c), rotate = 120]
                \vertex (a) at (5,0) {$a$};
                \vertex (up) at (2.5, 1.5) {};
                \vertex (do) at (2.5, -1.5) {}; 
                \draw[->-, bend right = 15] (c) to (up);
                \draw[->-, bend right = 15] (c) to (do);
                \draw[->-, bend right = 15] (a) to (up);
                \draw[->-, bend right = 15] (a) to (do);
                \draw[->-, bend right = 15] (up) to (c);
                \draw[->-, bend right = 15] (do) to (c);
                \draw[->-, bend right = 15] (up) to (a);
                \draw[->-, bend right = 15] (do) to (a);
                \draw[->-, bend right = 15] (up) to (do);
                \draw[->-, bend right = 15] (do) to (up);
            \end{scope}

            \begin{scope}[shift=(c), rotate = -120]
                \vertex (b) at (5,0) {$b$};
                \vertex (up) at (2.5, 1.5) {};
                \vertex (do) at (2.5, -1.5) {}; 
                \draw[->-, bend right = 15] (c) to (up);
                \draw[->-, bend right = 15] (c) to (do);
                \draw[->-, bend right = 15] (b) to (up);
                \draw[->-, bend right = 15] (b) to (do);
                \draw[->-, bend right = 15] (up) to (c);
                \draw[->-, bend right = 15] (do) to (c);
                \draw[->-, bend right = 15] (up) to (b);
                \draw[->-, bend right = 15] (do) to (b);
                \draw[->-, bend right = 15] (up) to (do);
                \draw[->-, bend right = 15] (do) to (up);
            \end{scope}

            \begin{scope}[shift=(e), rotate = 60]
                \vertex (e) at (0,0) {$e$};
                \vertex (g) at (5,0) {$g$};
                \vertex (up) at (2.5, 1.5) {};
                \vertex (do) at (2.5, -1.5) {}; 
                \draw[->-, bend right = 15] (e) to (up);
                \draw[->-, bend right = 15] (e) to (do);
                \draw[->-, bend right = 15] (g) to (up);
                \draw[->-, bend right = 15] (g) to (do);
                \draw[->-, bend right = 15] (up) to (e);
                \draw[->-, bend right = 15] (do) to (e);
                \draw[->-, bend right = 15] (up) to (g);
                \draw[->-, bend right = 15] (do) to (g);
                \draw[->-, bend right = 15] (up) to (do);
                \draw[->-, bend right = 15] (do) to (up);
            \end{scope}

            \begin{scope}[shift=(e), rotate = -60]
                \vertex (e) at (0,0) {$e$};
                \vertex (h) at (5,0) {$h$};
                \vertex (up) at (2.5, 1.5) {};
                \vertex (do) at (2.5, -1.5) {}; 
                \draw[->-, bend right = 15] (e) to (up);
                \draw[->-, bend right = 15] (e) to (do);
                \draw[->-, bend right = 15] (h) to (up);
                \draw[->-, bend right = 15] (h) to (do);
                \draw[->-, bend right = 15] (up) to (e);
                \draw[->-, bend right = 15] (do) to (e);
                \draw[->-, bend right = 15] (up) to (h);
                \draw[->-, bend right = 15] (do) to (h);
                \draw[->-, bend right = 15] (up) to (do);
                \draw[->-, bend right = 15] (do) to (up);
            \end{scope}

            \begin{scope}[shift = (g)]
                \vertex (g) at (0,0) {$g$};
                \vertex (i) at (5,0) {$i$};
                \vertex (up) at (2.5, 1.5) {};
                \vertex (do) at (2.5, -1.5) {}; 
                \draw[->-, bend right = 15] (g) to (up);
                \draw[->-, bend right = 15] (g) to (do);
                \draw[->-, bend right = 15] (i) to (up);
                \draw[->-, bend right = 15] (i) to (do);
                \draw[->-, bend right = 15] (up) to (g);
                \draw[->-, bend right = 15] (do) to (g);
                \draw[->-, bend right = 15] (up) to (i);
                \draw[->-, bend right = 15] (do) to (i);
                \draw[->-, bend right = 15] (up) to (do);
                \draw[->-, bend right = 15] (do) to (up);
            \end{scope}
            
            \begin{scope}[shift = (g), rotate=120]
                \vertex (g) at (0,0) {$g$};
                \vertex (d) at (5,0) {$d$};
                \vertex (up) at (2.5, 1.5) {};
                \vertex (do) at (2.5, -1.5) {}; 
                \draw[->-, bend right = 15] (g) to (up);
                \draw[->-, bend right = 15] (g) to (do);
                \draw[->-, bend right = 15] (d) to (up);
                \draw[->-, bend right = 15] (d) to (do);
                \draw[->-, bend right = 15] (up) to (g);
                \draw[->-, bend right = 15] (do) to (g);
                \draw[->-, bend right = 15] (up) to (d);
                \draw[->-, bend right = 15] (do) to (d);
                \draw[->-, bend right = 15] (up) to (do);
                \draw[->-, bend right = 15] (do) to (up);
            \end{scope}

            \node () at (e |- 0, -8) {$G_2$};

            \draw[->-, bend right = 60, red] (a) to (b);
            \draw[->-, bend right = 60, red] (b) to (h);
            \draw[->-, bend right = 60, red] (h) to (i);
            \draw[->-, bend right = 60, red] (i) to (d);
            \draw[->-, bend right = 20, red] (d) to (a);
    \end{scope}


        \begin{scope}[xshift = 22cm, yshift = -20cm]
           \begin{scope}
                \vertex (c) at (0,0) {$c$};
                \vertex (e) at (5,0) {$e$};
                \vertex (up) at (2.5, 1.5) {};
                \vertex (do) at (2.5, -1.5) {}; 
                \draw[->-, bend right = 15] (c) to (up);
                \draw[->-, bend right = 15] (c) to (do);
                \draw[->-, bend right = 15,green] (e) to (up);
                \draw[->-, bend right = 15] (e) to (do);
                \draw[->-, bend right = 15,green] (up) to (c);
                \draw[->-, bend right = 15] (do) to (c);
                \draw[->-, bend right = 15] (up) to (e);
                \draw[->-, bend right = 15] (do) to (e);
                \draw[->-, bend right = 15] (up) to (do);
                \draw[->-, bend right = 15] (do) to (up);
            \end{scope}

            \begin{scope}[shift=(c), rotate = 120]
                \vertex (a) at (5,0) {$a$};
                \vertex (up) at (2.5, 1.5) {};
                \vertex (do) at (2.5, -1.5) {}; 
                \draw[->-, bend right = 15] (c) to (up);
                \draw[->-, bend right = 15] (c) to (do);
                \draw[->-, bend right = 15] (a) to (up);
                \draw[->-, bend right = 15] (a) to (do);
                \draw[->-, bend right = 15] (up) to (c);
                \draw[->-, bend right = 15] (do) to (c);
                \draw[->-, bend right = 15] (up) to (a);
                \draw[->-, bend right = 15] (do) to (a);
                \draw[->-, bend right = 15] (up) to (do);
                \draw[->-, bend right = 15] (do) to (up);
            \end{scope}

            \begin{scope}[shift=(c), rotate = -120]
                \vertex (b) at (5,0) {$b$};
                \vertex (up) at (2.5, 1.5) {};
                \vertex (do) at (2.5, -1.5) {}; 
                \draw[->-, bend right = 15,green] (c) to (up);
                \draw[->-, bend right = 15] (c) to (do);
                \draw[->-, bend right = 15] (b) to (up);
                \draw[->-, bend right = 15] (b) to (do);
                \draw[->-, bend right = 15] (up) to (c);
                \draw[->-, bend right = 15] (do) to (c);
                \draw[->-, bend right = 15,green] (up) to (b);
                \draw[->-, bend right = 15] (do) to (b);
                \draw[->-, bend right = 15] (up) to (do);
                \draw[->-, bend right = 15] (do) to (up);
            \end{scope}

            \begin{scope}[shift=(e), rotate = 60]
                \vertex (e) at (0,0) {$e$};
                \vertex (g) at (5,0) {$g$};
                \vertex (up) at (2.5, 1.5) {};
                \vertex (do) at (2.5, -1.5) {}; 
                \draw[->-, bend right = 15,blue] (e) to (up);
                \draw[->-, bend right = 15,orange] (e) to (do);
                \draw[->-, bend right = 15] (g) to (up);
                \draw[->-, bend right = 15] (g) to (do);
                \draw[->-, bend right = 15] (up) to (e);
                \draw[->-, bend right = 15] (do) to (e);
                \draw[->-, bend right = 15,blue] (up) to (g);
                \draw[->-, bend right = 15,orange] (do) to (g);
                \draw[->-, bend right = 15] (up) to (do);
                \draw[->-, bend right = 15] (do) to (up);
            \end{scope}

            \begin{scope}[shift=(e), rotate = -60]
                \vertex (e) at (0,0) {$e$};
                \vertex (h) at (5,0) {$h$};
                \vertex (up) at (2.5, 1.5) {};
                \vertex (do) at (2.5, -1.5) {}; 
                \draw[->-, bend right = 15,red] (e) to (up);
                \draw[->-, bend right = 15] (e) to (do);
                \draw[->-, bend right = 15] (h) to (up);
                \draw[->-, bend right = 15] (h) to (do);
                \draw[->-, bend right = 15] (up) to (e);
                \draw[->-, bend right = 15] (do) to (e);
                \draw[->-, bend right = 15,red] (up) to (h);
                \draw[->-, bend right = 15] (do) to (h);
                \draw[->-, bend right = 15] (up) to (do);
                \draw[->-, bend right = 15] (do) to (up);
            \end{scope}

            \begin{scope}[shift = (g)]
                \vertex (g) at (0,0) {$g$};
                \vertex (i) at (5,0) {$i$};
                \vertex (up) at (2.5, 1.5) {};
                \vertex (do) at (2.5, -1.5) {}; 
                \draw[->-, bend right = 15] (g) to (up);
                \draw[->-, bend right = 15] (g) to (do);
                \draw[->-, bend right = 15,green] (i) to (up);
                \draw[->-, bend right = 15] (i) to (do);
                \draw[->-, bend right = 15,green] (up) to (g);
                \draw[->-, bend right = 15] (do) to (g);
                \draw[->-, bend right = 15] (up) to (i);
                \draw[->-, bend right = 15] (do) to (i);
                \draw[->-, bend right = 15] (up) to (do);
                \draw[->-, bend right = 15] (do) to (up);
            \end{scope}
            
            \begin{scope}[shift = (g), rotate=120]
                \vertex (g) at (0,0) {$g$};
                \vertex (d) at (5,0) {$d$};
                \vertex (up) at (2.5, 1.5) {};
                \vertex (do) at (2.5, -1.5) {}; 
                \draw[->-, bend right = 15] (g) to (up);
                \draw[->-, bend right = 15] (g) to (do);
                \draw[->-, bend right = 15,red] (d) to (up);
                \draw[->-, bend right = 15] (d) to (do);
                \draw[->-, bend right = 15,red] (up) to (g);
                \draw[->-, bend right = 15] (do) to (g);
                \draw[->-, bend right = 15] (up) to (d);
                \draw[->-, bend right = 15] (do) to (d);
                \draw[->-, bend right = 15] (up) to (do);
                \draw[->-, bend right = 15] (do) to (up);
            \end{scope}

            \node () at (e |- 0, -8) {$G_3$};

            \draw[->-, bend right = 60] (a) to (b);
            \draw[->-, bend right = 20,green] (b) to (i);
            \draw[->-, bend left = 20] (i) to (h);
            \draw[->-, bend right = 40,red] (h) to (d);
            \draw[->-, bend right = 20] (d) to (a);
            \end{scope}
        \end{tikzpicture}

        \end{center}
           \caption{$T$ is a tree, and $G_1$ illustrates the general shape of a Haj\'os tree join built from $T$. Each block represents one of the $D_i$, where the removed digons corresponding to the  edges of $T$ are drawn with dotted green.
           The circular ordering  is $(a,b,h,i,d)$, it follows the natural ordering given by the embedding of $T$. $G_2$ is the same as $G_1$ where each of the $D_i$ is $\bid K_4$ minus a digon. We have that $\dic(G_2) = \lambda(G_2) + 1 = 4$. Finally, $G_3$ shows the importance of taking a circular ordering corresponding to an embedding of $T$ for the peripheral cycle. Indeed, for $G_3$, the ordering is $(a,b,i,h,d)$ which does not correspond to any embedding of $T$. Observe that $\dic(G_3) = 4 < \lambda(G_3) +1 = 5$. To see that $\lambda(G_3) = 4$, observe that there are four (coloured) arc-disjoint $eg$-dipaths.}
           \label{fig:HTcomplete}
        \end{figure}
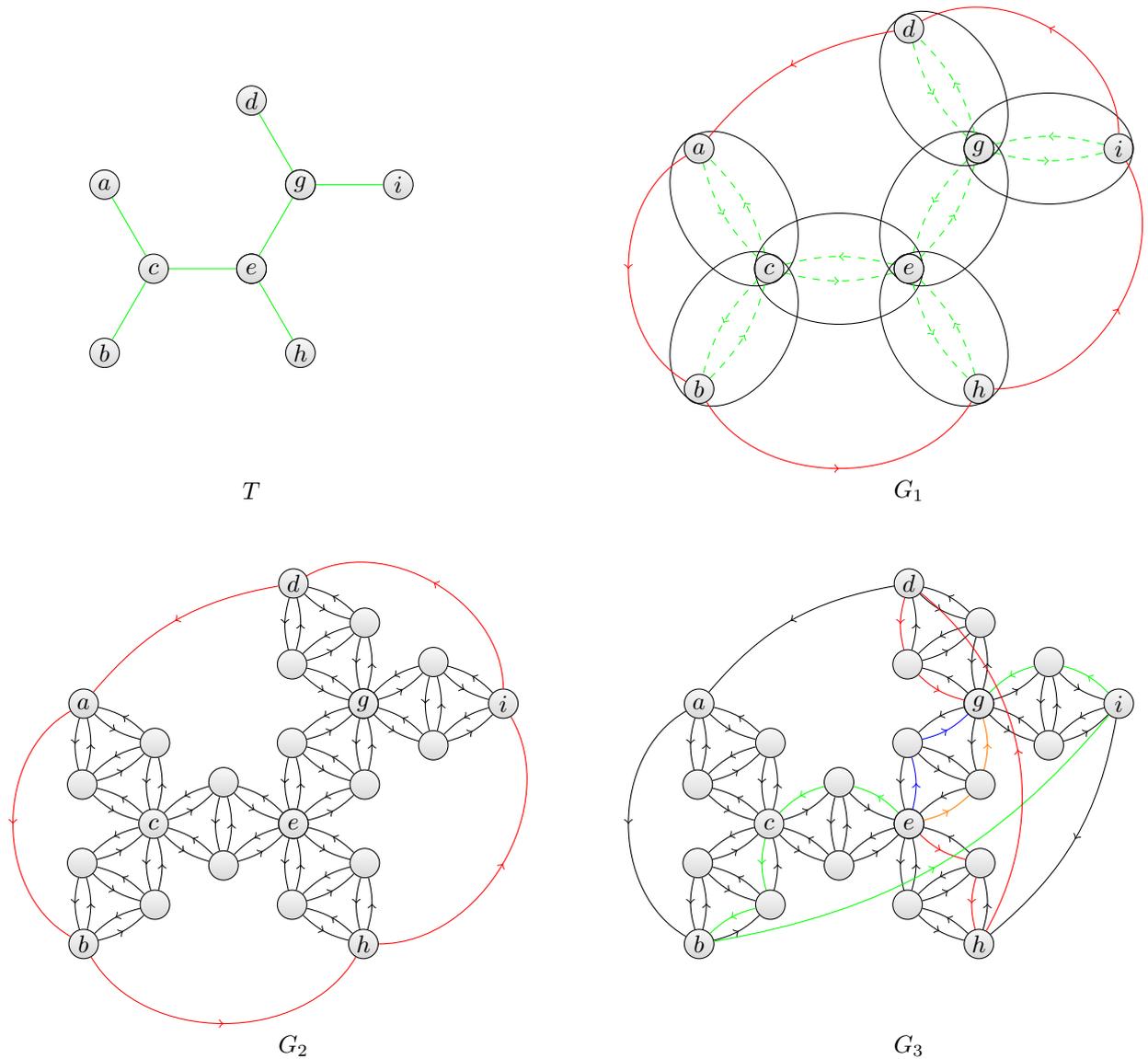

Note that, when $T$ is the path on three vertices, we recover a bidirected Haj\'os join. 
  
The basic idea of the Haj\'os tree join is the following: if each $D_i$ is $k$-dicritical, then any $(k-1)$-dicolouring of $D_i-[a_i,b_i]$ give the same colour to $a_i$ and $b_i$, which implies that in any $(k-1)$-dicolouring of $D-A(C)$, all the junction vertices receive the same colour, and thus $D$ is not $(k-1)$-dicolourable. 
Actually, we can prove that $D$ satisfies~\eqref{eq:extremal} if and only each of the $D_i$ does. Hence, Haj\'os tree join also provides a way to construct an infinite family  of digraphs satisfying \eqref{eq:extremal}.

\begin{definition}[The classes $\mathcal H_k$]   
Let \emph{$\vec{\mathcal H}_3$} be the smallest class of digraphs containing all bidirected odd wheels and closed under taking directed Haj\'os joins and Haj\'os tree joins, and for $k \geq 4$, let  \emph{$\hk$} be the smallest class of digraphs containing $\bid{K}_{k+1}$ and closed under taking directed Haj\'os joins and Haj\'os tree joins.
\end{definition}

The main result of this paper is:
\begin{theorem}\label{thm:main_HT}
    Let $k\geq 3$. Let $D$ be a digraph with $\lambda(D)=k$. Then $\dic(D) = \lambda(D) + 1=k+1$ if and only if a block of a strong component of $D$ is in $\hk$.  
\end{theorem}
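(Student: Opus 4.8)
The plan is to prove the two implications of the stated equivalence separately, after reducing to the core object. Recall from the discussion above that $\dic(D)=\lambda(D)+1=k+1$ holds if and only if some block $B$ of some strong component of $D$ satisfies $\dic(B)=\lambda(B)+1=k+1$, and that such a $B$ is itself strong with $2$-connected underlying graph; following the terminology introduced above, call such a $B$ \emph{$k$-extremal}. Thus it suffices to establish, for $k\ge 3$, that the $k$-extremal digraphs are \emph{exactly} the members of $\hk$; the theorem then follows by applying the two reductions in both directions.

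For the ``if'' direction I would show by induction on the number of operations used to build a member of $\hk$ that every member is $k$-extremal. The base cases are the bidirected odd wheels when $k=3$ and $\bid K_{k+1}$ when $k\ge 4$; for each one checks directly that it is strong, has $2$-connected underlying graph, has $\dic=k+1$ (the latter from the directed Brooks' Theorem, Theorem~\ref{thm:dir_brooks}), and has $\lambda=k$ (the relevant degree bound gives $\lambda\le k$, and then $\dic=\lambda+1$ forces $\lambda=k$). For the inductive step I would invoke the facts, asserted earlier, that the directed Haj\'os join and the Haj\'os tree join of digraphs satisfying~\eqref{eq:extremal} again satisfy~\eqref{eq:extremal}; since both operations visibly preserve strongness and $2$-connectedness of the underlying graph, and keep $\dic=k+1$, the value $\lambda=k$ is recovered from $\dic=\lambda+1$. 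Threading this back through the two reductions yields this direction.

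The substance is the ``only if'' direction: every $k$-extremal digraph $B$ lies in $\hk$, which I would prove by induction on $|V(B)|$. A first step is to show that extremality forces strong minimum-degree and connectivity conditions --- ideally that $B$ is $(k+1)$-dicritical, so that every vertex has in- and out-degree at least $k$ --- using the identity $\dic=\lambda+1$; this is what rules out ``redundant'' vertices or arcs that would otherwise inflate $\lambda$. If $B$ is a base case we are done. Otherwise I would extract a decomposition from a certificate for $\lambda(B)=k$: choose an ordered pair $(u,v)$ with $\lambda(u,v)=k$, and by Menger's theorem take a minimum $u\to v$ arc-cut $F$ with $|F|=k$ together with $k$ arc-disjoint $u\to v$ dipaths, each meeting $F$ exactly once. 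The interaction between $F$, these dipaths, and the degree/connectivity conditions should force $B$ to be either a directed or bidirected Haj\'os join (when the separation concentrates at a single junction vertex plus one distinguished arc or digon) or a Haj\'os tree join (when it spreads out as a directed cycle threading the leaves of a plane tree of junctions). In every case the pieces, reconstituted by adding back the appropriate digons $[u_i,v_i]$ as in Definition~\ref{def:HTJ}, are smaller $k$-extremal digraphs, hence lie in $\hk$ by induction, so $B\in\hk$.

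The main obstacle is precisely this decomposition step, and it is where the directed setting genuinely departs from the undirected Stiebitz--Toft argument (Theorem~\ref{thm:nono_version}). Two things must be controlled. First, one must prove that a non-base-case $k$-extremal digraph \emph{always} exhibits one of the join structures; the bidirected and directed Haj\'os joins alone do not suffice, because an extremal arc-cut of size $k$ can distribute across several branches at once, which is exactly the phenomenon the Haj\'os tree join is designed to capture. Second, one must verify that the resulting pieces are again $k$-extremal: the hard half is re-establishing $\dic=\lambda+1$ on each piece (typically by exhibiting $k$-dicolourings that are forced to agree on junction vertices, as sketched after Definition~\ref{def:HTJ}), and one must match the \emph{circular} order in which the $k$ dipaths cross the cut to a plane embedding of the underlying tree. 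That this ordering cannot be chosen arbitrarily is shown by the example $G_3$ of Figure~\ref{fig:HTcomplete}, where a non-planar ordering gives $\dic=4<\lambda+1=5$; handling it correctly is the crux of the argument.
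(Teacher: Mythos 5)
Your reduction to $k$-extremal digraphs and your ``if'' direction are essentially the paper's: the paper proves that directed Haj\'os joins and Haj\'os tree joins preserve $k$-extremality (Lemmata~\ref{lem:extremal_directed_Haj} and~\ref{lem:HT_iff_ext}), with $\lambda\le k$ coming from Lemma~\ref{lem:lambda_diminishing} exactly as you indicate. The problem is the ``only if'' direction, which is the entire content of the theorem and which your proposal does not actually argue: the sentence ``the interaction between $F$, these dipaths, and the degree/connectivity conditions should force $B$ to be either a directed or bidirected Haj\'os join \dots or a Haj\'os tree join'' restates the goal rather than proving it, and you yourself flag this step as the obstacle without resolving it. Moreover the route you sketch --- fix one pair $(u,v)$ with $\lambda(u,v)=k$ and analyse a single Menger cut together with $k$ arc-disjoint $uv$-dipaths --- is not the one that works in the paper, and it is far from clear it can be made to work: a single cut does not by itself reveal the tree of junction vertices, and the circular-order/planarity constraint you correctly identify (the $G_3$ example) has to be \emph{derived}, not just respected.

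Concretely, three ingredients are missing. First, the paper shows (Lemma~\ref{lem:prop_k-extremal}) that a $k$-extremal digraph is Eulerian, $(k+1)$-dicritical, and satisfies $\lambda(x,y)=k$ for \emph{every} ordered pair, so every minimum dicut has size $k$ in both directions; this symmetry underlies everything that follows. Second, the induction is not run on a Menger cut for one pair but by \emph{contracting} one side of a minimum dicut that does not isolate a vertex (Lemma~\ref{lem:contraction_extremal}) and applying the inductive hypothesis to the contracted digraph; the degenerate case in which every minimum dicut isolates a vertex needs its own arguments (Lemma~\ref{lem:all_cuts_isolate_vertices} for $k\ge 4$ and the super-special-partition argument of Lemma~\ref{lem:all_cuts_isolate_vertices_three} for $k=3$). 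Third --- and this is the structural point your proposal misses entirely --- the decomposition one obtains is not directly a Haj\'os tree join but a directed Haj\'os join or a \emph{Haj\'os bijoin} (Theorem~\ref{thm:decHJHBJ}), an intermediate operation that does \emph{not} admit a converse (the bijoin of two $k$-extremal digraphs need not be $k$-extremal, Figure~\ref{fig:bijoin_not_reciprocal}). Turning bijoins back into tree joins is why the paper introduces the auxiliary class $\htk$ of extended Haj\'os tree joins and proves Lemma~\ref{lem:P3_in_peripheral}, which locates the two arcs of a bijoin on the peripheral cycle. Without some substitute for this machinery, your induction cannot close.
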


 We also describe an algorithm that decides in polynomial time if a digraph $D$ belongs to $\vec {\mathcal H}_k$.\\ 

The rest of the paper is organized as follows. In Section~\ref{sec:def} we give all the needed definitions. In Section~\ref{sec:tools}, we state a few basic tools needed for our proofs. In Section~\ref{sec:kextr} we prove several important structural properties of $k$-extremal digraphs. In Section~\ref{sec:decthm} we prove a first step towards the main theorem by giving a decomposition theorem for $k$-extremal digraphs, and in Section~\ref{sec:mainthm}  we give the final proof of Theorem~\ref{thm:main_HT}. In Section~\ref{sec:algo} we give a polynomial time algorithm to recognize $k$-extremal digraphs. In section~\ref{sec:hypergraph} we discuss an analogue of our main result for hypergraphs, and in the last section, we discuss the case of $2$-extremal digraphs and propose a conjecture for their characterization.


\section{Definitions}\label{sec:def}


A \emph{directed graph}, or \emph{digraph}, is a pair $D = (V,A)$ of finite sets such that $A$ is a subset of $(V \times V) \setminus \{(v,v) \mid v \in V\}$. Thus our digraphs contain no loops nor parallel arcs. They may contain \emph{digons}, that is a pair of arcs in opposite directions between the same vertices. The digon $\{xy,yx\}$ is denoted by \emph{$[x,y]$}.
We  say that two vertices $x$ and $y$ are \emph{adjacent} if $xy$ or $yx$ is in $A(D)$. 
Given an undirected graph $G$, the \emph{symmetric digraph $\bid G$} is the digraph obtained by replacing each edge of $G$ with two arcs, one in each direction (a digon).

We sometimes write \emph{$x \ra y$} when $xy \in A(D)$. 
A \emph{trail} of a digraph $D$ is a sequence of vertices $x_1x_2\ldots x_p$ such that $x_ix_{i+1}\in A(D)$ for each $i<p$ and each arc is used once (but vertices can be used several times). It is \emph{closed} if $x_1 = x_p$. A  trail (resp. closed  trail) in which  vertices  are pairwise distinct is called a \emph{directed path} or \emph{dipath} for short (resp.  \emph{directed cycle} or \emph{dicycle} for short).  A $xy$-dipath is a dipath with first vertex $x$ and last vertex $y$.

Let $G$ be a connected undirected graph. A vertex $v \in V(G)$ is a \emph{cutvertex} if $G \setminus v$ is not connected. $G$ is \emph{$2$-connected} (or \emph{biconnected}) if it is connected and has no cutvertex. A \emph{block} of $G$ is a maximal biconnected subgraph of $G$. These definitions are extended to digraphs where they are applied to the underlying graphs, where the \emph{underlying graph} of a digraph $D$ is the undirected graph on the same vertex set as $D$, where edges connect vertices that are adjacent in $D$. 

A digraph is \emph{strong}, or \emph{strong}, if for every pair of vertices $x$ and $y$, there exists a $xy$-dipath and a $yx$-dipath. 

An \emph{eulerian digraph} is a digraph where for every vertex the outdegree is equal to its indegree.

The \emph{local arc-connectivity} $\lambda_D(x,y)$ of two distinct vertices $x$ and $y$ is the maximum number of pairwise arc-disjoint $xy$-dipaths. When it is clear from context, we omit the subscript $D$ and write $\lambda(x,y)$ instead of $\lambda_{D}(x, y)$.
The \emph{maximum local arc-connectivity} $\lambda(D)$ of a digraph $D$ is the maximum local arc-connectivity over all pairs of distinct vertices.

Let $X\subseteq V$. We denote by \emph{$\overline{X}$} the set $V-X$. 
We denote by  \emph{$\partial^+(X)$}  the set of arcs from $X$ to $\overline{X}$. 
A \emph{dicut} $(X, \overline{X})$ with  $X \neq \emptyset$ and $X \neq V$ is the set of arcs $\partial^+(X)$ and we say that $(X, \overline{X})$ has \emph{size} $k$ if $|\partial^+(X)|=k$. 
If $x \in X$ and $y \in \overline{X}$, we say that the dicut $(X, \overline{X})$  \emph{separates} $x$ from $y$, or that $(X, \overline{X})$ is an \emph{$xy$-dicut}.
We say that $(X, \overline{X})$  \emph{isolates} a vertex if $X$ or $\overline{X}$ is a singleton.

Let $D$ be a digraph. If $u, v \in V(D)$, we denote as $D + uv$ the digraph $(V(D), A(D) \cup \{ uv \})$. Similarly, $D - uv = (V(D), A(D) \setminus \{ uv \})$.

Let $k \geq 1$. We denote by \emph{[k]} the set $\{1, 2, \dots, k\}$. 
A \emph{$k$-dicolouring}, of a digraph $D$ is a function $\varphi: V(D) \rightarrow  [k]$ such that the digraph induced by $\varphi^-1(i)$ is acyclic for every $i \in [k]$. 
The \emph{dichromatic number $\dic(D)$} of $D$ is the minimum $k$ such that  $D$ is $k$-dicolourable. We will sometimes extend $\dic$ to subsets of vertices, using $\dic(X)$ to mean $\dic(D[X])$ where $X \subseteq V(D)$. 
It is easy to see that for any undirected graph $G$, we have $\chi(G) = \dic(\bid G)$. 

We say that a digraph $D$ is \emph{$k$-dicritical} if it has chromatic number $k$ for some integer $k$ but the removal of any arc yields a digraph of dichromatic number $k-1$. Similarly, a digraph $D$ is \emph{$k$-vertex-dicritical}  if it has dichromatic number $k$ for some integer $k$ but the removal of any vertex yields a digraph of dichromatic number $k-1$.

A digraph $D$ is \emph{$k$-extremal} if it is biconnected, strong and $\dic(D) = \lambda(D) + 1= k+1$.

\section{Tools}\label{sec:tools}

In \cite{M27}, Menger proved the following fundamental result connecting dicuts and arc-disjoint dipaths:

\begin{theorem}[Menger Theorem \cite{M27}]\label{thm:menger}
    Let $D$ be a directed digraph  and let $u, v \in V(D)$ be a pair of distinct vertices. Then $\lambda(u,v)=\partial^+(U)$ where $(U, \overline{U})$ is a minimum  dicut separating $u$ from $v$. 
\end{theorem}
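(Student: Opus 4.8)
The plan is to prove the two inequalities separately: that $\lambda(u,v)$ is at most the size of every $uv$-dicut (the easy direction), and that there exists a $uv$-dicut whose size equals $\lambda(u,v)$ (the hard direction). Together these show that a minimum $uv$-dicut has size exactly $\lambda(u,v)$.

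For the easy direction, I would fix an arbitrary $uv$-dicut $(U,\overline U)$ with $u \in U$ and $v \in \overline U$. Any $uv$-dipath starts in $U$ and ends in $\overline U$, so reading it from $u$ there is a first arc leaving $U$, and that arc belongs to $\partial^+(U)$. A family of pairwise arc-disjoint $uv$-dipaths uses each arc at most once, so it charges distinct arcs of $\partial^+(U)$; hence such a family has size at most $|\partial^+(U)|$. Taking the maximum over families and the minimum over $uv$-dicuts yields $\lambda(u,v)\le \min_{(U,\overline U)}|\partial^+(U)|$.

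For the hard direction, I would take a family $\mathcal P$ of arc-disjoint $uv$-dipaths of maximum size $p=\lambda(u,v)$ and exhibit a $uv$-dicut of size exactly $p$. Form the residual digraph $R$ from $D$ by reversing every arc used by a path of $\mathcal P$ and keeping the orientation of every arc unused by $\mathcal P$. The key claim is that $v$ is not reachable from $u$ in $R$: otherwise a $uv$-walk in $R$ could be combined with $\mathcal P$ by the standard symmetric-difference argument — cancelling each reversed (residual) arc against the path traversing it and rerouting the remaining pieces — to produce $p+1$ arc-disjoint $uv$-dipaths in $D$, contradicting maximality of $\mathcal P$. Let $U$ be the set of vertices reachable from $u$ in $R$, so $u\in U$, $v\in\overline U$, and $(U,\overline U)$ is a $uv$-dicut. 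I would then count $\partial^+(U)$: every arc $xy\in A(D)$ with $x\in U$ and $y\in\overline U$ must be used by $\mathcal P$ (else $xy\in A(R)$ forces $y\in U$), and no arc $yx\in A(D)$ with $y\in\overline U$ and $x\in U$ can be used by $\mathcal P$ (else its reversal $xy\in A(R)$ again forces $y\in U$). Reading $\mathcal P$ as a unit flow of value $p$, the net flow across the cut equals $|\partial^+(U)|-0=|\partial^+(U)|$ and also equals the flow value $p$, so $|\partial^+(U)|=p$.

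Combining the two directions, the dicut $(U,\overline U)$ has size $\lambda(u,v)$ while no dicut is smaller, so it is a minimum $uv$-dicut of size $\lambda(u,v)$, which is the statement. The same argument is precisely the integral max-flow/min-cut theorem applied with unit arc-capacities, with flow decomposition supplying the arc-disjoint dipaths; the combinatorial content is identical. I expect the main obstacle to be exactly the augmenting claim — that reachability of $v$ in $R$ yields $p+1$ arc-disjoint dipaths — since one must check that the symmetric difference of $\mathcal P$ with the augmenting walk decomposes cleanly into $p+1$ arc-disjoint $uv$-dipaths together with some arc-disjoint dicycles that are simply discarded, and this requires careful bookkeeping of arc multiplicities.
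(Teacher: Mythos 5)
The paper does not prove this statement: it is quoted as a classical result with a citation to Menger's 1927 paper, so there is no in-paper argument to compare against. Your proof is the standard residual-digraph (augmenting-path) proof of the arc version of Menger's theorem, and it is correct: the weak-duality direction, the choice of $U$ as the set of vertices reachable from $u$ in the residual digraph, and the counting showing $|\partial^+(U)|=p$ are all sound, with the only genuinely technical step being the augmentation claim, which you correctly identify and which is handled by the usual symmetric-difference bookkeeping.
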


\Lov~\cite{L73} proved the following result:
\begin{theorem}[\Lov~\cite{L73}]\label{thm:lovasz_lambda-reg}
Let $D$ be a digraph in which $\lambda(x,y) = \lambda(y,x)$ for any $x, y \in V(D)$. Then $D$ is Eulerian.
\end{theorem}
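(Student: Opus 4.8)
The plan is to prove the contrapositive: assuming $D$ is \emph{not} Eulerian, I will produce a pair $x,y$ with $\lambda(x,y)\neq\lambda(y,x)$. The whole argument is phrased in terms of dicuts via Menger's Theorem~\ref{thm:menger}, which tells us that $\lambda(x,y)$ is the minimum size of an $xy$-dicut. The first reduction is to recall that $D$ is Eulerian if and only if $|\partial^+(X)|=|\partial^-(X)|$ for every $X$, where $\partial^-(X):=\partial^+(\overline{X})$, and that in fact it suffices to check this for singletons. The convenient bookkeeping device is the excess function $\mathrm{ex}(X):=|\partial^+(X)|-|\partial^-(X)|=\sum_{v\in X}\big(d^+(v)-d^-(v)\big)$, which is \emph{modular} (additive over disjoint unions), while $X\mapsto|\partial^+(X)|$ is \emph{submodular}. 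So assuming $D$ is not Eulerian gives some $X$ with $\mathrm{ex}(X)\neq 0$; replacing $D$ by its reverse digraph if necessary (reversing all arcs swaps $\lambda(x,y)$ with $\lambda(y,x)$, hence preserves the hypothesis, and preserves being Eulerian), I may assume there is a \emph{positive} set, i.e.\ one with $\mathrm{ex}(X)\ge 1$.

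Next I would choose an extremal witness: let $A$ be a positive set minimizing $c:=|\partial^+(A)|$, and among those of minimum cardinality. The point of the construction is the following dichotomy for a pair $x\in A$, $y\in\overline{A}$. On one side, the dicut $\partial^+(\overline{A})=\partial^-(A)$ separates $y$ from $x$, so $\lambda(y,x)\le|\partial^-(A)|=c-\mathrm{ex}(A)\le c-1$. On the other side, if I can exhibit $x\in A$ and $y\in\overline{A}$ for which \emph{every} $xy$-dicut has size at least $c$, then $\lambda(x,y)\ge c>c-1\ge\lambda(y,x)$, contradicting the hypothesis and finishing the proof. Thus everything reduces to showing that the minimum, over $x\in A$ and $y\in\overline{A}$, of the smallest $xy$-dicut is at least $c$.

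The heart of the argument is therefore a submodular uncrossing step establishing that claim. Suppose instead there were a set $W$ with $W\cap A\neq\emptyset$, $\overline{W}\cap\overline{A}\neq\emptyset$, and $|\partial^+(W)|<c$, witnessing a cheap $xy$-dicut for some such pair. Since $c$ is the minimum boundary over positive sets, $W$ cannot be positive, so $\mathrm{ex}(W)\le 0$. I would then uncross $A$ with $W$, combining the submodular inequality $|\partial^+(A\cap W)|+|\partial^+(A\cup W)|\le|\partial^+(A)|+|\partial^+(W)|$ with the modular identity $\mathrm{ex}(A\cap W)+\mathrm{ex}(A\cup W)=\mathrm{ex}(A)+\mathrm{ex}(W)$. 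Because $A$ is positive and $W$ is not, one of the derived sets $A\cap W$, $A\cup W$ (or, in the nested configurations, a set obtained by intersecting with $\overline{W}$) turns out to be positive while having boundary at most $c$ and strictly smaller than $A$ in either boundary size or cardinality, contradicting the extremal choice of $A$.

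I expect this final uncrossing to be the main obstacle, and specifically the case analysis forced by the various ways $A$ and $W$ can sit relative to each other: the clean contradiction appears immediately when $A$ and $W$ genuinely cross, but the nested configurations (notably $A\subseteq W$) are delicate, and it is exactly here that one leans on the reverse-digraph symmetry and on the secondary minimization of $|A|$ to rule out the awkward cases. Once the claim is secured for a single pair $x\in A$, $y\in\overline{A}$, the comparison $\lambda(x,y)\ge c>\lambda(y,x)$ contradicts $\lambda(x,y)=\lambda(y,x)$, so $D$ must be Eulerian.
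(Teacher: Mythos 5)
The paper does not actually prove this statement --- it is imported verbatim from \Lov's paper \cite{L73} --- so your attempt can only be judged on its own terms. Your setup and the easy half are sound: with $A$ a set of positive excess minimizing $c=|\partial^+(A)|$, every $x\in A$ and $y\in\overline{A}$ satisfy $\lambda(y,x)\le|\partial^+(\overline{A})|=c-\mathrm{ex}(A)\le c-1$, and it would indeed suffice to exhibit \emph{one} pair $x\in A$, $y\in\overline{A}$ with $\lambda(x,y)\ge c$.

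The gap is exactly where you locate "the heart of the argument": you reduce the existential statement to the universal claim that every dicut $(W,\overline{W})$ with $W\cap A\ne\emptyset$ and $\overline{W}\cap\overline{A}\ne\emptyset$ has $|\partial^+(W)|\ge c$, and that claim is false, so no uncrossing can establish it. Take $V=\{a,b,c\}$ with arcs $ab$, $ac$, $bc$, $ca$. The excesses are $+1,0,-1$, the positive sets are $\{a\}$ and $\{a,b\}$, both with out-boundary $2$, so $c=2$ and your extremal choice forces $A=\{a\}$. But $W=\{a,c\}$ satisfies $|\partial^+(W)|=1<c$ and separates $a\in A$ from $b\in\overline{A}$; in fact $\lambda(a,b)=\lambda(b,a)=1$, so the pair $(a,b)$ yields no contradiction at all, and only the pair $(a,c)$ (where $\lambda(a,c)=2>1=\lambda(c,a)$) witnesses the failure of Eulerianness. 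This is precisely the nested configuration $A\subseteq W$ that you flag as "delicate"; it is not delicate but fatal, and neither reversal symmetry nor the secondary minimization of $|A|$ can repair a false statement. Note also that even your "clean" crossing case is not clean: modularity gives $\mathrm{ex}(A\cap W)+\mathrm{ex}(A\cup W)=\mathrm{ex}(A)+\mathrm{ex}(W)$, and since $\mathrm{ex}(W)$ may be very negative, both derived sets can fail to be positive, so neither need contradict the minimality of $c$. The theorem is of course true, but the correct argument requires choosing the target vertex $y$ far more carefully (in the example, $y$ must be taken where the negative excess sits); \Lov's proof in fact establishes the stronger local statement that $\lambda(x,y)\ge\lambda(y,x)$ for all $y$ already forces $d^+(x)\ge d^-(x)$, via a different extremal choice. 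As written, your proposal does not yield a proof.
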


The next lemma is crucial as it can be applied to describe the structure of minimum dicuts in $k$-extremal digraphs.

\begin{lemma}\label{lem:mono_vs_rainbow}
    Let $k \geq 1$. Let $D$ be a digraph such that $\dic(D) \geq k +1$. Let $(X_1,X_2)$ be a dicut of $D$ of size at most $k$ such that $D[X_1]$ and $D[X_2]$ are both $k$-dicolourable, and let $\phi_1$ (resp. $\phi_2$) a $k$-dicolouring of $D[X_1]$ (resp. $D[X_2]$). Then the following holds : 
    
    \begin{itemize}
\item either there exists a unique color $i$ such that vertices in $\phi_1^{-1}(i)$ have outneighbours in $X_2$. In this case for every $1\leq j \leq k$ there is exactly one arc from a vertex in $\phi_1^{-1}(i)$ to a vertex in $\phi_2^{-1}(j)$, and at least one arc from $\phi_2^{-1}(j)$ to $\phi_1^{-1}(i)$,

\item or symmetrically there exists a unique color $j$ such that vertices in  $\phi_2^{-1}(j)$ have in-neighbours in $X_1$. In this case for every $1\leq i \leq k$ there is exactly one arc from a vertex in $ \phi_1^{-1}(i)$ to a vertex in $\phi_2^{-1}(j)$, and at least one arc from $\phi_2^{-1}(j)$ to $\phi_1^{-1}(i)$. 

\end{itemize}
In particular $(X_1,X_2)$ has size exactly $k$. 

Consequently, if $(X_2, X_1)$ also has size at most $k$, then the cut has exactly $k$ arcs in both directions, and for any two $k$-dicolouring of the two sides the following holds : on one side all arcs through the cut are incident to the same colour class, and on the other side every colour class is incident to exactly one arc in each direction through the cut.
\end{lemma}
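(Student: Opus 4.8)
The plan is to exploit that $\dic(D)\ge k+1$ prevents us from gluing $\phi_1$ and $\phi_2$ into a dicolouring of $D$, and to run this obstruction over \emph{all} permutations of the colour set in order to read off the structure of the cut. Write $A_i=\phi_1^{-1}(i)$ and $B_j=\phi_2^{-1}(j)$ for the colour classes on the two sides, call an arc of the cut \emph{forward} if it lies in $\partial^+(X_1)$ (from $X_1$ to $X_2$) and \emph{backward} if it goes from $X_2$ to $X_1$, and let $M\subseteq[k]\times[k]$ be the set of pairs $(i,j)$ for which there is simultaneously a forward arc from $A_i$ to $B_j$ and a backward arc from $B_j$ to $A_i$. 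The whole argument will be a statement about the $0/1$ pattern $M$.

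First I would fix a permutation $\sigma$ of $[k]$ and consider the $k$-colouring of $D$ agreeing with $\phi_1$ on $X_1$ and with $\sigma\circ\phi_2$ on $X_2$. Since $\dic(D)\ge k+1$ this is not a dicolouring, so $D$ has a monochromatic dicycle, of some colour $c$. As $\phi_1$ is proper on $D[X_1]$ and $\sigma\circ\phi_2$ is proper on $D[X_2]$, this dicycle lies in neither side, hence it crosses the cut and therefore uses at least one forward and at least one backward arc; all its vertices have colour $c$, so these arcs join $A_c$ to $B_{\sigma^{-1}(c)}$ (the colour-$c$ vertices of $X_2$ after recolouring). Thus $(c,\sigma^{-1}(c))\in M$. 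Reindexing by $j=\sigma^{-1}(c)$, this says that for every permutation $\sigma$ there is a column $j$ with $(\sigma(j),j)\in M$; equivalently, no permutation lies entirely in the complement of $M$, i.e.\ the bipartite graph whose edges are the cells outside $M$ has no perfect matching.

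Next I would invoke K\"onig's theorem: that complement bipartite graph has a vertex cover consisting of a set $R$ of rows and a set $C$ of columns with $|R|+|C|\le k-1$. Setting $a=k-|R|$ and $b=k-|C|$, the $a\times b$ submatrix on the uncovered rows and columns lies entirely inside $M$, with $a,b\ge 1$ and $a+b\ge k+1$. Each of its $ab$ cells carries a distinct forward arc, so $ab\le|\partial^+(X_1)|\le k$. If both $a\ge 2$ and $b\ge 2$ then $ab\ge a+b\ge k+1$, a contradiction, so $\{a,b\}=\{1,k\}$. The case $a=1$ gives the first bullet: all forward arcs leave a single class $A_{i_0}$ (there are $\ge k$ of them, and at most $k$, hence exactly one into each $B_j$), while each $B_j$ sends a backward arc to $A_{i_0}$; the case $b=1$ gives the second bullet symmetrically. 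In either case there are exactly $k$ forward arcs, so $(X_1,X_2)$ has size exactly $k$.

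Finally, for the \emph{Consequently} part I would apply the lemma a second time to the dicut $(X_2,X_1)$ (the roles of the two sides swapped and all arcs reversed), which yields $|\partial^+(X_2)|=k$, so there are exactly $k$ arcs in each direction. Assuming the realized case is the one where every forward arc emanates from $A_{i_0}$: each $B_j$ sends at least one backward arc to $A_{i_0}$, that is at least $k$ backward arcs, and since there are exactly $k$ of them, each $B_j$ sends exactly one backward arc and all of them land in $A_{i_0}$. Hence on the $X_1$-side every cut arc in either direction is incident to the single class $A_{i_0}$, whereas on the $X_2$-side each colour class is incident to exactly one arc in each direction, as claimed (the other case is symmetric). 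I expect the only delicate point to be the step placing $(c,\sigma^{-1}(c))$ into $M$: one must argue that the forced monochromatic dicycle uses a forward \emph{and} a backward arc between the \emph{same} pair of colour classes; everything after that is the matching/covering reduction, whose punchline is that the sharp count $ab\le k$ against $a+b\ge k+1$ collapses the K\"onig cover to the extreme shape $\{1,k\}$.
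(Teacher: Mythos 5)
Your proof is correct and follows essentially the same route as the paper: you build the same bipartite pattern of ``digon-like'' pairs of colour classes, show its complement has no perfect matching because any permutation-glued colouring forces a monochromatic dicycle crossing the cut, and then collapse the structure to a single row or column by a counting argument. The only cosmetic differences are that you invoke K\"onig's theorem where the paper uses Hall's condition, and you derive the no-perfect-matching fact by quantifying over all permutations rather than by contradiction from one.
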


\begin{proof}
    Let $B$ be the bipartite graph with parts $U_1 = \{1,\dots,k\}$ and $U_2 = \{1,\dots,k\}$ where an edge $ij$ is present if and only if there exists in the cut $(X_1,X_2)$ one ac in each direction between $\phi_1^-1(i)$ and $\phi_2^-1(j)$. Note that by construction, there is an injection from $E(B)$ to the set of arcs from $X$ to $\overline X$, so that $|E(B)|\leq k$.
      
     Let $H$ be the complement of $B$, that is $V(H) = V(B)$ and $E(H) = \{ij \mid i \in U, j \in V, ij \notin E(B)\}$. Observe that if $i \in U$, $j \in V$ and $ij \in E(H)$, then in $D$ there cannot be a directed cycle which uses only color $i$ in $X_1$ and color $j$ in $X_2$ since this would imply the existence of an arc in each direction between $\phi_1^-1(i)$ and $\phi_2^-1(j)$.
     
    First suppose by contradiction that $H$ has a perfect matching $M$ and hence there exists a permutation $\sigma$ such that $i\sigma(i)\in E(H)$ for every $i\in [1,k]$. Let $\varphi:V(D) \rightarrow [k]$ be defined as follows: 
      \begin{equation}
   \varphi(x)=
    \begin{cases}
      \phi_1(x), & \text{if}\ x\in X_1 \\
      \sigma(\phi_2(x)), & \text{if}\ x\in X_2
    \end{cases}
  \end{equation}
By the previous observation, $\varphi$ is a  $k$-dicolouring of $D$, a contradiction.
    
    Thus, $H$ has no perfect matching. By Hall's marriage theorem, there is $Z \subseteq U$ such that $|N_{H}(Z)| < |Z|$. Thus, there are all possible edges between $Z$ and $V - N_H(Z)$ and by counting the number of these edges we get:
    \begin{align*}
        k \geq |E(B)| & \geq |Z| (k-|N_H(Z)|) \\
                            & \geq |Z|(k-(|Z|-1))
    \end{align*}
    Hence  $|Z|(k-(|Z|-1)) - k =(k - |Z|)(|Z| - 1) \leq 0$. 
    But as $1 \leq |Z| \leq k$,  we have that $|Z| = 1$ or $|Z| = k$. The first case corresponds to a vertex in $U_1$ with no neighbour in $U_2$ in $H$, so adjacent to all of $U_2$ in $B$. And the second case implies that one vertex in $U_2$ is not adjacent to any neighbour of $U_1$ in $H$, and therefore adjacent to every vertex of $U_1$ in $B$. Since $(X_1,X_2)$ has at most $k$ arcs, it is easy to see that in $D$ these two cases exactly correspond to the two outcomes in the Lemma.
    
    The last statement of the lemma easily follows by applying the argument in both directions.
 \end{proof}

From the previous Lemma we derive the following corollary.
\begin{corollary}\label{coro:small_cut}
Let $D$ be a digraph. If $D$ has a dicut $(X_1,X_2)$ of size at most $k-1$, and $\dic(D[X_i])\leq k$ for $i=1,2$, then $\dic(D)\leq k$. 
\end{corollary}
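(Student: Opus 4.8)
The plan is to argue by contradiction and invoke Lemma~\ref{lem:mono_vs_rainbow} directly. Suppose, for the sake of contradiction, that $\dic(D) \geq k+1$. Since $\dic(D[X_i]) \leq k$ for $i = 1, 2$, each side of the cut admits a $k$-dicolouring; fix one such colouring $\phi_i$ of $D[X_i]$. The dicut $(X_1, X_2)$ has size at most $k-1 \leq k$, so all the hypotheses of Lemma~\ref{lem:mono_vs_rainbow} are met with the same parameter $k$: the digraph has dichromatic number at least $k+1$, the cut has size at most $k$, and both sides are $k$-dicolourable.

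Applying the lemma then yields, among its conclusions, the ``in particular'' clause that $(X_1, X_2)$ must have size exactly $k$. This contradicts the assumption that the dicut has size at most $k-1$. Hence our supposition is false, and $\dic(D) \leq k$, as required.

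I expect no genuine obstacle here, since the statement is essentially an immediate consequence of the final clause of Lemma~\ref{lem:mono_vs_rainbow}: any dicut of size at most $k$ whose two sides are $k$-dicolourable must in fact have size exactly $k$ whenever $\dic(D) \geq k+1$. The only point requiring care is bookkeeping of the indices, namely that one applies the lemma with the corollary's parameter $k$, that the size bound $k-1$ comfortably satisfies the ``at most $k$'' hypothesis, and that the $k$-dicolourability of each side is exactly what $\dic(D[X_i]) \leq k$ provides.
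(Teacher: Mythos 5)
Your proof is correct and matches the paper's intended derivation: the corollary is stated as an immediate consequence of Lemma~\ref{lem:mono_vs_rainbow}, and the contradiction with the ``size exactly $k$'' clause is exactly the right way to extract it. No issues.
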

Note that by induction on the number of vertices (and since $\lambda(D')\leq \lambda(D)$ if $D'$ is a subdigraph of $D$), this corollary (applied with $k=\lambda(D)+1$) implies Neumann's Lara theorem that $\dic(D)\leq \lambda(D)+1$ for any digraph $D$.\\

When proving that some digraphs have small maximum local edge connectivity, we will often use the following Lemma:

\begin{lemma}\label{lem:lambda_diminishing}
    Let $D$ be a digraph, $u \neq v \in V(D)$ and $P$ a $uv$-dipath. Then $\lambda(D + uv - A(P)) \leq \lambda(D)$.
\end{lemma}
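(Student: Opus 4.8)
The plan is to prove the pointwise inequality $\lambda_{D'}(x,y) \leq \lambda_D(x,y)$ for every ordered pair of distinct vertices $x,y$, where $D' = D + uv - A(P)$; taking the maximum over all such pairs then gives $\lambda(D') \leq \lambda(D)$. So I would fix $x \neq y$, set $t = \lambda_{D'}(x,y)$, and take a family $Q_1, \dots, Q_t$ of pairwise arc-disjoint $xy$-dipaths in $D'$ (these exist by definition of $\lambda_{D'}$). The goal is to transform this family into $t$ pairwise arc-disjoint $xy$-dipaths in $D$, which yields $\lambda_D(x,y) \geq t$.

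The key observation is that the only arc of $D'$ that need not be an arc of $D$ is the added arc $uv$. Note moreover that $uv \in A(D')$ precisely when $uv \notin A(P)$, since otherwise removing $A(P)$ deletes it again; and when $uv \in A(D')$ one checks $A(D') \setminus \{uv\} \subseteq A(D)$. Since the $Q_i$ are arc-disjoint, at most one of them, say $Q_1$, can use the single arc $uv$. If none does, every arc used by the family already lies in $A(D)$, so $Q_1, \dots, Q_t$ are pairwise arc-disjoint $xy$-dipaths in $D$ and we are done.

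Otherwise $Q_1$ uses $uv$, so $Q_1 = x \cdots u \to v \cdots y$. I would form the walk $W$ obtained from $Q_1$ by replacing the arc $uv$ with the $uv$-dipath $P$. Every arc of $W$ lies in $A(D)$: the arcs of $Q_1$ other than $uv$ belong to $A(D')\setminus\{uv\} \subseteq A(D)$, and the arcs of $P$ belong to $A(D)$ by hypothesis. Crucially, $W$ is arc-disjoint from $Q_2, \dots, Q_t$: the arcs inherited from $Q_1$ are disjoint from the other $Q_i$ by arc-disjointness in $D'$, while the arcs of $P$ are exactly the arcs deleted in passing from $D$ to $D'$, hence are used by no dipath of $D'$. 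Finally, $W$ is an $xy$-walk and therefore contains an $xy$-dipath $\tilde Q_1$ on a subset of its arcs; this preserves arc-disjointness with $Q_2, \dots, Q_t$, and those remaining dipaths use no arc $uv$, so they lie entirely in $D$. Thus $\tilde Q_1, Q_2, \dots, Q_t$ are $t$ pairwise arc-disjoint $xy$-dipaths in $D$, as required.

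There is essentially no serious obstacle in this argument. The only points demanding care are the remark that at most one dipath can route through the single new arc $uv$, and the verification that substituting $P$ for $uv$ creates no conflict with the surviving dipaths, which holds exactly because the arcs of $P$ were removed when forming $D'$. The reduction of the resulting $xy$-walk to an $xy$-dipath is standard.
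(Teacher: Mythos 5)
Your proposal is correct and follows essentially the same route as the paper's proof: the paper argues by contradiction, takes a maximal family of arc-disjoint dipaths in $D+uv-A(P)$, observes that at most one can use the new arc $uv$, and reroutes it through $P$, whose arcs are unavailable to the other dipaths precisely because they were deleted. Your version is a more careful direct rendering of the same idea (including the walk-to-dipath extraction, which the paper leaves implicit), so nothing further is needed.
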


\begin{proof}
    Let  $H = D + uv - A(P)$. 
    Assume for contradiction that there exist $x,y \in V(D)$ that are linked by $\lambda(D)+1$ arc-disjoint $xy$-dipaths.  Since these dipaths cannot all exist in $D$, one of them contains the arc $uv$. Then, by replacing $uv$ by $P$, we obtain $\lambda(D)+1$ arc-disjoint $xy$-dipaths in $D$, a contradiction. 
\end{proof}


\section{First properties of $k$-extremal digraphs}\label{sec:kextr}

Recall that a digraph $D$ is \emph{$k$-extremal} if it is biconnected, strong and $\dic(D) = \lambda(D) + 1 = k+1$. 
The following lemma proves easy but fundamental properties of $k$-extremal digraphs that will be used constantly in the proofs. 

\begin{lemma}\label{lem:prop_k-extremal}
    Let $k \geq 1$, and let $D$ be a $k$-extremal digraph. Then $D$ is Eulerian, $(k+1)$-dicritical and $\lambda(x,y)=k$ for every pair of distinct vertices $x$ and $y$. 
    In particular, if $(X, \overline{X})$ is a minimum dicut, then so is $(\overline{X}, X)$. 
\end{lemma}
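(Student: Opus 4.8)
The plan is to prove the three assertions in the order: (I) every proper induced subdigraph of $D$ is $k$-dicolourable, i.e. $D$ is $(k+1)$-vertex-dicritical; (II) $\lambda(x,y)=k$ for every ordered pair; (III) $D$ is Eulerian; and then to deduce the dicut statement and $(k+1)$-arc-dicriticality. Throughout I would use that $\lambda(D)=k$ already gives $\lambda(x,y)\le k$ for all pairs, and that $\lambda(D')\le\lambda(D)=k$ for every subdigraph $D'$, so $\dic(D')\le k+1$ by Neumann-Lara (Corollary~\ref{coro:small_cut}); the content is always to rule out the value $k+1$.

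Step (I) is the heart, and I would prove it by induction on $|V(D)|$ (over all $k$-extremal digraphs at once). Suppose some proper $W\subsetneq V(D)$ has $\dic(D[W])=k+1$ and choose $W$ minimal. Then $D[W]$ is strong (otherwise a strong component with dichromatic number $k+1$ would be a smaller such set) and biconnected (otherwise a block with dichromatic number $k+1$ would be, using that a dicycle lives inside a single block); since $\lambda(D[W])\le k$ forces $\lambda(D[W])=k$, the digraph $D[W]$ is itself $k$-extremal and strictly smaller, so by induction $\lambda_{D[W]}(s,t)=k$ for all distinct $s,t\in W$. Now I would use $2$-connectivity and strongness of $D$ to produce two distinct vertices $s,t\in W$ joined by a dipath $P$ whose interior lies entirely in $\overline W$: since $D$ is strong there are arcs leaving and entering $W$, so contracting $W$ yields a strong digraph whose contracted vertex lies on a dicycle, i.e. a dipath leaving $W$ and returning to it through $\overline W$; and if all such excursions returned to their starting vertex, then $\overline W$ would attach to $W$ at a single vertex, a cutvertex of $D$, contradicting biconnectivity. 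The arcs of $P$ are all incident to $\overline W$, hence disjoint from the $k$ arc-disjoint internal $st$-dipaths witnessing $\lambda_{D[W]}(s,t)=k$; together they give $\lambda_D(s,t)\ge k+1>\lambda(D)$, a contradiction. Hence no such $W$ exists, which proves (I).

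Given (I), assertion (II) is short: if some pair had $\lambda(x,y)\le k-1$, then Menger (Theorem~\ref{thm:menger}) gives a dicut $(U,\overline U)$ of size at most $k-1$ with $x\in U$ and $y\in\overline U$; both $D[U]$ and $D[\overline U]$ are proper induced subdigraphs, hence $k$-dicolourable by (I), so Corollary~\ref{coro:small_cut} would give $\dic(D)\le k$, contradicting $\dic(D)=k+1$. Thus $\lambda(x,y)=k$ for every ordered pair. In particular $\lambda(x,y)=\lambda(y,x)=k$, so Lovász's theorem (Theorem~\ref{thm:lovasz_lambda-reg}) shows $D$ is Eulerian, which is (III). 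The ``in particular'' clause then follows: in an Eulerian digraph $d^+(v)=d^-(v)$ for every $v$, so summing over any $X$ gives $|\partial^+(X)|=|\partial^-(X)|=|\partial^+(\overline X)|$; thus $(X,\overline X)$ and $(\overline X,X)$ always have equal size, and one is a minimum dicut exactly when the other is.

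Finally, for $(k+1)$-arc-dicriticality I fix an arc $a=uv$. One always has $\dic(D)\le\dic(D-a)+1$ (recolour the head of $a$ with a fresh colour, killing every new monochromatic dicycle, which must pass through $a$), so $\dic(D-a)\ge k$; it remains to show $\dic(D-a)\le k$. By (II), $\lambda_D(u,v)=k$, and since the arc $uv$ crosses every $uv$-dicut, deleting it drops this to $\lambda_{D-a}(u,v)=k-1$; Menger then yields a $uv$-dicut $(U,\overline U)$ of $D-a$ of size $k-1$. As $a$ crosses this cut, $(D-a)[U]=D[U]$ and $(D-a)[\overline U]=D[\overline U]$ are proper induced subdigraphs of $D$, hence $k$-dicolourable by (I); Corollary~\ref{coro:small_cut} applied inside $D-a$ gives $\dic(D-a)\le k$, so $\dic(D-a)=k$. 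The main obstacle is step (I), and within it the one genuinely delicate point is extracting two distinct vertices of $W$ linked by a dipath through $\overline W$: this is exactly where biconnectivity is indispensable, since without it a digraph such as $\bid K_{k+1}$ sharing a single vertex with a short dicycle satisfies $\dic=\lambda+1$ while having pairs with $\lambda(x,y)<k$.
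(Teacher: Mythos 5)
Your proof is correct and follows essentially the same route as the paper's: a minimal subset $W$ with $\dic(D[W])=k+1$ is itself $k$-extremal, induction gives $\lambda_{D[W]}(s,t)=k$ for all pairs, and an excursion through $\overline W$ with distinct endpoints (whose existence is exactly what the paper's $R^+(x)=R^-(x)$ analysis extracts from strongness and biconnectivity) boosts $\lambda_D(s,t)$ above $k$; the remaining assertions then follow from Menger's theorem, Corollary~\ref{coro:small_cut} and \Lov's theorem, just as in the paper. The only point where you are looser than the paper is the claim that closed excursions force $\overline W$ to attach to $W$ at a single vertex --- in principle several vertices of $W$ could each carry their own private region of $\overline W$ --- but each such region is still separated from the rest by a single vertex, so the contradiction with biconnectivity stands.
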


\begin{proof}
    Let $D$ be a $k$-extremal digraph, and assume $D$ is a minimal counter-example. 

    We first prove that $D$ is $(k+1)$-vertex-dicritical. 
    We proceed by contradiction. Let $X \subsetneq V(D)$ be minimal such that $\dic(D[X]) =k+1$. By minimality of $X$, $D[X]$ is biconnected and strong. 
    Moreover, since  $k+1 = \dic(D[X]) \leq \lambda(D[X]) + 1 \leq \lambda(D)+1 \leq  k+1$, we have $\lambda(D[X]) = k$. 
    So $D[X]$ is $k$-extremal and thus, by minimality of $D$, $\lambda_{D[X]}(u,v)=k$ for every pair of distinct vertices $u$, $v$ in $X$.    
    
    Let $x \in X$ such that $x$ has an outneighbour in $\overline{X}$ (it exists because $D$ is strong). Let $R^+(x)$ (resp. $R^-(x)$) be the set of vertices $y \in V(D) \setminus X$ such that there is a $xy$-dipath (resp. a $yx$-dipath) with vertices in $\overline{X} \cup \{x\}$. 
    Let $y \in R^+(x)$. Since $D$ is strong, there exists a shortest dipath $P$ from $y$ to $X$. Let $x' \in X$ be the last vertex of $P$. If $x \neq x'$, then $\lambda_D(x,x') \geq \lambda_{D[X]}(x,x')+1 = k+1$, a contradiction. So $x=x'$ and thus $y \in R^-(x)$.  Hence, $R^+(x) \subseteq R^-(x)$ and similarly $R^-(x) \subseteq R^+(x)$. So $R^+(x) = R^-(x)$ and we set $R(x)= R^+(x)$. 
    Since $D$ is biconnected, there exists a shortest path $P=x_1\dots x_{\ell}$ in the underlying graph of $D \setminus x$  with $x_1 \in X$ and $x_{\ell} \in R(x)$.
    If $\ell \geq 3$, then $x_{\ell-1} \in V(D) \setminus (X \cup R(x))$. But then if $x_{\ell - 1}x_{\ell} \in A(D)$, then $x_{\ell - 1} \in R^-(x)$ and if $x_{\ell} x_{\ell - 1} \in A(D)$, then $x_{\ell - 1} \in R^+(x)$, and thus $x_{\ell-1} \in R(x)$ in both cases, a contradiction. So $\ell =2$. 
    But then, either $x_1x_2 \in A(D)$ and thus $\lambda_D(x_1,x) = k+1$ or $x_2x_1 \in A(D)$ and thus $\lambda_D(x,x_1)=k+1$, a contradiction in both cases. 
    This proves that $D$ is $(k+1)$-vertex-dicritical. 
    \medskip

    Let $x,y \in V(D)$ and assume for contradiction that $\lambda_D(x,y) \leq k-1$. Then, by Menger Theorem~\ref{thm:menger}, $D$ has a dicut $(X, \overline{X})$ of size at most $k-1$ with $x \in X$ and $y \in \overline{X}$.  
    Since $D$ is $k$-vertex-dicritical, we have that $\dic(X) \leq k$ and $\dic(\overline{X}) \leq k$ and thus, by Corollary~\ref{coro:small_cut}, $\dic(D) \leq k$, a contradiction. Hence $\lambda(x,y) = k$ for every pair of distinct vertices $x,\, y$.

    Let $xy \in A(D)$, and let $H= D - xy$. Since $\lambda_D(x,y) = k$, $\lambda_H(x,y) =k-1$ and, as above, $\dic(H) \leq k$. So $D$ is $k+1$-dicritical. 
    
    Finally, by Theorem~\ref{thm:lovasz_lambda-reg}, $D$ is Eulerian. 
\end{proof}

As a direct consequence, we get the characterization of $1$-extremal digraphs.

\begin{theorem}\label{thm:1extremal}
    A digraph is $1$-extremal if and only if it is a directed cycle.
\end{theorem}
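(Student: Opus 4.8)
The plan is to prove both implications, the forward one being a short verification and the converse the substantive part. For the forward direction, if $D$ is a directed cycle then it is strong, its underlying graph is a cycle and hence biconnected, we have $\dic(D)=2$ (colour one vertex $2$ and all others $1$, so that each colour class induces an acyclic subdigraph), and between any ordered pair $(x,y)$ there is a unique dipath, whence $\lambda(x,y)=1$ and $\lambda(D)=1$. Thus $D$ is $1$-extremal.

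For the converse, let $D$ be $1$-extremal. Applying Lemma~\ref{lem:prop_k-extremal} with $k=1$ gives that $D$ is Eulerian and that $\lambda(x,y)=1$ for every pair of distinct vertices; recall in addition that $D$ is strong and biconnected. Since $D$ is strong and Eulerian, $d^+(v)=d^-(v)\geq 1$ at every vertex $v$. The heart of the argument is to show that in fact $d^+(v)=1$ for every $v$: once this holds, every vertex has in- and out-degree exactly $1$, so $D$ decomposes into vertex-disjoint directed cycles, and strong connectivity forces it to be a single directed cycle, as desired.

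So suppose for contradiction that some vertex $v$ satisfies $d:=d^+(v)=d^-(v)\geq 2$. Since $D$ is connected and Eulerian it admits an Eulerian circuit, and reading it cyclically from $v$ exhibits $v$ exactly $d$ times; cutting at these passages partitions $A(D)$ into $d$ closed walks $Q_1,\dots,Q_d$, each from $v$ to $v$ with $v$ occurring only at its endpoints. I claim two of these walks share a vertex other than $v$. Indeed, if the sets $V(Q_i)\setminus\{v\}$ were pairwise disjoint then, since every arc not incident with $v$ lies inside a single $Q_i$, the digraph $D-v$ would split into the $\geq 2$ nonempty pieces $V(Q_i)\setminus\{v\}$ with no arc between distinct pieces, making $v$ a cutvertex and contradicting biconnectivity. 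Hence there is $u\neq v$ with $u\in V(Q_1)\cap V(Q_2)$, say. Taking the initial segment of $Q_1$ up to its first visit of $u$, and likewise for $Q_2$, yields two $vu$-walks; as $Q_1$ and $Q_2$ are arc-disjoint these walks are arc-disjoint, and each contains a $vu$-dipath on a subset of its arcs. This gives two arc-disjoint $vu$-dipaths, so $\lambda(v,u)\geq 2$, contradicting $\lambda(v,u)=1$. Therefore every out-degree equals $1$ and the proof is complete.

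The main obstacle is the converse, and within it the crucial point is to use biconnectivity in exactly the right place. Strong connectivity together with $\lambda\equiv 1$ does \emph{not} force a directed cycle: two dicycles glued at a single common vertex form a strong Eulerian digraph in which every pair has local arc-connectivity $1$ yet which is not a cycle (the shared vertex is a cutvertex). Thus the cutvertex analysis of the Eulerian-circuit decomposition above is precisely the step at which biconnectivity must be invoked, and the only mild technical care needed is the passage from two arc-disjoint $vu$-walks to two arc-disjoint $vu$-dipaths, which is routine since any walk contains a dipath using only its own arcs.
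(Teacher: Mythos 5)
Your proof is correct, but it takes a genuinely different route from the paper's. The paper's converse is a one-liner: by Lemma~\ref{lem:prop_k-extremal} a $1$-extremal digraph is $2$-dicritical, and a $2$-dicritical digraph is immediately a directed cycle (it contains a dicycle $C$, and any arc outside $C$ could be removed without destroying $C$, contradicting dicriticality). You instead draw on the \emph{other} conclusions of the same lemma --- that $D$ is Eulerian with $\lambda(x,y)=1$ for all pairs --- and combine them with biconnectivity via an Euler-tour decomposition at a hypothetical vertex of out-degree at least $2$. Both arguments are valid and both ultimately rest on Lemma~\ref{lem:prop_k-extremal}; yours is longer but has the virtue of pinpointing exactly where biconnectivity is needed (your two-dicycles-glued-at-a-vertex example is a good sanity check that strong connectivity and $\lambda\equiv 1$ alone do not suffice), while the paper's exploits the dicriticality conclusion to finish in one sentence. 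The only detail worth double-checking in your version --- that each closed walk $Q_i$ has a nonempty interior so that the pieces of $D-v$ are genuinely nonempty --- holds because digraphs here have no loops, so every $Q_i$ has length at least $2$.
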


\begin{proof}
    It is clear that all directed cycles are $1$-extremal. Conversely, if $D$ is a  $1$-extremal digraph, then it is $2$-dicritical by Lemma~\ref{lem:prop_k-extremal} and is thus a directed cycle. 
\end{proof}

Since a $k$-extremal digraph $D$ is $k+1$-dicritical, for any arc $uv$ there exists a $k$-dicolouring of $D-uv$, and if we put back the arc, then some monochromatic cycles must go through $uv$, and thus the $k$-dicolouring of $D-uv$ has some  monochromatic $vu$-dipath. In the case of a digon, we can say more.
\begin{lemma}\label{lem:extremdigon}
    Let $k \geq 1$. If $D$ is $k$-extremal and $[u,v] \subseteq A(D)$, then for every $k$-dicolouring of $D-[u,v]$, there is no monochromatic $uv$-dipath nor monochromatic $vu$-dipath.
\end{lemma}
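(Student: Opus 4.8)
The plan is to argue by contradiction and to reduce everything to a single application of Lemma~\ref{lem:mono_vs_rainbow} to a carefully chosen minimum dicut. Fix a $k$-dicolouring $\phi$ of $D-[u,v]$. Since the hypothesis is symmetric in $u$ and $v$, it suffices to rule out a monochromatic $uv$-dipath (applying the result to the same digon with the names of $u$ and $v$ exchanged then rules out a monochromatic $vu$-dipath). A monochromatic $uv$-dipath forces $\phi(u)=\phi(v)$, so I may assume $\phi(u)=\phi(v)=c$, there being nothing to prove otherwise. First I record a cheap observation: $\phi$ cannot simultaneously admit a monochromatic $uv$-dipath and a monochromatic $vu$-dipath, for concatenating them would yield a closed monochromatic walk, hence a monochromatic dicycle, entirely inside $D-[u,v]$, contradicting that $\phi$ is a dicolouring. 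So I assume, for contradiction, that $\phi$ has a monochromatic $uv$-dipath $P$ of colour $c$ and no monochromatic $vu$-dipath.

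Next I would build the dicut. As there is no monochromatic $vu$-dipath, adding the arc $uv$ back to $D-[u,v]$ creates no monochromatic dicycle through it, so $\phi$ is a valid $k$-dicolouring of $D-vu$. By Lemma~\ref{lem:prop_k-extremal} we have $\lambda_D(v,u)=k$, and deleting an arc from $v$ to $u$ decreases $\lambda(v,u)$ by exactly one (otherwise packing the direct arc $vu$ with a maximum packing in $D-vu$ would give $\lambda_D(v,u)\geq k+1$); thus $\lambda_{D-vu}(v,u)=k-1$. By Menger's Theorem~\ref{thm:menger} there is a $vu$-dicut $(Y,\overline{Y})$ of $D-vu$ of size $k-1$ with $v\in Y$ and $u\in\overline{Y}$. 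Restoring the arc $vu$, which runs from $Y$ to $\overline{Y}$, turns this into a dicut $(Y,\overline{Y})$ of $D$ of size exactly $k$, hence a minimum $vu$-dicut; since $D$ is Eulerian, the reverse cut $(\overline{Y},Y)$ also has size $k$ (the last assertion of Lemma~\ref{lem:prop_k-extremal}), and it contains the arc $uv$.

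Now I would invoke Lemma~\ref{lem:mono_vs_rainbow}. As $D$ is $(k+1)$-vertex-dicritical (Lemma~\ref{lem:prop_k-extremal}), the proper induced subdigraphs $D[Y]$ and $D[\overline{Y}]$ are $k$-dicolourable; in fact $D[Y],D[\overline{Y}]\subseteq D-[u,v]$ (neither contains a digon arc, since $u\notin Y$ and $v\notin\overline{Y}$), so $\phi|_Y$ and $\phi|_{\overline{Y}}$ are valid $k$-dicolourings of the two sides. Applying Lemma~\ref{lem:mono_vs_rainbow} to $D$ with the dicut $(Y,\overline{Y})$, whose both directions have size $k$, and with the colourings $\phi|_Y,\phi|_{\overline{Y}}$, I get that on one of the two sides every colour class is incident to exactly one cut arc in each direction. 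The contradiction then comes from $P$: both the arc $uv$ and the last $\overline{Y}\to Y$ crossing arc $a'b'$ of $P$ run from $\overline{Y}$ to $Y$, all four endpoints $u,v,a',b'$ have colour $c$ (as $P$ is monochromatic of colour $c$), and $a'b'\neq uv$ because $a'b'$ lies in $D-[u,v]$. Hence, whichever side is the ``rainbow'' one, its colour class $c$ is incident to two distinct cut arcs in the $\overline{Y}\to Y$ direction, contradicting the conclusion of Lemma~\ref{lem:mono_vs_rainbow}.

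The main obstacle is the second step: recognising that the absence of a monochromatic $vu$-dipath is exactly what lets $\phi$ survive the re-addition of $uv$, so that the asymmetry $\lambda_{D-vu}(v,u)=k-1$ versus $\lambda_{D-vu}(u,v)=k$ can be converted, through Menger, into a genuinely small $(k-1)$-dicut — which is what makes Lemma~\ref{lem:mono_vs_rainbow} applicable to a \emph{balanced} size-$k$ dicut of $D$. Once that dicut is in place the remainder is bookkeeping, the only care needed being to track the directions of the cut arcs and to verify that the digon arc and the crossing arc of $P$ both meet the colour class $c$ on the rainbow side, whichever of $Y$ or $\overline{Y}$ that turns out to be.
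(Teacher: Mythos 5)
Your proof is correct and follows essentially the same route as the paper: both arguments take a minimum dicut of $D$ separating $u$ from $v$ (of size $k$ in each direction by Lemma~\ref{lem:prop_k-extremal}), apply Lemma~\ref{lem:mono_vs_rainbow} to the restrictions of the given colouring to the two sides, and observe that a monochromatic dipath between $u$ and $v$ would produce a second monochromatic crossing arc in the same pair of colour classes as the digon, which the lemma forbids. Your detour through $D-vu$ to manufacture the dicut and your preliminary observation that the two dipaths cannot coexist are harmless but unnecessary, since Menger applied directly to $D$ already yields the size-$k$ dicut you need.
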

Thus in any $k$-extremal digraph $D$ with a digon $[u,v]$, there exists an assignment of $k$ colours to the vertices such that the only monochromatic directed cycle is the digon $[u,v]$.

\begin{proof}
Consider a digon $[u,v]$ in a $k$-extremal digraph $D$. 
By Lemma~\ref{lem:prop_k-extremal}, $D$ is $k+1$-dicritical. Eulerian, and $\lambda(u,v) = k$. Let $\varphi$ be a $k$-dicoulouring of $D-[u,v]$ and let $(X_1,X_2)$ be a $uv$-dicut of size $k$. We now apply Lemma~\ref{lem:mono_vs_rainbow} to $D$ and $\phi_1=\phi|_{X_1}$ and $\phi_2=\phi|_{X_2}$. Note that since $D$ is Eulerian, we are in the situation where $X_1$ and $X_2$ play the same role, so without loss of generality we may assume that $X_1$ is the side of the cut such where only one colour class contains vertices incident to arcs in the cut (the first item in the statement of the lemma). Moreover, there exists exactly one arc in each direction between this colour class and every colour class of $\phi_2$. Since the digon $[u,v]$ is across the cut, and since $u$ and $v$ must get the same colour in $\phi$ (for $D$ is not $k$-dicolourable), the arcs $uv$ and $vu$ are the only monochromatic arcs across the cut, so that in $\phi$ there exists no monochromatic $uv$-dipath nor monochromatic $vu$-dipath except for the arcs of the digon.

\end{proof}

Along the proof, we will sometimes need to contract one side of a minimum dicut of a $k$-extremal digraph and apply induction on the obtained digraph. For this to work properly, we need to ensure that the obtained digraph is also $k$-extremal. This done in Lemma~\ref{lem:contraction_extremal}. We also need to ensure that the dicut does not isolate any vertex so that the obtained digraph is strictly smaller than the original digraph. To prove that we can always find such a dicut, we use a method derived from \cite{R14} (see also Section 3 of \cite{AA22} for its use in proving Brooks' theorem for digraphs) in the case where $k \geq 4$, see Lemma~\ref{lem:all_cuts_isolate_vertices}, and a method derived from  \cite{L66} (see also Section 5 of \cite{AA22} for its use in proving Brooks' theorem for digraphs) in the case where $k=3$, see Lemma~\ref{lem:all_cuts_isolate_vertices_three}.

\begin{lemma}\label{lem:contraction_extremal}
    Let $k \geq 1$. Let $D$ be a $k$-extremal digraph and let $(A,\overline{A})$ be a minimum dicut of 
    $D$. Then $D/A$ or $D/\overline{A}$ is $k$-extremal.
\end{lemma}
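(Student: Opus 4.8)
The plan is to show that contracting the side of the cut identified as ``monochromatic'' by Lemma~\ref{lem:mono_vs_rainbow} produces a $k$-extremal digraph; the disjunction ``$D/A$ or $D/\overline{A}$'' in the statement will correspond exactly to the two symmetric outcomes of that lemma. First I would record the consequences of Lemma~\ref{lem:prop_k-extremal}: $D$ is Eulerian, $(k+1)$-dicritical, and $\lambda(x,y)=k$ for every pair $x\neq y$. Hence every minimum dicut has size exactly $k$, and both $(A,\overline{A})$ and $(\overline{A},A)$ have size $k$. The observation I will use repeatedly is that for every proper nonempty $X\subseteq V(D)$ the dicut $(X,\overline{X})$ separates some pair, so $|\partial^+(X)|\ge k$. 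Since $D$ is $(k+1)$-dicritical and strong, $D[A]$ and $D[\overline{A}]$ are $k$-dicolourable; fix $k$-dicolourings $\phi_1,\phi_2$. Applying Lemma~\ref{lem:mono_vs_rainbow} (both directions of the cut have size $k$), one side is monochromatic: all cut arcs incident to it touch a single colour class. Without loss of generality it is $A$, with class $\phi_1^{-1}(i)$, so that every arc of $\partial^+(A)$ leaves $\phi_1^{-1}(i)$ and every arc of $\partial^+(\overline{A})$ enters $\phi_1^{-1}(i)$. I will prove that $D/A$ is $k$-extremal.

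Contraction preserves strong connectivity, so $D/A$ is strong; let $a$ denote the contracted vertex. To bound $\lambda(D/A)$, note that for $q\in\overline{A}$ the dicuts $(\{a\},\overline{A})$ and $(\overline{A},\{a\})$ have sizes at most $|\partial^+(A)|=k$ and $|\partial^+(\overline{A})|=k$, so $\lambda_{D/A}(a,q),\lambda_{D/A}(q,a)\le k$. For $p,q\in\overline{A}$, take a minimum $pq$-dicut $(Y,\overline{Y})$ of $D$ of size $k$; if $Y\cap A=\emptyset$ then $A\subseteq\overline{Y}$, and otherwise submodularity of $\partial^+$ gives $|\partial^+(Y\cup A)|\le |\partial^+(Y)|+|\partial^+(A)|-|\partial^+(Y\cap A)|\le k+k-k=k$, using $|\partial^+(Y\cap A)|\ge k$. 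In either case there is a $pq$-dicut of size $\le k$ that does not split $A$, which descends to a $pq$-dicut of $D/A$ of size $\le k$; hence $\lambda_{D/A}(p,q)\le k$. Thus $\lambda(D/A)\le k$.

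For biconnectivity I would check that $D/A$ has no cutvertex. Removing any $v\in\overline{A}$ leaves a digraph whose underlying graph is a contraction of the connected graph underlying $D-v$, so $v$ is not a cutvertex. For $a$, it suffices that $D[\overline{A}]$ be connected: if not, write $\overline{A}=B_1\sqcup B_2$ with no arc between $B_1$ and $B_2$; then every arc leaving $B_i$ enters $A$, so $|\partial^+(B_1)|+|\partial^+(B_2)|=|\partial^+(\overline{A})|=k$, while each summand is $\ge k$, which is impossible. Hence $D[\overline{A}]$ is connected, $a$ is not a cutvertex, and $D/A$ is biconnected.

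It remains to prove $\dic(D/A)=k+1$, which I expect to be the main obstacle. The upper bound $\dic(D/A)\le\lambda(D/A)+1\le k+1$ follows from Corollary~\ref{coro:small_cut}. For the lower bound, suppose $\psi$ is a $k$-dicolouring of $D/A$ and set $c=\psi(a)$. Let $\tau$ be any permutation of $[k]$ with $\tau(i)=c$, and define $\varphi$ on $V(D)$ by $\varphi=\tau\circ\phi_1$ on $A$ and $\varphi=\psi$ on $\overline{A}$; I claim $\varphi$ is a $k$-dicolouring of $D$, contradicting $\dic(D)=k+1$. No monochromatic dicycle lies entirely in $A$ (since $\tau\circ\phi_1$ is proper there) or entirely in $\overline{A}$ (since $\psi$ is proper there). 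A monochromatic dicycle using cut arcs must, since every arc leaving $A$ leaves $\phi_1^{-1}(i)$, have colour $\tau(i)=c$; contracting its maximal subpaths inside $A$ to the vertex $a$ then yields a nontrivial closed walk in $D/A$ through $a$ all of whose vertices have $\psi$-colour $c$, hence a monochromatic dicycle of $D/A$, contradicting that $\psi$ is proper. This proves $\dic(D/A)=k+1$ and $\lambda(D/A)=k$, so $D/A$ is $k$-extremal. The delicate point throughout is that Lemma~\ref{lem:mono_vs_rainbow} must be invoked to contract \emph{precisely} the monochromatic side, so that the recolouring $\tau\circ\phi_1$ can absorb the colour of $a$ and every cut-crossing monochromatic dicycle projects onto a monochromatic dicycle through $a$.
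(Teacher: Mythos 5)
Your proof is correct and takes essentially the same route as the paper's: both invoke Lemma~\ref{lem:mono_vs_rainbow} to decide which side of the cut to contract, use submodularity of $|\partial^+|$ to bound $\lambda(D/A)$, rule out a cutvertex by counting cut arcs against the minimum dicut size, and obtain $\dic(D/A)\geq k+1$ by gluing a hypothetical $k$-dicolouring of $D/A$ to the colouring of the monochromatic side. The only difference is presentational: you spell out the closed-walk argument that the paper dismisses as "easy to see".
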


\begin{proof}
    Set $H=D/A$ and let $a$ be the vertex into which $A$ is contracted in $H$.
    Since $D$ is strong, so is $H$.
    
    Let us first prove that $\lambda(H) \leq k$. 
     By Lemma~\ref{lem:prop_k-extremal}, $D$ is Eulerian, and thus $d^+(a)=k$ and $d^-(a) =  k$.  
    Let $u,v \in H$, and let us prove that $\lambda(u,v) \leq k$. Since $d^+(a), d^-(a) \leq  k$, the result holds if $a \in \{u,v\}$. 
    Let $(B, \overline{B})$ be a minimum $uv$-dicut in $D$.

    $|\partial_D|$ is submodular, i.e. it satisfies that $\forall S, T \subseteq V(D), \partial_D(S) + \partial_D(T) \geq \partial_D(S \cup T) + \partial_D(S \cap T)$. 
    
    By Lemma~\ref{lem:prop_k-extremal}, the local arc-connectivity of any pair of vertices of $D$ is $k$, so, given $X \subset V(D)$  distinct from $\emptyset$ and $V(D)$, we have $\partial_D(X) \geq 2k$ and equality holds if and only if $|\partial_D^+(X)| = |\partial_D^+(\overline{X})| = k$. 
    
    By submodularity of $|\partial_D|$, $4k = |\partial_D(A)| + |\partial_D(B)| \geq |\partial_D(A \cap B)| + |\partial_D(A \cup B)|$. 
    Moreover, since $v \in \overline{B}$, $A \cap B \neq \emptyset$ and $A\cup B \neq V(D)$, and it is clear that $A \cap B \neq V(D)$ and $A \cup B \neq \emptyset$. 
    Thus $|\partial_D(A \cup B)| = 2k$, which implies that $|\partial_{H}^+(B \setminus A \cup \{a\})| \leq k$, i.e. $(B \setminus A \cup \{a\}, \overline B \setminus A)$ is a $uv$-dicut of $H$ of size at most $k$. 

    Let us now show that $H$ is biconnected. 
    For every $x \in \overline A$, $H \setminus x = (D \setminus x)/A$ is connected because $D \setminus x$ is connected. So it suffices to show that $H \setminus a$ is connected. 
    Let $X,Y$ be two connected components of $H \setminus a$. As $H$ is strong, there must be at least one arc from $a$ to $X$. But as $(A, \overline{A})$ is a minimum dicut of $D$, $a$ has outdegree $k$ in $H$, and thus there are at most $k-1$ arcs from $a$ to $Y$. Thus $(\overline{Y},Y)$ is a dicut of $D$ of size at most $k-1$, a contradiction.  So $H$ is biconnected. 
     
    As $D$ is Eulerian, $(\overline{A},A)$ is also a minimum dicut of $D$. 
    Thus $D/\overline{A}$ is both strong and biconnected, and $\lambda(D/\overline{A}) = k$.

    We now show that either $\dic(H) \geq  k + 1$ or $\dic(D/\overline{A}) \geq  k+1$.
    Let $\varphi_{A}$ be a $k$-dicolouring of $D[A]$ and $\varphi_{\overline{A}}$  a $k$-dicolouring of $D[\overline{A}]$.
    By Lemma~\ref{lem:mono_vs_rainbow}, we may assume without loss of generality that every vertex $N(\overline{A}) \cap A$ are coloured $1$. , and that every colour is used by vertices in $N(A) \cap \overline{A}$.


    Let us prove that  $\dic(H) \geq k+1$. 
    Suppose for contradiction that $H$ admits a proper $k$-dicolouring $\varphi_H$, chosen, up to permuting colours, so that $\varphi_H(a) = 1$.
    Consider $\varphi : V(D) \to [1,k]$ such that $\varphi(x) = \varphi_A(x)$ if $x \in A$ and $\varphi(x) = \varphi_{H}(x)$ if $x \in \overline{A}$. 
    Since $N(\overline{A}) \cap A$ are coloured $1$ with respect to $\varphi_A$, it is easy to see that $\varphi$ is a $k$-dicolouring of $D$, a contradiction.
\end{proof}

Recall that given a vertex $x$, $d_{max}(x) = max(d^+(x), d^-(x))$ and $\Delta_{max}(D) = max_{x \in V(D)} d_{max}(x)$. 

\begin{lemma}\label{lem:all_cuts_isolate_vertices}
Let $k \geq 4$. If all minimum dicuts of a $k$-extremal digraph $D$ isolate a vertex, then $D = \bid{K}_{k+1}$.
\end{lemma}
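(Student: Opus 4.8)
The plan is to use the hypothesis to pin down a vertex $v$ with $d^+(v)=d^-(v)=k$, read off the structure of a $k$-dicolouring of $D-v$ via Lemma~\ref{lem:mono_vs_rainbow}, and then bootstrap this into a copy of $\bid{K}_{k+1}$ sitting on $v$ and its neighbourhood. By Lemma~\ref{lem:prop_k-extremal}, $D$ is Eulerian, $(k+1)$-dicritical, and $\lambda(x,y)=k$ for all distinct $x,y$; in particular any dicut $(X,\overline X)$ with $x\in X$, $y\in\overline X$ has size at least $\lambda(x,y)=k$, so the minimum dicuts are precisely those of size $k$ and at least one exists (a minimum $xy$-dicut). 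By hypothesis it isolates a vertex, hence is the out-star or in-star of some $z$; as $D$ is Eulerian this gives $d^+(z)=d^-(z)=k$. Fix such a vertex $v$. Since the proof of Lemma~\ref{lem:prop_k-extremal} in fact shows $D$ is $(k+1)$-vertex-dicritical, $D-v$ admits a $k$-dicolouring $\varphi$.

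Next I would apply Lemma~\ref{lem:mono_vs_rainbow} to the dicut $(X_1,X_2)=(\{v\},\overline{\{v\}})$, with $\phi_1$ the one-point colouring of $v$ and $\phi_2=\varphi$. Both $\partial^+(\{v\})$ and $\partial^+(\overline{\{v\}})$ have size $k$, so the hypotheses hold and we sit in the degenerate case where $X_1$ is a single vertex: the lemma then yields exactly one arc from $v$ into each colour class of $\varphi$, and at least one arc from each colour class back to $v$. Since $d^-(v)=k$, counting forces both the out-neighbourhood $N^+(v)$ and the in-neighbourhood $N^-(v)$ to be \emph{rainbow}, i.e. every colour occurs exactly once among the out-neighbours and exactly once among the in-neighbours of $v$. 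The point to stress is that this holds for \emph{every} $k$-dicolouring of $D-v$.

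The heart of the proof is to upgrade this to completeness: I would show that $S:=\{v\}\cup N^+(v)\cup N^-(v)$ induces a $\bid{K}_{k+1}$, that is, that any two vertices of $S$ are joined by a digon (which in turn forces $N^+(v)=N^-(v)$ and $|S|=k+1$, since then every vertex of $S\setminus\{v\}$ lies in $N^+(v)$). Assuming some pair of $S$ is not joined by a digon, I would exploit the freedom in choosing $\varphi$ and perform a directed Kempe-type recolouring on the two colour classes involved, swapping colours on a carefully chosen set that keeps both classes acyclic, so as to obtain a new $k$-dicolouring of $D-v$ in which some colour is absent from $N^+(v)$ or $N^-(v)$ — contradicting the rainbow property, or equivalently freeing a colour to extend $\varphi$ to $v$ and contradicting $\dic(D)=k+1$. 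This recolouring, the directed analogue of the argument of \cite{R14} (cf. \cite{AA22}), is the main obstacle: unlike in the undirected setting, interchanging two colour classes in a digraph can create new monochromatic dicycles, so one must control the reachability between the classes. It is exactly here that $k\ge 4$ is essential — the statement is false for $k=3$, since bidirected odd wheels are $3$-extremal with all minimum dicuts isolating a vertex yet are not $\bid{K}_4$, which is why that case is treated separately in Lemma~\ref{lem:all_cuts_isolate_vertices_three}.

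Finally, once $D[S]=\bid{K}_{k+1}$ is established, the conclusion follows in one line from criticality: $\bid{K}_{k+1}$ has dichromatic number $k+1$, so if $S\subsetneq V(D)$ then $D[S]$ would be a proper induced subdigraph with $\dic(D[S])=k+1=\dic(D)$, contradicting the $(k+1)$-vertex-dicriticality of $D$. Hence $S=V(D)$ and $D=\bid{K}_{k+1}$.
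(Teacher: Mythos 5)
Your setup (an isolating minimum dicut gives a vertex $v$ with $d^+(v)=d^-(v)=k$, and Lemma~\ref{lem:mono_vs_rainbow} applied to the cut around $v$ shows that both $N^+(v)$ and $N^-(v)$ are rainbow in every $k$-dicolouring of $D-v$) is sound, but the proof has a genuine gap at exactly the point you flag yourself: the claim that $\{v\}\cup N^+(v)\cup N^-(v)$ induces a $\bid{K}_{k+1}$ rests on an unspecified ``directed Kempe-type recolouring'' that is never exhibited. This is not a routine detail to be filled in later: it amounts to re-proving the rigidity half of the directed Brooks theorem from scratch, and, as you yourself observe, interchanging two colour classes in a digraph can create new monochromatic dicycles, so the naive Kempe swap does not go through. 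Until that step is carried out, the central implication of your argument is missing.

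The paper takes a different and much lighter route, which you may want to compare with. It never analyses the neighbourhood of a degree-$k$ vertex. If $\Delta_{max}(D)=k$, then Theorem~\ref{thm:dir_brooks} applied to $D$ itself immediately gives $D=\bid{K}_{k+1}$. Otherwise some vertex has in- or out-degree greater than $k$; a minimum dicut separating two such vertices would isolate neither (both sides of a size-$k$ cut are too large around a vertex of degree at least $k+1$), so the hypothesis forces a \emph{unique} vertex $u$ with $d_{max}(u)\geq k+1$, all other vertices having in- and out-degree exactly $k$. Taking a maximal acyclic set $M$ containing $u$, every vertex outside $M$ has an in-neighbour and an out-neighbour in $M$, so $\Delta_{max}(D[V(D)\setminus M])\leq k-1$ while $\dic(D[V(D)\setminus M])\geq k$; Theorem~\ref{thm:dir_brooks}, applied this time to $D-M$ (here $k\geq 4$ rules out the directed-cycle and symmetric-odd-cycle exceptions), produces a set $K$ with $D[K]=\bid{K}_{k}$ whose boundary is a minimum dicut of $D$, which by hypothesis must isolate a vertex --- forcing $V(D)\setminus K=\{u\}$ and contradicting $d_{max}(u)\geq k+1$. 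All the Brooks-type recolouring work is thus delegated to the already-established Theorem~\ref{thm:dir_brooks} rather than redone by hand. You should either adopt this reduction or actually supply the recolouring argument you allude to; as written, the heart of your proof is an unproved assertion.
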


\begin{proof}
    Let $D$ be a $k$-extremal digraph in which every minimum dicut isolates a vertex. If $\Delta_{max}(D) = k+1$, then $D = \bid K_{k+1}$ by Theorem~\ref{thm:dir_brooks}. Otherwise, since for every vertex of $x \in V(D)$, $d_{max}(x) \geq k$, $D$ has a vertex with outdegree strictly greater than $k$ or a vertex with indegree strictly greater than $k$. 

    If there are two distinct vertices $u$ and $v$ with $d_{max}(u) \geq k+1$ and $d_{max}(v) \geq k+1$, then a minimum $uv$-dicut does not isolate $u$ nor $v$ (because $\lambda(u,v) = k$ and $D$ is Eulerian by Lemma~\ref{lem:prop_k-extremal} and thus, if $(U,V)$ is a minimum $uv$-dicut, then it has size $k$, and since $D$ is Eulerian, $(V,U)$ also has size $k$).   
    So there is a  vertex $u$ with $d_{max}(u) \geq k+1$, and for every $x \in V(D) \setminus u$, $d^+(x) = d^-(x) = k$.    
    
    Let $M \subseteq V(D)$ be a maximal set of vertices such that $u \in M$ and $D[M]$ is acyclic. Then every vertex of $V(D) \setminus M$ has outdegree and indegree at most $k-1$ in $V(D) \setminus M$, i.e. $\Delta_{max}(D[V(D) \setminus M]) \leq k - 1$. 
    As $\dic(D[M]) = 1$ and $\dic(D) = k+1$, we have that $\dic(D[V(D) \setminus M]) \geq k \geq \Delta_{max}(D[V(D) \setminus M]) +1$. 
    By Theorem~\ref{thm:dir_brooks} applied on $D[V(D) \setminus M]$, there exists $K \subseteq V(D) \setminus M$ such that $D[K] = \bid{K_{k}}$. As every vertex of $K$ has in- and outdegree exactly $k$ in $D$ and $k-1$ in $D[K]$, $\partial^+(K) = \partial^-(K)= k$. 
    Thus $(K, \overline K)$ is a minimum dicut and thus $V(D) \setminus K = \{u\}$ by hypothesis, a contradiction to the fact that $d_{max}(u) \geq k+1$. 
\end{proof}

\begin{lemma}\label{lem:all_cuts_isolate_vertices_three}
    If all minimum dicuts of a $3$-extremal digraph $D$ isolate a vertex, then $D = \bid W_{2\ell+1}$ for some $\ell \geq 1$ or $D$ is a directed Haj\'os join or a bidirected Haj\'os join of two digraphs.
\end{lemma}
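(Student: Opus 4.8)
The plan is to use the hypothesis to bound the number of high-degree vertices, remove the unique one, and recognise the remainder. Call a vertex $x$ \emph{low} if $d^+(x)=d^-(x)=3$ and \emph{high} if $d^+(x)=d^-(x)\geq 4$; by Lemma~\ref{lem:prop_k-extremal} $D$ is Eulerian with $\lambda(x,y)=3$ for all $x\neq y$, so every vertex is low or high, and every minimum dicut has size exactly $3$. The first step is to show that $D$ has at most one high vertex. If $u,v$ were both high, take a minimum $uv$-dicut $(X,\overline X)$; it has size $\lambda(u,v)=3$, and the only vertex it could isolate is $u$ (if $X=\{u\}$, giving $d^+(u)=3$) or $v$ (if $\overline X=\{v\}$, giving $d^-(v)=3$), both impossible since $u,v$ are high. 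Thus $(X,\overline X)$ isolates no vertex, contradicting the hypothesis. If $D$ has no high vertex then $\Delta_{max}(D)=3$ and $\dic(D)=\Delta_{max}(D)+1=4$, so the directed Brooks' Theorem~\ref{thm:dir_brooks} together with the connectedness of $D$ gives $D=\bid K_4=\bid W_3$ (the case $\ell=1$).

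So assume $D$ has a unique high vertex $h$. As $D$ is biconnected, $h$ is not a cutvertex, so $D-h$ is connected; moreover $\dic(D-h)=3$ (the upper bound because $k$-extremal digraphs are $(k+1)$-vertex-dicritical, as established in the proof of Lemma~\ref{lem:prop_k-extremal}, the lower bound because otherwise $\dic(D)\leq 3$). Every $x\neq h$ is low, hence $d^+_{D-h}(x),d^-_{D-h}(x)\leq 3$, and I would split on $\Delta_{max}(D-h)$. If $\Delta_{max}(D-h)=2$, then each $x\neq h$ loses at least one out-arc and one in-arc when $h$ is removed, forcing the digon $[h,x]$ and $d^+_{D-h}(x)=d^-_{D-h}(x)=2$; so $D-h$ is connected with $\Delta_{max}(D-h)=2$ and $\dic(D-h)=3$, and the directed Brooks' Theorem (the relevant exceptional class now being the symmetric odd cycles) yields $D-h=\bid C_{2\ell+1}$. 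Since $h$ is joined to all of $D-h$ by digons, $D=\bid W_{2\ell+1}$, with $\ell\geq 2$ because $d^+(h)=2\ell+1\geq 4$.

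There remains the case $\Delta_{max}(D-h)=3$, which is where the Haj\'os joins arise and which I expect to be the crux. The target is a \emph{bridge} of $D-h$, that is two vertices $u,w$ joined by an underlying cut-edge of $D-h$ which is either a digon $[u,w]$ or a single arc, and with the appropriate arcs between $\{u,w\}$ and $h$ absent. Given such a bridge, letting $P\ni u$ and $Q\ni w$ be the two sides it separates, one reconstructs $D$ as a Haj\'os join in which $h$ plays the role of the identified vertex $v$: in the digon case $D$ is the bidirected Haj\'os join (Definition~\ref{def:bidirected_HJ}) of $D[P\cup\{h\}]+[u,h]$ and $D[Q\cup\{h\}]+[w,h]$, and in the single-arc case (say oriented $u\to w$) $D$ is the directed Haj\'os join (Definition~\ref{def:directed_HJ}) of $D[P\cup\{h\}]+uh$ and $D[Q\cup\{h\}]+hw$; the absence of those arcs at $h$ is exactly what makes $[u,h],[w,h]$ (resp.\ $uh,hw$) the objects deleted in the construction. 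One checks along the way that the identified vertex must indeed be $h$: a low identified vertex would make the ``junction'' cut either of size $2$ (impossible, as $\lambda=3$) or a minimum dicut that fails to isolate a vertex.

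The hard part is to prove that such a bridge exists. The approach I would take is to analyse the block structure of the low-vertex subdigraph $D-h$. Using that every vertex is low, together with the rainbow behaviour of $3$-dicolourings at a low vertex supplied by Lemma~\ref{lem:mono_vs_rainbow} applied to its trivial dicut (which forces the three out-neighbours, and the three in-neighbours, of a low vertex to receive distinct colours in any $3$-dicolouring of $D-x$), one aims to establish a directed analogue of Gallai's description of the low-vertex subgraph: each block of $D-h$ is a bidirected complete graph $\bid K_r$ with $r\leq 3$, a symmetric odd cycle, a directed cycle, or a single digon/arc. Since all admissible $2$-connected blocks have $\Delta_{max}\leq 2$, the assumption $\Delta_{max}(D-h)=3$ prevents $D-h$ from being a single block, so it has a cutvertex. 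A degree count at each cutvertex $c$ — the out-degrees of $c$ within its blocks sum to either $3$ or $2$, the latter when $c$ also sends an arc to $h$ — combined with the fact that every partition of $V(D)$ into two parts of size at least two costs at least four crossing arcs (the contrapositive of the hypothesis), should then force a $\bid K_2$ or single-arc block, i.e.\ a bridge, and control its endpoints' adjacencies to $h$. Ruling out the stubborn configurations here — in particular internal directed-cycle blocks and bridges whose endpoints are adjacent to $h$ — is the main obstacle, and is precisely where I would import the method of~\cite{L66} as used in Section~5 of~\cite{AA22}.
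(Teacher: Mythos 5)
Your opening reductions are sound and match the paper's: every vertex is low or high, two high vertices would force a minimum dicut isolating nothing, so either all vertices have degree $3$ and directed Brooks gives $\bid K_4=\bid W_3$, or there is a unique high vertex $h$; and your $\Delta_{max}(D-h)=2$ case correctly recovers the odd wheels. The identification of the target structure is also right: what one needs is an arc or digon whose deletion together with $h$ disconnects $D$, after which Lemmata~\ref{lem:HJsufficient} and~\ref{lem:HB_sufficient} (which the paper invokes at exactly this point) certify that $D$ is a directed or bidirected Haj\'os join, including the non-adjacencies at $h$ that you correctly flag as necessary.

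However, the crux of the lemma is precisely the existence of that separating arc/digon when $\Delta_{max}(D-h)=3$, and there your proposal stops at a plan rather than a proof. You write that you ``would'' establish a directed Gallai-type description of the blocks of $D-h$ and that a degree count ``should then force'' a bridge, while conceding that ruling out ``the stubborn configurations'' is ``the main obstacle.'' That obstacle is the entire content of the lemma: the Gallai-type block classification for the low-vertex subdigraph is itself a nontrivial statement that you neither prove nor cite in a usable form, and even granting it, the step from ``$D-h$ has a cutvertex'' to ``$D-h$ has a bridge block whose endpoints have the right adjacencies to $h$'' is not carried out. The paper's proof takes a genuinely different route here: it never analyses the block structure of $D-h$, but instead considers partitions $(X,Y)$ with $u\in X$ and $D[X]$ acyclic of maximum size, calls the symmetric-odd-cycle components of $D[Y]$ obstructions, minimises their number, and uses a switching argument (Claims~\ref{clm:switch} and~\ref{clm:two_neighbours_obstruction}) to produce an infinite sequence of pairwise disjoint obstructions unless $D$ is an odd wheel or a Haj\'os join --- a contradiction with finiteness. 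Until you either prove your directed Gallai lemma and complete the degree-counting argument, or substitute an argument of comparable strength, the proposal has a genuine gap at its central step.
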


\begin{proof}
    Let $D$ be a $3$-extremal digraph in which every minimum dicut isolates a vertex, and assume for contradiction that $D$ is not a symmetric odd wheel nor a directed Haj\'os join.
    Similarly to the proof of Lemma~\ref{lem:all_cuts_isolate_vertices}, we can prove that there is a unique vertex $u$ with $d^+(u) = d^-(u) \geq 4$ and for every $v \in V(D) \setminus u$, $d^+(v) =d^-(v) = 3$. 
    

    Let $P = (X,Y)$ be a partition of $V(D)$. We say that $P$ is  a \emph{special partition} if $u \in X$, $D[X]$ is acyclic and $X$ has maximum size among all sets $X'$ such that $u \in X'$ and $D[X']$ is acyclic. Note that if $P=(X,Y)$ is a special partition, then every vertex of $Y$ has at least one inneighbour and one outneighbour in $X$, and thus has in- and outdegree at most $2$ in $D[Y]$. 
    
    An \emph{obstruction of $P$} is a connected component of $D[Y]$ isomorphic to a symmetric odd cycle. Since obstructions are $2$-regular, a connected component of $D[Y]$ contains an obstruction if and only if it isomorphic to an obstruction. Note also that every special partition has at least one obstruction, for otherwise, by Theorem~\ref{thm:dir_brooks}, $\dic(D[Y]) \leq 2$ and thus $\dic(D) \leq 3$, a contradiction.
    
    $P$ is said to be a \emph{super-special partition} if it is special and it minimizes the number of obstructions among all special partitions. 
    
    We call the following operation \emph{switching $x$ and $y$}.
    
    \begin{claim}\label{clm:switch}
        Let $P = (X,Y)$ be a super-special partition. Let $y$ be a vertex in an obstruction of $P$, and $x \in X \setminus \{u\}$ be a neighbour of $y$. Then $P' = (X \cup \{y\} \setminus \{x\}, Y \cup \{x\} \setminus \{y\})$ is a super-special partition, and $x$ is in an obstruction of $P'$.
    \end{claim}
    
    \begin{proofclaim}
         Suppose without loss of generality that $xy \in A(D)$. Let $Z \subseteq Y$ be the vertex set of the obstruction containing $y$. As $d^-(y) = 3$ and $y$ has $2$ inneighbours in $Z$, $y$ has no inneighbour in $X \setminus \{x\}$. Thus $D[X \cup \{y\} \setminus \{x\}]$ is acyclic. As $x \neq u$,  $P'$ is special. 
         Since removing any vertex of a symmetric odd cycle yields a digraph that is not a symmetric odd cycle, $D[Z \setminus \{y\}]$ is not a symmetric odd cycle. Thus $x$ is in an obstruction of $P'$ and $P'$ is super-special. 
    \end{proofclaim}

    The switching operation is particularly useful thanks to the following claim:

    \begin{claim}\label{clm:two_neighbours_obstruction}
     Let $P = (X,Y)$ be a super-special partition, and $Z$ the vertex set of an obstruction of $P$. Vertices in $X \setminus u$ have at most one neighbour in $Z$. 
    \end{claim}

    \begin{proofclaim} 
        Let $Z=\{v_1, \dots, v_s\}$ and $v_i$ and $v_{i+1}$ are linked by a digon for $i=1, \dots, s$ (subscript are taken modulo $s$). Suppose for contradiction that there is
        $x \in X \setminus \{u\}$ such that $x$ is a neighbour of $v_i$ and $v_j$ for some $i \neq j$. 
        By claim~\ref{clm:switch}, we can switch $x$ and $v_i$ to obtain the super special partition $P' = (X \cup \{v_i\} \setminus \{x\} , Y \cup \{x\} \setminus \{v_i\} )$. Since $x$ is a neighbour of $v_j$, the obstruction of $P'$  containing $x$ is $D[Z \cup \{x\} \setminus \{a\}]$, i.e. $D[Z \cup \{x\} \setminus \{a\}]$ is a symmetric odd cycle. Hence, $x$ is linked by a digon to $v_{i-1}$ and $v_{i+1}$. Now, by switching $x$ and $v_{i+1}$ in $P$, we get that $x$ is also linked by a digon to $v_{i+2}$ and thus is linked by a digon to every vertex of $Z$. 
        This implies that $s=3$, and thus the dicut $(V(Z),V(D) \setminus V(Z))$ has size $3$, so it is a minimum dicut that does not isolate a vertex, a contradiction. 
    \end{proofclaim}

    Let $P_1=(X_1, Y_1)$ be a super-special partition of $D$ and let $Z_1$ an obstruction of $P_1$. If no vertex of $Z_1$ has a neighbour in $X_1 \setminus \{u\}$, then $D[Z \cup \{u\}]$ is a symmetric odd wheel and we are done. So there exist $x_1 \in X_1 \setminus \{u\}$ and $y_1 \in Z_1$ such that $x_1$ and $y_1$ are adjacent. Set $Q_1 = Z_1 \setminus \{y_1\}$. 
    
    By claim~\ref{clm:switch}, $P_2=(X_2, Y_2)$ with $X_2= X_1 \cup \{y_1\} \setminus \{x_1\}$ and $Y_2 = Y_1 \cup \{x_1\} \setminus \{y_1\}$ is a super-special partition and $x_1$ is in an obstruction $Z_2$ of $P_2$. Let $Q_2 = Z_2 \setminus \{x_1\}$, so $Q_2$ is a symmetric path and is a connected component of $D[Y_1]$. 
    Observe that no vertex of $Q_2$ is adjacent with $y_1$. 
    If $y_1$ is the only vertex in $X_2 \setminus \{u\}$ with a neighbour in $V(Z_2)$, then either $x_1$ and $y_1$ are linked by a digon and $D$ is a bidirected Haj\'os join (by Lemma~\ref{lem:HB_sufficient}, because deleting $u$ and $[x_1, y_1]$ separates $Z_2$ from the rest of the digraph), or $D$ is a directed Haj\'os join (by Lemma~\ref{lem:HJsufficient}, because deleting $u$ and the arc linking $x_1$ and $y_1$ separates $Z_2$ from the rest of the digraph). A contradiction in both cases. 
    Hence, there is $x_2 \in X_2 \setminus \{u, y_1\}$ such that $x_2$ has a neighbour $y_2 \in V(Z_2)$. 
    
    Let $P_3=(X_3, Y_3)$ where $X_3 = X_2 \cup \{y_2\} \setminus \{x_2\}$ and $Y_3 = Y_2 \cup \{x_2\} \setminus \{y_2\}$. By claim~\ref{clm:switch}, $P_3$ is a super-special partition and $x_2$ is in an obstruction $Z_3$ of $P_3$.  
    Note that, claim~\ref{clm:two_neighbours_obstruction},  $x_2$ has at most one neighbour in $Z_1$ and in $Z_2$, but it has two neighbours in $Z_3$, this implies that  $Z_3$ is disjoint from $Z_1$ and $Z_2$. As previously, if $y_2$ is the only vertex in $X_3 \setminus u$ with a neighbour in $Z_3$, then $D$ is a directed Haj\'os join or a bidirected Haj\'os join, a contradiction. 
    So there exists $x_3 \in X_3 \setminus \{u, y_1, y_2\}$ such that $x_3$ has a neighbour $y_3 \in V(Z_3)$. 

    This process can be continued and never stop, a contradiction.

\end{proof}


\section{Directed Haj\'os joins and Haj\'os bijoins - A First Decomposition Theorem}\label{sec:decthm}
Our main theorem presented in the introduction (Theorem~\ref{thm:main_HT}) is a {\em structure theorem} for the class of $k$-extremal digraphs, in the sense that it is an "if and only if". The goal of this section is to prove an intermediate result that is a {\em decomposition theorem} for this class (an "only if" theorem). It involves the notion of Haj\'os bijoin described just after the statement.

\begin{theorem}\label{thm:decHJHBJ}
Let $k\geq 3$. If $D$ is $k$-extremal, then:
    \begin{itemize}
        \item either $D=\bid{K}_{k+1}$
        \item or $D$ is a symmetric odd wheel (only in the case $k=3$),
        \item or $D$ is a directed Haj\'os join of two $k$-extremal digraphs,
        \item or $D$ is a Haj\'os bijoin of two $k$ extremal-digraphs.
    \end{itemize}
\end{theorem}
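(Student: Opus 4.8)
The plan is to run a case analysis driven by the structure of a minimum dicut of $D$, dispatching almost everything to the lemmas of Section~\ref{sec:kextr}. By Lemma~\ref{lem:prop_k-extremal}, $D$ is Eulerian, $(k+1)$-dicritical, and satisfies $\lambda(x,y)=k$ for every pair of distinct vertices; in particular every minimum dicut has exactly $k$ arcs in each direction, and $(X,\overline X)$ is a minimum dicut if and only if $(\overline X, X)$ is. I would split on whether \emph{every} minimum dicut of $D$ isolates a vertex. First, suppose every minimum dicut does isolate a vertex. If $k\geq 4$, Lemma~\ref{lem:all_cuts_isolate_vertices} gives $D=\bid{K}_{k+1}$ directly. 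If $k=3$, Lemma~\ref{lem:all_cuts_isolate_vertices_three} gives that $D$ is a symmetric odd wheel, a directed Haj\'os join, or a bidirected Haj\'os join; since a bidirected Haj\'os join is in particular a Haj\'os bijoin (the degenerate junction case), the first three conclusions of the theorem are reached, modulo the check that the constituents are $k$-extremal discussed below.

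Now suppose some minimum dicut $(A,\overline A)$ does not isolate a vertex, so $|A|,|\overline A|\geq 2$. Since $D[A]$ and $D[\overline A]$ are proper induced subgraphs of the $(k+1)$-vertex-dicritical digraph $D$, both are $k$-dicolourable, so I can apply Lemma~\ref{lem:mono_vs_rainbow} to any pair of $k$-dicolourings $\phi_1,\phi_2$ of the two sides. Using that $D$ is Eulerian, so that the cut has size $k$ in both directions, the concluding clause of that lemma tells me that on one side, say $A$, all $2k$ cut arcs are incident to a single colour class, while on $\overline A$ every colour class meets exactly one cut arc in each direction. This pins down the junction structure. I would then contract: by Lemma~\ref{lem:contraction_extremal} one of $D/A$, $D/\overline A$ is $k$-extremal, and the proof of that lemma already establishes that both are strong, biconnected, and have $\lambda=k$. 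The task is then to read off from the cut structure the two constituents $D_1,D_2$ — essentially $D/\overline A$ and $D/A$, each carrying the junction vertex of in- and out-degree $k$ together with the gadget (a digon, or a single arc in the degenerate subcase) dictated by Lemma~\ref{lem:mono_vs_rainbow} — and to verify that $D$ is exactly their Haj\'os bijoin, or a directed Haj\'os join when the monochromatic side concentrates on a single arc.

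The main obstacle is this last verification, and specifically guaranteeing that \textbf{both} constituents are $k$-extremal rather than just one: Lemma~\ref{lem:contraction_extremal} only ensures that the contraction on one side jumps back to dichromatic number $k+1$. I expect to repair this exactly as in the classical Haj\'os construction, by building the junction gadget into each constituent so that it forces any $k$-dicolouring to behave as in Lemma~\ref{lem:extremdigon}; this should push the dichromatic number of the side that was merely $k$-dicolourable up to $k+1$ while keeping $\lambda=k$, where Lemma~\ref{lem:lambda_diminishing} is the tool to control local arc-connectivity once the gadget is inserted. The remaining work — confirming that the reconstruction reproduces $D$ on the nose and that the junction vertices match the formal definition of Haj\'os bijoin given just after the statement — is routine bookkeeping, but it is where all the care is needed, and it is also where the $k=3$ joins coming out of Lemma~\ref{lem:all_cuts_isolate_vertices_three} get upgraded from ``join of two digraphs'' to ``join of two $k$-extremal digraphs''.

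In summary, the skeleton is: Eulerian normalisation via Lemma~\ref{lem:prop_k-extremal}; the dichotomy ``all minimum dicuts isolate a vertex'' versus not; the isolating branch handled verbatim by Lemmas~\ref{lem:all_cuts_isolate_vertices} and~\ref{lem:all_cuts_isolate_vertices_three}; and the non-isolating branch handled by combining the cut analysis of Lemma~\ref{lem:mono_vs_rainbow} with the contraction of Lemma~\ref{lem:contraction_extremal} to exhibit $D$ as a directed Haj\'os join or a Haj\'os bijoin of two strictly smaller $k$-extremal digraphs.
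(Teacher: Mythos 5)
Your treatment of the branch where every minimum dicut isolates a vertex is correct and matches the paper: Lemma~\ref{lem:all_cuts_isolate_vertices} for $k\geq 4$, Lemma~\ref{lem:all_cuts_isolate_vertices_three} for $k=3$, and the observation that a bidirected Haj\'os join is a degenerate Haj\'os bijoin. Note also that the worry you raise at the end about upgrading ``join of two digraphs'' to ``join of two $k$-extremal digraphs'' is a non-issue: Lemmas~\ref{lem:extremal_directed_Haj} and~\ref{lem:extremal_bijoin_Haj} already state that if a $k$-extremal digraph is a directed Haj\'os join (resp.\ a Haj\'os bijoin) of $D_1$ and $D_2$, then both constituents are $k$-extremal, so no gadget-insertion repair is needed.

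The genuine gap is in the non-isolating branch. You propose to read the two constituents of a Haj\'os bijoin off a minimum dicut $(A,\overline A)$, taking them to be essentially $D/\overline A$ and $D/A$. This cannot work: a minimum dicut of a $k$-extremal digraph has $k$ arcs in each direction, i.e.\ up to $2k$ edges of the underlying graph crossing it, whereas a Haj\'os bijoin is defined by a \textbf{cutvertex} $a$ of $D-\{tu,vw\}$, i.e.\ a separation of the underlying graph consisting of one vertex plus exactly two arcs. Lemma~\ref{lem:mono_vs_rainbow} tells you the cut arcs on one side all touch a single colour class, but that class is a set of vertices, not a single vertex (if it were a single vertex incident to all $2k$ cut arcs, the dicut would isolate it and you would be in the other branch). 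So the dicut does not exhibit a bijoin, and the ``routine bookkeeping'' you defer is in fact the heart of the proof. The paper closes this gap by induction on $|V(D)|$: it applies the theorem to the $k$-extremal contraction $D/A$ given by Lemma~\ref{lem:contraction_extremal}, observes that if $D/A$ were itself a join then the join structure would lift back to $D$, and otherwise concludes that $D[\overline A]$ is $\bid K_k$ (or a flower when $k=3$) with each vertex having exactly one in- and one out-neighbour in $A$. Only with this very specific local structure in hand does it locate, via surgery operations such as $D-\{uv,vw\}+uw$ or $D[X]+[a,b]$ together with Lemmas~\ref{lem:HJsufficient} and~\ref{lem:HB_sufficient}, the two arcs and the cutvertex that actually witness the directed Haj\'os join or Haj\'os bijoin. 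Your proposal contains neither the induction nor any mechanism for producing that cutvertex, so the decomposition in the main case is not established.
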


\begin{definition}[Haj\'os bijoin and degenerated Haj\'os bijoin]\label{def:bijoin}
    Let $D_1$ and $D_2$ be two digraphs. Let $ta_1, a_1w \in A(D_1)$ ($t=w$ is possible) and $t$ and $w$ are in the same connected component of $D_1 \setminus a_1$. Let $va_2,a_2u \in A(D_2)$ ($u=v$ is possible) and $u$ and $v$ are in the same connected component of $D_2 \setminus a_2$. 
    The \emph{Haj\'os bijoin} of $D_1$ and $D_2$ with respect to $\big((t,a_1,w), (v,a_2, u)\big)$ is the digraph $D$ obtained from the disjoint union of $D_1 - \{ta_1, a_1w\}$ and $D_2 - \{va_2, a_2u\}$ by identifying $a_1$ and $a_2$ into a new vertex $a$, and adding the arcs $tu$ and $vw$. \\
    If $t=w$ and $u \neq v$ (or $u=v$ and $t \neq w$), we say it is a \emph{degenerated Haj\'os bijoin}.\\
    If $t=w$ and $u=v$, then it is the bidirected Haj\'os join of $D_1$ and $D_2$ with respect to $([t,a_1], [u,a_2])$. See Figure~\ref{fig:bijoin}.
\end{definition}



    \begin{figure}[!hbtp]
    \begin{center}
        \begin{tikzpicture}[scale=1]
            \begin{scope}[xshift = -1cm]
            \begin{scope}
                \vertex (r) at (1.2,0) {};
                \vertex (u) at (0,1) {$t$};
                \vertex (d) at (0,-1) {$w$};
                \draw (0,0) circle (1.5);
            \end{scope}
        
            \begin{scope}[xshift=2.4cm,xscale=-1]
                \vertex (r2) at (1.2,0) {$a$};
                \vertex (u2) at (0,1) {$u$};
                \vertex (d2) at (0,-1) {$v$};
                \draw (0,0) circle (1.5);
            \end{scope}

            \node () at (1.2,-2) {$D$};
            \end{scope}
            
            \draw[->-, bend left=15] (u) to (u2);
            \draw[->-, bend left=15] (d2) to (d);

            \begin{scope}[xshift=-9cm]
                \vertex (r) at (1.2,0) {$a_1$};
                \vertex (u) at (0,1) {$t$};
                \vertex (d) at (0,-1) {$w$};
                \draw[->-, bend left=15] (u) to (r);
                \draw[->-, bend left=15] (r) to (d);
                \draw (0,0) circle (1.5);
                \node () at (0,-2) {$D_1$};
            \end{scope}
        
            \begin{scope}[xshift=-5.5cm,xscale=-1]
                \vertex (r2) at (1.2,0) {$a_2$};
                \vertex (u2) at (0,1) {$u$};
                \vertex (d2) at (0,-1) {$v$};
                \draw[->-, bend right=15] (d2) to (r2);
                \draw[->-, bend right=15] (r2) to (u2);
                \draw (0,0) circle (1.5);
                \node () at (0,-2) {$D_2$};
            \end{scope}
        
        \end{tikzpicture}
        \end{center}
           \caption{$D$ is the Haj\'os bijoin of $D_1$ and $D_2$ with respect to $(t,a_1, w), (u,a_2, v)$.}
           \label{fig:bijoin}
        \end{figure}
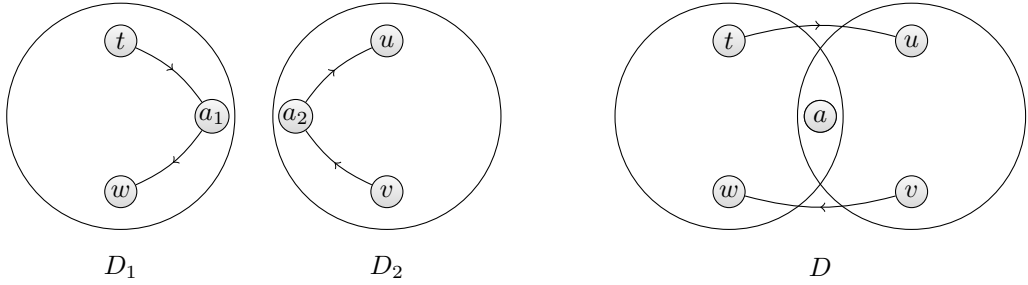


\subsection{Properties of directed Haj\'os joins}

For the definition of directed Haj\'os join, see Definition~\ref{def:directed_HJ}. We first prove an essential result about $k$-extremal digraphs and directed Haj\'os joins

\begin{lemma}\label{lem:extremal_directed_Haj}
    Let $k \geq 1$. 
    Let $D$ be the directed Haj\'os join of two digraphs $D_1$ and $D_2$. Then $D$ is $k$-extremal if and only if  both $D_1$ and $D_2$ are.
\end{lemma}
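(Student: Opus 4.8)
The plan is to verify the four defining properties of $k$-extremality (strong, biconnected, $\dic=k+1$, $\lambda=k$) on both sides of the equivalence, exploiting that the only interaction between the two ``sides'' $V(D_1)$ and $V(D_2)$ of $D$ happens at the identified vertex $v$ and through the single arc $uw$. Throughout I write $D''$ for the digraph obtained from $D_1\sqcup D_2$ by identifying $v_1$ and $v_2$ into $v$, so that $D = D'' - uv - vw + uw$, and I isolate two elementary facts that drive everything: (i) in $D$ the only arc between $V(D_1)\setminus v$ and $V(D_2)\setminus v$ is $uw$, so any dipath crossing from one side to the other must use $uw$ or pass through $v$, and to return it must pass through $v$; and (ii) a $v_1u$-dipath in $D_1$ never uses $uv_1$ and a $wv_2$-dipath in $D_2$ never uses $v_2w$, so such dipaths survive the arc deletions.

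For the dichromatic numbers I will use that a $k$-extremal digraph is $(k+1)$-dicritical (Lemma~\ref{lem:prop_k-extremal}), so deleting any arc makes it $k$-dicolourable while the whole digraph is not. The central observation, used both ways, is that in any $k$-dicolouring $\psi$ of $D_1$ there is no monochromatic $v_1u$-dipath in colour $\psi(u)$: if $\psi(v_1)\neq\psi(u)$ this is immediate, and if $\psi(v_1)=\psi(u)$ such a dipath together with the arc $uv_1$ would close a monochromatic cycle in $D_1$; symmetrically for $wv_2$-dipaths in $D_2$. For the backward direction I suppose $\varphi$ is a $k$-dicolouring of $D$; restricting it to each side gives $k$-dicolourings of $D_1-uv_1$ and $D_2-v_2w$, and since neither $D_1$ nor $D_2$ is $k$-dicolourable these restrictions force monochromatic $v_1u$- and $wv_2$-dipaths whose common colour $\varphi(u)=\varphi(v)=\varphi(w)$ closes, together with $uw$, a monochromatic walk, a contradiction; hence $\dic(D)\geq k+1$. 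For the forward direction I assume $\dic(D_1)\leq k$, fix a $k$-dicolouring $\psi_1$ of $D_1$ and any $k$-dicolouring $\psi_2$ of $D_2-v_2w$ agreeing with $\psi_1$ at $v$ (it exists because $D-uw$ is $k$-dicolourable, $D$ being $(k+1)$-dicritical, and $D_2-v_2w$ is an induced subdigraph of it); their amalgam is a valid $k$-dicolouring of $D-uw$, and the observation above guarantees that re-adding $uw$ creates no monochromatic cycle, so $\dic(D)\leq k$, a contradiction. The bounds $\dic(D_i)\leq k+1$ follow by $k$-dicolouring $D_i$ minus the relevant arc and recolouring one endpoint with a new $(k+1)$-th colour.

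Once $\dic=k+1$ is known on the relevant side, the inequality $\lambda\geq k$ is free from Neumann-Lara's bound $\dic\leq\lambda+1$, so only the upper bounds remain. For the backward direction I first check $\lambda(D'')\leq k$: same-side pairs are bounded by $k$ because, by (i), a dipath cannot leave and re-enter a side without using $v$ twice, and cross pairs are bounded by the at most $k$ arc-disjoint dipaths reaching $v$ inside a single $D_i$; then, since $D=D''+uw-A(P)$ for the dipath $P=u\to v\to w$, Lemma~\ref{lem:lambda_diminishing} yields $\lambda(D)\leq\lambda(D'')\leq k$. For the forward direction I bound $\lambda(D_1)\leq\lambda(D)=k$ by rerouting: among arc-disjoint $xy$-dipaths of $D_1$ at most one uses $uv_1$, and that one is redirected along $u\to w\to\cdots\to v$ through $D_2-v_2w$ (using that $D_2$ is strong, proved first), producing the same number of arc-disjoint $xy$-trails in $D$.

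Strongness I will prove directly by the same rerouting idea, using facts (i) and (ii): a dipath of $D_1$ traversing $uv_1$ is rerouted through the detour $u\to w\to\cdots\to v$ in $D$, and conversely every excursion of a $D$-dipath into the opposite side enters by $uw$ and leaves by $v$ and is short-cut by the deleted arc $uv_1$ or $v_2w$, giving strongness of $D$ from that of $D_1,D_2$ and vice versa. Biconnectivity rests on a cut-vertex analysis hinging on (i): in the forward direction, if some $z$ were a cut-vertex of $D_1$, then since the underlying edge $uv_1$ keeps $u$ and $v_1$ in one component of $D_1-z$, every component of $D_1-z$ would contain $u$ or $v_1$; a component avoiding both, as well as the degenerate cases $z\in\{u,v_1\}$, would isolate a piece of $V(D_1)$ in $D-z$ (the arc $uw$ and the vertex $v$ being the only links to the other side), contradicting biconnectivity of $D$, so no such $z$ exists. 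The backward direction is the mirror image: any split of $V(D_i)$ created by deleting $z$ is reconnected through the opposite side via $v$ and $uw$. I expect the colouring equivalence --- in particular pinning down the ``no monochromatic $v_1u$-dipath'' phenomenon and checking that the rerouted walks stay arc-disjoint and simple --- to be the main obstacle, whereas the $\lambda$ computation is short thanks to Lemma~\ref{lem:lambda_diminishing} and the connectivity arguments become routine once fact (i) is isolated.
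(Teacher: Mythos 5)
Your proposal is correct, and it diverges from the paper's proof in one substantive way: the paper handles the entire dichromatic-number (and hence strongness and biconnectivity) part of the equivalence by citing Theorem~2 of~\cite{BBSS20}, which states that a directed Haj\'os join is $(k+1)$-dicritical if and only if both operands are; since $(k+1)$-dicritical digraphs are automatically strong and biconnected, the paper is then left only with the local arc-connectivity bounds, which it obtains exactly as you do --- via the block decomposition of $D-uw+\{uv_1,v_2w\}$ and Lemma~\ref{lem:lambda_diminishing} in one direction, and via the dipath $P$ consisting of $uw$ followed by a $wv$-dipath inside the copy of $D_2-v_2w$ in the other (your explicit rerouting into arc-disjoint trails is the same argument unpacked). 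You instead reprove the colouring equivalence from first principles with the classical Haj\'os-join argument (no monochromatic $v_1u$-dipath in a dicolouring of $D_1$ because $uv_1$ would close a monochromatic cycle, and the amalgamation/restriction arguments on either side of the cutvertex $v$), and you establish strongness and biconnectivity by direct rerouting and cutvertex analysis rather than deriving them from dicriticality. Both routes are sound; the paper's buys brevity at the cost of an external citation, while yours is self-contained and, as a side effect, makes visible exactly where the hypothesis $uv_1\in A(D_1)$, $v_2w\in A(D_2)$ is used. The only place where your write-up is compressed enough to deserve care in a full version is the biconnectivity case analysis (the cases $z\in\{u,v_1\}$ and the observation that for $z\notin\{u,v_1\}$ the underlying edge $uv_1$ forces any hypothetical second component of $D_1-z$ to avoid both $u$ and $v_1$), but the idea as stated does close all cases.
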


\begin{proof}
    Suppose $D$ is a directed Haj\'os join of two digraphs $D_1$ and $D_2$ with respect to $(uv_1, v_2w)$, i.e. there is $uv_1 \in A(D_1)$ and $v_2u \in A(D_2)$ such that $D$ is obtained from disjoint copies of $D_1-uv_1$ and $D_2-v_2w$ by identifying the vertices $v_1$ and $v_2$ to a new vertex $v$ and adding the edge $uw$. 

    \begin{claim}[Theorem 2 in~\cite{BBSS20}]\label{clm:kawa_DHJ}
        $D$ is $k+1$-dicritical if and only if both $D_1$ and $D_2$ are. 
    \end{claim}
    
    Let us first suppose that $D_1$ and $D_2$ are $k$-extremal.  
    By Claim~\ref{clm:kawa_DHJ}, $D$ is $k+1$-dicritical, so it is also biconnected and strong. Since the maximum local arc-connectivity of a digraph equal the maximum maximum local arc-connectivity of its blocks, we have that  $\lambda(D-\{uw\} + \{uv_1, v_2w\}) =\max(\lambda(D_1), \lambda(D_2)) = k$, and by Lemma~\ref{lem:lambda_diminishing}, $\lambda(D) \leq \lambda(D-\{uw\} + \{uv_1, v_2w\} =  k$. Thus $k +1 = \dic(D) \leq \lambda(D) + 1 = k+1 $, so $\lambda(D) = k$ and $D$ is $k$-extremal. 
    
    Suppose now that $D$ is $k$-extremal. 
    By claim~\ref{clm:kawa_DHJ}, both $D_1$ and $D_2$ are $(k+1)$-dicritical and thus are also strong and biconnected. 
    Since $D$ is strong, it has a $wv$-dipath, and this dipath uses only arcs in the copy of $D_2 -wv_2$. Let $P$ be such a dipath to which we add the arc $uw$ at the beginning. 
    Then $\lambda(D + uv - A(P)) \leq \lambda(D) = k$, and since $D_1$ is a subdigraph of $D + uv - A(P)$, we have that $\lambda(D_1) \leq k$. Thus  $k +1 = \dic(D_1) \leq \lambda(D_1) + 1 = k+1 $, so $\lambda(D) = k$ and $D_1$ is $k$-extremal. Similarly, $D_2$ is $k$-extremal. 
\end{proof}

 If $D$ is a directed Haj\'os join of two digraphs, then there exists an arc $uw$, such that $D-uw$ has a cutvertex. The following lemma asserts that if $D$ is $k$-extremal, then the converse holds. This is sometimes useful to prove that $D$ is a directed Haj\'os join. 
 \begin{lemma}\label{lem:HJsufficient}
     Let $k \geq 1$. Let $D$ be a $k$-extremal digraph with an arc $uw\in A(D)$, such that $D-uw$ has a cutvertex $v$. Then $D$ is a directed Haj\'os join of two digraphs $D_1$ and $D_2$ with respect to $(uv, vw)$.
 \end{lemma}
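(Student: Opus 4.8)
The plan is to produce the vertex partition witnessing the directed Haj\'os decomposition and then verify that the two arcs that such a join must delete are genuinely absent from $D$. First I would record two purely topological facts coming from the $2$-connectivity of $D$. Since $D\setminus x$ is connected for every vertex $x$, whereas $(D-uw)\setminus v$ is disconnected, the vertex $v$ must be distinct from both $u$ and $w$: if $v$ were equal to $u$ or $w$ then $(D-uw)\setminus v$ would coincide with $D\setminus v$, which is connected. Moreover $u$ and $w$ must lie in different connected components of $(D-uw)\setminus v$, since $D\setminus v=\big((D-uw)\setminus v\big)+uw$ is connected and adding back the single arc $uw$ can reduce the number of components only if its endpoints lie in distinct ones. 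Writing $A$ for the vertex set of the component of $(D-uw)\setminus v$ containing $u$, and setting $V_1=A\cup\{v\}$, $V_2=V(D)\setminus A$ and $B=V_2\setminus\{v\}$, I then observe that, as $A$ is a full component of $(D-uw)\setminus v$, the arc $uw$ is the only arc of $D$ joining $A$ to $B$; every other arc of $D$ lies entirely inside $V_1$ or entirely inside $V_2$.

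The heart of the argument, and the step I expect to be the real obstacle, is showing that $uv\notin A(D)$ and $vw\notin A(D)$ — precisely the two arcs a directed Haj\'os join with respect to $(uv,vw)$ removes, so the reconstruction cannot proceed unless they are absent. Here I would invoke dicriticality: by Lemma~\ref{lem:prop_k-extremal}, $D$ is $(k+1)$-dicritical, so $D-uw$ admits a $k$-dicolouring $\varphi$. As $\dic(D)=k+1$, putting $uw$ back destroys $\varphi$, so $D$ has a $\varphi$-monochromatic dicycle through $uw$; equivalently $D-uw$ contains a $\varphi$-monochromatic $w\to u$ dipath $P$, whence $\varphi(u)=\varphi(v)=\varphi(w)$. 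Because $v$ separates $A$ from $B$ in $D-uw$, the dipath $P$ passes through $v$ and splits as a monochromatic $w\to v$ subdipath followed by a monochromatic $v\to u$ subdipath. Now if $uv\in A(D)$ then, since $\varphi(u)=\varphi(v)$, the arc $u\to v$ together with the monochromatic $v\to u$ subdipath of $P$ forms a monochromatic closed walk, hence a monochromatic dicycle, contained entirely in $D-uw$ — contradicting that $\varphi$ is a dicolouring of $D-uw$. Thus $uv\notin A(D)$, and symmetrically, using the $w\to v$ subdipath of $P$ together with the arc $vw$, we obtain $vw\notin A(D)$.

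With these two non-adjacencies established the decomposition is immediate. I would set $D_1=D[V_1]+uv$ and $D_2=D[V_2]+vw$; these are well defined simple digraphs precisely because $uv\notin A(D)$ and $vw\notin A(D)$, and they contain the arcs $uv$ and $vw$ respectively, with $V(D_1)\cap V(D_2)=\{v\}$. Then $D_1-uv=D[V_1]$ and $D_2-vw=D[V_2]$, so the digraph obtained from their disjoint union by identifying the two copies of $v$ has arc set equal to all arcs of $D$ inside $V_1$ together with all arcs inside $V_2$, which is exactly $A(D)\setminus\{uw\}$; adding the arc $uw$ then recovers $D$. This is by definition the directed Haj\'os join of $D_1$ and $D_2$ with respect to $(uv,vw)$, as required.
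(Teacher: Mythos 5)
Your proof is correct, and the key step --- showing $uv,vw\notin A(D)$, which is exactly where the paper also locates the difficulty --- is handled by a genuinely different mechanism. The paper takes \emph{two} separate $k$-dicolourings $\varphi_1,\varphi_2$ of the two blocks of $D-uw$ (available by $(k+1)$-dicriticality from Lemma~\ref{lem:prop_k-extremal}), aligns them at $v$, and argues that if $uv$ were an arc then $\varphi_1$ would admit no monochromatic $vu$-dipath, so the glued colouring would be a proper $k$-dicolouring of all of $D$, contradicting $\dic(D)=k+1$. You instead work with a \emph{single} $k$-dicolouring $\varphi$ of $D-uw$: since $\dic(D)=k+1$, there is a monochromatic $w\to u$ dipath $P$ in $D-uw$, which must pass through the cutvertex $v$, forcing $\varphi(u)=\varphi(v)=\varphi(w)$; the hypothetical arc $uv$ (resp.\ $vw$) then closes the $v\to u$ (resp.\ $w\to v$) portion of $P$ into a monochromatic dicycle lying entirely inside $D-uw$, contradicting the properness of $\varphi$ there. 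Your route is slightly more economical: no gluing, no colour permutation, and no need to identify the block structure of $D-uw$ (you only use that $v$ separates $u$ from $w$ in $D-uw$, which you correctly derive from biconnectivity of $D$). The paper's gluing argument, on the other hand, is the template it reuses in the more delicate Lemma~\ref{lem:HB_sufficient}, which is presumably why it is presented in that form here. Your preliminary observations ($v\notin\{u,w\}$, $uw$ is the unique arc between $A$ and $B$) and the final reconstruction of $D$ as the join of $D[V_1]+uv$ and $D[V_2]+vw$ are all sound.
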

 \begin{proof}
     Let $v$ be the cutvertex of $D-uw$. Since $D$ is biconnected, $D-uw$ has exactly two blocks $D_1$ and $D_2$  containing respectively $u$ and $w$. 
     Thus it is enough to prove that $uv\not\in A(D)$ and $vw\not\in A(D)$. We only prove it for $uv$, the argument  for $vw$ is identical. Assume by contradiction $uv\in A(D)$. 
     By Lemma~\ref{lem:prop_k-extremal}, $D$ is $k+1$-dicritical, so $D_i$ has a $k$-dicolouring $\varphi_i$ for $i=1,2$. Since $uv$ is an arc, there is no monochromatic $vu$-dipath with respect to $\varphi_1$. 
     Up to permuting colours, we may assume that $\varphi_2(v) = \varphi_1(v)$. Let $\varphi:V(D) \rightarrow [k]$ defined as follow: $\varphi(x) = \varphi_1(x)$ if $x \in V(D_1)$, and $\varphi(x) = \varphi_2(x)$ if $x \in V(D_2)$. 
     We claim that $\varphi$ is a $k$-dicolouring of $D$. Indeed, by construction of $\varphi$ there is no monochromatic dicycle included in $D_1$ or $D_2$, and a dicycle intersecting both $D_1$ and $D_2$ contains a $vu$ dipath included in $D_1$, and thus cannot be monochromatic. Thus $\varphi$ is a $k$-dicolouring of $D$, a contradiction. 
 \end{proof}

\subsection{Properties of Haj\'os bijoin}

We start with the analogue of Lemma~\ref{lem:extremal_directed_Haj} for bijoins, but note that we have only one direction here.

\begin{lemma}\label{lem:extremal_bijoin_Haj}
    Let $k \geq 3$. Let $D$ be a Haj\'os bijoin of two digraphs $D_1$ and $D_2$. If $D$ is $k$-extremal, then both $D_1$ and $D_2$ are $k$-extremal.
\end{lemma}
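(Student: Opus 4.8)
The plan is to verify, for $D_1$ (the argument for $D_2$ being symmetric under $(t,a_1,w)\leftrightarrow(v,a_2,u)$), the four defining properties of $k$-extremality: that $D_1$ is strong, that it is biconnected, that $\lambda(D_1)\le k$, and that $\dic(D_1)\ge k+1$. Write $V(D)=V_1\sqcup V_2\sqcup\{a\}$ where $V_i=V(D_i)\setminus\{a_i\}$ and $a$ is the identified vertex, so that the $D_1$-side $D[V_1\cup\{a\}]$ and the $D_2$-side $D_2':=D[V_2\cup\{a\}]$ meet only at $a$ and are joined by the two crossing arcs $tu$ and $vw$, while $D_1=D[V_1\cup\{a\}]+ta+aw$. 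Once $\lambda(D_1)\le k$ and $\dic(D_1)\ge k+1$ are known, the Neumann--Lara inequality $\dic(D_1)\le\lambda(D_1)+1$ (the remark after Corollary~\ref{coro:small_cut}) forces $\dic(D_1)=\lambda(D_1)+1=k+1$, so $D_1$ is $k$-extremal.

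Strongness and biconnectedness I would get by a rerouting argument. Every directed walk of $D$ between two vertices of $V_1\cup\{a\}$ can be pushed into $D_1$: each maximal excursion into the $D_2$-side leaves $V_1\cup\{a\}$ either at $t$ (through $tu$) or at $a$, and returns either at $w$ (through $vw$) or at $a$, so it can be replaced by the corresponding subwalk $t\to a\to w$, $t\to a$, $a\to w$, or the empty walk inside $D_1$. Since $D$ is strong this yields an $x\to y$ walk, hence an $x\to y$ dipath, in $D_1$. The same shortcut in the underlying graph shows $D_1\setminus z$ is connected for every $z$: the only delicate case is $z=a$, where the excursions connect $t$ and $w$ and must be rerouted inside $D_1\setminus a_1=D[V_1]$, which is exactly possible because the definition of the bijoin requires $t$ and $w$ to lie in the same component of $D_1\setminus a_1$.

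For $\lambda(D_1)\le k$ I would use Lemma~\ref{lem:lambda_diminishing} twice to re-create the two missing arcs $ta$ and $aw$ by dipaths through $D_2'$. The crucial point is to find arc-disjoint dipaths $Q_1\colon u\rs a$ and $Q_2\colon a\rs v$ inside $D_2'$; then $P_1=tu\cdot Q_1$ is a $ta$-dipath and $P_2=Q_2\cdot vw$ an $aw$-dipath of $D$, both avoiding the arcs of $D[V_1\cup\{a\}]$, so $D+ta+aw-A(P_1)-A(P_2)$ contains $D_1$ and, by Lemma~\ref{lem:lambda_diminishing}, has $\lambda\le\lambda(D)=k$. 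To produce $Q_1,Q_2$ I would observe that, since $D$ is Eulerian (Lemma~\ref{lem:prop_k-extremal}) and the only arcs of $D$ crossing between the two sides are $tu$ and $vw$, every vertex of $D_2'$ is balanced except $u$ (out-excess $1$) and $v$ (in-excess $1$); moreover $D_2'$ is weakly connected (again from $D$ biconnected together with the ``same component'' clause of the definition) and $a$ has in- and out-degree at least $k-1\ge2$ in $D_2'$, because every dicut of $D$ has size at least $k$ (Menger, Theorem~\ref{thm:menger}, and Lemma~\ref{lem:prop_k-extremal}). Hence $D_2'$ has a directed Eulerian $u$-$v$ trail, which necessarily passes through $a$; splitting it at $a$ gives the required arc-disjoint $Q_1,Q_2$. (When $u=v$ the trail becomes an Eulerian closed trail and the same splitting works.)

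The hard part will be $\dic(D_1)\ge k+1$. Here I would argue by contradiction: suppose $\phi_1$ is a $k$-dicolouring of $D_1$. Because $D$ is $(k+1)$-dicritical (Lemma~\ref{lem:prop_k-extremal}) and $V_1\neq\emptyset$, the proper induced subdigraph $D_2'$ is $k$-dicolourable; pick a $k$-dicolouring $\phi_2$ of $D_2'$ and permute its colours so that $\phi_2(a)=\phi_1(a)$. I claim $\phi_1\cup\phi_2$ is then a $k$-dicolouring of $D$, contradicting $\dic(D)=k+1$. Indeed a monochromatic dicycle of $D$ is either internal to one side (excluded by acyclicity of $\phi_1$ on $D_1$ and of $\phi_2$ on $D_2'$) or it crosses, and a simple dicycle can change sides only through $tu$, $vw$, or the shared vertex $a$; a dicycle that changes sides only at $a$ would have to pass through $a$ twice, so a crossing dicycle must use $tu$ or $vw$. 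If I can moreover arrange $\phi_1(t)\neq\phi_1(w)$, then no monochromatic dicycle uses both crossing arcs, and a monochromatic dicycle using say $tu$ and returning through $a$ would, together with the arc $t\to a$ of $D_1$, create a monochromatic cycle of $D_1$, which $\phi_1$ forbids. The main obstacle is precisely to secure a $k$-dicolouring $\phi_1$ of $D_1$ with $\phi_1(t)\neq\phi_1(w)$, and to deal with the degenerate bijoins where $t=w$ (so that $ta,aw$ form the digon $[t,a]$ and this separation of colours is impossible); these cases I expect to require a separate colouring argument exploiting the digon, in the spirit of Lemma~\ref{lem:extremdigon}, or an argument carried out on the $D_2$-side. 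The statement for $D_2$ then follows by the symmetry $(t,a_1,w)\leftrightarrow(v,a_2,u)$.
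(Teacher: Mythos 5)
The first three properties are handled correctly. Strongness and biconnectedness via rerouting excursions through $\{t,a,w\}$ is sound (the paper instead gets strongness from the fact that $D_1$ inherits Eulerianness from $D$, but your argument works equally well). Your proof of $\lambda(D_1)\le k$ is genuinely different from the paper's: you build the two missing arcs $ta$, $aw$ from arc-disjoint dipaths obtained by splitting an Eulerian $u$--$v$ trail of $D[V_2\cup\{a\}]$ at $a$, and then apply Lemma~\ref{lem:lambda_diminishing} twice; the paper instead takes a minimum $xy$-dicut of $D$, uses Eulerianness to place $a$ on the $y$-side, and checks that its restriction to $V(D_1)$ is still a dicut of size $k$. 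Both are correct; your version has the small advantage of producing $D_1$ as an explicit subdigraph of a digraph known to have $\lambda\le k$, at the cost of the degree/balance bookkeeping in $D[V_2\cup\{a\}]$ (which you do carry out correctly, including the fact that $a$ has in- and out-degree at least $k-1\ge 2$ there).

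The gap is where you say it is: the proof of $\dic(D_1)\ge k+1$ is not complete, because a $k$-dicolouring $\varphi_1$ of $D_1$ with $\varphi_1(t)\ne\varphi_1(w)$ need not exist (nothing prevents $D_1$ from forcing $t$ and $w$ into the same colour class in every $k$-dicolouring, and when $t=w$ the requirement is vacuous­ly impossible), and you do not supply the ``separate colouring argument'' you defer to. The paper's resolution is to put the freedom on the $D_2$-side instead: take any $k$-dicolouring $\varphi_1$ of $D_1$ and choose the $k$-dicolouring $\varphi_2$ of $D[V_2\cup\{a\}]$ so that $\varphi_2(a)=\varphi_1(a)$ \emph{and}, in the case $\varphi_1(a)\ne\varphi_1(t)$, also $\varphi_2(u)\ne\varphi_1(t)$ (possible since $k\ge 3$). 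Then, in your remaining case of a monochromatic dicycle $C$ using both $tu$ and $vw$, the $wt$-portion of $C$ is a monochromatic dipath of $D_1$; if $\varphi_1(a)=\varphi_1(t)$ it closes into a monochromatic dicycle of $D_1$ via the arcs $ta$ and $aw$, a contradiction, so $\varphi_1(a)\ne\varphi_1(t)$ and the conditional choice gives $\varphi(t)\ne\varphi(u)$, contradicting that $C$ is monochromatic through $tu$. This handles $t=w$ with no separate case (the ``$wt$-dipath'' degenerates but the digon $[t,a]\subseteq A(D_1)$ already forces $\varphi_1(a)\ne\varphi_1(t)$). Replacing your ``arrange $\varphi_1(t)\ne\varphi_1(w)$'' step by this argument completes the proof.
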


\begin{proof} 
    Let $D$ be a $k$-extremal digraph, and $D$ is a Haj\'os bijoin of two digraphs $D_1$ and $D_2$ with respect to $\big((t,a_1,w), (v,a_2, u)\big)$, i.e. there exists $tu,vw \in A(D)$ and $a \in V(D)$ such that $D\setminus \{a\} - \{tu,vw\}$ has two connected components with vertex sets $V'_1$ and $V'_2$ such that $D_1=D[V'_1 \cup a] + \{ta,aw\}$, and $D_2= D[V'_2 \cup a] + \{va,au\}$ and $t$ and $w$ are in the same connected component of $D_1\setminus a$ and  $u$ and $v$ are in the same connected component of $D_2\setminus a$. 
    \smallskip

    Let us first prove that $D_1$ is biconnected. Assume for contradiction that $D_1$ has a cutvertex $x$. Observe that $\{t,w,a_1\} \setminus x$ are in the same connected component of $D_1 \setminus x$. Indeed, if $x=a$ it is by hypothesis, and otherwise it is because $ta, aw \in A(D_1)$. Hence, $D \setminus x$ has a connected component disjoint from $\{t,a,w\}$, and thus $x$ is a cutvertex of $D$, a contradiction.  
    \smallskip 

    As $D$ is Eulerian, so is $D_1$ by construction. And since an Eulerian connected digraph is strong, $D_1$ is strong. 
    \smallskip 
    
    Let $x,y \in V(D_1)$ and let us prove that $\lambda_{D_1}(x,y) \leq k$. 
    Let $(X,\overline{X})$ be a minimum $xy$-dicut in $D$, i.e. $x \in X$ and $y \in \overline{X}$. Since $D$ is $k$-extremal, $(\overline{X}, X)$ is a minimum $yx$-dicut, and thus, up to permuting $y$ and $x$, we may assume without loss of generality that $a \in \overline{X}$. 
    Let $X_{D_1} =X \cap V(D_1)$ and  consider the $xa$-dicut of $D$ $(X_{D_1}, \overline{X}_{D_1})$.  If $t \in X$, then $\partial_{D_1}^+(X) = \partial_D^+(X) -tu + ta$, and otherwise $\partial_{D_1}^+(X) = \partial_D^+(X)$. Hence $|\partial_{D_1}^+(X)| = |\partial_D^+(X)| = k$, so $\lambda_{D_1}(u,v) \leq k$ and thus $\lambda(D_1) \leq k$. 
    \smallskip 
    
    Let us now prove that $\dic(D_1) \geq k + 1$. Suppose $D_1$ admits a $k$-dicolouring $\varphi_1$. Let $\varphi_2$ be a $k$-dicolouring of $D_2 - \{va,au\}$  
    such that $\varphi_1(a) = \varphi_2(a)$ and, if $\varphi_1(a) \neq \varphi_1(t)$, such that $\varphi_2(u) \neq \varphi_1(t)$ (this can always be done because $k \geq 3$). 
    Consider $\varphi : V(D) \to [1,k]$ such that $\varphi(x) = \varphi_1(x)$ if $x \in V(D_1) \cup a$ and $\varphi(x) = \varphi_2(x)$ if $x \in V(D_2)$. 

    Since $\dic(D) = k+1$, $D$ has a monochromatic directed cycle $C$ with respect to $\varphi$. 
    By construction of $\varphi$, $C$ goes through $tu$, or $vw$ or both. 
    If $C$ goes through   $tu$ but not  $vw$, then $C$ contains an $at$-dipath, which is not monochromatic since $ta \in A(D_1)$ and $\varphi_1$ is a $k$-dicolouring of $D_1$, a contradiction.
    Similarly, we get a contradiction if $C$ goes through   $vw$ but not  $tu$. 
    We may thus assume that $C$ uses both $tu$ and $vw$. In particular, $C$ contains an $uv$-dipath $P_{uv}$ included in $D_2$, and a $wt$-dipath $P_{wt}$ included in $D_1$. 
    If $\varphi_1(a) = \varphi_1(t)$, then $P_{wt}$  plus the arcs $ta$ and $aw$ form a monochromatic dicyle of $D_1$, a contradiction. 
    Thus $\varphi_1(a) \neq \varphi_1(t)$ and thus $\varphi(t) \neq \varphi(u)$ by construction of $\varphi$, so $C$ is not monochromatic, a contradiction. This finishes the proof that  $\dic(D_1) \geq k+1$. 
    \smallskip 
    
    Now, since $k+1 \leq \dic(D_1) \leq \lambda(D_1) + 1 \leq k+1$, we have $\dic(D_1) = \lambda(D_1) + 1 = k+1$. Altogether we get that $D_1$ is $k$-extremal. Similarly, $D_2$ is $k$-extremal. 
\end{proof}

Note that the reciprocal of lemma~\ref{lem:extremal_bijoin_Haj} does not hold, observe for example that the Haj\'os bijoin of two $\bid{K}_{k+1}$ is $k$-dicolourable and is thus not $k$-extremal. See Figure~\ref{fig:bijoin_not_reciprocal}.

    \begin{figure}[!hbtp]
    \begin{center}
        \begin{tikzpicture}[scale = 1.5]

            \begin{scope}
                \vertex[color=green] (r) at (0,0) {\textcolor{black}{a}};
                \vertex[color=blue] (l) at (-2,0) {};
                \vertex[color=red] (u) at (-1,1) {\textcolor{black}{t}};
                \vertex[color=green] (d) at (-1,-1) {\textcolor{black}{w}};
                \draw[->-, bend right=15] (d) to (r);
                \draw[->-, bend right=15] (r) to (u);
                
                \draw[->-, bend left=15] (r) to (l);
                \draw[->-, bend left=15] (l) to (r);
                \draw[->-, bend left=15] (u) to (d);
                \draw[->-, bend left=15] (d) to (u);
                \draw[->-, bend left=15] (u) to (l);
                \draw[->-, bend left=15] (l) to (u);
                \draw[->-, bend left=15] (d) to (l);
                \draw[->-, bend left=15] (l) to (d);
            \end{scope}
        
            \begin{scope}[shift=(r),xscale=-1,yscale=-1]
                \vertex[color=blue] (l) at (-2,0) {};
                \vertex[color=red] (uu) at (-1,1) {\textcolor{black}{u}};
                \vertex[color=green] (dd) at (-1,-1) {\textcolor{black}{v}};
                \draw[->-, bend right=15] (dd) to (r);
                \draw[->-, bend right=15] (r) to (uu);
                \draw[->-, bend left=15] (r) to (l);
                \draw[->-, bend left=15] (l) to (r);
                \draw[->-, bend left=15] (uu) to (dd);
                \draw[->-, bend left=15] (dd) to (uu);
                \draw[->-, bend left=15] (uu) to (l);
                \draw[->-, bend left=15] (l) to (uu);
                \draw[->-, bend left=15] (dd) to (l);
                \draw[->-, bend left=15] (l) to (dd);
            \end{scope}

            \draw[->-, bend left=15] (u) to (dd);
            \draw[->-, bend left=15] (uu) to (d);

        \end{tikzpicture}
        \end{center}
           \caption{A $4$-dicolouring of a bijoin of two $\bid{K}_{4}$.}
           \label{fig:bijoin_not_reciprocal}
    \end{figure}

However, we can still prove the following holds:

\begin{lemma}\label{lem:extremal_bijoin_Haj_only_if}
    Let $k \geq 3$. Let $D$ be a Haj\'os bijoin of two digraphs $D_1$ and $D_2$. If, for $i=1,2$,  $D_i$ is biconnected, strong, Eulerian and $\lambda(D_i) \leq k$, then $D$ is biconnected, strong,  Eulerian and $\lambda(D) \leq k$. 
\end{lemma}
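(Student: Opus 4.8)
The plan is to verify the four conclusions separately, in increasing order of difficulty: that $D$ is Eulerian (a degree count), that $\lambda(D)\le k$ (a short rerouting argument), and finally that $D$ is connected and biconnected, which I expect to be the delicate point. Throughout I keep the notation of Definition~\ref{def:bijoin}: $D$ has distinguished arcs $tu,vw$, the vertex $a$ is obtained by identifying $a_1$ and $a_2$, and I write $V_1'=V(D_1)\setminus\{a_1\}$ and $V_2'=V(D_2)\setminus\{a_2\}$, so that $V(D)=V_1'\cup\{a\}\cup V_2'$ and the only arcs of $D$ between $V_1'$ and $V_2'$ are $tu$ and $vw$.

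First I would check that $D$ is Eulerian. Passing from the disjoint union $D_1 \sqcup D_2$ to $D$ amounts to deleting the arcs $ta_1,a_1w,va_2,a_2u$ and adding $tu,vw$, so I would track the net effect on each in/out-degree: at $t$ the deletion of $ta_1$ and the addition of $tu$ cancel on the out-side (nothing changes on the in-side), and symmetrically at $w$ (via $a_1w$ and $vw$), at $v$ (via $va_2$ and $vw$) and at $u$ (via $a_2u$ and $tu$); at $a$ both the in-degree and the out-degree drop by exactly $2$, one unit from each side. Since $D_1,D_2$ are Eulerian, every vertex of $D$ is balanced, so $D$ is Eulerian; this bookkeeping is insensitive to the coincidences $t=w$ or $u=v$.

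For $\lambda(D)\le k$ I would introduce the auxiliary digraph $H=D-tu-vw+ta+aw+va+au$, obtained by reinstating the four deleted arcs at $a$ and suppressing the two new cross arcs. By construction $H$ is exactly the one-point union of $D_1$ and $D_2$ glued at $a$: it induces $D_1$ on $V_1'\cup\{a\}$, induces $D_2$ on $V_2'\cup\{a\}$, and has no arc between $V_1'$ and $V_2'$. Hence $a$ is the only cutvertex of $H$ and its blocks are precisely $D_1$ and $D_2$; since the maximum local arc-connectivity of a digraph equals the maximum over its blocks (any dipath joining vertices of different blocks must run through the shared cutvertex), $\lambda(H)=\max(\lambda(D_1),\lambda(D_2))\le k$. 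I would then reach $D$ from $H$ by two applications of Lemma~\ref{lem:lambda_diminishing}. The dipath $t\to a\to u$ lies in $H$, so $\lambda(H+tu-\{ta,au\})\le\lambda(H)\le k$; writing $H'=H+tu-ta-au$, the dipath $v\to a\to w$ still lies in $H'$, so $\lambda(H'+vw-\{va,aw\})\le\lambda(H')\le k$. Since $H'+vw-va-aw=D$, this yields $\lambda(D)\le k$. Both three-vertex dipaths are genuine because $t,a,u$ and $v,a,w$ are pairwise distinct, as $V_1',\{a\},V_2'$ are disjoint. This is the cleanest part of the argument and is unaffected by any degeneracy.

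The remaining, and I expect hardest, point is that $D$ is connected with no cutvertex; strongness then follows at once since a connected Eulerian digraph is strong. The backbone is that $D_i$ biconnected forces $D_i\setminus a_i$ connected. As the deleted arcs $ta_1,a_1w$ (resp. $va_2,a_2u$) are incident to $a_i$, the subdigraph $D[V_1']$ equals $D_1\setminus a_1$ and $D[V_2']$ equals $D_2\setminus a_2$, both connected; the arc $tu$ joins them, so $D-a$ is connected and in particular $a$ is not a cutvertex. To rule out a cutvertex $z\ne a$ I would show $D-z$ connected by again invoking connectivity of $D_i\setminus a_i$ together with the fact that the cross arcs $tu,vw$ give a connection between the two sides that avoids $a$, so that deleting a single $z$ cannot sever both that connection and the connection through $a$; the cases $z\in\{t,u,v,w\}$, where one cross arc disappears, and the degree bookkeeping at $a$ require individual checking. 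The one genuinely delicate verification is that $a$ retains an incident arc (equivalently that no $a_i$ reduces to the degenerate configuration where its only arcs are $ta_1,a_1w$), which is where connectivity could otherwise fail; this is immediate in the intended applications, where the relevant $D_i$ have minimum degree at least $k\ge 3$.
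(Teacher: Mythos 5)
Your argument for the Eulerian property, for strongness (via ``connected Eulerian implies strong''), and for $\lambda(D)\le k$ is exactly the paper's: the paper also forms $D' = D - \{tu,vw\} + \{ta,au,va,aw\}$, observes that its blocks are $D_1$ and $D_2$ so $\lambda(D')\le k$, and applies Lemma~\ref{lem:lambda_diminishing} twice. Your proof that $a$ is not a cutvertex (because $D[V_1']=D_1\setminus a_1$ and $D[V_2']=D_2\setminus a_2$ are connected and $tu$ joins them) is also the paper's first step.

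The one place where you stop short is the exclusion of a cutvertex $z\neq a$, and the heuristic you offer there points in the wrong direction. You argue that ``deleting a single $z$ cannot sever both the connection through the cross arcs and the connection through $a$,'' but the failure mode to worry about is not a separation of $V_1'$ from $V_2'$: it is a separation occurring entirely inside one side, say a piece of $V_1'$ being cut off from everything else by some $z\in V_1'$. The paper closes this in one line: if $z\in V_1'$ were a cutvertex of $D$, consider the component $C$ of $D\setminus z$ containing $a$; it contains all of $V_2$, hence also $t$ and $w$ via the arcs $tu$ and $vw$, so any other component of $D\setminus z$ is a subset of $V_1'$ avoiding $\{t,a,w\}$ and is therefore a component of $D_1\setminus z$ avoiding $\{t,a_1,w\}$ --- making $z$ a cutvertex of $D_1$, a contradiction. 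This is the missing idea; your ``individual checking'' of $z\in\{t,u,v,w\}$ is not needed once the argument is phrased this way, since those cases are not special. Your closing worry about $a$ retaining an incident arc is a reasonable one to flag (the paper does not address extreme degeneracies such as $D_1$ being a single digon either), and as you note it is vacuous in the setting where the lemma is applied.
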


\begin{proof} 
    Let, for $i=1,2$,  $D_i$ be a biconnected, strong, Eulerian digraph with $\lambda(D_i) \leq k$.
    Let $D$ be a Haj\'os bijoin of $D_1$ and $D_2$ with respect to $\big((t,a_1,w), (v,a_2, u)\big)$, i.e. there exists $tu,vw \in A(D)$ and $a \in V(D)$ such that $D\setminus \{a\} - \{tu,vw\}$ has two connected components with vertex set $V'_1$ and $V'_2$ such that $D_1=D[V'_1 \cup a] + \{ta,aw\}$, and $D_2= D[V'_2 \cup a] + \{va,au\}$ and $t$ and $w$ are in the same connected component of $D_1\setminus a$ and  $u$ and $v$ are in the same connected component of $D_2\setminus a$. 
    Set $V_1=V'_1 \cup a$ and $V_2 = V'_2 \cup a$.  
    \smallskip 

    Let us first prove that $D$ is biconnected. 
    Assume for contradiction that $D$ has a cutvertex $x$. 
    Since for $i=1, 2$ $D_i$ is biconnected, $D[V'_i]=D_i\setminus a$ is connected, and since moreover there is an (actually two) arc between $D[V'_1]$ and $D[V'_2]$, $D \setminus a$ is connected. Thus $x \neq a$. 
    Assume without loss of generality that $x \in V'_1$ and let $C$ be the connected component of $D \setminus x$ containing $a$. Then $V_2$ is included in $C$, and since there is an arc between $u$ and $V_2$ and between $w$ and $V_2$, $u$ and $w$ are also in $C$. Thus $a$, $u$ and $w$ are in the same connected component of $D \setminus x$, which implies that $x$ is a cutvertex of $D_1$, a contradiction. 
    

    \smallskip

    As $D_1$ and $D_2$ are Eulerian,  so is $D$ by construction. And since an Eulerian biconnected digraph is strong, $D_1$ is strong. 

    \smallskip 
    
    Let $D' = D - \{tu,vw\} + \{ta,au,va,aw\}$. As the blocks of $D'$ are $D_1$ and $D_2$, $\lambda(D') \leq k$. Hence, by applying twice Lemma~\ref{lem:lambda_diminishing}, we get that $\lambda(D) \leq \lambda(D')  \leq k$.  
\end{proof}

The following lemma is an analogue of Lemma~\ref{lem:HJsufficient} in the case of degenerated Haj\'os bijoins. 
\begin{lemma}\label{lem:HB_sufficient}
    Let $k \geq 3$. Let $D$ be a $k$-extremal digraph. Suppose there exists $tu$ and $uw$ in $A(D)$, such that $D - \{tu,uw\}$ has a cutvertex. Then $D$ is a directed Haj\'os join or a Haj\'os bijoin. 
\end{lemma}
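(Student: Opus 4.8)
The plan is to dichotomise according to whether deleting a \emph{single} one of the two arcs already creates a cutvertex. Write $H = D-\{tu,uw\}$ and let $a$ be a cutvertex of $H$; recall from Lemma~\ref{lem:prop_k-extremal} that $D$ is biconnected, Eulerian and $(k+1)$-dicritical. Two preliminary observations drive everything. First, $u\neq a$: otherwise both deleted arcs are incident with $a$, so $D\setminus a = H\setminus a$ is disconnected, contradicting that $D$ is biconnected. Second, I would isolate the elementary fact already used inside Lemma~\ref{lem:HJsufficient}: if $G$ is biconnected and $e=pq\in A(G)$ is such that $G-e$ has a cutvertex $x$, then $p,q\neq x$ and $(G-e)\setminus x$ has \emph{exactly two} connected components, one containing $p$ and one containing $q$ (adding the single edge $pq$ back must reconnect $(G-e)\setminus x$, which is only possible if there are two components separated by $p$ and $q$).

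If $D-tu$ or $D-uw$ has a cutvertex, then Lemma~\ref{lem:HJsufficient} applied to that arc shows immediately that $D$ is a directed Haj\'os join, and we are done. So assume both $D-tu$ and $D-uw$ are biconnected. Since $(D-tu)-uw=(D-uw)-tu=H$ has the cutvertex $a$, I apply the fact above twice: to $(G,e)=(D-tu,uw)$, obtaining that $H\setminus a$ has exactly two components with $u$ and $w$ in different ones; and to $(G,e)=(D-uw,tu)$, obtaining that $t$ and $u$ lie in different ones. As there are only two components, $t$ and $w$ lie together in one of them, say $V_1'$, while $u$ is in the other, $V_2'$, and $t,u,w\neq a$. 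Consequently the only arcs of $D$ joining the two sides are $tu$ (from $t\in V_1'$ to $u\in V_2'$) and $uw$ (from $u$ to $w\in V_1'$): this is precisely the shape of a degenerate Haj\'os bijoin with $v=u$. Setting $D_1=D[V_1'\cup a]+\{ta,aw\}$ and $D_2=D[V_2'\cup a]+[u,a]$, the Haj\'os bijoin of $D_1$ and $D_2$ with respect to $\big((t,a,w),(u,a,u)\big)$ reconstructs $D$ exactly, provided the four arcs $ta,aw,au,ua$ are \emph{not} already present in $D$ (so that the construction genuinely deletes what it adds). When $t=w$ this degenerates further to the bidirected Haj\'os join.

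It remains to certify $ta,aw,au,ua\notin A(D)$, and this is where the argument mirrors the dicolouring step of Lemma~\ref{lem:HJsufficient}. Since $D$ is $(k+1)$-dicritical and $H\subsetneq D$, $H$ is $k$-dicolourable, and, as $H$ is $P_1:=D[V_1'\cup a]$ and $P_2:=D[V_2'\cup a]$ glued at $a$, a $k$-dicolouring $\psi$ of $H$ is exactly a pair of $k$-dicolourings of $P_1,P_2$ agreeing on $a$. Every directed cycle of $D$ meeting both $V_1'$ and $V_2'$ must use $tu$ or $uw$; hence if some $\psi$ had $\psi(u)\neq\psi(t)$ and $\psi(u)\neq\psi(w)$ it would be a $k$-dicolouring of $D$, contradicting $\dic(D)=k+1$. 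Thus every $k$-dicolouring of $H$ satisfies $\psi(u)\in\{\psi(t),\psi(w)\}$. Now suppose for instance $au\in A(D)$: I would produce a $k$-dicolouring $\psi$ of $H$ with $\psi(u)\neq\psi(w)$ and check that it is in fact a $k$-dicolouring of $D$ — every cycle using $uw$ (in particular every cycle using both cross-arcs) needs $\psi(u)=\psi(w)$ and is excluded, while a cycle using $tu$ but not $uw$ runs through $a$ and is excluded because $au\in A(P_2)$ forbids a monochromatic $u$-to-$a$ dipath of the relevant colour, exactly as an arc forbids the reverse monochromatic dipath in Lemma~\ref{lem:HJsufficient}. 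This contradicts $\dic(D)=k+1$, so $au\notin A(D)$; the cases $ua,ta,aw$ are symmetric.

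The main obstacle is exactly this last colouring bookkeeping: one must guarantee enough freedom to separate the colour of $u$ from those of $t$ and $w$ in a $k$-dicolouring of $H$ in the presence of each hypothetical arc, while controlling the monochromatic dipaths that may run through the hub $a$. Here is where $k\ge 3$ is genuinely used — $u$ and $w$ lie on opposite sides of the cut and are coupled only through the colour of $a$, and with at least three colours one can reuse the permutation/recolouring trick of Lemma~\ref{lem:extremal_bijoin_Haj} to break the offending equality; for $k\le 2$ this freedom disappears, which is consistent with the lemma being stated for $k\ge 3$.
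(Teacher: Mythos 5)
Your structural setup is sound and matches the paper's: the dichotomy into ``one arc alone already creates a cutvertex'' (dispatched by Lemma~\ref{lem:HJsufficient}) versus ``$H\setminus a$ has exactly two components, with $t,w$ on one side and $u$ on the other'', and the reduction of the bijoin conclusion to proving $ta,aw,au,ua\notin A(D)$, are all correct and are exactly what the paper does. The gap is in the last step, and it is not mere bookkeeping: you never establish that a $k$-dicolouring $\psi$ of $H$ with $\psi(u)\neq\psi(w)$ (resp.\ $\psi(u)\neq\psi(t)$) exists. A $k$-dicolouring of $H$ is a pair of $k$-dicolourings of the two blocks $P_1=D[V_1'\cup a]$ and $P_2=D[V_2'\cup a]$ agreeing at $a$, and the permutation trick decouples $\psi(u)$ from $\psi(w)$ only if some dicolouring of $P_2$ gives $u$ a colour different from that of $a$, or some dicolouring of $P_1$ gives $w$ a colour different from that of $a$. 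If every $k$-dicolouring of $P_2$ forces $\psi(u)=\psi(a)$ and every $k$-dicolouring of $P_1$ forces $\psi(w)=\psi(a)$, then every $k$-dicolouring of $H$ satisfies $\psi(u)=\psi(a)=\psi(w)$ and the colouring you need does not exist, no matter how large $k$ is. Nothing in your argument rules this situation out, so the claimed contradiction is not reached.

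The paper closes exactly this hole with a different device. Assuming (say) $ta\in A(D)$, it works with the augmented block $D_1+aw$, which contains both $ta$ and $aw$; it shows this digraph is biconnected, strong and has $\lambda\le k$, and then observes that it is \emph{not Eulerian} (the vertex $t$ loses the out-arc $tu$ but no in-arc), so by Lemma~\ref{lem:prop_k-extremal} it cannot be $k$-extremal and must therefore be $k$-dicolourable. A $k$-dicolouring of $D_1+aw$ automatically admits no monochromatic $at$-dipath and no monochromatic $wa$-dipath --- precisely the control your bare colouring of $H$ lacks --- and is then combined with a colouring of the other block using the $k\ge 3$ permutation you describe. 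Your proof can be repaired, but only by importing this Eulerian-imbalance argument (or some other reason why the relevant augmented digraph is $k$-dicolourable); as written, the existence of the separating colouring of $H$ is an unproved and genuinely nontrivial claim, not a routine application of the recolouring trick.
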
 

\begin{proof} 
    Note that since $D$ is biconnected, $D - \{tu,uw\}$ is connected. Let $a$ be a cut-vertex of $D - \{tu,uw\}$. Assume first that $t$ and $w$ are in two distinct connected components of $D \setminus a - \{tu,uw\}$ and assume without loss of generality that $u$ is not in the same connected component as $t$. Then $D\setminus a - \{tu\}$ is disconnected, and thus $D$ is a directed Haj\'os join by Lemma~\ref{lem:HJsufficient}. 


    We may thus assume that $t$ and $w$ are in the same connected component of $D \setminus a - \{tu,uw\}$. To prove that $D$ is a Haj\'os bijoin, it is enough to prove that $ta,aw,ua,au \notin A(D)$. We will prove something a bit stronger.

    \begin{claim}
        Let $k \geq 3$. Let $D'$ be a $k$-extremal digraph. Suppose there exists $a, t,u,v,w \in V(D')$ with $a \notin \{t,u,v,w\}$ and $\{t,u\} \cap \{v,w\} = \emptyset$ (in other words, $t=u$ and $v=w$ are the only possible equalities) such that $tu,vw \in A(D')$, $a$ is a cutvertex of $D' - \{tu,vw\}$ and $D'-\{tu,vw\}$  has two blocks $D_1$ and $D_2$ with $t,w \in V(D_1)$ and $u,v \in V(D_2)$. Then $ta,au,va,aw \notin A(D')$. 
    \end{claim}

    \begin{proofclaim}
        Suppose that $ta \in A(D')$. Let us first prove that $D_1 + aw$ is biconnected, strong, and $\lambda(D_1 + aw) \leq k$. 
        
        Assume for contradiction that $D_1 + aw$ has a cutvertex $x$. Observe that $\{t,w,a\} \setminus x$ are in the same connected component of $D_1 + aw \setminus x$. Indeed, if $x=a$ it is by hypothesis, and otherwise it is because $ta, aw \in A(D_1) \cup \{aw\} $. Hence, $D' \setminus x$ has a connected component included in $D_1$ and disjoint from $\{t,a,w\}$, and thus $x$ is a cutvertex of $D'$, a contradiction. 
    
        Since for any two vertices $u,v$ of $D'$, $\lambda_{D'}(u,v) = k$, we have that $\lambda_{D' + aw - \{tu,vw\}}(u,v) \geq k - 2 \geq 1$, and thus $D' + aw - \{tu,vw\}$ is strong. As $D_1 + aw$ is a block of $D' + aw - \{tu,vw\}$, $D_1 + aw$ is strong.
    
        Let $P_{av}$ be an $av$-dipath in $D_2$, which  exists for $\lambda_{D' - \{tu,vw\}}(a,v) \geq \lambda_{D'}(a,v) - 2 \geq k - 2 \geq 1$. By Lemma~\ref{lem:lambda_diminishing}, $\lambda(D' + aw - A(P_{av}) - vw) \leq \lambda(D') = k$, and since $D_1 + aw$ is a subgraph of $D' + aw - A(P_{av}) - vw$, we have  $\lambda(D_1 + aw) \leq k$.

        Hence, $D_1 + aw$ is biconnected, strong, and $\lambda(D_1 + aw) \leq k$. 
        Yet $D_1 + aw$ is not Eulerian, for $D'$ is Eulerian, and the indegree of $t$ does not change in $D_1$ while its outdegree decreases by $1$. Thus, by Lemma~\ref{lem:prop_k-extremal}, $D_1 + aw$ is not $k$-extremal, hence $\dic(D_1 + aw) \leq k$. Let $\varphi_2$ be a $k$-dicolouring of $D_2$. Let $\varphi_1$ be a $k$-dicolouring of $D_1 + aw$ chosen so that 
        \begin{itemize}
            \item $\varphi_1(a) = \varphi_2(a)$ and, 
            \item if $\varphi_1(a) \neq \varphi_1(t)$, then $\varphi_1(t) \neq \varphi_2(u)$ (which is always possible up to permuting colours, since $k \geq 3$).
        \end{itemize}
       Consider $\varphi:V(D') \rightarrow [k]$ such that  
        \[
        \varphi(y) = \left\{
            \begin{array}{ll}
                \varphi_1(y)   & \text{ if } y \in V(D_1)\\ 
                \varphi_2(y)   & \text{ if } y \in V(D_2)
            \end{array}
        \right.
        \]
        Since $\dic(D') = k+1$, $\varphi$ contains a monochromatic dicycle $C$. Since $\varphi_1$ and $\varphi_2$ are $k$-dicolourings of respectively $D_1 + aw$ and $D_2$, $C$ intersects both $V(D_1) \setminus a$ and $V(D_2) \setminus a$. \\
        If $C$ contains $tu$ but not $vw$, then $C$ goes through $a$, and  there is a monochromatic $at$-dipath in $D_1+aw$ which, together with the arc $ta$, forms a monochromatic dicycle with respect to $\varphi_1$, a contradiction to the fact that $\varphi_1$ is a dicolouring of $D_1 + aw$. \\
        If $C$ contains $vw$ but not $tu$, then $C$ goes through $a$, and  there is a monochromatic $wa$-dipath in $D_1 +wa$ which, together with the arc $aw$, forms a monochromatic dicycle with respect to $\varphi_1$, a contradiction with the fact that $\varphi_1$ is a dicolouring of $D_1 + aw$.\\
        If $C$ contains both $tu$ and $vw$, then there is a monochromatic $wt$-dipath in $D_1$ with respect to $\varphi$. Since $ta, aw \in A(D_1 + aw)$ and $\varphi_1$ is a $k$-dicolouring of $D_1 + aw$, this implies that $\varphi_1(a) \neq \varphi(t)$. But then $\varphi(t) \neq \varphi(u)$ by the choice of $\varphi_1$ and $\varphi$, a contradiction.

        Hence, $\varphi$ has no monochromatic dicycle, a contradiction. 
        Thus we have proven that $ta \notin A(D')$. By symmetry, we have that $ta,au,va,aw \notin A(D')$.
    \end{proofclaim}

    Applying this claim with $u = v$, this proves $ta, au, ua, aw \notin A(D)$ and thus that $D$ is a Haj\'os bijoin.

\end{proof}


\subsection{Proof of Theorem~\ref{thm:decHJHBJ}}
    
    Let $k \geq 3$. 
    We prove the theorem by induction on the number of vertices.  Let $D$ be $k$-extremal, and assume by contradiction that $D$ is neither a symmetric complete graph,  a symmetric odd wheel, a directed Haj\'os  join, nor a Haj\'os bijoin.

    Given a digraph  $D$, a \emph{flower} of $D$ is an induced subdigraph $F$ of $D$ isomorphic to a symmetric path $P$ with an even number of vertices plus a vertex $x$ linked to each vertex of $P$ by a digon, and such that internal vertices of $P$ has no neighbour outside $F$, while other (that is $x$ and the two extremities of $P$) have exactly one inneighbour and one outneighbour outside $F$. The  vertex $x$ is called the \emph{center} of $F$.

    \begin{claim}\label{clm:coupe_extreme}
    $D$ has a minimum dicut $(X, \overline{X})$ such that either $k \geq 4$ and $D[\overline{X}] = \bid{K}_k$, or $k=3$ and $D[\overline{X}]$ is a flower of $D$. Moreover, if $D[\overline{X}] = \bid K_k$, then each vertex of $\overline{X}$ has exactly one inneighbour and one outneighbour in $X$.
    \end{claim}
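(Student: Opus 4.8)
The plan is to reduce, via a single contraction, to a strictly smaller $k$-extremal digraph, invoke the induction hypothesis of Theorem~\ref{thm:decHJHBJ}, and then read the structure of one side of a dicut off the resulting complete graph or wheel.

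First I would produce a non-isolating minimum dicut. Since $D$ is by assumption none of $\bid{K}_{k+1}$, a symmetric odd wheel, a directed Haj\'os join, or a Haj\'os bijoin --- and a bidirected Haj\'os join is a degenerate Haj\'os bijoin --- Lemma~\ref{lem:all_cuts_isolate_vertices} (when $k \geq 4$) and Lemma~\ref{lem:all_cuts_isolate_vertices_three} (when $k=3$) guarantee a minimum dicut $(X,\overline X)$ that does not isolate a vertex, so $|X|,|\overline X| \geq 2$. By Lemma~\ref{lem:contraction_extremal} one of $D/X$, $D/\overline X$ is $k$-extremal; since $D$ is Eulerian (Lemma~\ref{lem:prop_k-extremal}) the dicut $(\overline X, X)$ is also minimum, so after renaming the two sides I may assume $H := D/X$ is $k$-extremal. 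Writing $a$ for the vertex into which $X$ is contracted, we have $H[\overline X] = D[\overline X]$, and $a$ has in- and out-degree exactly $k$ in $H$ because the dicut carries $k$ arcs in each direction. As $|X| \geq 2$, $H$ has fewer vertices than $D$.

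Applying the induction hypothesis to $H$, it is $\bid{K}_{k+1}$, a symmetric odd wheel (only if $k=3$), a directed Haj\'os join, or a Haj\'os bijoin. The crux --- and the step I expect to be the main obstacle --- is to discard the last two cases. If $H$ were a directed Haj\'os join there would be an arc $e$ of $H$ such that $H-e$ has a cutvertex, and if $H$ were a Haj\'os bijoin there would be two arcs $tu, uw$ of $H$ such that $H-\{tu,uw\}$ has a cutvertex. I would lift this separating structure back to $D$: un-contracting $a$ replaces it by $X$, whose only links to $\overline X$ are the $2k$ dicut arcs, so by tracing where the distinguished arc(s) and the cutvertex of $H$ originate I aim to exhibit in $D$ either a single arc $e'$ with $D-e'$ having a cutvertex, or two arcs sharing an endpoint whose deletion creates a cutvertex. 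Lemma~\ref{lem:HJsufficient} then forces $D$ to be a directed Haj\'os join, and Lemma~\ref{lem:HB_sufficient} forces $D$ to be a directed Haj\'os join or a Haj\'os bijoin; either way we contradict the standing assumption on $D$. The delicate bookkeeping arises exactly when the distinguished arc of $H$, or the cutvertex of $H$, is incident to $a$, since then the separation of $D$ must be routed through the dicut rather than copied directly from $H$.

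Having ruled out the joins, only $\bid{K}_{k+1}$ and (for $k=3$) the symmetric odd wheel survive. If $H=\bid{K}_{k+1}$, then $D[\overline X]=H-a=\bid{K}_k$ and $|\overline X|=k$; each vertex of $\overline X$ is joined to $a$ by a digon in $H$, so in $D$ it has at least one in- and one out-neighbour in $X$, and since the dicut carries exactly $k$ arcs in each direction across the $k$ vertices of $\overline X$, each vertex of $\overline X$ has exactly one of each, which is the ``moreover'' statement. For $k \geq 4$ this is the required $\bid{K}_k$; for $k=3$ one checks that $\bid{K}_3$ is the degenerate flower on the path with two vertices, so the conclusion again holds. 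Finally, if $k=3$ and $H=\bid{W}_{2\ell+1}$ with $\ell \geq 2$, then its hub has degree $2\ell+1\geq 5$ whereas $a$ has degree $3$, so $a$ must be a rim vertex (the case $\ell=1$ being $\bid{K}_4$, already treated). Then $D[\overline X]=H-a$ is the hub joined by digons to the symmetric path on the remaining $2\ell$ rim vertices, i.e. a flower centered at the hub: the three neighbours of $a$ in $H$ --- the hub and the two path endpoints --- are exactly the center and the two extremities of this flower, each incident to precisely one dicut arc in each direction, while the internal path vertices keep all their neighbours inside $H-a$. This matches the definition of a flower and yields the desired minimum dicut, completing the proof of the claim.
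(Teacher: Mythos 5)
Your proposal follows the same route as the paper: obtain a non-isolating minimum dicut from Lemmata~\ref{lem:all_cuts_isolate_vertices} and~\ref{lem:all_cuts_isolate_vertices_three}, contract one side via Lemma~\ref{lem:contraction_extremal}, apply the induction hypothesis of Theorem~\ref{thm:decHJHBJ} to $H=D/X$, and read off $\bid K_k$ or a flower from the two surviving outcomes. Your treatment of those two final cases (including the observation that $\bid K_3$ is itself a flower when $k=3$, and the degree count giving exactly one in- and one out-neighbour in $X$ for each vertex of $\overline X$) matches the paper and is correct.

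However, at the step you yourself single out as the crux --- excluding the possibility that $H$ is a directed Haj\'os join or a Haj\'os bijoin --- there is a genuine gap. You propose to lift the separating structure of $H$ (a cutvertex plus one or two distinguished arcs) back to $D$, and you correctly observe that this fails to be automatic precisely when that cutvertex, or a distinguished arc, is incident to the contracted vertex $a$; but you leave that case unresolved, saying only that the separation ``must be routed through the dicut.'' If the cutvertex of the join decomposition of $H$ were $a$ itself, un-contracting would replace $a$ by the whole set $X$ and no single cutvertex of $D$ need survive, so the lifting genuinely breaks down there. The paper closes this case not by routing anything through the dicut but by showing it cannot occur: if $H$ is a join of two $k$-extremal digraphs $D_1$ and $D_2$ with identified vertex $b$, then $b_1$ and $b_2$ each have out-degree at least $k$ in their respective $D_i$, so $b$ has out-degree at least $2k-2>k$ in $H$, whereas $a$ has out-degree exactly $k$ (it is the head of a minimum dicut in the Eulerian digraph $D$); hence $b\neq a$, $a$ lies strictly inside one of the two parts, and un-contraction manifestly preserves the join structure. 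Supplying this degree argument is what your proof is missing; with it, the rest of your write-up goes through as stated.
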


    \begin{proofclaim}    
        By  Lemma~\ref{lem:all_cuts_isolate_vertices} and Lemma~\ref{lem:all_cuts_isolate_vertices_three}, $D$ has  a minimum dicut $(A,\overline{A})$  such that $|A| > 1$ and $|\overline{A}| > 1$. 
        By Lemma~\ref{lem:contraction_extremal}, up to permuting $A$ and $\overline{A}$,  we may assume that $D / A$ is $k$-extremal and thus by induction is either a symmetric complete graphs on $k+1$ vertices, a symmetric complete wheel, a Haj\'os bijoin of two $k$-extremal digraphs or a directed Haj\'os  join of two $k$-extremal digraphs.   
        
        Let $a$ be the vertex into which $A$ is contracted in $D / A$. Observe that, since $(A, \overline{A})$ is a minimum dicut and $D$ is Eulerian, $a$ has indegree an outdegree $k$ in $D/A$. 
        
        Assume first that there exist two $k$-extremal digraphs $D_1$ and $D_2$ such that $D / A$ is either a Haj\'os bijoin of $D_1$ and $D_2$ with respect to $\big((t,b_1,w), (v,b_2, u)\big)$ or a directed Haj\'os  join of $D_1$ and $D_2$ with respect to $(ub_1, b_2w)$, and let $b$ the vertex into which $b_1$ and $b_2$ are identified in $D$. Then, as $b_1$ and $b_2$ both have outdegree at least $k$ in respectively $D_1$ and $D_2$, $b$ has outdegree at least $2k-2 > k$ in $k$, and thus $b \neq a$. 
        We may thus assume that $a$ is in $D_1 \setminus a$ or in $D_2 \setminus a$. Now, when one un-contracts the vertex $a$ to get the original digraph $D$, it is clear that the structure of the directed Haj\'os join or Haj\'os bijoin is preserved, i.e. $D$ is a directed Haj\'os join or a Haj\'os bijoin,  a contradiction.
        
        Hence, we may assume that $D/A$ is a symmetric complete graph or a symmetric odd wheel in which $a$ is a vertex of outdegree $k$. Hence, $D[\overline{A}]$ is either a $\bid K_k$, or a symmetric path $P$ with an even number of vertices plus a vertex $x$ linked via a digon to each vertex of $P$. 

        Assume first that $D[\overline{A}]=\bid K_k$. Then each vertex of $\overline{A}$ has at least one inneighbour and one outneighbour in $A$ (because they linked to $a$ via a digon in $D/A$), and since $(A,\overline{A})$ is a minimum dicut, it has exactly one of each. 
        Assume now that we are in the  case where $D/A$ is a symmetric odd wheel, and thus $D[\overline{A}]$ is a symmetric path $P$ with an even number of vertices plus a vertex $x$ linked via a digon to each vertex of $P$. Observe that $x$ and each extremity of $P$ has at least one inneighbour and one outneighbour in $A$ (because they are linked via a digon to $a$ in $D/A$). Hence, since $(A, \overline{A})$ is a minimum dicut, $D[\overline{A}]$ is a flower. 
        
        Hence, by taking $X=A$, we get the desired properties.

    \end{proofclaim}

        
    We distinguish between two cases depending on whether or not there is a vertex of $\overline{X}$ that has two distinct in- and out-neighbour in $X$.
    \smallskip 

    \noindent\textbf{Case 1}: There exist $v \in \overline{X}$ and $u,w \in X$ such that $uv, vw \in A(D)$ and $u \neq w$
        
    Let $D' = D - \{uv,vw\} + \{uw\}$. Due to Lemma~\ref{lem:lambda_diminishing}, $\lambda(D') \leq \lambda(D) = k$. Let us now show that $\dic(D') \geq k+1$. Suppose for contradiction that  $D'$ admits a $k$-dicolouring $\varphi$. 
    As $\dic(D) = k+1$, $\varphi$ is not a $k$-dicolouring of $D= D' + \{uv, vw\} -\{uw\}$, and thus there is a monochromatic dicycle $C$ in $D' + \{uv, vw\} -\{uw\}$ containing $uv$ or $vw$ or both. 
    By claim~\ref{clm:coupe_extreme}, $v$ is linked via a digon to all its neighbours except for $u$ and $w$, so $C$ contains both $uv$ and $vw$. By replacing the arcs $uv$ and $vw$ by $uw$ in $C$, we get a monochromatic dicycle in $D'$, a contradiction. 
    Thus $k+1 \leq \dic(D') \leq \lambda(D') + 1 \leq k+1$ and hence $\dic(D') = \lambda(D') + 1 = k+1$. 

    Since there are $k$ arc-disjoint dipaths between any pair of vertices in $D$, there are at least $k-2 \geq 1$ arc-disjoint dipaths between any pair of vertices in $D'$, thus $D'$ is strong.
    Since $|\partial_{D'}^{+}(X)| = k-1$, $D'$ is not $k$-extremal by  Lemma~\ref{lem:prop_k-extremal}.  Thus $D'$ is not biconnected. 
    
    Let $a$ be a cutvertex of $D'$. Since $uw \in A(D')$, $u$ and $w$ are together in a block $B_1$ of $D'$. Since $D[\overline{X}]$ is either $\bid K_k$ or a flower of $D$, $\overline{X}$ is included in a block $B_2$ of $D'$. If $D'$ has a block distinct from $B_1$ and $B_2$, then $a$ is a cutvertex of $D$, a contradiction. Thus $B_1$ and $B_2$ are distinct and are the only blocks of $D'$. Now, $a$ is a cutvertex of $D- \{uv,vw\}$, and thus, by Lemma~\ref{lem:HB_sufficient}, $D$ is a Haj\'os bijoin or a directed Haj\'os join, a contradiction. 

    \medskip 
    
    \noindent\textbf{Case 2:} There are only digons between $X$ and $\overline{X}$ 

     Assume $D[X]$ is not strong and let $C_t$ be a terminal component of $D[X]$. We have  $\partial^+(C_t) \subseteq \overline X$ and $|\partial^+(C_t)| \geq k$, so the digons linking $X$ and $\overline{X}$ are all incident with some vertex of $C_t$. Thus $D$ is not strong, a contradiction. Hence $D[X]$ is strong.

    \noindent\textbf{Case 2a:} $D[X]$ is not biconnected.
    
    Consider $B = (B_1, \dots, B_n)$ a longest path of blocks in $D[X]$. Since $D[X]$ is not biconnected, $n \geq 2$. 
    
    Let $V(B_1) \cap V(B_{2}) = \{a\}$. 
    There is a digon between $V(B_1) \setminus a$ and $\overline{X}$, for otherwise $a$ is a cutvertex of $D$. 
    Suppose there is only one digon $[b,y]$ between $V(B_1) \setminus a$ and $\overline{X}$, with $b \in V(B_1) \setminus a$ and $y \in \overline{X}$. 
    Then $D \setminus a - [b,y]$ is not connected, thus by Lemma~\ref{lem:HB_sufficient}, $D$ is a directed Haj\'os join or a Haj\'os bijoin, a contradiction.
    
    Thus we can assume that there are at least two digons between $\overline{X}$ and $V(B_1) \setminus V(B_2)$, say $[a_1,y_1]$ and $[b_1,y_1']$ with $a_1, b_1 \in V(B_1) \setminus V(B_2)$ and $y_1,  y_1' \in \overline{X}$. 
    Similarly, there are two digons between $\overline{X}$ and $V(B_n) \setminus V(B_{n-1})$, say $[a_n,y_n]$ and $[b_n,y_n']$ with $a_n, b_n \in V(B_n) \setminus V(B_{n-1})$. 
    Note that $a_1 = b_1$ and $a_n = b_n$ are possible, but $y_1, y'_1, y_n, y'_n$ are pairwise distinct by claim~\ref{clm:coupe_extreme}. As there are at least $4$ digons between $X$ and $\overline{X}$, we have $k \geq 4$.

    Set $H= B + [a_1, a_n]$ and let us prove that $\lambda(H) = k$. 
    
    By Lemma~\ref{lem:mono_vs_rainbow}, $a_1$ and $a_n$ receive the same colour in all $k$-dicolouring of $D[X]$, and since any $k$-dicolouring of $B$ can easily be extended to a $k$-dicolouring of $D[X]$, the same holds for any $k$-dicolouring of $B$, and thus  
    $\dic(H) \geq k+1$. Let $ P_{a_1a_n} = a_1 \rightarrow y_1 \rightarrow y_n \rightarrow a_n$ and  $P_{a_na_1} = a_n \rightarrow y_n \rightarrow y_1 \rightarrow a_1$. 
    By Lemma~\ref{lem:lambda_diminishing},  $\lambda(D+[a_1, a_n] - A(P_1) - A(P_2)) \leq \lambda(D) = k$, and thus $\lambda(H) \leq k$. 
    Thus $k+1 \leq \dic(H) \leq  \lambda(H) + 1 \leq k+1$, so $\lambda(H) = k$. 
        
    Hence there are $k$ arc-disjoint $b_1 b_n$-dipaths in $H$. But replacing any potential use of $a_1a_n$ by $P_{a_1a_n}$ and any potential use of $a_na_1$ by $P_{a_na_1}$, and considering the dipath $b_1 \rightarrow y'_1 \rightarrow y'_n \rightarrow b_n$, we get $k+1$ arc-disjoint  $b_1b_n$-dipaths in $D$, a contradiction.
    \medskip 
    
    \noindent\textbf{Case 2b:} $D[X]$ is biconnected.

    If a vertex $a \in X$ is adjacent to every vertex in $\overline{X}$, then $a$ is a cutvertex of $D$, a contradiction.

    

    Let $a, b \in X$, $a \neq b$, such that there exist $a',b' \in \overline{X}$ with $[a,a'], [b,b'] \subseteq A(D)$. If $k=3$, let them be chosen so that neither $a'$ nor $b'$ is the center of $D[\overline{X}]$.
    Let $D' = D[X] + [a,b]$.
    Since $D[X]$ is strong and biconnected, so is $D'$.
    By Lemma~\ref{lem:mono_vs_rainbow}, $\dic(D') \geq k + 1$ for in every $k$-dicolouring $\varphi$ of $D[X]$, $\varphi(a) = \varphi(b)$.
    By Lemma~\ref{lem:lambda_diminishing}, $\lambda(D') \leq \lambda(D) = k$.
    Thus $D'$ is $k$-extremal.

    By induction, $D'$ is either a symmetric odd wheel, a symmetric complete graph, a directed Haj\'os join of two digraphs $D_1$ and $D_2$ or a Haj\'os bijoin of two digraphs $D_1$ and $D_2$.

    If $D'$ is a directed Haj\'os join of two digraphs $D_1$ and $D_2$, then either $a, b \in V(D_1)$ or $a, b \in V(D_2)$, and thus $D$ is a directed Haj\'os join, a contradiction.

    If $D'$ is a Haj\'os bijoin of two digraphs $D_1$ and $D_2$ with respect to $((t,x_1,w),(v,x_2,u))$, then if either $a, b \in V(D_1)$ or $a,b \in V(D_2)$, $D$ is itself a Haj\'os bijoin, a contradiction. Otherwise, we have $t=w$, $v = u$ and $\{t,u\} = \{a,b\}$, which contradicts that $D[X]$ is biconnected.

    Thus $D'$ is  $\bid K_{k+1}$ or a symmetric odd wheel. 

    Suppose there is no vertex in $X \setminus \{a,b\}$ with a neighbour in $\overline{X}$. Then  $a$ and $b$  both have at least two neighbours in $\overline{X}$, for otherwise either $D \setminus [a,a']$ or $D \setminus [b,b']$ has a cutvertex, and thus by Lemma~\ref{lem:HB_sufficient}, $D$ is a directed Haj\'os join or a Haj\'os bijoin, a contradiction. 
    Note that this implies that $k \geq 4$. Let $a',a'' \in \overline{X} \cap N(a)$ and $b', b'' \in \overline{X} \cap N(b)$. Then, since $\lambda_{D'}(a,b) = k$, we have that $\lambda_{D[X]}(a,b) \geq k-1$, that is there exist $k-1$ arc-disjoint $ab$-dipaths in $D[X]$. But then, since $a \rightarrow a' \rightarrow b' \rightarrow b$ and $a \rightarrow a'' \rightarrow b'' \rightarrow b$ are arc-disjoint  $ab$-dipaths that do not use any arc of $D[X]$, we have that $\lambda_{D}(a,b) \geq k + 1$, a contradiction.

    Thus, there exists a vertex $c \in X \setminus \{a,b\}$ with a neighbour $c' \in \overline{X}$.     
    Observe that $c$ is not adjacent with $a$ nor $b$, for either $[a,c] \in A(D)$ or $[b,c] \in A(D)$ (because $D' = D[X] + [a,b]$ is symmetric) and in any $k$-dicolouring $\varphi$ of $D[X]$, $\varphi(a) = \varphi(b) = \varphi(c)$ by Lemma~\ref{lem:mono_vs_rainbow}, a contradiction. This implies that $D' \neq \bid K_{k+1}$, and thus $k=3$ and $D'$ is a symmetric odd wheel. Let $x$ be the center of $D'$. Since $c$ is neither a neighbour of $a$ nor of $b$, we have that $x \notin \{a,b,c\}$. Let $d,e$ be the two other neighbours of $c$ in $D$. Then, $x \rightarrow d \rightarrow c$, $x \rightarrow e \rightarrow c$, $x \rightarrow a \rightarrow a' \rightarrow c' \rightarrow c$ and $x \rightarrow c$ are $4 > k$ arc-disjoint $xc$-dipaths, a contradiction.


\section{Haj\'os tree joins - Structure Theorems}\label{sec:mainthm}
At the end of this section, we will prove the main result of this article: $k$-extremal digraphs are exactly the digraphs in $\hk$, which we recall is a class built from  $\bid K_{k+1}$ (for $k\geq 4$) or symmetric odd wheels (for $k = 3$) using directed Haj\'os  joins and Haj\'os tree joins. In order to simplify our arguments, we will prove in fact an equivalence with another class called $\htk$ that is based on a single operation called extended Haj\'os tree join (which generalizes both Haj\'os tree joins and directed Haj\'os join) defined below. See Figure~\ref{fig:extended_hajos_tree_join}.

An \emph{Euler tour} is a closed trail that traverses each arc exactly once. 
Let $T$ be a tree. An \emph{Eulerian list} $C$ of $T$  is the circular list of the vertices of $T$ encountered following an Eulerian tour of $\bid T$ (note that there is a unique such circular list). A \emph{partial Eulerian list} $C'$ of $T$ is a circular sublist of an Eulerian list of $T$  with the following properties: 
\begin{itemize}
    \item each leaf of $T$ is in $C'$, and 
    \item each non-leaf vertex of $T$ appears at most once in $C'$. 
\end{itemize}

\begin{definition}[Extended Haj\'os tree join]\label{def:EHTJ}
Given 
\begin{itemize}
    \item a tree $T$ with edges $\{u_1v_1, \dots, u_nv_n\}$
    \item a  partial Eulerian list $C=(x_1, \dots, x_{\ell})$ of $T$,  and
    \item for $i=1, \dots, n$, $D_i$ a digraph such that 
    \begin{itemize}
        \item $V(D_i) \cap V(T) = \{u_i,v_i\}$, 
        \item $[u_i,v_i] \subseteq A(D_i)$, and 
        \item for $1 \leq i \neq j \leq n$, $V(D_i) \setminus \{u_i, v_i\} \cap V(D_j) \setminus \{u_j, v_j\} = \emptyset$. 
    \end{itemize}
\end{itemize}
We define the \emph{extended Haj\'os tree join} $T(D_1, \dots, D_n;C)$ to be the digraph $D$ obtained from $D_i -[a_i,b_i]$ for $i=1, \dots, n$ by adding the dicycle $C = x_1 \ra x_2 \ra \dots \ra x_{\ell} \ra x_1$. 

We say that $D$ is the \emph{extended Haj\'os tree join} of $(T, D_1, \dots, D_n)$ with respect to $C$. 

$C$ is called the  \emph{peripheral cycle} of $D$  and vertices $u_1, v_1, \dots, u_n,v_n$ are the \emph{junction vertices} of $D$ (note that there are $n-1$ of them). \\

\end{definition}

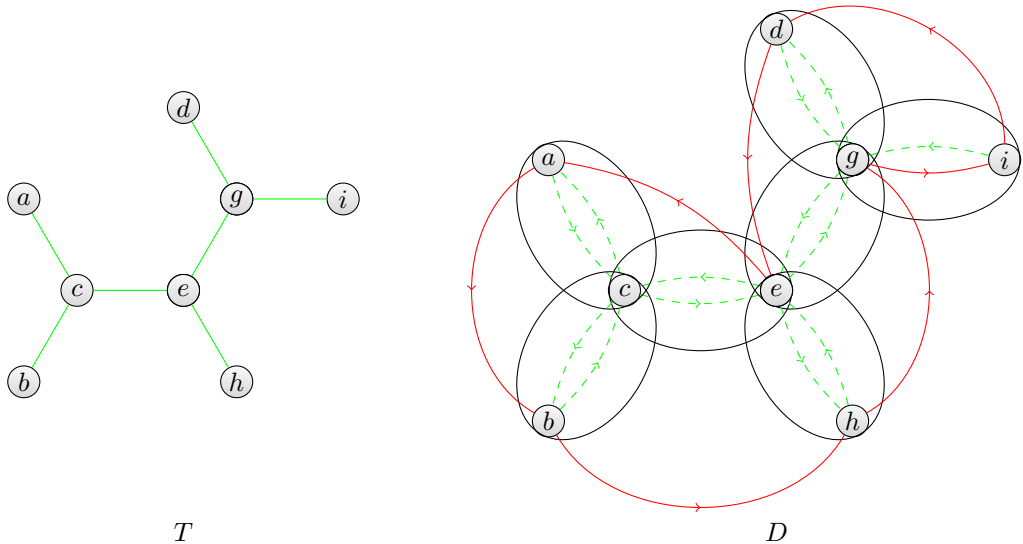
\begin{figure}[!hbtp]
    \begin{center}
        \begin{tikzpicture}[scale=0.4]

            \begin{scope}[xshift=-18cm,scale = 0.7]              
                \begin{scope}
                    \vertex (c) at (0,0) {$c$};
                    \vertex (e) at (5,0) {$e$};
                    \draw[green] (c) -- (e);
                \end{scope}
    
                \begin{scope}[shift=(c), rotate = 120]
                    \vertex (a) at (5,0) {$a$};
                    \draw[green] (a) -- (c);
                \end{scope}
    
                \begin{scope}[shift=(c), rotate = -120]
                    \vertex (b) at (5,0) {$b$};
                    \draw[green] (b) -- (c);
                \end{scope}
    
                \begin{scope}[shift=(e), rotate = 60]
                    \vertex (e) at (0,0) {$e$};
                    \vertex (g) at (5,0) {$g$};
                    \draw[green] (e) -- (g);
                \end{scope}
    
                \begin{scope}[shift=(e), rotate = -60]
                    \vertex (e) at (0,0) {$e$};
                    \vertex (h) at (5,0) {$h$};
                    \draw[green] (e) -- (h);
                \end{scope}
    
                \begin{scope}[shift = (g)]
                    \vertex (g) at (0,0) {$g$};
                    \vertex (i) at (5,0) {$i$};
                    \draw[green] (i) -- (g);
                \end{scope}
                
                \begin{scope}[shift = (g), rotate=120]
                    \vertex (g) at (0,0) {$g$};
                    \vertex (d) at (5,0) {$d$};
                    \draw[green] (d) -- (g);
                \end{scope}

            \end{scope}

            \node () at (e |- 0, -8) {$T$};

            \begin{scope}
                \vertex (c) at (0,0) {$c$};
                \vertex (e) at (5,0) {$e$};
                \draw[->-, bend right = 15, green, dashed] (c) to (e);
                \draw[->-, bend right = 15, green, dashed] (e) to (c);
                \draw (2.5,0) ellipse (3cm and 2cm) {};
            \end{scope}

            \begin{scope}[shift=(c), rotate = 120]
                \vertex (a) at (5,0) {$a$};
                \draw[->-, bend right = 15, green, dashed] (c) to (a);
                \draw[->-, bend right = 15, green, dashed] (a) to (c);
                \draw (2.5,0) ellipse (3cm and 2cm) {};
            \end{scope}

            \begin{scope}[shift=(c), rotate = -120]
                \vertex (b) at (5,0) {$b$};
                \draw[->-, bend right = 15, green, dashed] (c) to (b);
                \draw[->-, bend right = 15, green, dashed] (b) to (c);
                \draw (2.5,0) ellipse (3cm and 2cm) {};
            \end{scope}

            \begin{scope}[shift=(e), rotate = 60]
                \vertex (e) at (0,0) {$e$};
                \vertex (g) at (5,0) {$g$};
                \draw[->-, bend right = 15, green, dashed] (e) to (g);
                \draw[->-, bend right = 15, green, dashed] (g) to (e);
                \draw (2.5,0) ellipse (3cm and 2cm) {};
            \end{scope}

            \begin{scope}[shift=(e), rotate = -60]
                \vertex (e) at (0,0) {$e$};
                \vertex (h) at (5,0) {$h$};
                \draw[->-, bend right = 15, green, dashed] (e) to (h);
                \draw[->-, bend right = 15, green, dashed] (h) to (e);
                \draw (2.5,0) ellipse (3cm and 2cm) {};
            \end{scope}

            \begin{scope}[shift = (g)]
                \vertex (g) at (0,0) {$g$};
                \vertex (i) at (5,0) {$i$};
                \draw[->-, bend right = 15, green, dashed] (i) to (g);
                \draw (2.5,0) ellipse (3cm and 2cm) {};
            \end{scope}
            
            \begin{scope}[shift = (g), rotate=120]
                \vertex (g) at (0,0) {$g$};
                \vertex (d) at (5,0) {$d$};
                \draw[->-, bend right = 15, green, dashed] (g) to (d);
                \draw[->-, bend right = 15, green, dashed] (d) to (g);
                \draw (2.5,0) ellipse (3cm and 2cm) {};
            \end{scope}

            \node () at (e |- 0, -8) {$D$};

            \draw[->-, bend right = 60, red] (a) to (b);
            \draw[->-, bend right = 60, red] (b) to (h);
            \draw[->-, bend right = 60, red] (h) to (g);
            \draw[->-, bend right = 15, red] (g) to (i);
            \draw[->-, bend right = 60, red] (i) to (d);
            \draw[->-, bend right = 20, red] (d) to (e);
            \draw[->-, bend right = 20, red] (e) to (a);

        \end{tikzpicture}
        \end{center}
           \caption{A cartoonish drawing of an extended Haj\'os tree join $D$. Its peripheral cycle is in red. Removed digons are in dashed green. $T$ is the corresponding tree.}
           \label{fig:extended_hajos_tree_join}
        \end{figure}

Observe that an extended Haj\'os tree join in which the partial Eulerian list only uses leaves of $T$ is a Haj\'os tree join. Observe also that the peripheral cycle may be a digon in the case where $T$ is a path and the partial Eulerian list only contains the extremities of $T$. Note that every digraph $D$ that contains a digon is an extended \Haj tree join of itself, as if we denote by $T$ the tree consisting of the single edge $uv$, then we have the trivial identity $D=T(D,[uv])$.

We denote by $W_{\ell}$ the wheel on $\ell+1$ vertices. 
\begin{definition}[The class $\htk$]
For $k \geq 4$, let $\htk$ be the smallest class of digraphs that contains $\bid K_{k+1}$ and is closed under taking extended Haj\'os tree joins. Let $\htthree$ be the smallest class of digraphs that contains $\bid W_{2\ell+1}$, for every integer $\ell \geq 1$, and is closed under taking extended Haj\'os tree joins.
\end{definition}



The two following lemmata imply (by induction on the number of vertices) that $\htk\subseteq \hk$.

\begin{lemma}\label{lem:HTK=HJorHT}
Let $k \geq 3$. Let $D\in \htk$, $D$ distinct from a symmetric complete digraph and a symmetric odd wheel. Then $D$ is either a directed Haj\'os join or a Haj\'os tree join of  digraphs in $\htk$.
\end{lemma}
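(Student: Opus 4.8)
The plan is to exploit that $\htk$ is \emph{generated} from the base objects ($\bid K_{k+1}$, or the $\bid W_{2\ell+1}$ when $k=3$) by extended Haj\'os tree joins, and then to read off the desired decomposition from the position of the peripheral cycle relative to the tree. First I would fix a representation $D=T(D_1,\dots,D_n;C)$ as an extended Haj\'os tree join with each $D_i\in\htk$, chosen with $n$ maximum. Since the only representation with $n=1$ is the trivial identity $D=D_1$ (the peripheral cycle being the digon on the single edge of $T$), maximality of $n$ together with the hypothesis that $D$ is not a base object forces $n\geq 2$: an element of $\htk$ admitting only the trivial representation is atomic for the operation, hence a base object.

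Next I would split into two cases according to whether the peripheral cycle $C$ visits a non-leaf vertex of $T$. If $C$ uses only leaves of $T$, then by the observation following Definition~\ref{def:EHTJ} the circular order induced by $C$ on the leaves is the natural planar order, so $D$ is literally a Haj\'os tree join (Definition~\ref{def:HTJ}) of the digraphs $D_1,\dots,D_n\in\htk$, and there is nothing more to prove.

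The interesting case is when $C$ passes through some internal vertex $x$ of $T$; since $C$ is a partial Eulerian list, $x$ occurs exactly once in $C$. Let $B_{(1)},\dots,B_{(d)}$ (with $d=\deg_T(x)\geq 2$) be the branches of $T$ at $x$, listed in the order in which the underlying Euler tour enters them after the recorded copy of $x$, and split them into the two nonempty groups $\{B_{(1)}\}$ and $\{B_{(2)},\dots,B_{(d)}\}$, giving subtrees $T_2,T_1$ of $T$ (both containing $x$) with vertex sets $S_2,S_1$. The key geometric fact is that, because $x$ is recorded only once, the passage of the Euler tour through $x$ between $B_{(1)}$ and $B_{(2)}$ is \emph{shortcut} in $C$: it becomes a single peripheral arc $p\ra q$ with $p$ the last recorded vertex of $B_{(1)}$ and $q$ the first of $B_{(2)}$. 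Every $D_i$ lives inside one side together with $x$, and every other peripheral arc stays within one side, so $p\ra q$ is the unique arc of $D$ joining $S_2\setminus x$ to $S_1\setminus x$; in particular $x$ is a cutvertex of $D-pq$. Setting $E_1=D[S_2]+px$ and $E_2=D[S_1]+xq$, this exhibits $D$ as the directed Haj\'os join of $E_1$ and $E_2$ with respect to $(px,xq)$ (Definition~\ref{def:directed_HJ}).

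The main work — and the main obstacle — is to verify that $E_1,E_2\in\htk$. For this I would check that the restriction of $C$ to $S_i$, closed up by the new arc ($p\ra x$ for $E_1$, $x\ra q$ for $E_2$), is a valid partial Eulerian list of $T_i$: restricting an Euler tour of $\bid T$ to the subtree $T_i$ and shortcutting the removed passages through $x$ yields an Euler tour of $\bid{T_i}$, all leaves of $T_i$ still appear, and $x$ appears at most once. Consequently $E_i$ is exactly the extended Haj\'os tree join over $T_i$ of those $D_j$ whose tree edge lies in $T_i$, so $E_i\in\htk$ by closure. I would also record the small but necessary point that the added arcs $p\ra x$ and $x\ra q$ are not already present in $D$ — they are precisely the arcs that the shortcut (respectively the removal of the digon $[u_j,v_j]$) deleted, and $D$ has no parallel arcs — so the two directed Haj\'os join operands are well defined. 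The bulk of the difficulty is this bookkeeping linking partial Eulerian lists of $T$ and of its subtrees to the local picture of a single crossing arc demanded by the directed Haj\'os join.
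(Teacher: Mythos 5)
Your proposal is correct and follows essentially the same route as the paper: if the peripheral cycle avoids internal vertices of $T$ you get a Haj\'os tree join directly, and otherwise you split $T$ at an internal vertex $v$ recorded once in $C$, identify the unique shortcut arc of $C$ crossing between the two sides, and exhibit $D$ as the directed Haj\'os join of the two extended Haj\'os tree joins obtained on each side (with the arcs into and out of $v$ restored). The only difference is that you spell out details the paper leaves implicit — the non-triviality of the chosen representation and the verification that the restricted lists are partial Eulerian lists of the subtrees — which is fine.
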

\begin{proof} 
    Denote $D=T(D_1, \dots, D_n;C)$ as in Definition~\ref{def:EHTJ}. If $C$ does not use any internal vertex of $T$, then $D$ is a Haj\'os tree join and we are done. So we can assume $C$ uses an internal vertex $v$ of $T$. Let $X$ be the connected component of $T \setminus v$ that contains the out-neighbour of $v$ in $C$. 
    Let $\mathcal L_1$ be the list of digraphs corresponding to the edges of $T[X \cup v]$, and $\mathcal L_2 = \{D_1, \dots, D_n\}\setminus \mathcal L_1$. As $C$ is a partial Eulerian list, which is thus obtained from an Eulerian tour, it is of the form $C = vP_{X}P_{\overline{X}}$ where $P_X$ is the portion of $C$ contained in $X$, and $P_{\overline X}$ is the portion of $C$ contained in $V(T)\setminus (X\cup v)$.  
    Note that, since $v$ is a cutvertex of $T$, $P_{X}$ and $P_{\overline{X}}$ are non-empty

    Let $x$ be the last vertex of $P_{X}$ and $y$ be the first element of $P_{\overline{X}}$. Then $D \setminus v - xy$ is disconnected, and thus $D$ is a directed Haj\'os join of 
    $D'_1 = D[\bigcup_{G \in \mathcal L_1} V(G)] + xv$, 
    and $D'_2 = D[\bigcup_{G \in \mathcal L_2} V(G)] + vy$, which are clearly in $\htk$.

\end{proof}

\begin{lemma}\label{lem:HJ_preserve_htk}
Let $k \geq 3$. A directed Haj\'os join of two digraphs in $\htk$ is in $\htk$.
\end{lemma}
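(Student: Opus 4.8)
The plan is to induct on $|V(D_1)|+|V(D_2)|$. Write $D$ for the directed Haj\'os join of $D_1,D_2\in\htk$ with respect to $(uv_1,v_2w)$ as in Definition~\ref{def:directed_HJ}, so $D$ is obtained from $D_1-uv_1$ and $D_2-v_2w$ by identifying $v_1,v_2$ into a vertex $v$ and adding the arc $uw$. The heart of the argument is a dichotomy on where the arc $uv_1$ sits in an extended Haj\'os tree join representation of $D_1$ (and symmetrically for $v_2w$ in $D_2$): either it lies inside a proper piece, in which case I would absorb $D_2$ into that piece and close by induction, or it can be exposed as a \emph{peripheral} arc, in which case I would build $D$ directly as a single extended Haj\'os tree join.

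For the reduction step, suppose $D_1$ has a representation $D_1=T^1(A_1,\dots,A_p;C^1)$ as in Definition~\ref{def:EHTJ} in which $uv_1$ is an internal arc of a proper piece $A_i$ (so $u,v_1\in V(A_i)$ and $|V(A_i)|<|V(D_1)|$). Being internal, $uv_1$ is distinct from the digon $[u_i,v_i]$ deleted when forming the join, so deleting $uv_1$, identifying $v_1=v_2$ and adding $uw$ affect only $A_i$ and $D_2$; concretely $D=T^1(A_1,\dots,A_{i-1},A_i',A_{i+1},\dots,A_p;C^1)$ where $A_i'$ is the directed Haj\'os join of $A_i$ and $D_2$ with respect to $(uv_1,v_2w)$. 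I would check that $A_i'$ still contains $[u_i,v_i]$ (so this is a legal join), and since $|V(A_i)|+|V(D_2)|<|V(D_1)|+|V(D_2)|$ the induction hypothesis gives $A_i'\in\htk$, whence $D\in\htk$. The symmetric statement for $D_2$ disposes of the case where $v_2w$ lies in a proper piece.

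If neither reduction applies, I claim $uv_1$ and $v_2w$ can be taken to be peripheral. Indeed, if $D_1$ is not a base digraph then, being in $\htk$, it arises as an extended Haj\'os tree join with at least two edges (a single-edge join merely reproduces its unique piece), and every such piece is proper; so in that representation $uv_1$ is either internal to a proper piece (reduction) or peripheral. If $D_1$ is a base digraph it is symmetric, so $[u,v_1]$ is a digon and the trivial representation $D_1=T^1(D_1;[u,v_1])$ exhibits $uv_1$ in its peripheral cycle. Thus write $D_1=T^1(\mathcal A;C^1)$ with $u\to v_1\in C^1$ and $D_2=T^2(\mathcal B;C^2)$ with $v_2\to w\in C^2$. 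I would form $T$ by gluing $T^1$ and $T^2$ at $v_1=v_2=:v$ (still a tree, since a single vertex is identified), keep all pieces $\mathcal A\cup\mathcal B$, and take $C$ to be the cycle obtained by deleting $u\to v_1$ and $v_2\to w$ and inserting $u\to w$: writing $C^1=v\to z_1\to\cdots\to z_m\to u\to v$ and $C^2=w\to y_1\to\cdots\to y_r\to v\to w$, this yields the single dicycle $C=v\to z_1\to\cdots\to z_m\to u\to w\to y_1\to\cdots\to y_r\to v$. A direct comparison of arcs shows $T(\mathcal A\cup\mathcal B;C)=D$, so it remains only to verify that $C$ is a legal peripheral cycle, that is, a partial Eulerian list of $T$.

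This last verification is the main obstacle, and it is where the embedding of $T$ matters. The leaf and multiplicity conditions are immediate: every leaf of $T$ is a leaf of $T^1$ or of $T^2$ and hence occurs in $C^1$ or $C^2$, while $v$ is now internal and appears once in $C$. For the ordering condition I would choose the embedding of $T$ so that all edges of $T^2$ are inserted into the single corner at $v$ corresponding to the unique occurrence of $v_1$ in $C^1$; then the Euler tour of $\bid T$ is obtained from that of $\bid{T^1}$ by splicing in the Euler tour of $\bid{T^2}$ at that visit of $v$, and $C$ is recovered as a circular sublist of it by keeping $w,y_1,\dots,y_r$ together with the returning occurrence of $v$ while dropping the arriving one. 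Making this Euler-tour splicing precise, together with the routine incidence checks in the reduction step when $u$ or $v_1$ happens to be a junction vertex, is the only genuinely delicate part; the rest is bookkeeping.
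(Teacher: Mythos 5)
Your proposal is correct and follows essentially the same strategy as the paper's proof: when the arc $uv_1$ (or $v_2w$) lies inside a proper piece of a tree-join representation you push the directed Haj\'os join into that piece and close by induction (the paper's Case~2), and otherwise you glue the two trees at $v$ and splice the peripheral cycles, deleting $uv_1$ and $v_2w$ and inserting $uw$ (the paper's Case~4). The only organizational difference is that you absorb the paper's base-digraph cases (its Cases~1 and~3) into the splicing step via the trivial single-edge representation $D_1=T^1(D_1;[u,v_1])$, which is a tidy unification rather than a different argument, and both write-ups leave the verification that the spliced list is a partial Eulerian list at a comparable level of detail.
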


\begin{proof}
    Let $D$, $D'$ be two digraphs in $\htk$, $uv_1 \in A(D)$, $v_2w \in A(D')$ and let $H$ be the directed Haj\'os join of $D$ and $D'$ with respect to $(uv_1, v_2w)$. We call $v$ the vertex obtained after identifying $v_1$ and $v_2$. 
     We want to prove that $H \in \htk$. We distinguish four cases.

    \noindent\textbf{Case 1:} $D=D'=\bid K_{k+1}$ or $D =\bid W_{2\ell + 1}$ and $D' = \bid W_{2\ell' + 1}$ for some $\ell, \ell' \geq 1$.\\ 
    Let $T = (\{u,v,w\}, \{uv,vw\})$ be the path of length $2$ and let $C=u \ra v \ra  w \ra u)$. Observe that $(u,v,w,u)$ is a partial Eulerian list of $T$. Then it is easy to check that $T(D, D'; C)$ is the directed Haj\'os join of $D$ and $D'$.  See Figure~\ref{fig:Hajos_K_4}. This proves Case 1. \smallskip

    \begin{figure}[!hbtp]
    \begin{center}
        \begin{tikzpicture}[scale=1.3]
            \vertex (l1) at (-1,0) {u};
            \vertex (l2) at (-1.2,1.2) {};
            \vertex (l3) at (-2,1.5) {};
            \vertex (l4) at (0,2) {v};
        
            \vertex (r1) at (1,0) {w};
            \vertex (r2) at (1.2,1.2) {};
            \vertex (r3) at (2,1.5) {};
            
            \draw[arc, bend right=15] (l1) to (l2);
            \draw[arc, bend right=15] (l2) to (l1);
            \draw[arc, bend right=15] (l3) to (l2);
            \draw[arc, bend right=15] (l2) to (l3);
            \draw[arc, bend left = 13] (l1) to (l3);
            \draw[arc, bend right] (l3) to (l1);

            \draw[arc, bend right=15] (l2) to (l4);
            \draw[arc, bend right=15] (l4) to (l2);
            \draw[arc, bend left = 13] (l3) to (l4);
            \draw[arc, bend right] (l4) to (l3);
            
            \draw[arc, bend right=15] (r2) to (l4);
            \draw[arc, bend right=15] (l4) to (r2);
            \draw[arc, bend right = 13] (r3) to (l4);
            \draw[arc, bend left] (l4) to (r3);

            \draw[arc] (l4) to (l1);
            \draw[arc] (r1) to (l4);
            \draw[arc] (l1) to (r1);
            
            \draw[arc, bend right=15] (r1) to (r2);
            \draw[arc, bend right=15] (r2) to (r1);
            \draw[arc, bend right=15] (r3) to (r2);
            \draw[arc, bend right=15] (r2) to (r3);
            \draw[arc, bend right = 13] (r1) to (r3);
            \draw[arc, bend left] (r3) to (r1);
        \end{tikzpicture}
    \end{center}
    \caption{\label{fig:Hajos_K_4} The directed Haj\'os join of two $\bid K_4$.}
    \end{figure}

    From now on, we may assume that $D = T(D_1, \dots, D_{n'};C)$ for some tree $T$, some digraphs $D_1, \dots, D_{n}$ and a peripheral cycle $C$ built from a partial Eulerian list $L$ of $T$.
    \smallskip 

    \noindent\textbf{Case 2:} $uv_1 \notin A(C)$.\\ 
    In this case, there is $i \in [n]$ such that $uv_1 \in A(D_i)$. 
    By induction, there exists $H' \in \htk$ such that $H'$ is the directed Haj\'os join of $D_i$ and $D'$ with respect to $(uv_1, v_2w)$. 
    Then $H=T(D_1, \dots, D_{i-1}, H', D_{i+1}, \dots, D_{n}; C) \in \htk$. This proves Case 2. \smallskip 
    
    \noindent\textbf{Case 3:} $uv_1 \in A(C)$ and $D'$ is a symmetric complete graph or a symmetric odd wheel.\\
    In particular, $u$ and $v_1$ are vertices of $T$. 
    Then  $H=T'(D_1, \dots, D_n, D'; C') \in \htk$, where $T'$ is obtained from $T$ by adding the vertex $w$ and the edge $v_1w$, and $C'$ is obtained from the partial Eulerian list $L'$ obtained from $L$ by adding $w$ between $u$ and $v_1$ (in other words the peripheral cycle $C'$ is obtained from $C$ by deleting $uv_1$ and adding $uw$ and $wv_1$). This proves Case 3.
    \smallskip 

    \noindent\textbf{Case 4:} $uv_1 \in A(C)$ and $D'$ is neither a symmetric complete graph nor a symmetric odd wheel.\\
    Then $D' = T'(D'_1, \dots, D'_{n'};C')$ for some tree $T'$, some digraphs $D'_1, \dots, D'_{n'}$ and a peripheral cycle $C'$ built from a partial Eulerian list $L'$ of $T$.  

    If $v_2w \notin A(C')$, then the result follows from Case 2. So we may assume that $v_2w \in A(C')$. 

    Since $uv_1 \in A(C)$ and $v_2w \in A(C')$, we have $L=(u,v_1, L_1, u)$ and $L'=(v_2,w,L'_1, v_2)$ for some lists $L_1$ and $L'_1$. 
    
    Let $T_H$ be the tree obtained from $T$ and $T'$ by identifying $v_1$ and $v_2$ to a new vertex $v$. 
    Then $H=T_H(D_1, \dots, D_n,D'_1, \dots, D'_{n'}; C_H)$, where $C_H$ is obtained from the partial Eulerian list $L_H=(v, L_1, u, w, L_2, v)$. This proves Case 4 and the lemma. 
\end{proof}


The following result is crucial, as it will allow us to use Haj\'os bijoins given by Theorem~\ref{thm:decHJHBJ} and preserve the fact of being in $\htk$ (this is the main reason why extended Haj\'os tree joins are more convenient to use that Haj\'os tree join combined with directed Haj\'os join).

\begin{lemma}\label{lem:P3_in_peripheral}
    Let $k \geq 3$. Let $D \in \htk$ and $uv, vw \in A(D)$ with $u \neq w$. If in all $k$-dicolourings of $D \setminus \{uv, vw\}$ there is a monochromatic $wu$-dipath, then   $D=T(D_1, \dots, D_n;C)$ such that $uv$ and  $vw$ are in $A(C)$.  
\end{lemma}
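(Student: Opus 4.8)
The plan is to induct on $|V(D)|$, using the decomposition provided by Lemma~\ref{lem:HTK=HJorHT}: either $D$ is a base element ($\bid K_{k+1}$ for $k\geq 4$, or $\bid W_{2\ell+1}$ for $k=3$), or $D$ is a directed Haj\'os join, or a Haj\'os tree join, of strictly smaller digraphs of $\htk$. Before splitting into cases I would record the only consequence of the hypothesis that I will use: it implies that in \emph{every} $k$-dicolouring $\phi$ of $D-\{uv,vw\}$ one has $\phi(u)=\phi(w)$ and, moreover, $w$ reaches $u$ inside its colour class; equivalently, one cannot $k$-dicolour $D-\{uv,vw\}$ while keeping $u$ and $w$ ``separated''. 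In each case the goal is to produce a representation $D=T(D_1,\dots,D_n;C)$ in which $v$ sits on the peripheral cycle with $u$ as its predecessor and $w$ as its successor; this is exactly where the flexibility of \emph{extended} Haj\'os tree joins (internal tree vertices allowed on $C$) is needed.

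\textbf{Base cases.} If $D=\bid K_{k+1}$, I would colour $\{u,v\}$ with one colour and give the remaining $k-1$ vertices pairwise distinct colours; since only the arc $uv$ (not $vu$) was deleted, the class $\{u,v\}$ is acyclic, so this is a $k$-dicolouring of $D-\{uv,vw\}$, and in it $w$ lies in a singleton class distinct from that of $u$, so there is no monochromatic $wu$-dipath. Hence the hypothesis is never satisfied and the statement holds vacuously. The case $D=\bid W_{2\ell+1}$ is the delicate base case: here I would run a short analysis according to whether $v$ is the hub or a rim vertex, using that every $3$-dicolouring of $D-\{uv,vw\}$ must reuse a colour precisely on a pair whose connecting digon was broken, to show that either a colouring with no monochromatic $wu$-dipath exists (so the hypothesis is again vacuous) or else $u\to v\to w$ is a sub-path of a directed cycle of $D$ that can be taken as the peripheral cycle of an explicit extended Haj\'os tree join representation.

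\textbf{Inductive step.} Here $D$ is a directed Haj\'os join or a Haj\'os tree join of smaller members of $\htk$, and I would distinguish cases by the position of the pair $(uv,vw)$: both arcs interior to one part, one or both equal to an added Haj\'os arc or lying on the peripheral cycle, or the pair straddling the cut/junction vertex. The monochromatic-$wu$-path hypothesis is used twice. First, it rules out the ``straddling but not on a cycle'' configurations: if deleting $uv,vw$ disconnected $u$ from $w$ across the cut, then $k$-dicolourings of the two sides could be pasted together keeping $u$ and $w$ in distinct classes, contradicting the hypothesis. Second, in the remaining configuration both arcs live (after at most relabelling the redirected Haj\'os arc) inside a single smaller digraph $D'$, and I would transfer the hypothesis to $D'$---a monochromatic $wu$-dipath of $D$ can be confined to $D'$ because a colour class crosses the cut/junction only through the single relevant vertex---and then invoke induction on $D'$ to place $uv,vw$ on its peripheral cycle. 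Finally I would reassemble, splicing the tree and the partial Eulerian list of $D'$ into those of the ambient join exactly as in the tree-merging surgery of Lemma~\ref{lem:HJ_preserve_htk} (Case~4), checking that the spliced list is still a partial Eulerian list of the merged tree and that $uv$ and $vw$ remain consecutive on the resulting peripheral cycle.

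\textbf{Main obstacle.} The crux is this inductive step, and within it two points: (i) showing that the hypothesis \emph{localizes}, i.e.\ that a monochromatic $wu$-dipath in all of $D-\{uv,vw\}$ may be assumed to avoid the other parts of the decomposition, which relies on the cut/junction structure forcing colour classes to meet the rest of $D$ only through a single vertex; and (ii) the combinatorial surgery merging two extended Haj\'os tree join representations along a shared junction while keeping $u\to v\to w$ consecutive on one peripheral cycle and preserving the partial-Eulerian-list property. The odd-wheel base case is the secondary obstacle, being a bounded but fiddly colouring computation.
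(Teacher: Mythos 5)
Your overall skeleton --- structural induction on $\htk$, vacuous base cases, then localising the hypothesis to one constituent and splicing representations --- matches the paper's proof, which however works directly with a representation $D=T(D_1,\dots,D_n;C)$ rather than routing through the dichotomy of Lemma~\ref{lem:HTK=HJorHT}; the two cases then become simply ``$v$ is a junction vertex and $u$ is interior to some $D_i$'' and ``$u,v,w$ all lie in one $D_i$''. The genuine problem is the one concrete technique you commit to for the straddling case: refuting the hypothesis by ``pasting $k$-dicolourings of the two sides keeping $u$ and $w$ in distinct classes''. Pasting $k$-dicolourings across a directed Haj\'os join essentially never produces a $k$-dicolouring of the whole: since each part is not $k$-dicolourable once its deleted arc is restored, any $k$-dicolouring of a part forces the two ends of its deleted arc into the same colour class together with a monochromatic dipath closing that arc, and matching the colours at the identification vertex then closes a monochromatic dicycle through the newly added arc --- this is precisely why the Haj\'os join raises the dichromatic number. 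The paper's argument in the corresponding situation (its Case~1) is of a different nature: take a single $k$-dicolouring $\varphi$ of $D-uv$ (which exists by dicriticality), invoke the hypothesis to obtain a monochromatic $wu$-dipath $P$ in $D-\{uv,vw\}$, and note that $P$ cannot pass through $v$, because the initial segment of $P$ from $w$ to $v$ together with the arc $vw$ (still present in $D-uv$) would be a monochromatic dicycle of $D-uv$. Hence $P$ must enter the part containing $u$ through the other junction vertex, which forces all junction vertices to share a colour and the peripheral cycle to be monochromatic --- a contradiction. Some argument of this kind, exploiting the hypothesis itself rather than a pasting, is indispensable here.

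Two smaller remarks. The odd-wheel base case is not delicate: exactly as for $\bid K_{k+1}$, in any $k$-dicolouring of $D-uv$ every digon incident to $w$ is intact, so $w$ receives a colour distinct from all of its neighbours and no monochromatic dipath leaves $w$; the hypothesis is therefore vacuous and the second alternative you leave open never occurs. Also, your localisation step asserts that a colour class meets the rest of $D$ ``only through the single relevant vertex''; in an extended Haj\'os tree join each constituent $D_i$ attaches through \emph{two} junction vertices $u_i,v_i$, and confining a monochromatic $wu$-dipath to $D_i$ uses that $u_i$ and $v_i$ receive distinct colours in the ambient colouring (otherwise the peripheral cycle would be monochromatic); this is the form the argument must take, as in the paper's Case~2, for both the splicing sub-case and the colouring-extension sub-case to go through.
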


\begin{proof}
    Assume that in all $k$-dicolourings of $D \setminus \{uv, vw\}$ there is a monochromatic $wu$-dipath. 
    Assume that the result holds for every digraph in $\htk$ with strictly less vertices than $D$.

    Assume first that $D$ is a symmetric complete graph or a symmetric odd wheel.
    Let $\varphi$ be a $k$-dicolouring of $D - uv$ (which must exist for $D$ is dicritical). Then $w$ has a colour distinct from the colours of each of its neighbours with respect to $\varphi$, and thus $\varphi$ is a $k$-dicolouring of $D - \{uv,vw\}$ in which there is no monochromatic $wu$-dipath. So we may assume that $D$  is not a symmetric complete graph nor a symmetric odd wheel. 
 
    Thus, $D=T(D_1, \dots, D_n;C)$ and let $\{u_i,v_i\} = V(D_i) \cap V(T)$ for $i=1, \dots, n$. 
    Assume for contradiction that, $uv \notin E(C)$ or $vw \notin E(C)$.

     \noindent\textbf{Case 1:} $u \in V(D_i) \setminus \{u_i,v_i\}$ for some $i \in[n]$, $v=v_i$, and $w \notin V(D_i) \setminus \{u_i\}$. \\ 
     In this case, we will find a $k$-dicolouring of $D\setminus \{uv,vw\}$ with no monochromatic $wu$-dipath, thus obtaining a contradiction. 
    Since $D$ is $(k+1)$-dicritical, there is a $k$-dicolouring $\varphi$  of $D \setminus uv_i$. 
    Since $\varphi$ is, in particular, a $k$-dicolouring of $D\setminus \{uv_i,v_iw\}$, it contains a monochromatic $wu$-dipath $P$ by hypothesis. Hence $\varphi(u) = \varphi(w)$. Since $w \notin V(D_i)\setminus \{u_i\}$, $P$ goes through $u_i$ or $v_i$. 
    
    Observe first that $\varphi(u) = \varphi(v_i)$, for otherwise $\varphi$ is a a $k$-dicolouring of $D$, a contradiction. 
    If $P$ goes through $v_i$, the arc $v_iw$ yields a monochromatic directed cycle, a contradiction. Hence $P$ foes through $u_i$. 
    Hence, $\varphi(u_i) = \varphi(u) = \varphi(v_i)$. This implies that all junction vertices receive the same colour, and thus the peripheral cycle is monochromatic, a contradiction.  This proves case 1. 
    \medskip 

    \noindent\textbf{Case 2:} $u,v,w \in V(D_i)$ for some $i \in [n]$,   $uv \notin A(C)$ and $vw \notin A(C)$. \\
    Note that, since $uv,vw \notin A(C)$, $\{uv,vw\} \cap \{u_iv_i, v_iu_i\}=\emptyset$.

    Suppose first that $D_i = T'\big(D'_1, \dots, D'_m;C'\big)$ with $uv \in A(C')$ and $vw \in A(C')$. As $u \neq w$,  $|V(C)| \geq 3$, and thus there exists $j$ such that $[u_i, v_i] \in D'_j$. Then:
    
    $$D = T'\big(D'_1, \dots, D'_{j-1}, T\big(D_1, \dots, D_{i-1}, D'_{j}, D_{i+1}, \dots, D_n;C\big)  , D'_{j+1}, \dots, D'_m;C'\big)$$
    and we are done.  

    So $uv$ or $vw$ is not in $A(C)$. 
    Hence, by induction, there is a $k$-dicolouring $\varphi_i$ of $D_i\setminus \{uv,vw\}$ such that there is no monochromatic $wu$-dipath in $D_i$. Observe that $\varphi_i(u_i) \neq \varphi_i(v_i)$ since $u_i$ and $v_i$ are linked by a digon in  $D_i\setminus \{uv,vw\}$. 

    Now, let $\varphi$ be a $k$-dicolouring of $D \setminus \big(V(D_i) \setminus \{u_i,v_i\}\big)$.
    If $\varphi(u_i) = \varphi(v_i)$, then all junction vertices receive this same colour, and $C$ is monochromatic, a contradiction. So $\varphi(u_i) \neq \varphi(v_i)$. Now, we may assume without loss of generality that $\varphi(u_i) = \varphi_i(u_i)$ and $\varphi(v_i) = \varphi_i(v_i)$, and obtain a $k$-dicolouring of $D \setminus \{uv,vw\}$ with no monochromatic $wu$-dipath. Indeed, a $wu$-dipath is either included in $D_i-[u_i,v_i]$, or contains both $u_i$ and $v_i$.  \smallskip

    Let us now explain why these two cases cover all possible cases. 
    Since $uv \notin E(C)$ or $vw \notin E(C)$, we may assume that at least one vertex of $\{u,v,w\}$ is not a junction vertex, for an arc linking two junction vertices is an arc of $C$. 

    \begin{itemize}
        \item If none of $\{u,v,w\}$ is a junction vertex, we are in case 2. 
        \item  If $v$ is a junction vertex and $u$ is not. Then $u \in V(D_i)\setminus \{u_i,v_i\}$ for some $i \in [n]$. Then either $w \notin V(D_i) \setminus \{u_i\}$, and we are in case 1, or $w \in V(D_i) \setminus u_i$, and we are in case 2. 
        \item By directional duality, the previous case is the same as the case where $v$ is a junction vertex and $w$ is not. 
        \item If $v$ is not a junction vertex, then $v \in V(D_i) \setminus \{u_i,v_i\}$ for some $i \in [n]$, and thus $u,w \in V(D_i)$, and we are in case 2.  
    \end{itemize}
    
      \end{proof}

From the previous lemma, we deduce an analogous of Lemma~\ref{lem:HJ_preserve_htk} for Haj\'os bijoins.

\begin{lemma}\label{lem:HBJ_preservehtk}
   Let $k\geq 3$. If $D$ is not $k$-dicolourable and $D$ is the Haj\'os bijoin of two digraphs $D_1\in \htk$ and $D_2\in \htk$, then $D\in \htk$. 
\end{lemma}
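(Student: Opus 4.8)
The plan is to exhibit an explicit extended Haj\'os tree join of $D$ (Definition~\ref{def:EHTJ}) obtained by \emph{stitching together} suitable decompositions of $D_1$ and $D_2$. Write the bijoin as in Definition~\ref{def:bijoin}: $D$ is built from $D_1-\{ta_1,a_1w\}$ and $D_2-\{va_2,a_2u\}$ by identifying $a_1,a_2$ into $a$ and adding the arcs $tu$ and $vw$. The first step is to show that $D_1$ can be written as $T_1(\dots;C_1)$ with the two arcs $ta_1,a_1w$ consecutive on its peripheral cycle $C_1$, and symmetrically $D_2=T_2(\dots;C_2)$ with $va_2,a_2u$ consecutive on $C_2$. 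For the non-degenerate sides (where $t\neq w$, resp.\ $u\neq v$) this is exactly the conclusion of Lemma~\ref{lem:P3_in_peripheral} applied to the directed path $t\to a_1\to w$ of $D_1$ (resp.\ $v\to a_2\to u$ of $D_2$). For a degenerate side, say $t=w$, the set $\{ta_1,a_1w\}$ is the digon $[t,a_1]$, and I instead use the trivial identity that a digraph containing a digon is an extended Haj\'os tree join of itself: $D_1=T_1(D_1;[t,a_1])$, where $T_1$ is the single edge $ta_1$ and $C_1$ is the digon $t\to a_1\to t$. In the fully bidirected case ($t=w$ and $u=v$) both sides are treated this way and the construction below collapses to the bidirected Haj\'os join.

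Granting these two decompositions, the second step is a routine piece of tree surgery. Let $Q_1$ be the dipath obtained from $C_1$ by deleting $a_1$ together with its two incident cycle-arcs $ta_1,a_1w$, so $Q_1$ runs from $w$ to $t$; define $Q_2$ from $C_2$ analogously, running from $u$ to $v$. I form $T_D$ by gluing $T_1$ and $T_2$ along $a=a_1=a_2$ (this is again a tree), keep all the pieces of $D_1$ and $D_2$, and take as new peripheral cycle
\[
C_D:\; w \rightsquigarrow t \to u \rightsquigarrow v \to w,
\]
that is $Q_1$, then the new arc $tu$, then $Q_2$, then the new arc $vw$. One then checks that $C_D$ is a valid partial Eulerian list of $T_D$: it contains every leaf of $T_D$ (these are the leaves of $T_1,T_2$ other than $a$, all of which survive in $Q_1,Q_2$), each non-leaf at most once, and it is a circular sublist of an Euler tour of $T_D$ because the excursion $t\to u\rightsquigarrow v\to w$ is precisely the $T_2$-tour inserted at the point where the $T_1$-tour visited $a$. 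Since the pieces together with $C_D$ rebuild exactly the arc set of $D$, this yields $D=T_D(\dots;C_D)$, and as all pieces lie in $\htk$ we conclude $D\in\htk$.

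The heart of the proof, and the only place using $\dic(D)\ge k+1$, is verifying the hypothesis of Lemma~\ref{lem:P3_in_peripheral}, namely that \emph{every} $k$-dicolouring $\psi_1$ of $D_1-\{ta_1,a_1w\}$ contains a monochromatic $wt$-dipath (and symmetrically for $D_2$). I would argue by contradiction: from a putative $\psi_1$ with no monochromatic $wt$-dipath I build a $k$-dicolouring of $D$. Choose a $k$-dicolouring $\psi_2$ of $D_2-\{va_2,a_2u\}$ agreeing with $\psi_1$ at $a$ and glue. As $\psi_1,\psi_2$ dicolour the two sides, any monochromatic dicycle of the glued colouring must use $tu$ or $vw$; analysing the cut at $a$, such a dicycle either crosses via both new arcs, in which case it produces a monochromatic $wt$-dipath inside $D_1-\{ta_1,a_1w\}$ (excluded), or it uses a single new arc and passes through $a$. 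Since $\dic(D_1)=k+1$, the colouring $\psi_1$ forces a monochromatic $a_1t$- or $wa_1$-dipath; but if both occurred with $\psi_1(a)=\psi_1(t)=\psi_1(w)$ they would concatenate through $a$ into a monochromatic $wt$-dipath, again excluded, so at most one ``single-arc'' dicycle is armed. That remaining dicycle is then destroyed by choosing $\psi_2$ so that $t,u$ (resp.\ $v,w$) receive different colours.

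\textbf{Main obstacle.} The delicate point is exactly this final colour choice: one must guarantee a $k$-dicolouring $\psi_2$ of $D_2-\{va_2,a_2u\}$ with $\psi_2(a)$ prescribed and $\psi_2(u)$ (resp.\ $\psi_2(v)$) avoiding that value, or else carrying no monochromatic $ua$- (resp.\ $av$-) dipath. This is where $k\ge 3$ is indispensable and where the $\htk$-structure of $D_2$ must be exploited, since an arbitrary colouring of $D_2$ may leave a monochromatic dicycle through $a$; controlling this flexibility is the technical crux on which the whole argument rests.
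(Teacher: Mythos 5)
Your overall strategy is the paper's: reduce to Lemma~\ref{lem:P3_in_peripheral} by showing that every $k$-dicolouring of $D_1-\{ta_1,a_1w\}$ (resp.\ $D_2-\{va_2,a_2u\}$) contains a monochromatic $wt$-dipath (resp.\ $uv$-dipath), treat a degenerate side via the trivial identity $D_1=T^1(D_1;[ta_1])$, and then glue $T_1$ and $T_2$ at $a$ and splice the two peripheral cycles together through the new arcs $tu$ and $vw$. That skeleton, including the observation that at most one of the two ``single-arc'' dicycles can be armed because a monochromatic $wa_1$-dipath and a monochromatic $a_1t$-dipath would concatenate into a forbidden monochromatic $wt$-dipath, matches the paper's proof essentially step for step.

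The problem is the step you yourself flag as the ``main obstacle'': you never actually produce the dicolouring $\psi_2$ of $D_2-\{va_2,a_2u\}$ that disarms the remaining single-arc dicycle, and your diagnosis of what is needed to produce it is off. No appeal to $k\ge 3$ and no fine control of the internal $\htk$-structure of $D_2$ is required. Since $D_2\in\htk$, $D_2$ is $k$-extremal and hence $(k+1)$-dicritical, so $D_2-va_2$ admits a $k$-dicolouring; restricted to $D_2-\{va_2,a_2u\}$ this colouring can carry no monochromatic $ua_2$-dipath, because such a dipath together with the arc $a_2u$ (still present in $D_2-va_2$) would close a monochromatic dicycle. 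Permuting its colours so that $\psi_2(a_2)=\psi_1(a_1)$ preserves this property, and that is exactly what kills the dicycle using $tu$ alone; symmetrically one colours $D_2-a_2u$ to kill the one using $vw$ alone. Note that which of the two deletions to use depends on which of the monochromatic dipaths $wa_1$ or $a_1t$ survives in $\psi_1$, so $\psi_2$ must be chosen after that case distinction, not before. With this one-line observation inserted, your argument closes and coincides with the paper's proof.
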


 \begin{proof}
        Let $ta_1, a_1w \in A(D_1)$, and $t$ and $w$ are in the same connected component of $D_1 \setminus a_1$. Let $va_2, a_2u \in A(D_2)$,  and $u$ and $v$ are in the same connected component of $D_2 \setminus a_2$.
        Assume that $D$ is obtained from the disjoint union of $D_1 - \{ta_1, a_1w\}$ and $D_2 - \{va_2,a_2u\}$ by identifying $a_1$ and $a_2$ into a new vertex $a$, and adding the arcs $tu$ and $vw$, i.e. $D$ is the bijoin of $D_1$ and $D_2$ with respect to $((t,a_1, w),(u, a_2, v))$. 
               
        Suppose first that $t\neq w$ and $D_1 -\{ta_1, a_1w\}$ admits a $k$-dicolouring $\varphi_1$ with no monochromatic $wt$-dipath.
        Then either there is no monochromatic $wa_1$-dipath, or no monochromatic $a_1t$-dipath. Without loss of generality, suppose there is no monochromatic $wa_1$-dipath.
        As $D_2$ is $(k+1)$-dicritical, $D_2 - va_2$ is $k$-dicolourable, and thus there is a $k$-dicolouring $\varphi_2$ of $D_2- \{va_2, a_2u\}$ with no monochromatic $ua_2$-dipath. Up to permuting colours, we may assume that $\varphi_1(a_1) = \varphi_2(a_2)$. 
        Consider $\varphi:V(D) \rightarrow [k]$ such that           
        $$
        \varphi(x) = \left\{
            \begin{array}{ll}
                \varphi_1(a_1) & \text{ if } x=a \\
                \varphi_1(x)   & \text{ if } x \in V(D_1)\\ 
                \varphi_2(x)   & \text{ if } x \in V(D_2)
            \end{array}
        \right.
        $$  
        Since $\dic(D) \geq k+1$, $\varphi$ contains a monochromatic dicycle $C$. Since $\varphi_1$ and $\varphi_2$ are $k$-dicolourings of respectively $D[V(D_1)]$ and $D[V(D_2)]$, $C$ intersects both $V(D_1) \setminus a_1$ and $V(D_2) \setminus a_2$. Thus it contains $tu$, or $vw$ or both. \\ 
        If $C$ contains $tu$ but not $vw$, then $C$ goes through $a$, and thus there is a monochromatic $ua_2$-dipath in $D_2$, which together with $a_2u$ forms a monochromatic dicycle in $D_2$ with respect to $\varphi_2$ a contradiction to the choice of $\varphi_2$. \\ 
        If $C$ contains $vw$ but not $tu$, then there is a monochromatic $wa$-dipath in $D_1$ with respect to $\varphi_1$, a contradiction to the choice of $\varphi_1$. \\ 
        Hence $C$ contains both $tu$ and $vw$ and thus there is a monochromatic $wt$-dipath in $D_1$, a contradiction to the choice of $\varphi_1$.

        Hence, if $t\neq w$, then all $k$-dicolourings of $D_1-\{ta_1,a_1w\}$ admit a monochromatic $wt$-dipath. Similarly, if $u\neq v$ then all $k$-dicolourings of $D_2-\{va_2,a_2u\}$ admit a monochromatic $uv$-dipath.
        
        If $t=w$, then in fact we can write $D_1=T^1(D^1,[ta1])$, where $T^1$ is simply the tree consisting of the edge $ta_1$. Otherwise by Lemma~\ref{lem:P3_in_peripheral}, $D_1=T^1(D^1_1, \dots, D^1_n;C^1)$ for a tree $T^1$,   digraphs $D^1_1, \dots, D^1_n$ and peripheral cycle $C^1$ such that $ta_1$ and $a_1w$ are in $A(C^1)$. Similarly, either $u=v$ in which case we write $D_2=T^2(D_2,[a2u])$ with $T_2$ being the tree consisting of the single edge $a_2u$, or  $D_2=T^2(D^2_1, \dots, D^2_m;C^2)$ for a tree $T^2$,   digraphs $D^2_1, \dots, D^2_m$ and peripheral cycle $C^2$ such that $va_2$ and $a_2u$ are in $A(C^2)$. 
        In all cases let $T$ be the tree obtained from $T_1$ and $T_2$ by identifying $a_1$ and $a_2$ to a vertex $a$. Now, $D = T(D^1_1, \dots, D^1_n, D^2_1, \dots, D^2_m;C)$ where $C$ is obtained from $C_1$ and $C_2$ after deleting arcs $ta_1$, $a_1w$, $va_2$ and $a_2w$, and adding $tu$ and $vw$. 

    \end{proof}
    
For the proof of our main theorem, we need to prove that digraphs in $\hk$ are indeed $k$-extremal. We know directed Haj\'os joins preserve $k$-extremality (Lemma~\ref{lem:extremal_directed_Haj}), we do it now for Haj\'os tree joins (we prove an if and only if for the purpose of the recognition algorithm of the next section)


\begin{lemma}\label{lem:HT_iff_ext}
 Let $k \geq 2$. Let $D, D_1, \dots, D_n$ be digraphs such that $D$ is a Haj\'os tree join of the $D_i$. Then $D$ is $k$-extremal if and only if all digraphs $D_1, \dots, D_n$ are $k$-extremal. 
\end{lemma}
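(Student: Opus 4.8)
The plan is to prove both implications, the forward one (all $D_i$ being $k$-extremal $\Rightarrow$ $D$ being $k$-extremal) being the substantial direction. Write $D=T(D_1,\dots,D_n;C)$, with $T$ a tree on edges $u_1v_1,\dots,u_nv_n$ and $C$ the peripheral dicycle on the leaves. The base case $n=1$ is immediate, since then $C$ is the digon $[u_1,v_1]$ and $D=D_1$, so I assume $n\ge 2$ (in which case $D[V(D_i)]=D_i-[u_i,v_i]$, no peripheral arc lying inside a single piece). For the forward direction I would check the four defining properties in turn. First, $\dic(D)\ge k+1$: in any $k$-dicolouring of $D$ the restriction to each $D_i-[u_i,v_i]=D[V(D_i)]$ is a $k$-dicolouring, and since $D_i$ is $(k+1)$-dicritical (Lemma~\ref{lem:prop_k-extremal}) the two ends of the removed digon receive the same colour (otherwise the colouring would extend to $D_i$, contradicting $\dic(D_i)=k+1$); as $T$ is connected this forces all junction vertices, hence all leaves, to be monochromatic, so $C$ is a monochromatic dicycle, a contradiction. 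Second, $D$ is Eulerian and hence strong: each $D_i-[u_i,v_i]$ is Eulerian (deleting a digon preserves balance) and connected (a $2$-connected digraph stays connected after deleting one underlying edge), balance is preserved at the glued junction vertices, and $C$ adds one in- and one out-arc at each leaf; a connected Eulerian digraph is strong.

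The key step is $\lambda(D)\le k$. I would introduce the auxiliary digraph $\hat D=\bigcup_i D_i$ obtained by gluing the pieces along $T$ while \emph{keeping} all the digons; its blocks are exactly $D_1,\dots,D_n$, so $\lambda(\hat D)=\max_i\lambda(D_i)=k$ (any local arc-connectivity is bounded through a cut vertex separating two distinct blocks). The combinatorial heart is that, because $C$ lists the leaves in the circular order of an Euler tour of $\bid T$, the tree-paths $Q_j$ joining consecutive leaves $x_j,x_{j+1}$ in $T$ \emph{partition} the arc set of $\bid T$, each $Q_j$ being an $x_jx_{j+1}$-dipath. Starting from $\hat D$ and successively adding the peripheral arc $x_jx_{j+1}$ while deleting $A(Q_j)$, Lemma~\ref{lem:lambda_diminishing} shows $\lambda$ never increases, and since the $Q_j$ are arc-disjoint each is still present when processed; after all $\ell$ steps we have deleted exactly $A(\bid T)$ (all digons $[u_i,v_i]$) and added exactly $C$, i.e. reached $D$, so $\lambda(D)\le\lambda(\hat D)=k$. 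With $\dic(D)\le\lambda(D)+1$ (Corollary~\ref{coro:small_cut}) this yields $\dic(D)=\lambda(D)+1=k+1$; this is exactly where the planar ordering is needed, since $G_3$ in Figure~\ref{fig:HTcomplete} shows the partition fails otherwise. Finally $D$ is biconnected: a private vertex $z$ of $D_i$ is not a cut vertex because deleting the edge $u_iv_i$ from the connected graph $D_i-z$ leaves every component meeting $\{u_i,v_i\}$, and these reconnect through the rest of $D$; a junction vertex $c$ is not a cut vertex because each incident piece stays connected after deleting $c$ (by $2$-connectivity of $D_i$) and all surviving junction vertices are joined through $(T+C)-c$, which is connected. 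Together these give that $D$ is $k$-extremal.

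For the reverse direction I would argue by induction on $|V(D)|$. Pick a leaf $v_n$ of $T$, with peripheral neighbours $x_{\mathrm{prev}}\ra v_n\ra x_{\mathrm{next}}$ on $C$. Deleting these two arcs makes $u_n$ a cut vertex, since the interior of $D_n$ together with $v_n$ then attaches only at $u_n$; hence by Lemma~\ref{lem:HB_sufficient} $D$ is a directed Haj\'os join or a (degenerate) Haj\'os bijoin. Matching the definition with $a=u_n$ and the cross-arcs $x_{\mathrm{prev}}v_n,\,v_nx_{\mathrm{next}}$ shows that the side carrying $v_n$ reconstitutes exactly $D_n=D[V(D_n)]+[u_n,v_n]$, while the other side is the Haj\'os tree join of $D_1,\dots,D_{n-1}$ obtained from $T-v_n$ by relabelling $v_n$ as $u_n$ on the peripheral cycle. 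By Lemma~\ref{lem:extremal_directed_Haj} or Lemma~\ref{lem:extremal_bijoin_Haj}, $D$ $k$-extremal forces both sides to be $k$-extremal; so $D_n$ is $k$-extremal, and the induction hypothesis applied to the reduced object gives that $D_1,\dots,D_{n-1}$ are $k$-extremal.

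The main obstacle is this reverse direction, specifically the per-piece lower bound $\dic(D_i)\ge k+1$ and a bookkeeping subtlety in the induction: when $\deg_T(u_n)\ge 3$ (the star case) the reduced object has its peripheral cycle running through the internal vertex $u_n$, i.e. it is an \emph{extended} Haj\'os tree join rather than a plain one, so the plain induction hypothesis does not apply verbatim and one must either peel it further through Lemma~\ref{lem:HTK=HJorHT} or replace the induction by a direct $2$-separation analysis of $\{u_i,v_i\}$. In the latter, one writes $k=\lambda_{D[V(D_i)]}(u_i,v_i)+\lambda_{E}(u_i,v_i)$ for the two sides $D[V(D_i)]$ and the external part $E$ (every $u_iv_i$-dipath of $D$ lies in one side), and combines $k$-dicolourings of the two sides: the only way $D$ can fail to be $k$-dicolourable is that one side forces $u_i=v_i$ while the other forbids it, and ruling out this ``conflicting'' configuration against $\lambda(D)=k$ is the crux. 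Once $\dic(D_i)\ge k+1$ is in hand, $\dic(D_i)=\lambda(D_i)+1=k+1$ follows together with the (more routine) facts that $D_i$ is strong, biconnected, and $\lambda(D_i)\le k$, the last obtained by rerouting the two digon arcs $u_iv_i,v_iu_i$ along two arc-disjoint trails extracted from an Euler circuit of the strong external part $E$.
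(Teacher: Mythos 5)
Your forward direction is essentially the paper's own argument: the same auxiliary digraph obtained by restoring the digons $[u_i,v_i]$ and deleting the peripheral cycle, the same observation that the tree-dipaths between consecutive leaves of $C$ are pairwise arc-disjoint (this is exactly where the planar embedding is used), repeated application of Lemma~\ref{lem:lambda_diminishing} to get $\lambda(D)\leq k$, and the lower bound $\dic(D)\geq k+1$ via all junction vertices being forced monochromatic. That half is correct.

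The reverse direction has a genuine gap, which you have in fact located yourself: you never establish $\dic(D_i)\geq k+1$ for every $i$. Your proposed induction (peel off a leaf piece $D_n$ using Lemma~\ref{lem:HB_sufficient} and Lemma~\ref{lem:extremal_bijoin_Haj}) does not close, because, as you concede, when $u_n$ is an internal vertex of $T$ of degree at least $3$ the residual object is an \emph{extended} Haj\'os tree join to which the induction hypothesis does not apply; moreover those two lemmata are only stated for $k\geq 3$ while the statement claims $k\geq 2$, and your fallback ``direct $2$-separation analysis'' is a description of the difficulty rather than an argument. The paper closes this step quite differently: it proves a small combinatorial claim that for any proper subset $A'\subsetneq A(T)$ of the edges of a tree there is a $3$-colouring of $V(T)$ making exactly the edges of $A'$ monochromatic while the leaves do not all receive the same colour. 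Taking $A'$ to be the set of edges $u_iv_i$ with $\dic(D_i)=k+1$ and supposing for contradiction that $A'\neq A(T)$, one colours each extremal piece via Lemma~\ref{lem:extremdigon} (so that $u_i$ and $v_i$ share a colour but no monochromatic dipath joins them) and each non-extremal piece by an ordinary $k$-dicolouring (so that $u_i$ and $v_i$ get distinct colours); stitching these together along the tree colouring yields a $k$-dicolouring of $D$, contradicting $\dic(D)=k+1$. Some argument of this kind --- handling all the pieces simultaneously rather than peeling one off --- is what your proof is missing.
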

 \begin{proof}
Let $D=T(D_1, \dots, D_n;C)$ where $T$, $C$, $D_1, \dots, D_n$ are as in Definition~\ref{def:HTJ}. For each $D_i$, let  $\{u_i,v_i\} = V(T)\cap V(D_i)$ such that the digon $[u_i,v_i]$ is in $A(D_i)$ but was removed in the construction of $D$. 

Let  $D'$ the digraph obtained from $D$ by putting back all digons $[u_i,v_i]$ between vertices of $T$, and by removing the arcs in the peripheral cycle $C$. $D'$ is a digraph whose blocks are exactly the $D_i$. One can easily observe that $\lambda(D')=\max_{i=1}^n \lambda(D_i)$ and $\dic(D')=\max_{i=1}^n \dic(D_i)$. For every arc $uv\in A(C)$, let $P_{uv}$ be the unique $uv$-dipath of $D'$ that uses only arcs between vertices of $T$ (arcs from the digons that were removed to construct $D$). It is easy to notice that all $P_{uv}$ are pairwise arc-disjoint (each cycle $uv+P_{uv}$ correspond to one face of the planar graph $T+C$). Therefore one can go from $D'$ to $D$ by applying successive operations where one replaces the $uv$-dipath $P_{uv}$ by the arc $uv$ for each arc $uv\in A(C)$. By Lemma~\ref{lem:lambda_diminishing}, we  obtain 
$\lambda(D) \leq \lambda(D')=\max_{i=1}^n \lambda(D_i)$. 
\smallskip 

Assume $D$ is $k$-dicolourable. Then in any $k$-dicolouring of $D$, the vertices of $T$ do not all get the same colour (otherwise $C$ would me monochromatic). So there is a digraph $D_i$ such that the vertices $u_i$ and $v_i$ get distinct colours. But this provides a proper $k$-dicolouring of the corresponding $D_i$. Hence, if $D$ is $k$-dicolourable, then  
$\min_{i=1}^n \dic(D_i) \leq k$.
\smallskip 

With the two previous paragraphs, we can already prove that if each $D_i$ is $k$-extremal, then $D$ is $k$-extremal. We have $\dic(D_i)=k+1=\lambda(D_i)+1$ for every $i$. So, by the first paragraph, $\lambda(D) \leq k$, and by the second paragraph, $k+1 \leq \dic(D)$. Hence $k+1 \leq \dic(D) \leq  \lambda(D) +1 \leq k+1$, so $\dic(D) = \lambda(D) + 1 = k+1$.  
If $D$ admits a cutvertex, then by construction of the Haj\'os tree join, a block is included in some $D_i$, which contradicts the fact that $D_i$ is biconnected. Now observe that since every $D_i$ is Eulerian (for they are $k$-extremal), $D$ is also Eulerian and is thus strong.\smallskip 

Now assume $D$ is $k$-extremal. 
If for some $i \in [n]$, $D_i$ is not connected (resp. has a cutvertex), then at least one connected component (resp. block) is disjoint from $\{u_i,v_i\}$ (recall that $[u_i,v_i] \subseteq A(D_i)$), and we get that $D$ is also not connected (resp. has a cutvertex); a contradiction. So each $D_i$ is biconnected. 

Since $D$ is Eulerian, every $D_i$ is also Eulerian and thus every $D_i$ is strong.

Let $i \in [n]$ and let  us prove that $\lambda(D_i)=k$. 
Let $u,v \in V(D_i)$, and set $\lambda_{D_i}(u,v) = p$, i.e. there is $p$ arc-disjoint $uv$-dipaths in $D_i$. Then either these dipaths do not use any arc in the digon $[u_i,v_i]$, in which case these dipaths are still present in $D$, or they do use one of the arcs $\{u_iv_i, v_iu_i\}$, but then we can assume they don't use both, for otherwise we could reroute the dipaths to obtain a collection of $uv$-dipaths that do not use any arc in the digon $[u_i,v_i]$. 
But then we can replace this arc by a $u_iv_i$-dipath or $v_iu_i$-dipath using only arcs in some $D_j$ for $j\neq i$ and some peripheral arcs of $C$. Hence we still get $p$ pairwise arc disjoint $uv$-dipaths. Therefore $\max_{i=1}^n \lambda(D_i) \leq \lambda(D)$, so $\lambda(D_i)\leq k$ for every $i$.

In order to conclude that every $D_i$ is extremal, and since $\dic$ is always at most  $1+\lambda$ for any digraph, we only need to prove that $\dic(D_i)>k$ for every $i$. 


\begin{claim}
    If $T$ is a tree and  $A'\subsetneq  A(T)$, there exists $\varphi : V(T)\to \{1,2,3\}$ such that each edge in $A'$ is monochromatic, no edge in $A(T) \setminus A(T')$ is monochromatic, and the leaves of $T$ do not all receive the same colour.
\end{claim}
\begin{proofclaim}
Let $uv$ be an edge not in $A'$ and consider the two connected components $T_u$ and $T_v$ of $T-uv$. If some connected component only contains edges in $A'$ we colour it with one single colour. If not we apply induction. Up to permuting the colours we can do so that $u$ and $v$ receive distinct colours. If we applied induction to either $T_u$ or $T_v$, then all leaves do not get the same colour, and if not it means all edges of $T$ except $uv$ was in $A'$, but in that case since the colour of $u$ is distinct from the colour of $v$, the leaves in $T_u$ and $T_v$ must get distinct colours.
\end{proofclaim}

Now, let $A'$ be the set of edges $u_iv_i$ of $T$ such that $\dic(D_i)=k+1$. If all edges are in $A'$, then we are done. So we may assume for contradiction that it is not the case. By the claim above, there exists $\varphi:V(T) \ra \{1, 2, 3\}$ such that each edge of $T'$ is monochromatic, no edge of $A(T) \setminus A'$ is monochromatic, and the leaves of $T$ do not all receive the same colour. 

If $u_iv_i\in A'$, then the digraph $D_i$ is $k$-extremal and we can apply Lemma~\ref{lem:extremdigon} to get a $k$-dicolouring $\varphi_i$ of $D_i-[u_i,v_i]$ chosen such that $\varphi_i(u_i) = \varphi_i(v_i) = \varphi(u_i)$ and  such that there are no monochromatic dipath between $u_i$ and $v_i$. 

If $u_iv_i\not\in A'$, then  $\dic(D_i)\leq k$ and we let $\varphi_i$ be a $k$-dicolouring of $D_i$. Since the digon $[u_i v_i]$ is in $D_i$ $u_i$ and $v_i$ get distinct colours and we can choose $\phi_i$ such that $\varphi_i(u_i) = \varphi(u_i) \neq \varphi(v_i) = \varphi_i(v_i)$. 

Altogether, we obtain a $k$-dicolouring of the vertices of $D$ that is proper on each $D_i-[u_i,v_i]$, and such that there is no monochromatic dipath between any pair of vertices of $T$, and such that $C$ is not monochromatic. This is a $k$-dicolouring of $D$, our final contradiction.


 
 

\end{proof}

We are now ready to prove our main theorem.
\begin{theorem}~\label{thm:struct} 
Let $k \geq 3$ and let $D$ be a digraph. The three following statements are equivalent: 
\begin{description}
\item[(i)] $D$ is $k$-extremal
\item[(ii)] $D\in \htk$
\item[(iii)] $D \in \hk$
\end{description}
\end{theorem}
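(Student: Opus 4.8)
The plan is to establish the three implications (ii)$\Rightarrow$(iii), (iii)$\Rightarrow$(i) and (i)$\Rightarrow$(ii), each by induction on $|V(D)|$; together they close the cycle and give the full equivalence. Essentially all the work has been done in the preceding lemmas, so the task is to assemble them while tracking the base cases in the two regimes $k=3$ and $k\geq 4$. The first useful observation is that the generators of the three classes coincide: for $k\geq 4$ they are all equal to $\bid K_{k+1}$, and for $k=3$ the base elements of $\htthree$, $\hthree$ are exactly the bidirected odd wheels $\bid W_{2\ell+1}$ ($\ell\geq 1$), which are also the "symmetric odd wheel" outcome of Theorem~\ref{thm:decHJHBJ}.

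For (ii)$\Rightarrow$(iii) I would argue exactly as announced before Lemma~\ref{lem:HTK=HJorHT}: the base elements of $\htk$ lie in $\hk$ by the previous paragraph, and if $D\in\htk$ is not a base element, then Lemma~\ref{lem:HTK=HJorHT} writes $D$ as a directed Haj\'os join or a Haj\'os tree join of strictly smaller members of $\htk$. These belong to $\hk$ by the induction hypothesis, and since $\hk$ is by definition closed under both operations, $D\in\hk$.

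For (iii)$\Rightarrow$(i) I would first verify that the generators of $\hk$ are $k$-extremal: $\bid K_{k+1}$ is biconnected and strong with $\dic=k+1$ and $\lambda=k$, and each bidirected odd wheel $\bid W_{2\ell+1}$ is biconnected and strong with $\dic=4$ and $\lambda=3$ (every rim vertex has in- and out-degree $3$, so no pair has local arc-connectivity exceeding $3$, while the hub reaches any rim vertex by $3$ arc-disjoint dipaths). The induction step is then immediate from the "if" direction of Lemma~\ref{lem:extremal_directed_Haj} (directed Haj\'os joins preserve $k$-extremality) and of Lemma~\ref{lem:HT_iff_ext} (Haj\'os tree joins preserve $k$-extremality), since every non-base element of $\hk$ is one of these joins of smaller, hence by induction $k$-extremal, members.

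The substantive direction is (i)$\Rightarrow$(ii), and it is where I expect the only genuine difficulty. Let $D$ be $k$-extremal and apply the decomposition Theorem~\ref{thm:decHJHBJ}. In the two base outcomes ($D=\bid K_{k+1}$, or $D$ a symmetric odd wheel when $k=3$) we get $D\in\htk$ directly. If $D$ is a directed Haj\'os join of two $k$-extremal digraphs, then both factors lie in $\htk$ by induction and Lemma~\ref{lem:HJ_preserve_htk} gives $D\in\htk$. If $D$ is a Haj\'os bijoin of two $k$-extremal digraphs $D_1,D_2$, then $D_1,D_2\in\htk$ by induction, and since $D$ is $k$-extremal it satisfies $\dic(D)=k+1>k$, so it is not $k$-dicolourable and Lemma~\ref{lem:HBJ_preservehtk} applies, yielding $D\in\htk$. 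The delicate point is precisely this bijoin case: to realise a bijoin as an extended Haj\'os tree join one must know that the two identified arcs of each factor can be placed consecutively on the peripheral cycle, and this is exactly what Lemma~\ref{lem:P3_in_peripheral} (used inside Lemma~\ref{lem:HBJ_preservehtk}) secures. With that lemma in hand the assembly is routine, so the main obstacle of the whole proof has already been absorbed into the preparatory results.
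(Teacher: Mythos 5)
Your proposal is correct and follows essentially the same route as the paper: the same cyclic chain of implications, each by induction on $|V(D)|$, using Theorem~\ref{thm:decHJHBJ} together with Lemmata~\ref{lem:extremal_directed_Haj}, \ref{lem:extremal_bijoin_Haj}, \ref{lem:HJ_preserve_htk} and \ref{lem:HBJ_preservehtk} for (i)$\Rightarrow$(ii), Lemma~\ref{lem:HTK=HJorHT} for (ii)$\Rightarrow$(iii), and Lemmata~\ref{lem:extremal_directed_Haj} and \ref{lem:HT_iff_ext} for (iii)$\Rightarrow$(i). Your explicit verification that the base cases ($\bid K_{k+1}$ and the bidirected odd wheels) are $k$-extremal is a detail the paper leaves implicit, but otherwise the two arguments coincide.
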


\begin{proof}
If $D$ is a symmetric complete digraph of a symmetric odd wheel, then the result holds, so assume it is not. We prove the statements by induction on the number of vertices of $D$.

\begin{description}
\item[(i)$\Ra$ (ii)]

Assume $D$ is $k$-extremal. By Theorem~\ref{thm:decHJHBJ}, $D$ is either a directed Haj\'os join or a Haj\'os bijoin of two $k$-extremal digraphs.

Assume first that $D$ is the directed Haj\'os join of two digraphs $D_1$ and $D_2$. By Lemma~\ref{lem:extremal_directed_Haj}, both $D_1$ and $D_2$ are $k$-extremal. Thus by induction $D_1$ and $D_2$ belong to $\htk$ and since directed Haj\'os joins preserve the fact of being in $\htk$  (Lemma~\ref{lem:HJ_preserve_htk}), $D$ is in $\htk$.

So we can assume that $D$  is a Haj\'os bijoin of two $k$-extremal digraphs $D_1$ and $D_2$. By Lemma~\ref{lem:extremal_bijoin_Haj}, both $D_1$ and $D_2$ are $k$-extremal. So by induction hypothesis they both belong to $\htk$. By Lemma~\ref{lem:HBJ_preservehtk}, $D$ is in $\htk$. 


\item[(ii)$\Ra$ (iii)] By Lemma~\ref{lem:HTK=HJorHT} a digraph $D$ in $\htk$ is either a directed Haj\'os join or a Haj\'os tree join of digraphs in $\htk$  and thus in $\hk$ by induction. Hence $D$ is in $\hk$.

\item[(iii)$\Ra$ (i)] This is guaranteed by the fact that both directed Haj\'os joins and Haj\'os tree joins preserve the fact of being $k$-extremal (Lemmata~\ref{lem:extremal_directed_Haj} and~\ref{lem:HT_iff_ext}).
\end{description}
\end{proof}

\section{Recognition algorithm}\label{sec:algo}

In this section, we give a polynomial time algorithm deciding if a given digraph $D$ satisfies $\dic(D) = \lambda(D) + 1$. For algorithmic reasons, we need to avoid Haj\'os tree joins, so we need to devise another characterization, using the notion of parallel Haj\'os joins that we define now.




\begin{definition}[Parallel Haj\'os join]\label{def:parallel_join}
    Let $D_B$ be a digraph, set $B=V(D_B)$ and let $[a,b] \subseteq A(D_{B})$. \\  
    Let $D_{AC}$ be a digraph with $V(D_{AC}) = A \cup C$, $A \cap C= \{x\}$, let $t,w \in A \setminus x$ such that $t,w$ are in the same connected component of $D_{AC} \setminus x$, and let $u,v \in V(C)$ such that $u$ and $v$ are in the same connected component of $D_{AC}[C] \setminus x$.\\ 
    The \emph{parallel Haj\'os join} $D$ of $D_{AC}$ and $D_B$ with respect to $(t,u,v,w, [a,b])$ is the digraph obtained from disjoint copies of $D_B-[a,b]$, $D_{AC}[A]$ and $D_{AC}[C]$, by identifying the copy of $x$ in $D_{AC}[A]$ to $a$, and the copy of $x$ in $D_{AC}[C]$ to $b$. 
    See Figure~\ref{fig:parallel_join}.
\end{definition}

    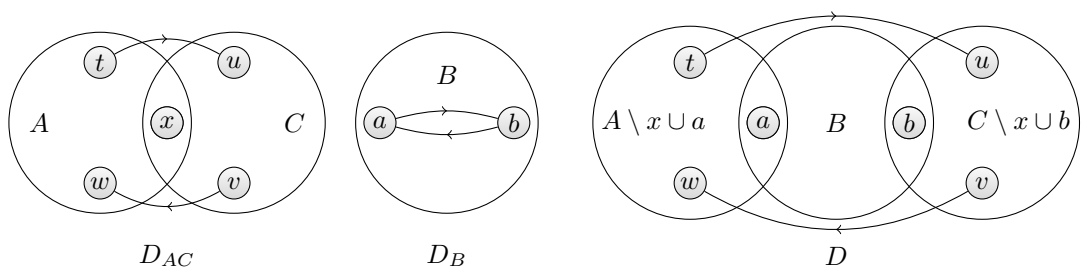
\begin{figure}[!hbtp]
    \begin{center}
        \begin{tikzpicture}[scale=0.8]

            \begin{scope}[xshift=0cm]
                \vertex (r) at (1.2,0) {$a$};
                \vertex (u) at (0,1) {$t$};
                \vertex (d) at (0,-1) {$w$};
                \draw (0,0) circle (1.6);
                \node () at (-0.6,0) {$A \setminus x \cup a$};
            \end{scope}

            \begin{scope}[xshift=2.4cm]
                \vertex (lc) at (1.2,0) {$b$};
                \vertex (rc) at (-1.2,0) {$a$};
                \draw (0,0) circle (1.6);
                \node () at (0,0) {$B$};
                \node () at (0,-2.2) {$D$};
            \end{scope}
        
            \begin{scope}[xshift=4.8cm,xscale=-1]
                \vertex (r2) at (1.2,0) {$b$};
                \vertex (u2) at (0,1) {$u$};
                \vertex (d2) at (0,-1) {$v$};
                \node () at (-0.6,0) {$C\setminus x \cup b$};
                \draw (0,0) circle (1.6);
            \end{scope}

            \draw[->-, bend left=30] (u) to (u2);
            \draw[->-, bend left=30] (d2) to (d);

            \begin{scope}[xshift=-9.7cm]
                \vertex (r) at (1.1,0) {$x$};
                \vertex (u) at (0,1) {$t$};
                \vertex (d) at (0,-1) {$w$};
                \node () at (-1,0) {$A$};
                \draw (0,0) circle (1.5);
            \end{scope}

            \begin{scope}[xshift=-7.5cm,xscale=-1]
                \vertex (r2) at (1.1,0) {$x$};
                \vertex (u2) at (0,1) {$u$};
                \vertex (d2) at (0,-1) {$v$};
                \node () at (-1,0) {$C$};
                \draw (0,0) circle (1.5);
            \end{scope}

            \draw[->-, bend left=30] (u) to (u2);
            \draw[->-, bend left=30] (d2) to (d);

            \node () at (-8.6,-2.2) {$D_{AC}$};

            \begin{scope}[xshift=-4cm]
                \vertex (lc) at (1.1,0) {$b$};
                \vertex (rc) at (-1.1,0) {$a$};
                \draw[->-, bend left=15] (lc) to (rc);
                \draw[->-, bend left=15] (rc) to (lc);
                \node () at (0,0.8) {$B$};
                \draw (0,0) circle (1.5);
                \node () at (0,-2.2) {$D_B$};
            \end{scope}

        \end{tikzpicture}
        \end{center}
           \caption{$D$ is a parallel Haj\'os join of $D_{AC}$ and $D_{B}$ with respect to $(t,u,v,w)$.}
           \label{fig:parallel_join}
        \end{figure}

Let us say an informal word on the intuition behind parallel Haj\'os join. 
Let $D=T(D_1, \dots, D_n;C)$ be a Haj\'os tree join and assume $u_iv_i \in E(T)$ is such that  both $u_i$ and $v_i$ are interior vertices of $T$. Then $D$ is the Haj\'os parallel join of $D_{i}$ and the Haj\'os tree join obtained after contracting $D_i$. 


As before, we need to prove that this operation preserves extremality.

\begin{lemma}\label{lem:extremal_parallel_Haj}
    Let $k \geq 3$. A parallel Haj\'os join of two digraphs $D_{AC}$ and $D_C$ is $k$-extremal if and only if both $D_{AC}$ and $D_{B}$ are $k$-extremal.
\end{lemma}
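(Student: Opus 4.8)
The plan is to work throughout with the characterization of $k$-extremal digraphs as exactly the members of $\htk$ (Theorem~\ref{thm:struct}), so that it suffices to prove $D\in\htk$ if and only if $D_{AC},D_B\in\htk$. Writing $a$ (resp.\ $b$) for the vertex of $D$ obtained from the copy of $x$ in $D_{AC}[A]$ (resp.\ in $D_{AC}[C]$), the first step is to unwind Definitions~\ref{def:parallel_join} and~\ref{def:bijoin} into three purely structural identities, each verified by checking that the two constructions produce the same arc set: (i) $D$ is the Haj\'os bijoin of $D_1:=D_{AC}[A]+\{ta,aw\}$ and $D_2:=(D_B-[a,b])\cup D_{AC}[C]+\{va,au\}$, glued at $a$ with crossing arcs $tu,vw$; (ii) $D_{AC}$ is the Haj\'os bijoin of the same $D_1$ and $D_2':=D_{AC}[C]+\{vx,xu\}$, glued at $x$; and (iii) $D_2$ is a degenerate Haj\'os bijoin of $D_B$ and $D_2'$, the degenerate side being the digon $[a,b]$ of $D_B$ (take $t=w=a$, $a_1=b$, glue at $b$, and add $au,va$).

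For the forward implication ($D$ $k$-extremal $\Rightarrow D_{AC},D_B$ $k$-extremal), I would apply Lemma~\ref{lem:extremal_bijoin_Haj} to identity (i) to get that $D_1,D_2$ are $k$-extremal, and then to identity (iii) to get that $D_B,D_2'$ are $k$-extremal; in particular $D_B$ is $k$-extremal. By identity (ii), $D_{AC}$ is the bijoin of the $k$-extremal digraphs $D_1,D_2'$, so Lemma~\ref{lem:extremal_bijoin_Haj_only_if} already gives that $D_{AC}$ is biconnected, strong, Eulerian and satisfies $\lambda(D_{AC})\le k$; the only missing point is $\dic(D_{AC})\ge k+1$. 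This I would obtain by a colouring transfer: given a hypothetical $k$-dicolouring $\psi$ of $D_{AC}$, use Lemma~\ref{lem:extremdigon} (valid since $D_B$ is $k$-extremal) to pick a $k$-dicolouring of $D_B-[a,b]$ agreeing with $\psi(x)$ on both $a$ and $b$ and having no monochromatic dipath between $a$ and $b$, and glue the two colourings into a map $\varphi$ on $D$. Any monochromatic dicycle of $\varphi$ cannot traverse the interior of $D_B$ (that would create a monochromatic $a$--$b$ or $b$--$a$ dipath), so after contracting $D_B$ back to $x$ it becomes a monochromatic closed walk of $D_{AC}$ and hence contains a monochromatic dicycle of $\psi$, a contradiction. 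Thus $\varphi$ is a $k$-dicolouring of $D$, contradicting $\dic(D)=k+1$, whence $\dic(D_{AC})=\lambda(D_{AC})+1=k+1$.

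The reverse implication is where the transfer argument genuinely breaks down (a monochromatic dicycle of $D$ that enters $x$ from the $A$-side and leaves to the $C$-side need not project to a dicycle of $D_{AC}$), so I would instead argue through the tree-join structure. Assuming $D_{AC},D_B$ $k$-extremal, identity (ii) and Lemma~\ref{lem:extremal_bijoin_Haj} give $D_1,D_2'\in\htk$, and since $D_{AC}$ is not $k$-dicolourable the construction inside the proof of Lemma~\ref{lem:HBJ_preservehtk} yields a Haj\'os tree join presentation $D_{AC}=T_{AC}(\mathcal E;C_{AC})$ in which the gluing vertex $x$ is an \emph{internal} vertex of $T_{AC}$ and the crossing arcs $tu,vw$ are the two arcs of the peripheral cycle $C_{AC}$ straddling $x$. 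Un-contracting $x$ into the tree-edge $ab$ and placing $D_B$ on that edge then exhibits $D$ as the Haj\'os tree join $T(\mathcal E,D_B;C_{AC})$. Applying Lemma~\ref{lem:HT_iff_ext} twice finishes the argument: $D$ is $k$-extremal iff every member of $\mathcal E$ together with $D_B$ is $k$-extremal, while $D_{AC}=T_{AC}(\mathcal E;C_{AC})$ being $k$-extremal is equivalent to every member of $\mathcal E$ being $k$-extremal; combined with $D_B$ $k$-extremal this gives $D$ $k$-extremal.

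I expect the main obstacle to be precisely this last structural step. Verifying that the presentation of $D_{AC}$ can be taken with $x$ internal and with $tu,vw$ consecutive on the peripheral cycle, and that the $x$-un-contraction reproduces the parallel Haj\'os join exactly, will require carefully tracking the construction of Lemma~\ref{lem:HBJ_preservehtk} (where the trees of the two bijoin sides are identified at the shared vertex and the deleted arcs $tx,xw,vx,xu$ are replaced by $tu,vw$ on the peripheral cycle, via Lemma~\ref{lem:P3_in_peripheral}), together with a separate treatment of the degenerate cases $t=w$ or $u=v$, in which one of these trees collapses to a single edge and $x$ could a priori fail to become internal.
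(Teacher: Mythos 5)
Your setup and forward direction match the paper exactly: the three identities (i)--(iii) are precisely the paper's decomposition into $D_A$, $D_C$ and $D_{BC}$, the transfers of biconnectivity, strongness, Eulerianness and $\lambda\le k$ via Lemmata~\ref{lem:extremal_bijoin_Haj} and~\ref{lem:extremal_bijoin_Haj_only_if} are the same, and your colouring transfer through $D/B=D_{AC}$ for $\dic(D_{AC})\ge k+1$ is the paper's argument (the appeal to Lemma~\ref{lem:extremdigon} is a slight strengthening of what is needed, since $\varphi_B(a)=\varphi_B(b)=\psi(x)$ already suffices to project a monochromatic dicycle of $D$ onto one of $D_{AC}$).

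The reverse direction is where you depart from the paper, and as written it does not close. Lemma~\ref{lem:HT_iff_ext} is stated and proved only for Haj\'os tree joins in the sense of Definition~\ref{def:HTJ}, where the peripheral cycle visits leaves of $T$ only; what the proof of Lemma~\ref{lem:HBJ_preservehtk} hands you is an \emph{extended} Haj\'os tree join whose partial Eulerian list may contain internal vertices of $T_{AC}$, so ``applying Lemma~\ref{lem:HT_iff_ext} twice'' is not available. The repair is to note that the members of $\mathcal E$ lie in $\htk$ (they come from witnessing presentations of $D_A,D_C\in\htk$) and $D_B\in\htk$ by Theorem~\ref{thm:struct}, so $D=T(\mathcal E,D_B;C_{AC})$ is in $\htk$ by closure under extended Haj\'os tree joins and hence $k$-extremal by Theorem~\ref{thm:struct}; but this still hinges on the claim that $C_{AC}$ is a partial Eulerian list of the tree obtained from $T_{AC}$ by splitting $x$ into the edge $ab$ --- equivalently, that the $A$-side and $C$-side vertices do not interleave along $C_{AC}$ --- which you flag as the main obstacle but do not prove (it does hold, because in Lemma~\ref{lem:HBJ_preservehtk} the new peripheral cycle is the concatenation of the $T^1$-part and the $T^2$-part joined by $tu$ and $vw$; also, $x$ is automatically internal, even in the degenerate cases, since it has positive degree in each of the two trees being glued). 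The paper sidesteps all of this: after obtaining biconnectivity, strongness, Eulerianness and $\lambda(D)\le k$ from Lemma~\ref{lem:extremal_bijoin_Haj_only_if} exactly as you do, it proves $\dic(D)\ge k+1$ directly --- a hypothetical $k$-dicolouring $\varphi_D$ of $D$ forces $\varphi_D(a)=\varphi_D(b)$, and a two-case analysis on whether $\varphi_D(t)=\varphi_D(u)$ produces a $k$-dicolouring of $D_{AC}$ by recolouring one side using the dicriticality of $D_A$ or $D_C$, a contradiction. That elementary route avoids Theorem~\ref{thm:struct} and the tree-join bookkeeping entirely, and is what you need to substitute for the missing half.
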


\begin{proof} 
    Let $D$ be the parallel Haj\'os join of $D_{AC}$ and $D_{B}$ as in Definition~\ref{def:parallel_join}. 
    
    Let $D_A = D[A] + \{ta,aw\}$, $D_{C} = D[C] + \{vb,bu\}$ and $D_{BC} = D[V(B) \cup V(C)] + \{va,au\}$. See Figure~\ref{fig:DA_DB_DC_DBC}.

    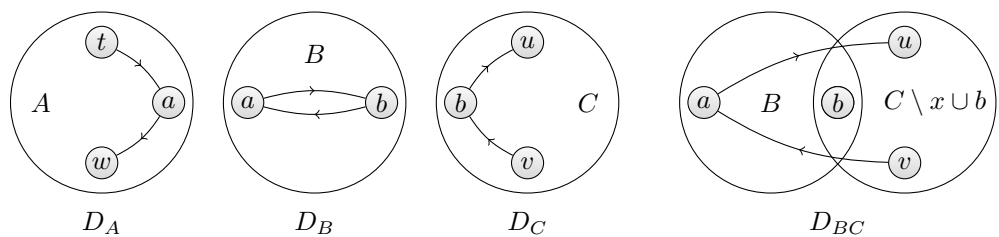
\begin{figure}[!hbtp]
    \begin{center}
        \begin{tikzpicture}[scale=0.8]
            \begin{scope}[xshift=-11cm]
                \vertex (r) at (1.1,0) {$a$};
                \vertex (u) at (0,1) {$t$};
                \vertex (d) at (0,-1) {$w$};
                \draw[->-, bend left=15] (u) to (r);
                \draw[->-, bend left=15] (r) to (d);
                \node () at (-1,0) {$A$};
                \draw (0,0) circle (1.5);
                \node () at (0,-2) {$D_A$};
            \end{scope}

            \begin{scope}[xshift=-7.5cm]
                \vertex (lc) at (1.1,0) {$b$};
                \vertex (rc) at (-1.1,0) {$a$};
                \draw[->-, bend left=15] (lc) to (rc);
                \draw[->-, bend left=15] (rc) to (lc);
                
                \node () at (0,0.8) {$B$};
                \draw (0,0) circle (1.5);
                \node () at (0,-2) {$D_B$};
            \end{scope}
        
            \begin{scope}[xshift=-4cm,xscale=-1]
                \vertex (r2) at (1.1,0) {$b$};
                \vertex (u2) at (0,1) {$u$};
                \vertex (d2) at (0,-1) {$v$};
                \draw[->-, bend right=15] (d2) to (r2);
                \draw[->-, bend right=15] (r2) to (u2);
                \draw (0,0) circle (1.5);
                \node () at (-1,0) {$C$};
                \node () at (0,-2) {$D_C$};
            \end{scope}

            \begin{scope}[xshift=0cm]
                \vertex (ll) at (1.1,0) {$b$};
                \vertex (rr) at (-1.1,0) {$a$};
                \node () at (0,0) {$B$};
                \draw (0,0) circle (1.5);
            \end{scope}
        
            \begin{scope}[xshift=2.2cm,xscale=-1]
                \vertex (r2) at (1.1,0) {$b$};
                \vertex (u2) at (0,1) {$u$};
                \vertex (d2) at (0,-1) {$v$};
                
                \node () at (-0.5,0) {$C\setminus x \cup b$};
                \draw (0,0) circle (1.5);
            \end{scope}

            \draw[->-, bend left=15] (d2) to (rr);
            \draw[->-, bend left=15] (rr) to (u2);

            \node () at (1.1,-2) {$D_{BC}$};

        \end{tikzpicture}
        \end{center}
           \caption{$D_A$, $D_B$, $D_C$ and $D_{BC}$ in the proof of Lemma~\ref{lem:extremal_parallel_Haj}.}\label{fig:DA_DB_DC_DBC}
        \end{figure}

    Observe that 
    \begin{itemize}
        \item $D$ is a degenerated Haj\'os bijoin of $D_{A}$ and $D_{BC}$, 
        \item $D_{BC}$ is a degenerated Haj\'os bijoin of $D_{B}$ and $D_{C}$, and
        \item $D_{AC}$ is a Haj\'os bijoin of $D_{A}$ and $D_{C}$.  
    \end{itemize}
   
    Suppose first that $D$ is $k$-extremal ane let us prove that $D_{AC}$ and $D_B$ are $k$-extremal. 

    Since $D$ is a Haj\'os bijoin of $D_{A}$ and $D_{BC}$, and $D_{BC}$ is a degenerated Haj\'os bijoin of $D_{B}$ and $D_{C}$, we get that $D_A$ and $D_B$ and $D_C$ are $k$-extremal by Lemma~\ref{lem:extremal_bijoin_Haj}. 
    So it remains to prove that $D_{AC}$ is $k$-extremal. 
    
    Since $D_A$ and $D_C$ are  $k$-extremal, and  $D_{AC}$ is the Haj\'os bijoin of $D_A$ and $D_C$, by Lemma~\ref{lem:extremal_bijoin_Haj_only_if} we have that $D_{AC}$ is strong, biconnected and $\lambda(D_{AC}) \leq k$.

    Let us now prove that $\dic(D_{AC}) \geq k+1$. Suppose $D_{AC}$ admits a $k$-dicolouring $\varphi_{AC}$. Since $D_B$ is $k$-extremal, $D_B - [a,b]$ admits a $k$-dicolouring $\varphi_B$ with $\varphi_B(a) = \varphi_B(b)$. 
    Up to permuting colours, we may assume that $\varphi_B(a) = \varphi_{AC}(x)$. Let $\varphi: V(D) \to [1,k]$ be such that $\varphi(y) = \varphi_{B}(y)$ if $y \in B$ and $\varphi(y) = \varphi_{AC}(y)$ if $y \in A \cup C \setminus x$. As any dicycle of $D$  is either included in $D_B$, or contains vertices that form a dicycle in $D/B = D_{AC}$, $\varphi$ is a $k$-dicolouring of $D$, a contradiction. 

    Thus $k+1 \leq \dic(D_{AC}) \leq \lambda(D_{AC}) + 1 \leq k+1$. Hence $\dic(D_{AC}) = \lambda(D_{AC})=k+1$, which ends the proof that $D_{AC}$ is $k$-extremal. 
    \medskip  

    Suppose now that $D_{AC}$ and $D_{B}$ are $k$-extremal and let us prove that $D$ is $k$-extremal. 

    Since $D_{AC}$ is $k$-extremal and is a Haj\'os bijoin of $D_A$ and $D_C$, both $D_A$ and $D_C$ are $k$-extremal by Lemma~\ref{lem:extremal_bijoin_Haj}. 
    
    Since $D_{BC}$ is the Haj\'os bijoin of $D_B$ and $D_C$ and $D_B$ and $D_C$ are both $k$-extremal, $D_{BC}$ is biconnected, strong, Eulerian and $\lambda(D_{BC}) \leq k$  by Lemma~\ref{lem:extremal_bijoin_Haj_only_if}. 
    Finally, since $D$ is the Haj\'os bijoin of $D_A$ and $D_{BC}$, by Lemma~\ref{lem:extremal_bijoin_Haj_only_if}, $D$ is biconnected, strong, Eulerian and $\lambda(D) \leq k$. 
    
    Let us now prove that $\dic(D) \geq k + 1$. Suppose that $D$ admits a $k$-dicolouring $\varphi_D$.
    Then, as $D[B] = D_{B} - [a,b]$, and because $D_B$ is $k$-extremal, $\varphi_D(a) = \varphi_D(b)$.  
    We are going to split the proof into two cases, in each case we prove that $\dic(D_{AC}) \leq k$, a contradiction. 
    
    \smallskip 
    
    \noindent\textbf{Case 1:} $\varphi_D(t) \neq \varphi_D(u)$

    Since $\varphi_D(a) = \varphi_D(b)$, either $\varphi_D(t) \neq \varphi_D(a)$ or $\varphi_D(u) \neq \varphi_D(b)$. Suppose without loss of generality that $\varphi_D(t) \neq \varphi_D(a)$. 
    Let $\varphi_C$ be a $k$-dicolouring of $D_{C} - bu = D[C] +vb$ and, up to permuting colours, assume  that $\varphi_C(b) = \varphi_D(b) (= \varphi_D(a))$. There is no monochromatic $bv$-dipath with respect to  $\varphi_{C}$ as $vb \in A(D_C - bu)$. Also, $\varphi_{C}(b) = \varphi_{C}(u)$ for $D_C$ is $k$-extremal and thus $k$-dicritical. Hence  $\varphi_{C}(u) \neq \varphi_D(t)$. 
    
    Now, let $\varphi_{AC} : V(D_{AC}) \to [1,k]$ be such that 
    
$$
\varphi_{AC}(y) = \left\{
    \begin{array}{ll}
        \varphi_D(a) & \text{ if } y=x \\
        \varphi_{C}(y) & \text{ if } y \in C \setminus b \\
        \varphi_D(y) &  \text{ if } y \in A \setminus a
    \end{array}
\right.
$$


    Observe that, since there is no monochromatic $bv$-dipath with respect to $\varphi_C$, there is no monochromatic $xv$-dipath with respect to $\varphi_{AC}$. 
    Since any dicycle of $D_{AC}$ is either included in $A \cup x$ or in $C \cup x$,  or goes through $tu$, or contains an  $xv$-dipath, $\varphi_{AC}$ is a $k$-dicolouring of $D_{AC}$, a contradiction.

    \smallskip

    \noindent\textbf{Case 2:} $\varphi_D(t) = \varphi_D(u)$. 

    
    
    There is not both a monochromatic $wt$-dipath and a monochromatic $uv$-dipath with respect to $\varphi_D$. Without loss of generality, suppose there is no monochromatic $uv$-dipath. Then, either there is no monochromatic $ub$-dipath or no monochromatic $bv$-dipath.
    Suppose without loss of generality that there is no monochromatic $bv$-dipath. Since $D_{A}$ is $k$-extremal, $D_{A} - aw$ admits a $k$-dicolouring $\varphi_{A}$. 
    Up to permuting colours, we may assume that $\varphi_{A}(a) = \varphi_D(a)$. 
    Note that $ta \in A(D_{A} - aw)$, and thus there is no monochromatic $at$-dipath in $\varphi_{A}(a)$.

    Let $\varphi_{AC} : V(D_{AC}) \to [1,k]$ be such that
    $$
\varphi_{AC}(y) = \left\{
    \begin{array}{ll}
         \varphi_D(a) & \text{ if } y=x \\
        \varphi_{A}(y) & \text{ if } y \in A \setminus a \\
        \varphi(y) &  \text{ if }y \in A \setminus a
    \end{array}
\right.
$$


    Observe that: 
    \begin{itemize}
        \item since there is no monochromatic $at$-dipath with respect to $\varphi_{A}$, there is no monochromatic $xt$-dipath with respect to $\varphi_{AD}$, 
        \item since there is no monochromatic $uv$-dipath with respect to $\varphi_D$, there is no monochromatic $uv$-dipath with respect to $\varphi_{AC}$, and
        \item since there is no monochromatic $bv$-dipath with respect to $\varphi_{D}$, there is no monochromatic $xv$-dipath with respect to $\varphi_{AD}$. 
    \end{itemize}

    Finally, observe that dicycle of $D_{AC}$  is either included in $D_{AC}[A \cup x]$ or $D_{AC}[C \cup x]$, or contains a $uv$-dipath, or a $xv$-dipath or a $ww$-dipath. Hence, $\varphi_{AC}$ is a $k$-dicolouring of $D_{AC}$, a contradiction. 
    
    This proves that $\dic(D) \geq k+1$. We now have as usual $k+1 \leq \dic(D) \lambda(D) + 1 \leq k+1$, so $\dic(D) = \lambda(D) + 1 = k+1$, and since we already proved that $D$ is strong and biconnected, we get that $D$ is $k$-extremal. 

\end{proof}

The algorithm will use the following third decomposition theorem for $\hk$. Recall that Haj\'os star join is a Haj\'os tree join in which the tree is a star. See Figure~\ref{fig:cyclic_join}. 
\begin{theorem}\label{lem:htk_implies_other_joins}
    Let $k \geq 3$. If $D$ is $k$-extremal, then one of the following holds:
    \begin{itemize}
        \item either $D = \ovlra{K}_k$, 
        \item or $D$ is a symmetric odd wheel (only in the case $k=3$), 
        \item or $D$ is a directed Haj\'os join, 
        \item or $D$ is a parallel Haj\'os join, 
        \item or $D$ is a Haj\'os star join. 
    \end{itemize}
\end{theorem}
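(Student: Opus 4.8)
The plan is to deduce this third decomposition from the structure theorem already in hand together with the first decomposition of Haj\'os tree joins. Since $D$ is $k$-extremal, Theorem~\ref{thm:struct} gives $D \in \htk$. If $D$ is a symmetric complete digraph or (when $k=3$) a symmetric odd wheel, then we are in the first two conclusions and there is nothing to do, so assume it is neither. Then Lemma~\ref{lem:HTK=HJorHT} applies and tells us that $D$ is either a directed Haj\'os join --- in which case the third conclusion holds --- or a Haj\'os tree join $D = T(D_1,\dots,D_n;C)$ as in Definition~\ref{def:HTJ}. It remains to treat this last case, showing that it falls into the Haj\'os star join or the parallel Haj\'os join conclusion. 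By Lemma~\ref{lem:HT_iff_ext} each $D_i$ is itself $k$-extremal, a fact I will use so that the pieces of the decomposition are well behaved.

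So I fix a Haj\'os tree join representation $D = T(D_1,\dots,D_n;C)$, write $\{u_i,v_i\}=V(T)\cap V(D_i)$, and recall that the peripheral cycle $C$ visits exactly the \emph{leaves} of $T$, in the circular order prescribed by the plane embedding. I distinguish two cases according to the shape of $T$. If $T$ is a star, then $D$ is by definition a Haj\'os star join and the fifth conclusion holds. Otherwise $T$ is not a star, and here I will use the elementary fact that a tree fails to be a star precisely when it has an edge both of whose endpoints are interior (non-leaf) vertices: if $T$ had two distinct interior vertices then the first edge on the path between them already joins two interior vertices, while conversely a star has a unique interior vertex. Fix such an edge $u_iv_i$ with $u_i,v_i$ both interior, together with its associated piece $D_i$.

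The claim is then that $D$ is the parallel Haj\'os join of $D_B := D_i$ (with distinguished digon $[a,b]:=[u_i,v_i]$) and $D_{AC}$, where $D_{AC}$ is the Haj\'os tree join obtained from $D$ by contracting $D_i$, i.e.\ the Haj\'os tree join over the tree $T' = T/(u_iv_i)$ using the digraphs $\{D_j : j\neq i\}$ and the same peripheral cycle $C$. Contracting the interior edge $u_iv_i$ into a vertex $x$ keeps $T'$ a plane tree with the same leaves and makes $x$ a cutvertex of $T'$ whose removal leaves the two nonempty sides $A\setminus x$ (from $T_A$, the $u_i$-side) and $C\setminus x$ (from $T_C$, the $v_i$-side), each containing at least one leaf of $T$; this gives the partition $V(D_{AC}) = A\cup C$ with $A\cap C=\{x\}$ required by Definition~\ref{def:parallel_join}. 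The heart of the argument is that the peripheral cycle crosses the cut between $A\setminus x$ and $C\setminus x$ \emph{exactly twice}: because $C$ follows the plane embedding and the edge $u_iv_i$ separates $T$ into $T_A$ and $T_C$, the leaves lying in $T_A$ occupy one contiguous arc of $C$ and those in $T_C$ the complementary arc (this is most cleanly seen by reading off $C$ from an Euler tour of $\bid T$, during which the whole of $T_C$ is traversed between the two passages along $u_iv_i$). Hence $C$ contains exactly one arc $t\to u$ from an $A$-leaf $t$ to a $C$-leaf $u$ and exactly one arc $v\to w$ from a $C$-leaf $v$ to an $A$-leaf $w$, the remaining peripheral arcs staying inside $A$ or inside $C$. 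Writing $P_A$ for the sub-path of $C$ from $w$ to $t$ and $P_C$ for the sub-path from $u$ to $v$, we have $C = P_A\,(t\to u)\,P_C\,(v\to w)$; since $x$ is interior it lies on neither path, so $P_A$ witnesses that $t,w$ lie in one component of $D_{AC}\setminus x$ and $P_C$ witnesses that $u,v$ lie in one component of $D_{AC}[C]\setminus x$, exactly the side conditions of Definition~\ref{def:parallel_join}. Reassembling $D_B-[a,b]=D_i-[u_i,v_i]$, the lobe $D_{AC}[A]$ and the lobe $D_{AC}[C]$ (identifying the two copies of $x$ with $a=u_i$ and $b=v_i$) and, as the notation $(t,u,v,w,[a,b])$ and Figure~\ref{fig:parallel_join} indicate, re-adding the two crossing arcs $t\to u$ and $v\to w$, recovers $D$ exactly, so $D$ is the parallel Haj\'os join claimed and the fourth conclusion holds.

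The routine parts are the invocations of Theorem~\ref{thm:struct}, Lemma~\ref{lem:HTK=HJorHT} and Lemma~\ref{lem:HT_iff_ext}, and the elementary tree fact isolating an interior edge. The main obstacle --- really the only place where something must be checked with care --- is the planarity bookkeeping of the previous paragraph: that the peripheral cycle meets the $(A,C)$-cut in exactly two arcs, that these are correctly oriented to play the roles of $t\to u$ and $v\to w$, and that the two peripheral sub-paths certify the required connectivity of $\{t,w\}$ in $D_{AC}\setminus x$ and of $\{u,v\}$ in $D_{AC}[C]\setminus x$. I would also note in passing the harmless degenerate sub-case where $T$ is a path with exactly two leaves: there $C$ is a digon and one obtains $t=w$ and $u=v$, which is still a legitimate (degenerate) parallel Haj\'os join.
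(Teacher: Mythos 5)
Your proposal is correct and follows essentially the same route as the paper: reduce via the structure theorem to the case of a Haj\'os tree join, observe that a non-star tree has an edge with both endpoints interior, contract the corresponding piece $D_i$, and use the plane embedding to see that the peripheral cycle splits into two arcs (one per side of the cut edge) meeting the cut in exactly two oppositely oriented arcs, which exhibits $D$ as a parallel Haj\'os join of $D_i$ and the contracted digraph. The only cosmetic difference is that you pass through $\htk$ and Lemma~\ref{lem:HTK=HJorHT} where the paper works directly with $\hk$, which changes nothing of substance.
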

 \begin{figure}[!hbtp]
    \begin{center}
        \begin{tikzpicture}[scale=0.5]

            \begin{scope}
                \vertex (lc) at (0,0) {$x$};
                \vertex (v1) at (5,0) {$v_1$};
                \draw (2.5,0) ellipse (3cm and 2cm) {};
            \end{scope}

            \begin{scope}[rotate = 120]
                \vertex (lc) at (0,0) {$x$};
                \vertex (v2) at (5,0) {$v_2$};
                \draw (2.5,0) ellipse (3cm and 2cm) {};
            \end{scope}

            \begin{scope}[rotate = -120]
                \vertex (lc) at (0,0) {$x$};
                \vertex (v3) at (5,0) {$v_3$};
                \draw (2.5,0) ellipse (3cm and 2cm) {};
            \end{scope}

            \draw[->-, bend right=60] (v1) to (v2);
            \draw[->-, bend right=60] (v2) to (v3);
            \draw[->-, bend right=60] (v3) to (v1);

            \node () at (0,-6) {$D$};

            \begin{scope}[xshift = -14cm]
                
                \begin{scope}[rotate=0]
                    \begin{scope}[xshift=1.5cm]
                        \vertex (l) at (0,0) {$x$};
                        \vertex (v) at (5,0) {$v_1$};
                        \draw[->-, bend left=15] (l) to (v);
                        \draw[->-, bend left=15] (v) to (l);
                        \draw (2.5,0) ellipse (3cm and 2cm) {};
                    \end{scope}
                    
                \end{scope}
                
                \begin{scope}[rotate=120]
                    \begin{scope}[xshift=1.5cm]
                        \vertex (l) at (0,0) {$x$};
                        \vertex (v) at (5,0) {$v_2$};
                        \draw[->-, bend left=15] (l) to (v);
                        \draw[->-, bend left=15] (v) to (l);
                        \draw (2.5,0) ellipse (3cm and 2cm) {};
                    \end{scope}
                \end{scope}
    
                \begin{scope}[rotate=-120]
                    \begin{scope}[xshift=1.5cm]
                        \vertex (l) at (0,0) {$x$};
                        \vertex (v) at (5,0) {$v_3$};
                        \draw[->-, bend left=15] (l) to (v);
                        \draw[->-, bend left=15] (v) to (l);
                        \draw (2.5,0) ellipse (3cm and 2cm) {};
                    \end{scope}
                \end{scope}
            \end{scope}
           
        \end{tikzpicture}
        \end{center}
           \caption{$D$ is a Haj\'os star join of three digraphs.}
           \label{fig:cyclic_join}
        \end{figure}
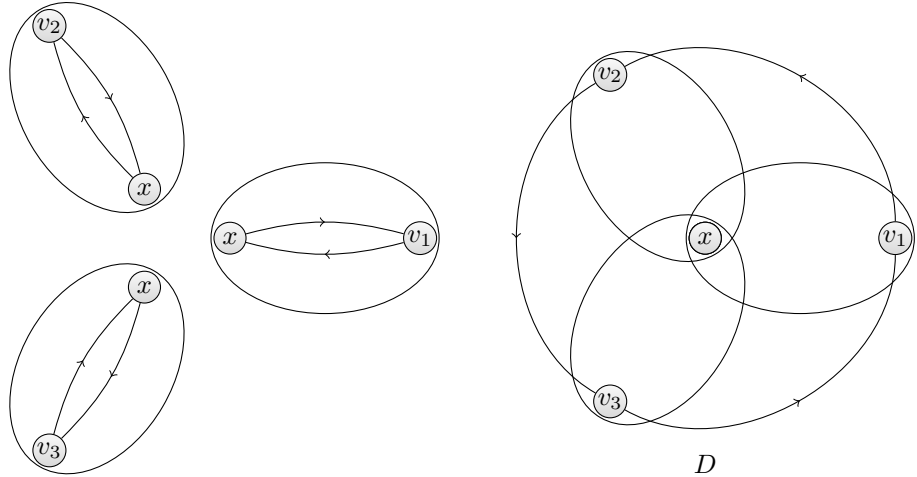
        
\begin{proof}
    Suppose $D \neq \ovlra{K}_k$ and $D$ is not a symmetric odd wheel. 
    By Theorem~\ref{thm:struct}, $D \in \hk$. 
    Since it is one of the possible outputs of this theorem, we can assume that $D$ is not a directed Haj\'os join. Thus  $D$ is a Haj\'os tree join:  there exists a tree $T$ with edges $\{u_1v_1, \dots, u_nv_n\}$, some  digraphs $D_1, \dots, D_n$ with $[u_i,v_i] \subseteq A(D_i)$ for $i=1, \dots, n$ such that $D=T(D_1, \dots, D_n;C)$, where $C$ is the peripheral cycle. 
    \smallskip

    If $T$ is a star, then $D$ is a Haj\'os star join of $D_1, \dots, D_n$ and we are done.
    \smallskip 
    
    Hence, there is $u_i,v_i \in E(T)$ such that $u_i$ and $v_i$ are both interior vertices of $T$.  
    Let $T_{u_i}$ and $T_{v_i}$ be the two connected component of $T-u_iv_i$ containing respectively $u_i$ and $v_i$. 
    Since $C$ goes through each leaves of $T$ following the natural ordering of an embedding of $T$, $C$ is the concatenation of two vertex disjoint dipaths $P_{u_i}$ and $P_{v_i}$ such that $V(P_{u_i})$ are the leaves of $T$ contained in $T_{u_i}$ and $V(P_{v_i})$ are the leaves of $T$ contained in $T_{v_i}$.
    There is $x_{u_i}, y_{v_i} \in V(P_{u_i})$ and $x_{v_i}, y_{v_i} \in V(P_{v_i})$ such that $x_{u_i}y_{v_i}, x_{v_i}y_{u_i} \in A(C)$. 

    Let $D'$ be the digraph obtained from $D$ by deleting $V(D_{i})\setminus \{u_i,v_i\}$ and identifying $u_i$ and $v_i$ to a new vertex $x$. 
    Then $D$ is the Haj\'os parallel join of $D'$ and $D_{ab}$ with respect to $(x_{u_i}, y_{v_i}, x_{v_i}, y_{u_i})$. To see this, look at Definition~\ref{def:parallel_join} and observe that: 
    \begin{itemize}
        \item $D_{i}$ plays the role of $D_B$,
        \item $D'$ plays the role of $D_{AC}$,
        \item $A=\bigcup_{u_jv_j \in T_{u_i}} V(D_{j})$ and $C=\cup_{u_jv_j \in T_{v_i}} V(D_{j})$,
        \item $x_{u_i}, y_{v_i}, x_{v_i}, y_{u_i}$ plays the role of respectively $t,u,v,w$
    \end{itemize} 
    Finally, observe that $x_{u_i}, y_{u_i}$ are in the same connected component of $D[A]\setminus x$ because of $P_{u_i}$ and $x_{v_i}, y_{v_i}$ are in the same connected component of $D[C] \setminus x$ because of $P_{v_i}$.

\end{proof}


\begin{theorem}
    Let $k \geq 3$. There is an algorithm that decides if a given digraph $D$ is $k$-extremal in time $\mathcal O(n^{10})$. 
\end{theorem}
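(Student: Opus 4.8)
The plan is to turn the structural characterisation into a recursive recognition procedure that never has to compute the (hard) quantity $\dic(D)$ directly. By Theorem~\ref{thm:struct}, a strong biconnected digraph $D$ with $\lambda(D)=k$ is $k$-extremal if and only if $D\in\hk$, and by Theorem~\ref{lem:htk_implies_other_joins} every $k$-extremal $D$ is either a base case ($\bid K_{k+1}$, or, for $k=3$, a symmetric odd wheel) or decomposes as a directed Haj\'os join, a parallel Haj\'os join, or a Haj\'os star join. The engine of the algorithm is that Lemmas~\ref{lem:extremal_directed_Haj}, \ref{lem:extremal_parallel_Haj} and~\ref{lem:HT_iff_ext} each assert, unconditionally for any syntactic decomposition of that type, that the join is $k$-extremal \emph{if and only if} all of its constituent parts are. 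I would therefore design a routine $\textsc{IsExtremal}(D)$ that first rejects unless $D$ is strong, biconnected, Eulerian and $\lambda(D)=k$ — all necessary by Lemma~\ref{lem:prop_k-extremal} and testable in polynomial time via connectivity tests and $O(n^2)$ max-flow computations — then accepts if $D$ is one of the base cases, and otherwise searches for a single valid decomposition of one of the three join types and recurses on its parts.

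The observation that keeps the recursion cheap is that we need not branch over decompositions: since each preservation lemma is an \emph{if and only if}, for any valid decomposition the parts are all $k$-extremal exactly when $D$ is, so it suffices to find one decomposition and commit to it, regardless of whether $D$ itself is $k$-extremal. Detecting a directed Haj\'os join is immediate from Lemma~\ref{lem:HJsufficient}: scan every arc $uw$ and test whether $D-uw$ has a cutvertex $v$ with $uv,vw\notin A(D)$. For a Haj\'os star join I would guess the centre $x$ and recover the petals and the peripheral dicycle from the block structure of $D\setminus x$. For a parallel Haj\'os join I would guess the bounded set of distinguished vertices of Definition~\ref{def:parallel_join}, namely the connectors $t,u,v,w$ together with the two images $a,b$ of the hinge $x$, and then verify that deleting the cross-arcs $tu,vw$ exposes $a$ and $b$ as the cut vertices peeling off the sides $A$ and $C$, reconstructing $D_{AC}$ and $D_B$ as prescribed. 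In each case the candidate is checked against the defining conditions, so detection is sound, and completeness holds because the search ranges over all choices of the distinguished vertices.

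For the running time I would argue that the recursion tree has only $O(n)$ nodes: every decomposition splits $D$ into at least two strictly smaller parts that overlap only in their junction (and hinge) vertices, so the private vertex sets of the parts are pairwise disjoint and a standard laminar-family argument bounds the number of recursive calls by $O(n)$; committing to one decomposition per node means each node is expanded once. The work at a node is dominated by the parallel-Haj\'os-join search, which enumerates a constant number of distinguished vertices ($O(n^{6})$ choices) and, for each, performs connectivity and max-flow verifications in $O(n^{3})$ time, for $O(n^{9})$ per node; the base-case tests, the directed-join and star-join searches, and the initial verification of strongness, biconnectivity, Eulerianness and $\lambda(D)=k$ are all cheaper. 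Multiplying the $O(n)$ nodes by $O(n^{9})$ per node yields the claimed $O(n^{10})$ bound.

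The main obstacle is to establish sound-and-complete polynomial-time detection of the parallel and star joins: unlike the directed Haj\'os join, they are not pinned down by a single local feature such as a cut arc, so I must prove that guessing the finitely many distinguished vertices named in Definitions~\ref{def:parallel_join} and~\ref{def:HTJ} and verifying the induced cut structure really recovers every such decomposition while never certifying a spurious one. A secondary subtlety, resolved precisely by the if-and-only-if form of the preservation lemmas, is correctness on inputs that are \emph{not} $k$-extremal: because we cannot evaluate $\dic(D)$, correctness on rejection rests entirely on the contrapositive of Theorem~\ref{lem:htk_implies_other_joins}, namely that a digraph which passes the necessary conditions but is neither a base case nor decomposable into $k$-extremal parts cannot be $k$-extremal.
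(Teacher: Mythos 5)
Your proposal follows essentially the same route as the paper: reduce via Theorem~\ref{lem:htk_implies_other_joins} to testing for the three join types (directed, parallel, star), use the if-and-only-if preservation lemmas (Lemmata~\ref{lem:extremal_directed_Haj}, \ref{lem:extremal_parallel_Haj}, \ref{lem:HT_iff_ext}) to recurse on the parts of a single discovered decomposition, brute-force over the bounded set of distinguished vertices for the directed and parallel joins, and bound the recursion tree times the per-node cost to get $\mathcal O(n^{10})$. Your recursion-tree bound of $O(n)$ nodes via superadditivity of $|V(D_v)|-1$ is in fact tighter than the paper's $O(n^2)$ bound, and both suffice.

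The one step that would fail as described is the Haj\'os star join detection. You propose to ``guess the centre $x$ and recover the petals and the peripheral dicycle from the block structure of $D\setminus x$,'' but $D\setminus x$ is precisely the petals \emph{glued together by the peripheral cycle} $C$, so it is connected (indeed typically biconnected) and its block structure does not expose the petals; to separate them you must first delete the arcs of $C$, which you do not know, and since the number $\ell$ of petals is unbounded you cannot enumerate all tuples $(x,v_1,\dots,v_\ell)$. The paper resolves this by guessing only a triple $(x, v_\ell, v_1)$ with $v_\ell v_1\in A(D)$ and then observing that the bridges of $D\setminus x - v_\ell v_1$ contain a unique $v_1v_\ell$-dipath, which forces the remaining peripheral vertices $v_2,\dots,v_{\ell-1}$ and hence the whole decomposition, all in $\mathcal O(n^2)$ per triple. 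You flagged sound-and-complete detection as your main remaining obstacle, and this is exactly the missing idea needed to close it; the parallel-join detection, by contrast, genuinely does reduce to a constant number of guessed vertices as you claim, and matches the paper's Claim~\ref{clm:alg_parallelJ}.
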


\begin{proof} 
    Our algorithm is based on Theorem~\ref{lem:htk_implies_other_joins} together with Lemmata~\ref{lem:extremal_directed_Haj},~\ref{lem:extremal_parallel_Haj} and~\ref{lem:HT_iff_ext}
    
    Let $D$ be a digraph on $n$ vertices. Checking if $D$ is strong and biconnected can be done in time $\mc{O}(n^2)$. It takes time $\mc{O}(n^2)$ to check if $D = \ovlra{K}_k$ or $D$ is a symmetric odd wheel. If $D = \ovlra{K}_k$, then our algorithm outputs that $D$ is $k$-extremal. 
    We may now assume that $D$ is strong, biconnected and distinct from $\bid K_k$ and symmetric odd wheels. 

    \begin{claim}\label{clm:alg_DHJ}
        We can decide in time $\mathcal O(n^5)$ that either $D$ is not  the directed Haj\'os join of two digraphs, or $D$ is the directed Haj\'os join of two digraphs $D_1$ and $D_2$ and compute $D_1$ and $D_2$. 
    \end{claim}

    \begin{proofclaim}
        Checking if $D$ is a directed Haj\'os join of two digraphs $D_1$ and $D_2$ can be done by testing for all triples of vertices $(u, v, w)$ if $uw \in A(D)$, $D \setminus v - uw$ is not connected and $u$ and $w$ are in distinct components of $D \setminus v - uw$. 
        If $(u, v, w)$ is such a triple, Let $R_u$ (resp. $R_w$) be the connected component of $D \setminus v - uw$ containing $u$ (resp. containing $w$).
        Then $D$ is the directed Haj\'os join of $D[R_u \cup v] + uv$ and $D[R_w \cup v] + vw$. 
        This can be done in time $\mc{O}(n^5)$. 
    \end{proofclaim}

    \begin{claim}\label{clm:alg_parallelJ}
        We can decide in time $\mathcal O(n^8)$ that either $D$ is not the parallel Haj\'os join of two digraphs, or $D$ is the parallel Haj\'os join of two digraphs $D_1$ and $D_2$ and compute $D_1$ and $D_2$. 
    \end{claim}

    \begin{proofclaim}
        Checking if $D$ is a directed Haj\'os join of two digraphs $D_1$ and $D_2$ can be done by testing for all $6$-tuples of vertices $(t, u, v, w, a, b)$ if $tu, vw \in A(D)$, $D \setminus \{a,b\} - \{tu, vw\}$ has a connected component $A$ containing both $t$ and $u$, a connected component $C$ containing both $u$ and $v$, and some other connected components union of which we name $B$. Then $D$ is the parallel Haj\'os join of  the digraphs obtained from $D[A \cup a]$ and $D[C \cup b]$ by deleting $tu, vw$, identifying $a$ and $b$ into a new vertex $x$, and adding arcs $tx,xw,vx,xu$, and $D[B] + [a,b]$.         
        This can be done in time $\mc{O}(n^8)$.
    
    \end{proofclaim}

    \begin{claim}\label{clm:alg_HSJ}
        We can decide in time $\mc{O}(n^5)$ that either $D$ is not the Haj\'os star join of some digraphs, or $D$ is a Haj\'os star join of some digraphs  $D_1, \dots, D_{\ell}$ and compute $D_1, \dots, D_{\ell}$. 
    \end{claim}
    
    \begin{proofclaim}
        Observe that $D$ is a Haj\'os star join of $\ell$ digraphs if and only if it has $\ell +1$ vertices $x, v_1, \dots, v_{\ell}$ such that $C= v_1 \ra \dots \ra v_{\ell} \ra v_1$ and $D\setminus x -A(C)$ has exactly $\ell$ connected component $R_1, \dots, R_{\ell}$ such that $v_i \in R_i$ for $i=1, \dots, \ell$.  
        Indeed, if it is the case then $D=T(D_1, \dots, D_{\ell}, C)$ where $T$ is the tree with edges $\{xv_1, \dots, xv_{\ell}\}$,  and $D_i = D[R_i \cup x] + [x,v_i]$, and the "only if" part is straightforward by definition of a Haj\'os star join.  
        
        Hence, given $\ell+1$ vertices $y, p_1, \dots, p_{\ell}$, we can decide if they can play the role of respectively $x, v_1, \dots, v_{\ell}$ in time $\mc O(n^2)$. 
        But this is not enough to conclude because  $\ell$ can be large. 

        Anyway, we are going to show that given a triple of vertices $(y,p_{\ell}, p_1)$, we can guess in time $\mathcal O(n^2)$ if there exists $p_2, \dots, p_{\ell-1}$ such that $y, p_1, \dots, p_{\ell}$ can play the role of respectively $x, v_1, \dots, v_{\ell}$. In this case, we say that $(y, p_{\ell}, p_1)$ is a \emph{good guess}. 

        Let $(y, p_{\ell}, p_1)$ be a triple of vertices of $D$ such that $p_{\ell}p_1 \in A(G)$. 
        Compute the list of bridges $\mathcal B$ of $D \setminus y - p_{\ell}p_1$. This can be done in $\mc O(n^2)$ time. Observe that if $\mathcal B$ does not contain a $p_1p_{\ell}$-dipath, then our guess is wrong. 
        Assume otherwise, and observe that $\mathcal B$ induces a forest, so it has a unique $p_1p_{\ell}$-dipath, say $P=p_1 \ra p_2 \ra \dots \ra p_{\ell}$. 
        Now, $(y, p_{\ell}, p_1)$ is a good guess if and only  $p_2, p_3, \dots, p_{\ell-1}$ can play the role of respectively $v_2, \dots, v_{\ell-1}$, so we are done. 

        Altogether, it takes $\mc O(n^2)$ to decide if a triple of vertices is a good guess, so the total time is $\mathcal O(n^5)$.
           
        
        
    \end{proofclaim}
    
    Thus in time $\mc{O}(n^8)$, we can check whether $D$ is a directed Haj\'os join or a parallel Haj\'os join of two digraphs $D_1$ and $D_2$ and compute $D_1$ and $D_2$, or a Haj\'os star join of $\ell$ digraphs $D_1, \dots D_{\ell}$ and compute $D_1, \dots, D_{\ell}$.
    If all these checks fail, then by Theorem~\ref{lem:htk_implies_other_joins} $D$ is not $k$-extremal.
    
    If $D$ is a directed Haj\'os join of two digraphs $D_1$ and $D_2$, we can then recursively check whether $D_1$ and $D_2$ are $k$-extremal, and our algorithm can return that $D$ is $k$-extremal if and only if they both are $k$-extremal,  by Lemma~\ref{lem:extremal_directed_Haj}. 
    We do the same if $D$ is a parallel join (Lemma~\ref{lem:extremal_parallel_Haj}) or a star Haj\'os join (Lemma~\ref{lem:HT_iff_ext}). 

    \medskip
    
    Let us now prove that our algorithm has time complexity $\mc{O}(n^{10})$. First, note that in each case, in time $\mc{O}(n^{8})$, either we conclude that $D$ is not $k$-extremal, or we make $\ell \geq 2$ recursive calls on digraphs $(D_i)_{i \in [1, \ell]}$. We have that $\sum_{i \in [1, \ell]} |V(D_i)| - 1 \leq |V(D)| - 1$ and for $i \in [1, \ell]$, that $2 \leq |V(D_i)| < |V(D)|$. Let us consider $T$, the rooted tree of recursive calls of our algorithm, with each node $v$ labelled with the digraph $D_v$ of the corresponding recursive call. Let $depth$ be the function which associates to a node its depth in $T$. Then, 
    $$\sum_{v \mid depth(v) = 0} |V(D_{v})| - 1 = V(D) - 1 \leq n$$ and, for $i \geq 1$, $$\sum_{v \mid depth(v) \leq i} |V(D_{v})| - 1 \leq \sum_{v \mid depth(v) = i - 1} |V(D_{v})| - 1.$$ Thus we can recursively prove for any $k \in \mathbb{N}$ that $\sum_{v \mid depth(v) \leq k} |V(D_{v})| - 1 \leq n$. As every $D_v$ has $|V(D_v)| \geq 2$, this implies that there are at most $n$ calls at any depth. But, since $T$ has depth at most $n$, this means there are at most $n^2$ recursive calls. Each of these recursive calls takes time at most $\mc{O}(n^8)$, and thus our algorithm has time complexity $\mc{O}(n^{10})$.
\end{proof}


\section{The hypergraph case} \label{sec:hypergraph}

As mentioned in the introduction, Theorem~\ref{thm:nono_version} has already been generalized to hypergraph with chromatic number at least $4$ by  Schweser,  Stiebitz and  Toft~\cite{SST19}. Their result is closely related to ours as we explain below. 

Let $H$ be a hypergraph. 
Its chromatic number $\chi(H)$  is the least integer $k$ such that the vertices of $H$ can be coloured in such a way that no hyperedge is monochromatic. 
A \emph{$uv$-hyperpath} in $H$ is a sequence 
$(u_1, e_1, u_2, e_2, \dots , e_{q-1}, u_q)$ of distinct vertices 
$u_1, u_2, \dots , u_q$ of $H$ and distinct hyperedges $e_1, e_2,\dots , e_{q-1}$ of $H$ such that $u = u_1, v = u_q$ and $\{u_i, u_{i+1}\} \subseteq e_i$ for $i \in \{1, 2,\dots  , q - 1\}$. 
The local connectivity $\lambda(u,v)$ of two vertices $u$ and $v$  is the maximum number of hyperedge-disjoint $uv$-hyperpaths linking $u$ and $v$ and the maximum local connectivity of $H$ is $\lambda(H) = max_{u\neq v}\lambda(u,v)$. 

Let $H_1$ and $H_2$ be two hypergraphs and, for $i=1,2$, let $e_i \in E(H_i)$ and $v_i\in e_i$. The \emph{Haj\'os hyperjoin} of $H_1$ and $H_2$ with respect to $((e_1,v_1), (e_2, v_2))$ is the hypergraph $H$ obtained from $H_1$ and $H_2$ by identifying $v_1$ and $v_2$ into a new vertex $v$, deleting $e_1$ and $e_2$ and adding a new edge $e$ where $e=e_1\cup e_2 \setminus \{v_1,v_2\}$ or $e=e_1\cup e_2 \cup \{v\} \setminus \{v_1,v_2\}$. 

Let $\mathcal H_3$ be the smallest class of hypergraphs that contains all odd wheels and is closed under taking Haj\'os hyperjoins, and for $k \geq 4$, 
$\mathcal H_k$  is the smallest class of hypergraphs that contains $K_{k+1}$ and is closed under taking Haj\'os hyperjoins. We do not define precisely here what a block for an hypergraph is - it is the natural extension of the one for a graph - we refer the reader to \cite{SST19} for a rigorous definition.

\begin{theorem}[\cite{SST19}]\label{thm:main_hypergraph}
    Let  $H$ be a hypergraph with $\chi(H) = k+1 \geq 4$. Then $\chi(H) = \lambda(H) + 1$ if and only if a block of $H$ is in $\mathcal H_k$. 
\end{theorem}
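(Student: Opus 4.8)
The plan is to transport, to the hypergraph setting, the very same three-step architecture this paper uses for digraphs: reduce to an extremal notion carried by a single block, check that the building operation preserves extremality, and then prove a decomposition theorem by induction on the number of vertices. For the reduction I would use the three facts that $\chi(H)=\max_B\chi(B)$ over the blocks $B$ of $H$, that $\lambda(H)=\max_B\lambda(B)$ (the maximum local connectivity is attained inside a single block, exactly as in the undirected statement behind Theorem~\ref{thm:nono_version}), and that $\chi(H)\le\lambda(H)+1$ for every hypergraph (the hypergraph analog of the inequality behind Corollary~\ref{coro:small_cut}). Under the hypothesis $\chi(H)=k+1$, these combine to show that $\chi(H)=\lambda(H)+1$ holds if and only if some block $B$ satisfies $\chi(B)=\lambda(B)+1=k+1$; call such a $2$-connected hypergraph \emph{$k$-extremal}. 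The theorem then reduces, just as Theorem~\ref{thm:main_HT} did, to the statement that a $k$-extremal hypergraph is exactly a member of $\mathcal H_k$.

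For the \emph{if} direction I would first verify that $K_{k+1}$ is $k$-extremal: its chromatic number is $k+1$, and between any two vertices there are precisely $k$ edge-disjoint paths, so $\lambda(K_{k+1})=k$. Then I would show that a Haj\'os hyperjoin of two $k$-extremal hypergraphs is again $k$-extremal. Criticality of the colouring is the classical Haj\'os argument in the guise of Claim~\ref{clm:kawa_DHJ}: in any $k$-colouring the two identified endpoints $v_1,v_2$ are forced to agree, so the new edge $e$ cannot be made non-monochromatic, whence $\chi=k+1$; and that $\lambda$ stays at most $k$ follows from a rerouting argument in the spirit of Lemma~\ref{lem:lambda_diminishing}, since any hyperpath using the new edge $e$ can be rerouted through the deleted edges $e_1,e_2$ and the identification vertex $v$, so that an excess of $k+1$ hyperedge-disjoint hyperpaths would already be present in the disjoint union.

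For the \emph{only if} direction I would argue by induction on $|V(H)|$, proving the decomposition: every $k$-extremal hypergraph is $K_{k+1}$ or a Haj\'os hyperjoin of two smaller $k$-extremal hypergraphs. The scaffolding is a suite of structural lemmas mirroring Section~\ref{sec:kextr}: a $k$-extremal hypergraph is colour-critical, every pair of vertices has local connectivity exactly $k$ (otherwise a cut of size below $k$ would split the colouring and drop $\chi$ below $k+1$), and minimum cuts have the balanced ``rainbow-versus-monochromatic'' colour pattern of Lemma~\ref{lem:mono_vs_rainbow}. One then shows that unless $H=K_{k+1}$ there is a minimum cut splitting $H$ into two nontrivial parts (the hypergraph analog of Lemma~\ref{lem:all_cuts_isolate_vertices}), contracts one side while preserving $k$-extremality (the analog of Lemma~\ref{lem:contraction_extremal}), applies the induction hypothesis, and recovers the Haj\'os hyperjoin by un-contracting.

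I expect the decomposition theorem to be the main obstacle, and within it the analysis of minimum cuts when hyperedges may be large. A single hyperedge can straddle a cut and meet it in several vertices, so the clean ``one arc per colour class in each direction'' picture of Lemma~\ref{lem:mono_vs_rainbow} must be replaced by a more careful incidence count, and both admissible shapes of the new edge in the hyperjoin, namely $e=e_1\cup e_2\setminus\{v_1,v_2\}$ and $e=e_1\cup e_2\cup\{v\}\setminus\{v_1,v_2\}$, must be produced by the un-contraction step according to whether the contracted vertex does or does not lie on the reconstituted edge. Verifying that a nontrivial cut can always be selected, and matching the reconstruction to the correct edge shape, is precisely where the technical weight of \cite{SST19} sits, and is the part I would expect to be hardest.
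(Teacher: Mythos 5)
First, a point of context: the paper does not prove Theorem~\ref{thm:main_hypergraph} at all --- it is quoted verbatim from~\cite{SST19} and used only as a foil for the digraph result, so there is no in-paper proof to compare yours against. Judged on its own merits, your proposal is a sensible roadmap but not a proof: the entire ``only if'' direction rests on a decomposition theorem (``every $k$-extremal hypergraph is $K_{k+1}$ or a Haj\'os hyperjoin of two smaller $k$-extremal hypergraphs'') that you state and then defer, explicitly acknowledging that this is ``precisely where the technical weight of \cite{SST19} sits.'' That deferred step is the theorem. Moreover, the decomposition as you state it is not even correct for $k=3$: the class $\mathcal H_3$ is generated by \emph{all} odd wheels, not by $K_4$ alone, so both your base case for the ``if'' direction (you only verify $K_{k+1}$) and the list of outcomes in your decomposition are missing the odd-wheel generators --- compare the second bullet of Theorem~\ref{thm:decHJHBJ}, which exists exactly to handle this.

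There is also a structural issue you pass over too quickly. In the digraph setting of this paper, the naive plan ``contract one side of a non-isolating minimum cut, apply induction, un-contract'' does \emph{not} recover a binary join: it forces the introduction of Haj\'os bijoins (Theorem~\ref{thm:decHJHBJ}) and ultimately Haj\'os tree joins, because the un-contraction can place the contracted vertex in the middle of the peripheral structure rather than at a join vertex. If the hypergraph case escapes this, it is because the two admissible shapes of the new hyperedge, $e=e_1\cup e_2\setminus\{v_1,v_2\}$ and $e=e_1\cup e_2\cup\{v\}\setminus\{v_1,v_2\}$, let a single growing hyperedge absorb what the digraph proof must encode as a peripheral cycle through many leaves; your sketch mentions the two shapes only in passing and never explains why they make the binary operation sufficient. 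Until you (i) prove the hypergraph analogues of Lemmata~\ref{lem:mono_vs_rainbow}, \ref{lem:contraction_extremal} and~\ref{lem:all_cuts_isolate_vertices} with the correct incidence counts for large hyperedges straddling a cut, (ii) handle the $k=3$ odd-wheel case, and (iii) show the un-contraction always lands on one of the two prescribed edge shapes, the argument has a genuine gap at its core.
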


Given a digraph $D$, let $H_D$ be the hypergraph on vertex set $V(D)$, and $e \subseteq  V(D)$ is a hyperedge of $H_D$ if and only if it induces a directed cycle in $D$. We clearly have that $\dic(D) = \chi(H_D)$. 
Hence, one could suspect that our result is actually implied by the result of Schweser,  Stiebitz and  Toft. But this is not the case because a dipath of $D$ does not need to translate into a hyperpath of $H_D$, and thus the maximum local arc-connectivity of $D$ does not need to be equal to the maximum local edge-connectivity of $H_D$.  
Actually, we can prove that the class of extremal digraphs contains the class of extremal hypergraph in the following sense: 

\begin{lemma}\label{lem:digraphs>hypergraphs}
Let $k \geq 3$. For every hypergraph $H \in  \mathcal H_k$, there exists a digraph $D \in \hk$ such that  $H_D= H$. 
\end{lemma}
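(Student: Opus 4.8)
The plan is to proceed by induction on the number of Haj\'os hyperjoins used to obtain $H$, turning each hypergraph hyperjoin into a digraph operation that stays inside $\hk$. The whole argument rests on one elementary remark about symmetric digraphs: for an undirected graph $G$, the induced subdigraph $\bid G[e]$ is a directed cycle if and only if $|e|=2$ and $e\in E(G)$, since every directed cycle of length at least $3$ in a symmetric digraph contains a digon, hence a chord. Consequently $H_{\bid G}=G$, where $G$ is regarded as a hypergraph all of whose edges have size $2$. This immediately disposes of the base case: for the generator $K_{k+1}$ of $\mathcal H_k$ (with $k\geq 4$) take $D=\bid K_{k+1}\in\hk$, and for a generator $W_{2\ell+1}$ of $\mathcal H_3$ take $D=\bid W_{2\ell+1}\in\hk$.

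For the inductive step, let $H$ be the Haj\'os hyperjoin of $H_1,H_2\in\mathcal H_k$ with respect to $\big((e_1,v_1),(e_2,v_2)\big)$, and let $D_1,D_2\in\hk$ satisfy $H_{D_i}=H_i$ by the induction hypothesis. Each $D_i\in\hk$ is $k$-extremal and therefore lies in $\htk$ by Theorem~\ref{thm:struct}. Since $e_i$ is a hyperedge of $H_{D_i}$, the induced subdigraph $C_i:=D_i[e_i]$ is a directed cycle through $v_i$; let $t_i$ and $w_i$ be the in- and out-neighbour of $v_i$ along $C_i$, so that $t_iv_i,\,v_iw_i\in A(C_i)$. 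The two admissible shapes of the new hyperedge $e$ correspond to the two ways of splicing $C_1$ and $C_2$ at their marked vertices.

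When $e=e_1\cup e_2\cup\{v\}\setminus\{v_1,v_2\}$ (the identified vertex is retained), I would let $D$ be the directed Haj\'os join of $D_1$ and $D_2$ with respect to $(t_1v_1,\,v_2w_2)$. Removing these two arcs, identifying $v_1=v_2=v$ and inserting $t_1w_2$ merges $C_1$ and $C_2$ into a single directed cycle $t_1\to w_2\to\cdots\to v\to\cdots\to t_1$ with vertex set exactly $e$; this cycle must pass through $v$, which is the only connection between the two sides other than the new arc. Lemma~\ref{lem:extremal_directed_Haj} then gives that $D$ is $k$-extremal, hence $D\in\hk$. When $e=e_1\cup e_2\setminus\{v_1,v_2\}$ (the identified vertex is discarded), I would instead splice the two cycles \emph{around} $v$: this is exactly the Haj\'os bijoin of $D_1$ and $D_2$ with respect to $\big((t_1,v_1,w_1),(t_2,v_2,w_2)\big)$, which deletes $t_1v_1,v_1w_1,t_2v_2,v_2w_2$, identifies $v_1=v_2=v$, and adds $t_1w_2$ and $t_2w_1$; here the required connectivity conditions hold because $C_i\setminus v_i$ is a $w_it_i$-path in $D_i\setminus v_i$. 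Once we know $H_D=H$ we have $\dic(D)=\chi(H_D)=\chi(H)=k+1$, so $D$ is not $k$-dicolourable, and Lemma~\ref{lem:HBJ_preservehtk} yields $D\in\htk\subseteq\hk$.

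The heart of the proof, and the step I expect to be the main obstacle, is verifying that the digraph $D$ produced in either case actually satisfies $H_D=H$, i.e. that its chordless directed cycles are in exact correspondence with the hyperedges of $H$. Three things must be checked: that the spliced cycle described above is chordless, equals $e$, and is the \emph{only} new chordless cycle crossing the join; that every chordless cycle of $D_i$ away from the splice survives unchanged; and, most delicately, that deleting the join arcs destroys \emph{precisely} the hyperedges $e_1$ and $e_2$ and no others. The last point fails if some deleted arc lies on a chordless cycle different from $C_i$. To control this I would strengthen the induction hypothesis to also guarantee that $D_i$ can be chosen so that each arc of $C_i$ lies on a unique chordless directed cycle, a property that holds trivially in the symmetric base digraphs (where all chordless cycles are digons), and then show that both the directed Haj\'os join and the Haj\'os bijoin preserve it along the spliced cycle. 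With this invariant in hand the bookkeeping of chordless cycles becomes exact and the equality $H_D=H$ follows.
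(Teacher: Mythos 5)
Your inductive skeleton --- realise each Haj\'os hyperjoin by a directed Haj\'os join when the identified vertex is kept in the new hyperedge, and by a Haj\'os bijoin when it is dropped, then use Lemma~\ref{lem:extremal_directed_Haj}, Theorem~\ref{thm:struct} and Lemma~\ref{lem:HBJ_preservehtk} to stay inside $\hk$ --- is the right one, and the base cases and the identification of the spliced cycle with the new hyperedge are correct. Be aware, though, that the paper gives essentially no written proof of this lemma: it asserts that the statement is ``a direct consequence'' of Property~\ref{prop:hypergraph_nul} (any two hyperedges of a hypergraph in $\mathcal H_k$ meet in at most one vertex), and the proof environment printed under the lemma in fact establishes a different claim (the existence of a $D\in\hk$ with $H_D\notin\mathcal H_k$). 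So your proposal is a reconstruction of an argument the authors left implicit rather than a variant of a written one.

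The concrete issue is your last paragraph. First, the invariant you propose to carry along (``each arc of $C_i$ lies on a unique chordless directed cycle'') does not need to be added to the induction hypothesis: it already follows from $H_{D_i}=H_i$ together with Property~\ref{prop:hypergraph_nul}, since two chordless directed cycles of $D_i$ are two hyperedges of $H_i\in\mathcal H_k$ and hence share at most one vertex, so no arc; in particular the only chordless cycle of $D_i$ through both endpoints of a deleted arc is $C_i$ itself. This is exactly what makes the deletion destroy only $e_i$, keeps every other hyperedge induced, and forces any chordless cycle of $D$ that crosses the join to decompose into dipaths whose vertex sets induce chordless cycles through $\{t_1,v_1\}$ and $\{v_2,w_2\}$ back in $D_1$ and $D_2$, hence equal $e_1$ and $e_2$. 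Second, even with this, one family of potential spurious cycles is not covered by your invariant: a cycle $Z$ lying entirely on one side, avoiding the deleted arc $t_1v_1$, whose \emph{only} chord in $D_1$ is $t_1v_1$, so that $Z$ becomes chordless in $D$. Your uniqueness invariant says nothing about $Z$ because $Z$ is not chordless in $D_1$ and does not use $t_1v_1$; ruling this configuration out (it amounts to excluding an induced ``theta'' consisting of $C_1$ together with a chordless $t_1v_1$-dipath attached only at its endpoints) requires a separate argument, which neither your proposal nor the paper supplies. Until that case is closed, the equality $H_D=H$ --- which you yourself identify as the heart of the proof --- is not established.
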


The above lemma is a direct consequence of the following property of hypergraphs in $\mathcal H_k$. 

\begin{property}\label{prop:hypergraph_nul}    
Let $H \in \mathcal H_k$. Then for every $e,e' \in E(H)$, $|e\cap e'| \leq 1$.
\end{property}

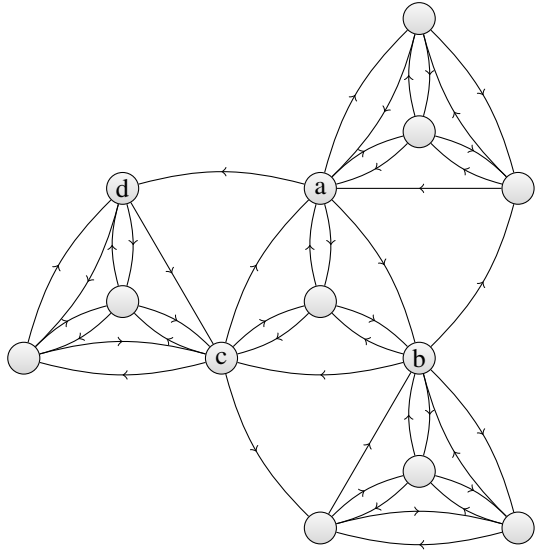
\begin{figure}[!hbtp]
\begin{center}
        \begin{tikzpicture}[scale = 2.6]

            \begin{scope}[]
                \vertex (a) at (0,0) {c};
                
                \begin{scope}[shift=(a)]
                    \vertex (b) at (1,0) {b};
                    
                    \begin{scope}[rotate = 60]
                        \vertex (c) at (1,0) {a};
                    \end{scope}

                    \begin{scope}[rotate = 30]
                        \vertex (d) at (0.57735,0) {};
                    \end{scope}

                    \draw[->-, bend left=15] (d) to (a);
                    \draw[->-, bend left=15] (a) to (d);
                    \draw[->-, bend left=15] (d) to (b);
                    \draw[->-, bend left=15] (b) to (d);
                    \draw[->-, bend left=15] (d) to (c);
                    \draw[->-, bend left=15] (c) to (d);    
                \end{scope}
    
                \draw[->-, bend left=15] (c) to (b);
                \draw[->-, bend left=15] (b) to (a);
                \draw[->-, bend left=15] (a) to (c);

                \begin{scope}[shift=(a), rotate = 120]
                    \vertex (u) at (1,0) {d};
                    \begin{scope}[rotate = 60]
                        \vertex (v) at (1,0) {};
                    \end{scope}
                    \draw[->-, bend left=15] (u) to (v);
                    \draw[->-, bend left=15] (v) to (u);
                    \draw[->-, bend left=15] (a) to (v);
                    \draw[->-, bend left=15] (v) to (a);
                    \draw[->-] (u) to (a);
                    \draw[->-, bend right=15] (c) to (u);  

                    \begin{scope}[rotate = 30]
                        \vertex (w) at (0.57735,0) {};
                    \end{scope}

                    \draw[->-, bend left=15] (w) to (a);
                    \draw[->-, bend left=15] (a) to (w);
                    \draw[->-, bend left=15] (w) to (u);
                    \draw[->-, bend left=15] (u) to (w);
                    \draw[->-, bend left=15] (w) to (v);
                    \draw[->-, bend left=15] (v) to (w);    
                \end{scope}
    
                \begin{scope}[shift=(b), rotate = -120]
                    \vertex (u) at (1,0) {};
                    \begin{scope}[rotate = 60]
                        \vertex (v) at (1,0) {};
                    \end{scope}
                    \draw[->-, bend left=15] (u) to (v);
                    \draw[->-, bend left=15] (v) to (u);
                    \draw[->-, bend left=15] (b) to (v);
                    \draw[->-, bend left=15] (v) to (b);
                    \draw[->-] (u) to (b);
                    \draw[->-, bend right=15] (a) to (u);

                    \begin{scope}[rotate = 30]
                        \vertex (w) at (0.57735,0) {};
                    \end{scope}

                    \draw[->-, bend left=15] (w) to (b);
                    \draw[->-, bend left=15] (b) to (w);
                    \draw[->-, bend left=15] (w) to (u);
                    \draw[->-, bend left=15] (u) to (w);
                    \draw[->-, bend left=15] (w) to (v);
                    \draw[->-, bend left=15] (v) to (w);
                \end{scope}
    
                \begin{scope}[shift=(c), rotate = 0]
                    \vertex (u) at (1,0) {};
                    \begin{scope}[rotate = 60]
                        \vertex (v) at (1,0) {};
                    \end{scope}
                    \draw[->-, bend left=15] (u) to (v);
                    \draw[->-, bend left=15] (v) to (u);
                    \draw[->-, bend left=15] (c) to (v);
                    \draw[->-, bend left=15] (v) to (c);
                    \draw[->-] (u) to (c);
                    \draw[->-, bend right=15] (b) to (u);     

                    \begin{scope}[rotate = 30]
                        \vertex (w) at (0.57735,0) {};
                    \end{scope}

                    \draw[->-, bend left=15] (w) to (c);
                    \draw[->-, bend left=15] (c) to (w);
                    \draw[->-, bend left=15] (w) to (u);
                    \draw[->-, bend left=15] (u) to (w);
                    \draw[->-, bend left=15] (w) to (v);
                    \draw[->-, bend left=15] (v) to (w);
                \end{scope}
            \end{scope}
      
        \end{tikzpicture}
        \end{center}
           \caption{A $3$-extremal digraph. The two induced dicycles $a \rightarrow b \rightarrow c \rightarrow a$ and $a \rightarrow d \rightarrow c \rightarrow a$ share two vertices. Hence its hypergraph of induced dicycles is not $3$-extremal.}\label{fig:graph_vs_hypergraph}
        \end{figure}

\begin{proof}
    The result holds for complete graphs and odd wheels, and it is easy to see that if it holds for two hypergraphs $H_1$ and $H_2$, then it also holds for any Haj\'os hyperjoin of $H_1$ and $H_2$. 
\end{proof}

\begin{proof}[of Lemma~\ref{lem:digraphs>hypergraphs}]
    Using directed Haj\'os join, it is easy to construct a digraph $D \in \hk$ that has two induced directed cycles with two common vertices, see Figure~\ref{fig:graph_vs_hypergraph} for an example. By Property~\ref{prop:hypergraph_nul}, $H_D \notin \mathcal H_k$.
\end{proof}

 We actually believe that the class of extremal digraphs \textit{strictly} contains the class of extremal hypergraph, but we were not able to prove the following that there exist  (an infinite family of) digraphs $D$ such that $D \in \hk$ and  $H_D \notin  \mathcal H_k$. 




\section{$2$-extremal digraphs}\label{sec:2extremal}
Similarly to  hypergraphs, the $2$-extremal digraphs seems to be more difficult to characterize.  
$\bid K_3$, and more generally symmetric odd cycles are of course $2$-extremal. 


\begin{figure}[!hbtp]
    \begin{center}
        \begin{tikzpicture}[scale=0.4]
            \begin{scope}
                \vertex (c) at (0,0) {$c$};
                \vertex (e) at (5,0) {$e$};
                \draw[->-, bend right = 15] (c) to (e);
                \draw[->-, bend right = 15] (e) to (c);
            \end{scope}

            \begin{scope}[shift=(c), rotate = 120]
                \vertex (a) at (5,0) {$a$};
                \draw[->-, bend right = 15] (c) to (a);
                \draw[->-, bend right = 15] (a) to (c);
            \end{scope}

            \begin{scope}[shift=(c), rotate = -120]
                \vertex (b) at (5,0) {$b$};
                \draw[->-, bend right = 15] (c) to (b);
                \draw[->-, bend right = 15] (b) to (c);
            \end{scope}

            \begin{scope}[shift=(e), rotate = 60]
                \vertex (e) at (0,0) {$e$};
                \vertex (g) at (5,0) {$g$};
                \draw[->-, bend right = 15] (e) to (g);
                \draw[->-, bend right = 15] (g) to (e);
            \end{scope}

            \begin{scope}[shift=(e), rotate = -60]
                \vertex (e) at (0,0) {$e$};
                \vertex (h) at (5,0) {$h$};
                \draw[->-, bend right = 15] (e) to (h);
                \draw[->-, bend right = 15] (h) to (e);
            \end{scope}

            \begin{scope}[shift = (g)]
                \vertex (g) at (0,0) {$g$};
                \vertex (i) at (5,0) {$i$};
                \draw[->-, bend right = 15] (g) to (i);
                \draw[->-, bend right = 15] (i) to (g);
            \end{scope}
            
            \begin{scope}[shift = (g), rotate=120]
                \vertex (g) at (0,0) {$g$};
                \vertex (d) at (5,0) {$d$};
                \draw[->-, bend right = 15] (g) to (d);
                \draw[->-, bend right = 15] (d) to (g);

            \end{scope}

                \begin{scope}[shift = (h), rotate=0]
                \vertex (h) at (0,0) {$h$};
                \vertex (j) at (5,0) {$j$};
                \draw[->-, bend right = 15] (h) to (j);
                \draw[->-, bend right = 15] (j) to (h);
            \end{scope}

            \node () at (e |- 0, -8) {$D$};

            \draw[->-, bend right = 30] (a) to (b);
            \draw[->-, bend right = 30] (b) to (j);
            \draw[->-, bend right = 30] (j) to (i);
            \draw[->-, bend right = 30] (i) to (d);
            \draw[->-, bend right = 30] (d) to (a);

        \end{tikzpicture}
        \end{center}
           \caption{A $2$-extremal digraphs}\label{fig:2_extremal}
        \end{figure}
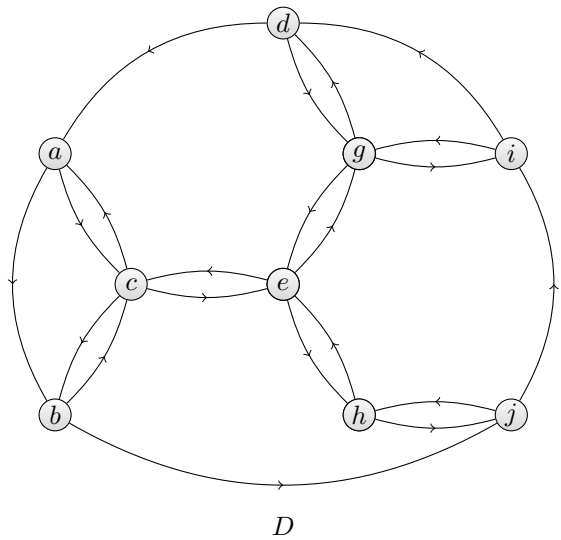


\begin{definition}[2-Haj\'os tree join]
Given
\begin{itemize}
    \item a tree $T$ embedded in the plane with at least two edges,
    \item A partition $(A,B)$ of the edges of $T$, with $A=\{u_1v_1, \dots, u_av_a\}$ and $B=\{x_1y_1, \dots, x_by_b\}$ such that every leaf to leaf path in $T$ contains an even number of edges of $B$, 
    \item a circular ordering $C=(x_1, \dots, x_{\ell})$ of the leaves of $T$, taken following the natural ordering given by the embedding of $T$,  and
    \item for $i=1, \dots, a$, a digraph $D_i$ such that
    \begin{itemize}
        \item $V(D_i) \cap V(T) = \{u_i,v_i\}$, 
        \item $[u_i,v_i] \subseteq A(D_i)$, and
        \item for $1 \leq i \neq j \leq a$, $V(D_i) \setminus \{u_i, v_i\} \cap V(D_j) \setminus \{u_j, v_j\} = \emptyset$,
    \end{itemize}
\end{itemize} 
we define the \emph{2-Haj\'os tree join} $T(D_1, \dots, D_a; C)$ to be the digraph obtained from $T$ by replacing each edge  $u_iv_i \in A$ by $D_i -[u_i,v_i]$, each edge $x_iy_i \in B$ by a digon and  by adding the directed cycle  $C = x_1 \ra x_2 \ra \dots \ra x_{\ell} \ra x_1$. 
\end{definition}

Observe that, in the definition of $2$-Haj\'os tree joins, if $A=\emptyset$, then the resulting digraphs is a generalised wheel. 

Let $\mathcal H_2$ be the smallest class of digraphs containing symmetric odd cycle and closed under taking directed Haj\'os join and $2$-Haj\'os tree join. 
It is a routine work to check that digraphs in $\mc H_2$ are $2$-extremal. We conjecture that they are the only ones.  
\begin{conjecture}
    A digraph is $2$-extremal if and only if it is in $\mathcal H_2$. 
\end{conjecture}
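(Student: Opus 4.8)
The plan is to prove the two implications separately, closely mirroring the architecture of the proof of Theorem~\ref{thm:struct} while replacing the Haj\'os bijoin by the $2$-Haj\'os tree join and carrying the new parity condition through every step. All the ``machinery'' lemmata of Sections~\ref{sec:tools} and~\ref{sec:kextr} that were stated for $k\ge 1$ remain available here; in particular, by Lemma~\ref{lem:prop_k-extremal} a $2$-extremal digraph is Eulerian, $3$-dicritical and satisfies $\lambda(x,y)=2$ for every pair of distinct vertices $x,y$, and Lemma~\ref{lem:lambda_diminishing} and Lemma~\ref{lem:contraction_extremal} may be used verbatim.

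For the sufficiency direction ($\mathcal{H}_2 \subseteq \{2\text{-extremal}\}$) I would first observe that symmetric odd cycles are visibly $2$-extremal and that directed Haj\'os joins preserve $2$-extremality by Lemma~\ref{lem:extremal_directed_Haj} (already valid for $k=1$). The only genuinely new step is an analogue of Lemma~\ref{lem:HT_iff_ext} for the $2$-Haj\'os tree join, and this is exactly where the parity condition earns its keep. Writing $D=T(D_1,\dots,D_a;C)$, let $D'$ be obtained by reinstating every removed digon $[u_i,v_i]$ and deleting the peripheral cycle; its blocks are the $D_i$ together with the $B$-digons, so $\lambda(D')=2$, and rerouting each peripheral arc along the unique tree-dipath bounding its face of $T+C$ (these dipaths being pairwise arc-disjoint) gives $\lambda(D)\le\lambda(D')=2$ through Lemma~\ref{lem:lambda_diminishing}. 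To see $\dic(D)\ge 3$, take any putative $2$-dicolouring: across each $A$-edge the endpoints $u_i,v_i$ receive the \emph{same} colour (since $D_i$ is $3$-dicritical, exactly as in Lemma~\ref{lem:extremdigon}), whereas across each $B$-digon they receive \emph{distinct} colours; the hypothesis that every leaf-to-leaf path carries an even number of $B$-edges then forces all leaves of $T$ into one colour class, making the peripheral directed cycle monochromatic, a contradiction. Hence $\dic(D)=\lambda(D)+1=3$, and strongness and $2$-connectivity follow from the Eulerian gluing as in Lemma~\ref{lem:HT_iff_ext}.

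For the necessity direction I would run the same induction on $|V(D)|$ as in Theorem~\ref{thm:decHJHBJ}. If $\Delta_{max}(D)=2$ then $D$ is a symmetric odd cycle by the directed Brooks' Theorem~\ref{thm:dir_brooks}, so assume some vertex has degree at least $3$. The target is a decomposition theorem stating that every $2$-extremal digraph is a symmetric odd cycle, a generalised wheel, a directed Haj\'os join, or a $2$-Haj\'os tree join of $2$-extremal digraphs. As for $k=3$ one would first prove the analogue of Lemma~\ref{lem:all_cuts_isolate_vertices_three}: if every minimum dicut isolates a vertex, then $D$ is a symmetric odd cycle, a generalised wheel, or a directed Haj\'os join; here the ``obstructions'' of a super-special partition are \emph{directed} cycles rather than symmetric odd cycles, because outside a maximal acyclic set every vertex has in- and out-degree at most $1$ and so the relevant base class is $\vec{\mathcal B}_1$. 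Otherwise one chooses a minimum dicut $(A,\overline A)$ isolating no vertex, contracts the side provided by Lemma~\ref{lem:contraction_extremal}, applies induction, and un-contracts; using Lemmata~\ref{lem:HJsufficient} and~\ref{lem:HB_sufficient} one would try to certify that $D$ is itself a directed Haj\'os join or embeds into a $2$-Haj\'os tree join, reconstructing the tree, the edge-partition $(A,B)$ and the peripheral odd cycle from the recursive structure. Finally, exactly as in the passage through the extended Haj\'os tree joins of Section~\ref{sec:mainthm}, I would introduce an extended $2$-Haj\'os tree join (whose peripheral list may visit interior tree vertices) to make the class closed under directed Haj\'os joins and to localise the parity bookkeeping.

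The hard part, and the reason this remains a conjecture, is precisely the recolouring arguments inside the necessity direction. With only two colours the crucial freedom exploited repeatedly for $k\ge 3$ --- choosing, as in Lemma~\ref{lem:extremal_bijoin_Haj} and throughout the proof of Theorem~\ref{thm:decHJHBJ}, a third colour so that an extended partial dicolouring avoids a forbidden monochromatic dipath --- simply evaporates. Consequently the clean \emph{local} Haj\'os bijoin decomposition is no longer available, and its substitute is the intrinsically \emph{global} parity constraint of the $2$-Haj\'os tree join: whether a given assembly of pieces is non-$2$-dicolourable depends on the parities of the $B$-edges along all leaf-to-leaf paths simultaneously, not on the behaviour across any single minimum dicut. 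Making the induction respect this global invariant --- in particular showing that the pieces produced by contracting a minimum dicut can always be reassembled into one tree with a consistent even-parity partition and a single peripheral odd cycle --- is the step I do not see how to force through, and is presumably where a genuinely new idea is needed.
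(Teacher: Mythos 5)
The statement you are addressing is stated in the paper as an open conjecture, not a theorem: the authors only assert that ``it is a routine work to check that digraphs in $\mathcal H_2$ are $2$-extremal'' and explicitly leave the converse unproven. Your proposal matches this status rather than resolving it. The sufficiency direction you sketch is sound and is essentially the ``routine work'' the paper alludes to: the parity condition on $B$-edges along leaf-to-leaf paths, combined with the fact that any $2$-dicolouring of $D_i-[u_i,v_i]$ must assign $u_i$ and $v_i$ the same colour when $D_i$ is $3$-dicritical, does force the peripheral cycle to be monochromatic, and the rerouting argument via Lemma~\ref{lem:lambda_diminishing} gives $\lambda(D)\le 2$ exactly as in Lemma~\ref{lem:HT_iff_ext}.

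However, the necessity direction is a genuine gap, and you acknowledge as much yourself. The plan of contracting a minimum dicut, applying induction, and un-contracting breaks down precisely where you say it does: the recolouring steps in Lemma~\ref{lem:extremal_bijoin_Haj}, Lemma~\ref{lem:HB_sufficient} and Case 1 of Theorem~\ref{thm:decHJHBJ} all require a third colour to steer a partial dicolouring away from a forbidden monochromatic dipath (``which is always possible since $k\ge 3$''), and no substitute for this freedom is provided. Moreover, reassembling the inductively obtained pieces into a single tree with a globally consistent even-parity partition $(A,B)$ is not something the local dicut analysis delivers; the obstruction is global, whereas the entire decomposition machinery of Sections~\ref{sec:decthm} and~\ref{sec:mainthm} is local to one minimum dicut at a time. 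So what you have written is a correct account of why the statement is a conjecture together with a verification of the easy inclusion, not a proof of the equivalence; the ``only if'' direction remains entirely open, in agreement with the paper.
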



\subsubsection*{Acknowledgement}
This research was partially supported by the ANR project DAGDigDec (JCJC)   ANR-21-CE48-0012 and by the group Casino/ENS Chair on Algorithmics and Machine Learning.

\end{document}